\documentclass[10pt]{article}
\usepackage{graphicx} 
\usepackage[a4paper, left=2.9cm, right=2.9cm, top=2.8cm, bottom=3.3cm]{geometry}
\usepackage{amsfonts}
\usepackage{amsmath}
\usepackage{amsthm}
\usepackage{setspace}
\usepackage{amssymb}
\usepackage{pst-node}
\usepackage{mathrsfs}
\usepackage{tikz}
\usepackage{pgfplots}
\pgfplotsset{compat=1.18}
\usetikzlibrary{arrows.meta, positioning, decorations.pathmorphing}
\usepackage{hyperref}
\colorlet{darkblue}{blue!50!black}
\hypersetup{
	colorlinks,%
	citecolor=darkblue,%
	filecolor=red,%
	linkcolor=darkblue,%
	urlcolor=darkblue,%
	pdfnewwindow=true,%
	pdfstartview={FitH}
}
\usepackage{comment}
\usepackage{tabularx}
\usepackage{pifont}
\usepackage{microtype}
\usepackage{bbm}
\usepackage{appendix}
\usepackage{fancyhdr}
\usetikzlibrary{automata,arrows,positioning,calc}
\usepackage{accents}
\usepackage{enumitem}

\usepackage[framemethod=TikZ]{mdframed}
\usepackage{thmtools}
\usepackage{xcolor}
\usepackage[framemethod=TikZ]{mdframed}
\usepackage{cleveref}
\usepackage{booktabs}
\usepackage{makecell}
\usepackage{titlesec}
\usepackage{graphicx}
\usepackage{stmaryrd}

\titleformat{\section}
  {\normalfont\Large\bfseries}  
  {\thesection}{1em}{}          

\titleformat{\subsection}
  {\normalfont\large\bfseries}
  {\thesubsection}{1em}{}

\colorlet{myblue}{blue!50!white}
\colorlet{myblue2}{blue!30!white}

\makeatletter
\newcommand{\hathat}[1]{%
\begingroup%
  \let\macc@kerna\z@%
  \let\macc@kernb\z@%
  \let\macc@nucleus\@empty%
  \hat{\mathchoice%
    {\raisebox{.2ex}{\vphantom{\ensuremath{\displaystyle #1}}}}%
    {\raisebox{.2ex}{\vphantom{\ensuremath{\textstyle #1}}}}%
    {\raisebox{.16ex}{\vphantom{\ensuremath{\scriptstyle #1}}}}%
    {\raisebox{.14ex}{\vphantom{\ensuremath{\scriptscriptstyle #1}}}}%
    \smash{\hat{#1}}}%
\endgroup%
}
\makeatother

\declaretheoremstyle[
  headfont=\bfseries\color{black},   
  bodyfont=\normalfont,
  headpunct={.},                      
  spaceabove=6pt, spacebelow=6pt,
  mdframed={
    backgroundcolor=myblue!8,        
    linecolor=myblue,                 
    linewidth=2pt,
    topline=false, bottomline=false,
    leftline=false, rightline=false
  }
]{mytheoremstyle}

\declaretheoremstyle[
  headfont=\bfseries\color{black},   
  bodyfont=\normalfont,
  headpunct={.},                      
  spaceabove=6pt, spacebelow=6pt,
  mdframed={
    backgroundcolor=myblue2!20,        
    linecolor=myblue,                 
    linewidth=2pt,
    topline=false, bottomline=false,
    leftline=false, rightline=false
  }
]{mytheoremstyle2}

\declaretheoremstyle[
  headfont=\bfseries\color{black},   
  bodyfont=\normalfont,
  headpunct={.},                     
  spaceabove=0pt, spacebelow=4pt,
  mdframed={
    backgroundcolor=none,            
    linecolor=myblue2,                 
    linewidth=1pt,                   
    topline=false, bottomline=false, 
    rightline=false, leftline=true,  
    innerleftmargin=8pt,             
    skipabove=4pt, skipbelow=4pt     
  }
]{mytheoremstyle3}

\theoremstyle{plain}
\newtheorem{theorem}{Theorem}[section]
\newtheorem{lemma}[theorem]{Lemma}
\newtheorem{proposition}[theorem]{Proposition}

\newtheorem{corollary}[theorem]{Corollary}

\theoremstyle{definition}
\newtheorem{definition}[theorem]{Definition}

\newtheorem{remark}[theorem]{Remark}

\declaretheorem[name=Hypothesis]{hyp}

\numberwithin{equation}{section}

\newcommand{\NN}{\mathbb{N}}
\newcommand{\ZZ}{\mathbb{Z}}
\newcommand{\QQ}{\mathbb{Q}}
\newcommand{\RR}{\mathbb{R}}

\newcommand{\PP}{\mathbb{P}}
\newcommand{\EE}{\mathbb{E}}

\newcommand{\eqdef}{ \overset{\textup{\tiny def.}}{=}}

\newcommand{\dbl}{\llbracket}
\newcommand{\dbr}{\rrbracket}

\newcommand{\notation}[4]{#1 & #2 & #3 & #4 \\}

\title{\Large Large Deviation Principle for the Empirical Measures of Simple Random Walks on $\overline{\ZZ}$}
\author{
Jan-Luka Fatras\thanks{
Univ Toulouse, INSA Toulouse, CNRS, IMT, Toulouse, France. Email: \texttt{jan-luka.fatras@math.univ-toulouse.fr}.}
}

\date{\small \today}

\begin{document}

\maketitle

\vspace{-1em}

\begin{abstract}
\noindent In this article we establish a large deviation principle for the empirical measures of a simple spatially inhomogeneous random walk on $\overline{\ZZ}$, the two-point compactification of $\ZZ$. The classical Donsker--Varadhan framework does not apply, since the random-walk kernel and the topology of $\overline{\ZZ}$ fall outside its standard assumptions. In certain regimes, the resulting rate function is non-convex on its effective domain. We also derive a large deviation principle for empirical means of observables $f:\ZZ \to \RR^d$ admitting limits at $\pm\infty$. This result is optimal in the sense that in general, no large deviation principle holds for the larger class of bounded continuous functions on $\ZZ$.
\end{abstract}

\noindent\textbf{Keywords:} Large deviations; Markov chains; empirical measures; random walks; compactification.

\medskip

\noindent\textbf{MSC Classification:} 60F10; 60G50 ; 60J10.

\tableofcontents

\newpage

\section{Introduction}\label{section_intro}

The seminal work of Donsker and Varadhan \cite{Donsker_varadhan} initiated the systematic study of large deviation principles (LDPs) for the empirical measures of Markov chains. For the symmetric random walk on $\ZZ$, the classical theory yields a weak LDP, see \cite{Donsker_varadhan_3}, \cite[Lemma 5]{Bryc_Dembo}. However, Hypothesis~H$^*$ of \cite[p.~415]{Donsker_varadhan_3}, which implies positive recurrence, is violated in this setting and it was later shown in \cite[pp.~922-923]{Baxter_Jain_Varadhan} that a full LDP fails for the empirical measures of the symmetric random walk. 

In this article, we revisit this obstruction by enlarging the state space, along a line already suggested in \cite[p.~366]{Varadhan_role_topology}. In Theorem~\ref{theorem_existence_and_identification_of_rate_function}, we establish a full LDP for the empirical measures of nearest-neighbour random walks on $\overline{\ZZ}$, the two-point compactification of $\ZZ$. The introduction of the points at infinity captures the mass that escapes to infinity in the original model, and this phenomenon manifests itself through non-trivial additional contributions to the large deviation rate function. Since $\pm \infty$ are accumulation points of $\overline{\ZZ}$, the resulting process falls outside the classical framework of Markov chains on purely discrete state spaces, as studied in \cite[Lemma 5]{Bryc_Dembo}, \cite{Fayolle}, \cite{Jian_Wu} and \cite{Daures}. It therefore requires a refined large deviation analysis. The establishment of a full LDP on $\overline{\ZZ}$ enables the use of the contraction principle to obtain large deviation results for a certain class of observables defined on $\ZZ$, a feature that is unavailable under the weak LDP on $\ZZ$.

\noindent Throughout, we will use the notation $\NN=\{1,2,\dots\}$ and for $m \leq n$, we let $\dbl m,n \dbr=\{m,m+1,\dots,n\}$. We also use the convention that $0 \times \infty=0$.

\subsection{Main results}\label{subsection_results}

Let $\overline{\ZZ} \eqdef \ZZ \cup \{-\infty,+\infty\}$ be the two-point compactification of $\ZZ$, where the topology is generated by the sets $\dbl -\infty,n\dbr$ and $\dbl n,+\infty\dbr$, with $n \in \ZZ$. With this topology, $\overline{\ZZ}$ is compact and metrisable. Consider the canonical probability space $(\widehat{\Omega},\widehat{\mathcal{F}})=\big(\overline{\ZZ}^{\NN},\mathcal{B}(\overline{\ZZ})^{\otimes \NN} \big)$, where $\mathcal{B}(\overline{\ZZ})$ denotes the Borel $\sigma$-algebra on $\overline{\ZZ}$, and let $(S_n)_{n \in \NN}$ be the coordinate process. For each $m \in \ZZ$, let $\PP_m$ be a probability measure on $(\widehat{\Omega},\widehat{\mathcal{F}})$ such that $\PP_m(S_1=m)=1$, and under which $(S_n)_{n \in \NN}$ is a Markov chain with transition probabilities given by
\begin{equation*}
\PP_m(S_{n+1}=+\infty \, \vert \, S_n=+\infty)=1, \qquad \PP_m(S_{n+1}=-\infty \, \vert \, S_n=-\infty)=1 \,,
\end{equation*}
and for all $k \in \ZZ$,
\begin{equation*}
\PP_m(S_{n+1}=k+1 \, \vert \, S_n=k)=p(k) , \qquad \PP_m(S_{n+1}=k-1 \, \vert \, S_n=k)=1-p(k) \,,
\end{equation*}
where $p : \ZZ \longrightarrow [0,1]$ is called the \textit{transition probability function}. The process $(S_n)_{n \in \NN}$ is a spatially inhomogeneous nearest-neighbour random walk on $\overline{\ZZ}$, with absorbing states $\pm \infty$. Throughout the article we will make the following hypotheses on $p$.

\begin{hyp}\label{hypothesis_on_transition_function_p}
\begin{enumerate}
\item $0<p(k)<1$ for all $k \in \ZZ$,
\item there exists $p_{-},p_{+} \in (0,1)$ such that 
\begin{equation*}\label{eq_def_p_pm}
p_{-} \eqdef \lim \limits_{k \to -\infty} p(k) \qquad \text{and} \qquad p_+ \eqdef \lim \limits_{k \to +\infty} p(k).
\end{equation*}
\end{enumerate}
\end{hyp} 
\noindent Note that the second condition in Hypothesis~\ref{hypothesis_on_transition_function_p} is equivalent to the requirement that $p$ admit a continuous extension from $\ZZ$ to $\overline{\ZZ}$.

Let $(\ell_n)_{n \in \NN}$ be the sequence of empirical measures associated to the random walk,
\begin{equation*}\label{eq_def_empirical_measure}
\ell_n \eqdef \frac{1}{n} \sum_{j=1}^n \delta_{S_j} , \quad n \in \NN.
\end{equation*}
The sequence $(\ell_n)_{n \in \NN}$ is a sequence of random variables in the space of probability measures on $\overline{\ZZ}$, denoted by $\mathcal{P}(\overline{\ZZ})$. We will be interested in showing a LDP for the sequence $(\ell_n)_{n \in \NN}$ in the space $\mathcal{P}(\overline{\ZZ})$ endowed with the topology of weak convergence. We recall that a sequence of random variables $(X_n)_{n \in \NN}$ on a topological space $\mathcal{X}$, endowed with its Borel $\sigma$-algebra, satisfies a (full) LDP with rate function $I:\mathcal{X} \to [0,+\infty]$, where $I$ is lower semicontinuous, if 
\begin{align}
-\inf \limits_{x \in U} I(x) \leq \liminf \limits_{n \to \infty} \frac{1}{n} \log \PP \left( X_n \in U \right) & \qquad \text{for all open sets } U \subseteq \mathcal{X} , \label{eq_def_LDP_lower_bound} \\
-\inf \limits_{x \in F} I(x) \geq \limsup \limits_{n \to \infty} \frac{1}{n} \log \PP \left( X_n \in F \right)   & \qquad  \text{for all closed sets } F \subseteq \mathcal{X}. \label{eq_def_LDP_upper_bound}
\end{align}
If the lower bound~(\ref{eq_def_LDP_lower_bound}) holds for all open sets and the upper bound~(\ref{eq_def_LDP_upper_bound}) holds only for compact sets, rather than for all closed sets, we say that the LDP is \textit{weak} rather than full. The rate function $I$ is said to be \textit{good} if for all $\alpha \geq 0$, the level set $\{ I \leq \alpha\}$ is compact. 

Any measure $\mu \in \mathcal{P}(\overline{\ZZ})$ admits a decomposition of the form
\begin{equation}\label{eq_decomposition_measures_on_Z_bar}
\mu=\sum_{\sigma \in \{-,0,+\}} \alpha_{\sigma} \mu_{\sigma},
\end{equation}
where $\alpha_{\sigma} \geq 0$, $\sum_{\sigma} \alpha_{\sigma}=1$ and 
\begin{align*}
\mu_0 \in \mathcal{P}(\ZZ) \text{,} \qquad \mu_{-}=\delta_{- \infty}  \qquad \text{and} \qquad \mu_{+}=\delta_{+ \infty}.
\end{align*}
If $\mu(\ZZ)>0$, this decomposition is unique. If $\mu(\ZZ)=0$, then $\alpha_0=0$, and the choice of $\mu_0 \in \mathcal{P}(\ZZ)$ is arbitrary. We shall refer to $\mu_0$ as the \textit{central} part of $\mu$. When writing $\mu_0 \in \mathcal{P}(\ZZ)$, we implicitly identify $\mathcal{P}(\ZZ)$ with its natural embedding into $\mathcal{P}(\overline{\ZZ})$, and adopt this convention throughout the article. 

We define the function $I : \mathcal{P}(\overline{\ZZ}) \longrightarrow [0,+\infty]$ by 
\begin{equation}\label{eq_def_rate_fuction}
I(\mu) \eqdef  \alpha_0 I_{\textup{DV}}(\mu_0) + \min \left\{
\alpha_{-}I_{\textup{Cr}}^{p_{-}}(0) + \alpha_+ \inf \limits_{x \in [0,1]} I_{\textup{Cr}}^{p_{+}}(x)  \, ; \,
\alpha_{+}I_{\textup{Cr}}^{p_{+}}(0) + \alpha_{-} \inf \limits_{x \in [-1,0]} I_{\textup{Cr}}^{p_{-}}(x)
\right\},
\end{equation}
where $\mu$ is decomposed as in \eqref{eq_decomposition_measures_on_Z_bar}. The function $I$ is well defined, since when $\mu(\ZZ)=0$, the decomposition of $\mu$ is not unique, but the value of $I(\mu)$ does not depend on the choice of $\mu_0$. Here, $I_{\textup{DV}}$ and $I_{\textup{Cr}}^{p_{\sigma}}$ are classical rate functions defined as follows. The functional $I_{\textup{DV}}$ is the usual Donsker--Varadhan rate function \cite{Donsker_varadhan}, given by 
\begin{align}\label{Donsker_Varadhan_our_case}
\begin{array}{cccc}
I_{\textup{DV}}: & \mathcal{P}(\ZZ) & \longrightarrow & [0,+\infty] \\
& \mu & \longmapsto & \sup \limits_{\substack{(u_k)_{k \in \ZZ} \, : \\ u_k \geq 1 \, , \, \forall k \in \ZZ }} \displaystyle \sum \limits_{k \in \ZZ} \mu(k) \log \left( \frac{u_k}{p(k)u_{k+1}+(1-p(k))u_{k-1}} \right) 
\end{array}.
\end{align}
For $p \in (0,1)$, let $X$ be a biased Rademacher random variable with parameter $p$, that is,
\begin{align*}
\PP(X=1)=p \qquad \text{and} \qquad \PP(X=-1)=1-p,
\end{align*} 
and let $\Lambda_p(\lambda)=\log \EE[e^{\lambda X}]$ denote its cumulant generating function. The corresponding Cramér rate function is then given by
\begin{equation}\label{eq_I_Cr_Fenchel_Legendre}
I_{\textup{Cr}}^{p} \eqdef (\Lambda_p)^*,
\end{equation}
where $f^*$ denotes the Fenchel--Legendre transform of $f$. Using a variational formula for the relative entropy
\cite[Proposition~1.4.2]{Dupuis_Ellis} and the Fenchel--Legendre duality for convex functions \cite[Lemma~4.5.8]{DZ_LDP}, one can show that
\begin{equation*}
I_{\textup{Cr}}^{p}(x)= D_{\textup{KL}}\left(\textup{Rad}\left(\frac{1+x}{2}\right) \, \Big\| \, \textup{Rad}(p)\right), \quad x \in [-1,1].
\end{equation*}
where $D_{\textup{KL}}(\cdot \| \cdot)$ is the Kullback--Leibler divergence and $\textup{Rad}(p)$ is a Rademacher distribution of parameter $p$. In particular, an explicit expression of $I_{\textup{Cr}}^{p}$ is given by
\begin{equation}\label{eq_def_I_Cr_p}
I_{\textup{Cr}}^{p}(x) = \begin{cases}
\frac{1-x}{2} \log \left( \frac{1-x}{2(1-p)} \right) + \frac{1+x}{2} \log \left( \frac{1+x}{2p} \right), & \text{if } x \in [-1,1], \\
+ \infty, & \text{otherwise.}
\end{cases}
\end{equation}
The following theorem is the main result of this article.
\begin{theorem}\label{theorem_existence_and_identification_of_rate_function}
For every $m \in \ZZ$, the sequence of empirical measures $(\ell_n)_{n \in \NN}$ satisfies, under $\PP_m$, a large deviation principle with good rate function $I : \mathcal{P}(\overline{\ZZ}) \longrightarrow [0,+\infty]$ given by Equation~(\ref{eq_def_rate_fuction}).
\end{theorem}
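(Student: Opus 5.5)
Since $\overline{\ZZ}$ is compact, $\mathcal{P}(\overline{\ZZ})$ is compact and metrisable, so exponential tightness is automatic. It therefore suffices to prove a weak LDP, that is, local upper and lower bounds on small balls around each $\mu\in\mathcal{P}(\overline{\ZZ})$. By \cite[Theorem 4.1.11]{DZ_LDP} the weak LDP then upgrades to a full LDP with a good rate function given by
\begin{equation*}
\tilde I(\mu)=-\lim_{\delta\to0}\liminf_n \tfrac1n\log\PP_m(\ell_n\in B(\mu,\delta)).
\end{equation*}
It remains to identify $\tilde I=I$. Note that the chain started at $m\in\ZZ$ never reaches $\pm\infty$. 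Hence $\ell_n\in\mathcal{P}(\ZZ)$ almost surely, and mass near $\pm\infty$ only means that the walk spends time at large $|k|$.

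\textbf{Lower bound.} Fix $\mu=\alpha_-\delta_{-\infty}+\alpha_0\mu_0+\alpha_+\delta_{+\infty}$ with $I(\mu)<\infty$. By lower semicontinuity we may take $\mu_0$ finitely supported with $I_{\textup{DV}}(\mu_0)<\infty$. We build an explicit strategy in three phases, with sublinear transitions between them. Suppose the first term of the min in \eqref{eq_def_rate_fuction} is attained, with minimiser $x^*\in[0,1]$.
\begin{enumerate}
\item For time $\alpha_0 n$, the walk realises $\mu_0$ near the origin. The cost is $\alpha_0 I_{\textup{DV}}(\mu_0)$, by the Donsker--Varadhan lower bound for an irreducible chain on a finite box; this is standard, since tilting to a finite box is allowed.
\item The walk travels to $-\infty$ at sublinear speed, say reaching $-K_n$ with $K_n\to\infty$ and $K_n=o(n)$. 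It then stays in $\dbl -\infty,-K\dbr$ for time $\alpha_-n$ with zero drift. Because $p(k)\to p_-$, the cost per step is $I^{p_-}_{\textup{Cr}}(0)+o(1)$, by Cramér/Mogulskii for increments that are asymptotically i.i.d.\ Rad$(p_-)$.
\item The walk goes to the right region and moves with average velocity $x^*\ge0$ for time $\alpha_+ n$, staying beyond $K$. The cost is $\alpha_+ I^{p_+}_{\textup{Cr}}(x^*)$.
\end{enumerate}
Positive velocity is allowed on the $+$ side because the walk can keep drifting outward. Zero velocity is forced on the side visited first, since the walk must return from it, and crossing costs only $o(n)$ steps. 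Positivity of $p$ (Hypothesis~\ref{hypothesis_on_transition_function_p}) makes all connecting paths cost $O(\text{length})$. Concatenating the phases with the Markov property gives the lower bound $-I(\mu)$.

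\textbf{Upper bound.} This is the main obstacle. Fix a large $K$ and split time according to whether $S_j<-K$, $|S_j|\le K$, or $S_j>K$. The path decomposes into excursions in the three regions, and the weak neighbourhood of $\mu$ forces the occupation fractions to be approximately $\alpha_-,\alpha_0,\alpha_+$.
\begin{itemize}
\item In the central region, use the Donsker--Varadhan supermartingale $\prod u_{S_j}/(Pu)(S_j)$ with $u$ constant outside $\dbl -K,K\dbr$. This bounds the cost of the central occupation by $\alpha_0 I_{\textup{DV}}(\mu_0)$, up to errors vanishing as $K\to\infty$.
\item In the outer regions, the increments are nearly Rad$(p_\pm)$. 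Use exponential martingales $e^{\lambda S_j-\sum\Lambda_{p(S_i)}(\lambda)}$ restricted to the times spent there. Let $v_\pm$ be the net displacement per unit time in each region. The constraint is that every excursion into $\dbl -\infty,-K\dbr$ except possibly the last must return to $-K$. Moreover, the total displacement $S_n-S_1$ is sublinear on the side that is not the final one, while on the final side we only have $v_+\ge0$ (respectively $v_-\le0$).
\item Since there are only two sides, at most one side is visited last. This yields a cost of at least $\alpha_-I^{p_-}(v_-)+\alpha_+I^{p_+}(v_+)$ with $v_-=0,\ v_+\in[0,1]$, or symmetrically. Minimising gives the min in \eqref{eq_def_rate_fuction}.
\end{itemize}

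The technical difficulty is to control the number of excursions; it can be up to linear, with the crossings of level $\pm K$ feeding back into the central part. We handle it by a single combined test function $u_k=e^{\lambda_-\min(k+K,0)}\,w_k\,e^{\lambda_+\max(k-K,0)}$, which blends the DV weights $w$ with Cramér tilts. We choose $\lambda_-$ optimally for velocity $0$. The tilt outside the box is then unbounded, but in the form $e^{\lambda S_n}$ it gives only a boundary term $e^{\lambda_+ (S_n-K)^+}$ that is absorbed by the final-side velocity. Finally, we optimise over $\lambda_\pm$ and $w$, let $K\to\infty$, and shrink the neighbourhood $\delta\to0$ to recover $I(\mu)$.
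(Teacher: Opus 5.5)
Your lower bound follows the paper's typical-trajectory construction: a central block, then a zero-velocity excursion on the side visited first, then a meander on the final side, joined by connectors of length $o(n)$ paid at rate $\log p_*$. However, the reduction ``by lower semicontinuity we may take $\mu_0$ finitely supported'' is not justified. Lower semicontinuity gives $\liminf_k I_{\textup{DV}}(\mu_0^{(k)})\ge I_{\textup{DV}}(\mu_0)$ along approximants, which is the wrong direction. A local lower bound proved on a class $D$ transfers to $\mu$ only if some $\mu^{(k)}\in D$ satisfy $\mu^{(k)}\to\mu$ and $\limsup_k I(\mu^{(k)})\le I(\mu)$. As written, phase~1 therefore covers only finitely supported $\mu_0$. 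You have two ways to repair this. The first is to prove that finitely supported measures are dense in this stronger sense for $I_{\textup{DV}}$, which is a separate argument. The second is the paper's route: use the weak LDP lower bound on $\ZZ$ for arbitrary $\mu_0$, pushed to $\mathcal{P}(\overline{\ZZ})$ by continuity of $\iota_*$ (Proposition~\ref{proposition_exponential_rate_decay_I_DV}). Then control the endpoint of the central block with Corollary~\ref{corollary_bound_on_last_point_if_close_to_mu_c_appendix}: if $\ell_t\in B(\mu_0,2^{-R}\varepsilon)$ with $R>R_{\mu_0}(\varepsilon)$, then $|S_t|\le R+2\varepsilon t$, so the connector to $\pm R$ costs at most $p_*^{O(\varepsilon n)}$. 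A lesser point concerns phases 2--3: Mogulskii's theorem alone does not handle the barrier, because staying beyond $\pm K$ is not an open condition at scale $n$ when the margin is $o(n)$. You can first move $\eta n$ deep, at extra cost $O(\eta n)$, and then use an open tube. Alternatively, use a polynomial estimate for the walk conditioned to stay on one side (Vatutin--Wachtel in the paper, or ballot/Catalan counting), after comparing path weights with the homogeneous walk through the factors $(1\pm G(\varepsilon))^{n}$.

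Your upper bound takes a genuinely different route from the paper. The combined test function in your last paragraph is by itself a complete argument, provided $w$ takes values in $[1,M]$ and is constant outside $\dbl -K,K\dbr$. The displacement bookkeeping and the concern about linearly many excursions are then unnecessary. For any $u>0$, the product $Z_n=\prod_{j=1}^{n-1}u(S_{j+1})/Pu(S_j)$ satisfies $\EE[Z_n]=1$ exactly; unbounded $u$ is harmless because the walk is bounded at each finite time. Moreover $Z_n=\frac{u(S_n)}{u(S_1)}\exp\big(\sum_{j<n}\log\frac{u}{Pu}(S_j)\big)$. With $w$ as above, $\log(u/Pu)$ extends continuously to $\overline{\ZZ}$, with values $-\Lambda_{p_\pm}(\lambda_\pm)$ at $\pm\infty$. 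Hence on $\{\ell_n\in B(\mu,\delta),\,S_n>K\}$ the exponent is at least $n\big(\int\log\frac{u}{Pu}\,\mathrm{d}\mu-\eta(\delta)\big)-C$ with $\eta(\delta)\to0$. The leftover factor is $u(S_1)/u(S_n)\le M e^{-\lambda_+(S_n-K)}$ (note the sign), which is bounded once $\lambda_+\ge0$, so nothing needs to be absorbed by a velocity. The two key identities are $\sup_{\lambda\ge0}(-\Lambda_{p_+}(\lambda))=\inf_{x\in[0,1]}I^{p_+}_{\textup{Cr}}(x)$ and $\sup_{\lambda\in\RR}(-\Lambda_{p_-}(\lambda))=I^{p_-}_{\textup{Cr}}(0)$. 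Treat the three final-position classes separately: impose $\lambda_-\le0$ for the class ending below $-K$, and no sign constraint for the class ending in the box. Then let $\delta\to0$ and $K\to\infty$, and take the supremum over $w$; this yields exactly $-I(\mu)$, because the telescoping itself enforces the excursion constraint.

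The paper instead cuts each path at region changes and stitches the pieces into a central word, an excursion and a meander. It bounds the number of pieces by $O(\varepsilon n)$ via the occupation of $\pm R$, absorbs a preimage count $e^{\tilde f(\varepsilon)n}$, and then applies the regional Donsker--Varadhan and Cramér upper bounds. Your martingale argument is shorter and uses neither $p_*>0$ nor the estimates of Section~\ref{section_regional_estimates}. The paper's decomposition, in exchange, mirrors the lower-bound construction and makes the excursion/meander interpretation of $I$ explicit.
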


We now briefly discuss the interpretation of the rate function $I$. The term $\alpha_0 I_{\textup{DV}}(\mu_0)$ arises from the analysis of the Markov chain on the discrete state space $\ZZ$, see \cite{Donsker_varadhan_3}, \cite[Lemma~5]{Bryc_Dembo} and \cite[Theorem~1.4]{Daures}. It represents the large deviation cost for the random walk to approximate the central measure $\mu_0$ during a fraction $\alpha_0$ of its time. Turning to the terms involving $I_{\textup{Cr}}^{p_{\sigma}}$, observe that if the empirical measure of a trajectory is close to a measure $\mu$ assigning mass $\alpha_{+}$ to $+\infty$, then the trajectory must spend a proportion $\alpha_{+}$ of its time close to $+ \infty$. Since we are considering a nearest-neighbour random walk, if a trajectory ends close to $-\infty$, then all visits to a neighbourhood of $+\infty$ necessarily occur as \textit{excursions}. By contrast, if the trajectory ends near $+\infty$, the final visit to that neighbourhood is not required to return to its starting point: it is a \textit{meander} rather than necessarily an excursion. The term $\alpha_{+} I_{\textup{Cr}}^{p_+}(0)$ corresponds to the large deviation cost associated with spending a fraction $\alpha_{+}$ of the time in a neighbourhood of $+\infty$ in the form of excursions, whereas the term $\alpha_{+} \inf_{x \in [0,1]} I_{\textup{Cr}}^{p_+}(x)$ represents the cost of spending the same proportion of time in a neighbourhood of $+\infty$, but without imposing the excursion constraint. The variational formulation of $I$ reflects the fact that the large deviation cost is minimised over the possible neighbourhoods in which the trajectory may terminate.

Another expression of the rate function $I$ is, for all $\mu=\sum_{\sigma \in \{-,0,+\}} \alpha_{\sigma} \mu_{\sigma} \in \mathcal{P}(\overline{\ZZ})$,
\begin{equation}\label{eq_def_rate_fuction_other_form_2}
I(\mu) =  \inf_{(x_-,x_+) \in \mathcal{C}} \Big\{ \alpha_0 I_{\textup{DV}}(\mu_0) + 
\alpha_{-}I_{\textup{Cr}}^{p_{-}}(x_-) + \alpha_{+} I_{\textup{Cr}}^{p_{+}}(x_+) \Big\},
\end{equation}
where $\mathcal{C} \eqdef \big\{(x_-,x_+) \in [-1,0] \times [0,1] \, : \, x_-=0 \; \text{or} \; x_+=0 \big\}$. The condition that, for $(x_-,x_+) \in \mathcal{C}$, either $x_-=0$ or $x_+=0$ reflects the constraint that visits to at least one of the two neighbourhoods of infinity must occur exclusively in the form of excursions. Accordingly, $\mathcal{C}$ captures the geometric structure inherent to the trajectories of a nearest-neighbour random walk.

\medskip

For Markov chains on compact state spaces, the large deviation principle for empirical measures is typically governed by the Donsker--Varadhan rate function. However, the standing assumption of strict positivity, stated in \cite[p.~3]{Donsker_varadhan}, is violated by the Markov chain $(S_n)_{n \in \NN}$ considered here. Indeed, starting from $0$, the chain never reaches $+\infty$, and hence is not irreducible. Extensions of the results in \cite{Donsker_varadhan} to non-irreducible and countable Markov chains have been obtained in \cite{Fayolle}, \cite{Jian_Wu}, \cite[Corollary 13.6]{Rassoul} and \cite[Theorem~1.4]{Daures}. In those settings, the rate function coincides with the Donsker--Varadhan functional on certain subsimplices of $\mathcal{P}(\mathcal{X})$, referred to as the set of admissible measures, and is infinite elsewhere. These results, however, do not apply in the present framework, since we do not endow $\overline{\ZZ}$ with the discrete topology: the two points at infinity are accumulation points of the discrete core. As shown by \eqref{eq_def_rate_fuction}, the presence of these two accumulation points alters the structure of the rate function in a non-trivial way.

\begin{remark}\label{remark_other_expression_of_rate_function}
By evaluating the expression of $I_{\textup{Cr}}^{p}$ given in (\ref{eq_def_I_Cr_p}) at 0, we obtain
\begin{equation*}
I_{\textup{Cr}}^{p}(0)=\frac{1}{2} \log \left( \frac{1}{4p(1-p)} \right).
\end{equation*}
In addition, for all $p \in (0,1) $, the rate function $I_{\textup{Cr}}^{p}$ is strictly convex on $[-1,1]$ with a unique minimum reached at $\EE[X]=2p-1$. Hence, 
\begin{align*}
\inf_{x \in [0,1]} I_{\textup{Cr}}^{p}(x)&=
\begin{cases}
\frac{1}{2} \log \left( \frac{1}{4p(1-p)} \right)=I_{\textup{Cr}}^{p}(0), & \text{if } p \leq \frac{1}{2}, \\
0, & \text{otherwise,}
\end{cases} \\
\text{and} \qquad \inf_{x \in [-1,0]} I_{\textup{Cr}}^{p}(x)&=
\begin{cases}
\frac{1}{2} \log \left( \frac{1}{4p(1-p)} \right)=I_{\textup{Cr}}^{p}(0), & \text{if } p \geq \frac{1}{2}, \\
0, & \text{otherwise}.
\end{cases}
\end{align*}
Thus, we obtain the following alternative expression of the rate function $I$,
\begin{align}\label{eq_rate_function_other_form}
I(\mu)=
\begin{cases}
\alpha_0 I_{\textup{DV}}(\mu_0) + \alpha_{+} I_{\textup{Cr}}^{p_+}(0), & \text{if } p_+<\frac{1}{2} \; \text{and} \; p_-<\frac{1}{2}, \\
\alpha_0 I_{\textup{DV}}(\mu_0) + \alpha_{-} I_{\textup{Cr}}^{p_-}(0), & \text{if } p_+ \geq \frac{1}{2} \; \text{and} \; p_- \geq \frac{1}{2}, \\
\alpha_0 I_{\textup{DV}}(\mu_0) + \alpha_{-} I_{\textup{Cr}}^{p_-}(0) + \alpha_{+} I_{\textup{Cr}}^{p_+}(0), & \text{if } p_+ < \frac{1}{2} \; \text{and} \; p_- \geq \frac{1}{2}, \\
\alpha_0 I_{\textup{DV}}(\mu_0) + \min \left\{ \alpha_{+} I_{\textup{Cr}}^{p_+}(0) ,  \alpha_{-} I_{\textup{Cr}}^{p_-}(0) \right\}, & \text{if } p_+ \geq \frac{1}{2} \; \text{and} \; p_- < \frac{1}{2}.
\end{cases}
\end{align}
\end{remark}

\begin{remark}\label{remark_generalisation_to_step_distribution}
We expect the result of Theorem~\ref{theorem_existence_and_identification_of_rate_function} to extend to random walks with compactly supported step distributions. More precisely, let $(\nu_k)_{k \in \ZZ}$ be a family of distributions in $\mathcal{P}(\ZZ)$ such that $\text{Supp}(\nu_k) \subseteq \dbl-M,M \dbr$ for some $M \in \NN$. We assume that $\nu_k \xrightarrow[|k| \to \infty]{\mathcal{L}} \nu_{\pm}$ and that a condition analogous to Hypothesis~\ref{hypothesis_on_transition_function_p} holds. Under these conditions, the spatially inhomogeneous random walk with step distributions $(\nu_k)_{k \in \ZZ}$ is expected to satisfy a LDP at the level of empirical measures with rate function
\begin{align*}
I(\mu) & = \inf_{(x_-,x_+) \in \mathcal{C}} \big\{ \alpha_0 I_{\textup{DV}}(\mu_0)+\alpha_{-}I_{\nu_-}(x_-)+\alpha_{+} I_{\nu_+}(x_+)  \big\} \\
& = \alpha_0 I_{\textup{DV}}(\mu_0)+ \min \left\{\alpha_{-}I_{\nu_-}(0)+ \alpha_{+} \inf_{x \in [0,1]} I_{\nu_+}(x) \, ; \, \alpha_{-} \inf_{x \in [-1,0]} I_{\nu_-}(x) + \alpha_{+} I_{\nu_+}(0) \right\},
\end{align*}
where $I_{\nu}$ denotes the Cramér rate function for a random variable distributed according to $\nu$. Indeed, the local estimates in Section \ref{section_regional_estimates} can be extended to this setting, while the constructions of Sections \ref{section_lower_bound} and \ref{section_upper_bound} can be adapted accordingly, subject to minor technical adjustments. We restrict attention to the simple random walk for clarity of exposition.
\end{remark}

We briefly discuss Hypothesis~\ref{hypothesis_on_transition_function_p}.
The first condition guarantees the irreducibility of the Markov chain on $\ZZ$. Moreover, requiring $p_+,p_-$ to be in $(0,1)$ enforces the property commonly known as \textit{uniform ellipticity}, which is standard in the literature on random walks in random environments, see \cite[p.~258]{Zeitouni_RWRE} and \cite[p.~235]{Rassoul}. Whether uniform ellipticity is strictly necessary remains an open question. For example, it is not required when the measure $\mu_0$ is compactly supported. Extending the result to arbitrary measures, as in \cite[Proposition~2.7.4]{Daures}, appears to be out of reach in the present framework, since our arguments ultimately rely on escaping compact sets in order to approximate $\pm \infty$. This suggests a delicate interplay between the rate of decay of the tails of $\mu_0$ and the rate at which $p$ converges to $0$ or $1$. 

\begin{remark}\label{remark_no_full_LDP_no_hypo_H_no_tightness}
The following statement concerns a random walk with transition probability function $p$ satisfying Hypothesis~\ref{hypothesis_on_transition_function_p}, defined on $\ZZ$ equipped with the discrete topology rather than on $\overline{\ZZ}$. Since $(S_n)_{n \in \NN}$ is an irreducible discrete Markov chain, its associated empirical measure satisfies a weak LDP with the Donsker--Varadhan rate function, see  \cite{Donsker_varadhan_3}, \cite[Lemma 5]{Bryc_Dembo}, \cite{Fayolle}, \cite[Theorem~1.4]{Daures}. However, one can verify that Hypothesis~H$^*$ of \cite[p.~415]{Donsker_varadhan_3} fails in this setting, and that exponential tightness does not hold. Indeed, let $K \subseteq \mathcal{P}(\ZZ)$ be compact, and consider the event $S_k=k-1$ for all $k \in \dbl 1,n \dbr$. For $n$ sufficiently large, the corresponding empirical measure $\ell_n$ cannot belong to $K$. Otherwise, there would exist a subsequence $(\ell_{n_k})_{k \in \NN}$ converging weakly to some probability measure, which is not possible since the mass of $\ell_n$ drifts to infinity on the event $S_k=k-1$ for all $k \in \dbl 1,n \dbr$. Consequently,
\begin{align*}
\liminf_{n \to \infty} \frac{1}{n} \log \PP(\ell_n \notin K) \geq \liminf_{n \to \infty} \frac{1}{n} \log \PP(S_k=k-1 \,,\, \forall k \in \dbl 1,n \dbr) = \log p_+,
\end{align*}
which precludes exponential tightness. In fact, it is shown in \cite[pp.~922-923]{Baxter_Jain_Varadhan} that no full LDP holds for the symmetric random walk. Using the same closed set 
\begin{align*}
\mathcal{C}=\left\{ \frac{1}{2n+1} \sum_{k=0}^{2n} \delta_{k} \right\}_{n \in \NN \cup \{0\}} \subseteq \mathcal{P}(\ZZ),
\end{align*}
one can similarly show that no full LDP holds in $\mathcal{P}(\ZZ)$ as soon as Hypothesis~\ref{hypothesis_on_transition_function_p} is satisfied. Finally, note that Hypothesis~H$^*$ would require $p_+=0$ and $p_-=1$, which is excluded from Hypothesis~\ref{hypothesis_on_transition_function_p}.
\end{remark}

Finally, we present a corollary to Theorem~\ref{theorem_existence_and_identification_of_rate_function}. Consider an observable $f:\ZZ \to \RR^d$ and the associated empirical mean 
\begin{align*}
\ell_n (f) = \frac{1}{n} \sum_{j=1}^n f(S_j),
\end{align*}
where $(S_n)_{n \in \NN}$ is a simple random walk on $\ZZ$. As mentioned previously, it is shown in \cite[pp.~922-923]{Baxter_Jain_Varadhan}, that there is no full LDP at the level of empirical measures and hence one cannot use the contraction principle to obtain a LDP for all continuous and bounded observables on $\ZZ$. However, from Theorem~\ref{theorem_existence_and_identification_of_rate_function}, one obtains a LDP for a subclass of observables $f : \ZZ \to \RR^d$, which, to the best of our knowledge, has not appeared in the literature before.

\begin{corollary}\label{corollary_contraction_on_observables_with_limits}
Let $d \in \NN$ and let $f:\ZZ \to \RR^d$ be such that the limits
\begin{align*}
\lim_{k \to +\infty} f(k) \eqdef f^+ \qquad \text{and} \qquad \lim_{k \to -\infty} f(k) \eqdef f^-
\end{align*}
exist. Then the sequence $\big(\ell_n(f)\big)_{n \in \NN}$ satisfies a LDP with good rate function $I_f : \RR^d \to [0,\infty]$ given by
\begin{equation*}
I_f(x)=\inf \left\{ I(\mu) \; : \; \mu \in \mathcal{P}(\overline{\ZZ}) \; ,\; \alpha_{+} f^+ + \alpha_0 \sum_{k \in \ZZ} f(k) \mu_0(\{k\}) + \alpha_{-} f^- = x \right\}, \quad x \in \RR^d.
\end{equation*}
\end{corollary}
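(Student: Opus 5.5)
The plan is to apply the contraction principle to the full LDP of Theorem~\ref{theorem_existence_and_identification_of_rate_function}. Since the limits $f^\pm$ exist, $f$ admits a unique extension $\bar f:\overline{\ZZ}\to\RR^d$ with $\bar f(\pm\infty)=f^\pm$. This extension is continuous for the topology of $\overline{\ZZ}$: points of $\ZZ$ are isolated, and continuity at $\pm\infty$ is exactly the existence of the limits. It is also bounded, because $\overline{\ZZ}$ is compact. We then consider the map
\begin{equation*}
T:\mathcal{P}(\overline{\ZZ})\to\RR^d,\qquad T(\mu)=\int_{\overline{\ZZ}}\bar f\,d\mu .
\end{equation*}
Each coordinate of $\bar f$ is bounded and continuous, so $T$ is continuous for the weak topology by definition of weak convergence.

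Next we check that $T(\ell_n)=\ell_n(f)$ almost surely under $\PP_m$. Starting from $m\in\ZZ$, the walk moves by nearest-neighbour steps, so $S_j\in\dbl m-j+1,m+j-1\dbr\subseteq\ZZ$ for every $j$. Hence $\PP_m$-a.s.\ the chain never visits $\pm\infty$, and $\int\bar f\,d\ell_n=\frac1n\sum_{j\le n}f(S_j)$. Thus the law of $\ell_n(f)$ under $\PP_m$ is the image of the law of $\ell_n$ under $T$.

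The contraction principle \cite[Theorem~4.2.1]{DZ_LDP} requires a good rate function, which Theorem~\ref{theorem_existence_and_identification_of_rate_function} provides (goodness is in fact automatic, since $\mathcal{P}(\overline{\ZZ})$ is compact and $I$ is lower semicontinuous). It yields an LDP for $\ell_n(f)$ with good rate function
\begin{equation*}
I_f(x)=\inf\{I(\mu):T(\mu)=x\}.
\end{equation*}
To identify this with the stated formula, write $\mu=\alpha_-\delta_{-\infty}+\alpha_0\mu_0+\alpha_+\delta_{+\infty}$ as in \eqref{eq_decomposition_measures_on_Z_bar}. Then
\begin{equation*}
T(\mu)=\alpha_+f^++\alpha_0\sum_{k\in\ZZ}f(k)\mu_0(\{k\})+\alpha_-f^-.
\end{equation*}
The series converges absolutely because $f$ is bounded. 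The expression is independent of the choice of $\mu_0$ when $\alpha_0=0$, consistent with the convention $0\times\infty=0$.

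The only points needing care are the continuity of $\bar f$ at the accumulation points and the fact that $\ell_n$ is supported on $\ZZ$. Both are immediate, so I expect no real obstacle.
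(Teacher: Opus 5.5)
Your proposal is correct and follows the same route as the paper: you extend $f$ continuously to $\overline{\ZZ}$ and apply the contraction principle \cite[Theorem~4.2.1]{DZ_LDP} to the continuous map $\mu\mapsto\int\bar f\,\mathrm{d}\mu$. The extra checks you add are harmless and make explicit what the paper leaves implicit. These are that $\ell_n$ is a.s.\ supported on $\ZZ$ and that $T(\mu)$ equals the stated expression; the latter is a finite sum since $f$ is bounded, so the $0\times\infty$ convention is not actually needed there.
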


\begin{proof}
Every function $f:\ZZ \to \RR^d$ admitting limits at $\pm \infty$ extends to a continuous and bounded function $\tilde{f} \in \mathcal{C}_b(\overline{\ZZ},\RR^d)$. The result follows by applying the contraction principle \cite[Theorem~4.2.1]{DZ_LDP} to the sequence $(\ell_n)_{n \in \NN}$ with the map
\begin{align*}
\mathcal{P}(\overline{\ZZ}) \ni \mu \mapsto \int \tilde{f}(k) \mathrm{d}\mu(k) \in \RR^d,
\end{align*}
which is continuous on $\mathcal{P}(\overline{\ZZ})$ by definition of the weak topology.
\end{proof}

\noindent The following proposition shows that the class of observables admitting limits at $\pm \infty$ is optimal, in the sense that the result of Corollary~\ref{corollary_contraction_on_observables_with_limits} cannot be extended to all continuous and bounded observables.

\begin{proposition}\label{proposition_no_LDP_for_bounded_observables}
Let $(S_n)_{n \in \NN}$ be a simple random walk on $\ZZ$ with constant transition probability function $p \equiv \overline{p} \neq \frac{1}{2}$. Then there exists a bounded function $f : \ZZ \to \RR$ such that the sequence $\big(\ell_n(f)\big)_{n \in \NN}$ does not satisfy a LDP.
\end{proposition}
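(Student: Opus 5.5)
We argue by contradiction. Let $f$ be the indicator of a union of blocks of geometrically growing length. Along one sequence of times, typical trajectories give $\ell_n(f)$ close to $3/4$; along another, the probability of that is exponentially small. An LDP rules out this pair of behaviours.

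By symmetry ($k\mapsto -k$) assume $\overline p>\tfrac12$ and set $v\eqdef 2\overline p-1>0$. Define
\begin{equation*}
f(k)\eqdef \mathbbm{1}\Big\{k\in \textstyle\bigcup_{j\geq 1}\dbl 4^{2j},4^{2j+1}-1\dbr\Big\},\qquad k\in\ZZ,
\end{equation*}
which is bounded but has no limit at $+\infty$. The walk has speed $v$, so $\ell_n(f)$ is essentially the proportion of $k\le n$ with $vk$ in a block. This proportion oscillates: it is about $3/4$ when $vn\approx 4^{2j+1}$ and at most about $1/4$ when $vn\approx 4^{2j}$. Set $n_j\eqdef\lfloor 4^{2j+1}/v\rfloor$ and $m_j\eqdef\lfloor 4^{2j}/v\rfloor$.

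\textbf{Step 1 (exponential concentration).} Fix small $\varepsilon,\delta>0$ and let
\begin{equation*}
G_n\eqdef\{|S_k-S_1-(k-1)v|\le\varepsilon k \text{ for all } k\in\dbl \delta n,n\dbr\}.
\end{equation*}
The increments are i.i.d.\ Rademacher variables with mean $v$. Hoeffding's inequality and a union bound over at most $n$ times give $\PP_m(G_n^c)\le n\,e^{-c n}$ for some $c=c(\varepsilon,\delta)>0$. In particular $\PP_m(G_n)\to1$.

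\textbf{Step 2 (deterministic estimates on $G_n$).} On $G_n$, every $k\ge\delta n$ has $S_k\in[(v-\varepsilon')k,(v+\varepsilon')k]$, where $\varepsilon'=\varepsilon+O(1/(\delta n))$.

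For $n=n_j$: every $k$ with $vk\in[4^{2j}(1+\eta),4^{2j+1}(1-\eta)]$ has $f(S_k)=1$, where $\eta=O(\varepsilon/v)$. These times have density about $3/4$ in $\dbl1,n_j\dbr$, so $\ell_{n_j}(f)\ge 3/4-\kappa$.

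For $n=m_j$: every $k\in[\delta m_j,m_j]$ with $vk\in[4^{2j-1}(1+\eta),4^{2j}(1-\eta)]$ has $f(S_k)=0$. These times have density about $3/4$, so $\ell_{m_j}(f)\le 1/4+\kappa$.

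Here $\kappa=O(\varepsilon/v+\delta)$ can be made smaller than $1/8$. Writing out the count is routine bookkeeping. This is the step where care is needed: $\ell_n$ counts times rather than sites, and the linear position bounds are what make that conversion work.

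\textbf{Step 3 (contradiction).} Suppose $(\ell_n(f))$ satisfies an LDP with some lower semicontinuous rate function $J$; goodness is not needed. Let $F\eqdef[3/4-\kappa,1]$, which is closed.

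By Step 2, $\PP_m(\ell_{n_j}(f)\in F)\ge\PP_m(G_{n_j})\to1$. The upper bound then gives $-\inf_F J\ge 0$, so $\inf_F J=0$. Since $F$ is compact and $J$ is lower semicontinuous, $J(x_1)=0$ for some $x_1\in F$.

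Take the open set $U\eqdef(1/2,\infty)\ni x_1$, using $3/4-\kappa>1/2$. The LDP lower bound gives $\liminf_n \frac1n\log\PP_m(\ell_n(f)\in U)\ge0$. But by Steps 1 and 2, $\{\ell_{m_j}(f)\in U\}\subseteq G_{m_j}^c$, so
\begin{equation*}
\tfrac{1}{m_j}\log\PP_m(\ell_{m_j}(f)\in U)\le -c+o(1).
\end{equation*}
This contradicts the lower bound, so no LDP holds.

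The case $\overline p<\tfrac12$ follows by reflecting both $f$ and the walk.
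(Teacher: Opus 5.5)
Your proof is correct, and it takes a genuinely different route from the paper's.

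\textbf{The paper's route.} The paper uses blocks $\dbl c_n,d_n\dbr$ with super-exponential growth ($c_n=d_{n-1}^2$, $d_n=(n+1)c_n$) and works with rare events at the single point $1$. Along $d_n$, the walk runs straight up to $c_n$ at cost $\overline{p}^{\,c_n}=e^{o(d_n)}$. It then stays above $c_n$ with probability bounded below, by Feller's result. So the local $\limsup$ rate at $1$ is $0$. Along $c_n$, the event $|\ell_{c_n}(f)-1|<\varepsilon$ forces $S_{c_n}\le 2\varepsilon c_n$, which is exponentially unlikely by Cramér. So the local $\liminf$ rate is at most $-\Lambda^*(0)<0$, and \cite[Theorem~4.1.18]{DZ_LDP} then excludes an LDP.

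\textbf{Your route.} You use geometric blocks and only typical behaviour. Hoeffding pins $S_k$ near $vk$ with exponential certainty. Hence $\ell_n(f)$ is forced into $[3/4-\kappa,1]$ along $n_j$ and into $[0,1/4+\kappa]$ along $m_j$. In other words, the law of large numbers for $\ell_n(f)$ fails in an exponentially concentrated way in both directions. The upper bound on $F$ then produces a zero of the rate function there, and the lower bound on $(1/2,\infty)$ fails along $m_j$.

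\textbf{Comparison.}
\begin{itemize}
\item Your version is more elementary and self-contained: it needs no ladder-variable result, no refined Cramér bound, and no rate-identification theorem.
\item Since you only use the upper bound on the compact set $F$, your argument also rules out a weak LDP.
\item The paper's version locates the failure in the exponential rates at a fixed point. It needs the super-exponential block growth precisely so that the entry cost $\overline{p}^{\,c_n}$ is subexponential in $d_n$.
\item Both arguments use $\overline{p}\neq\frac12$ essentially. You need it for linear speed with exponential concentration. The paper needs it for the exponential cost of $S_{c_n}\le 2\varepsilon c_n$ and for the positive probability of staying nonnegative. This is consistent with the symmetric case being left open in Remark~\ref{remark_on_projective_limit}.
\end{itemize}
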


\noindent In Appendix~\ref{subsection_appendix_proof_prop_no_LDP_bdd_obser}, a proof of Proposition~\ref{proposition_no_LDP_for_bounded_observables} is given by constructing an observable $f:\ZZ \to \RR$ for which $(\ell_n(f))_{n \in \NN}$ does not satisfy a LDP. The observable $f$ oscillates between 0 and 1, the blocks on which $f=1$ and $f=0$ become progressively longer.

\begin{remark}\label{remark_LDP_for_different_compactification}
In this article, we study the simple random walk on the two-point compactification of $\ZZ$. One could, however, consider other compactifications of $\ZZ$. For instance, using the same methods as presented here, one can establish a LDP for the one-point compactification of $\ZZ$. An application of the contraction principle, as in Corollary~\ref{corollary_contraction_on_observables_with_limits}, then yields a LDP for observables that admit a common limit at $\pm \infty$. This result is weaker than the one obtained in Corollary~\ref{corollary_contraction_on_observables_with_limits}. At the opposite extreme, suppose that an LDP could be established on $\beta \ZZ$, the Stone-Čech compactification of $\ZZ$. In that case, one would obtain a LDP for \textit{all} bounded observables on $\ZZ$. Indeed, by definition \cite[Theorem~38.2]{Munkres}, every bounded function on $\ZZ$ extends uniquely to a continuous function on $\beta \ZZ$. Applying the contraction principle, as in Corollary~\ref{corollary_contraction_on_observables_with_limits}, would then yield the desired LDP for the corresponding empirical averages. Proposition~\ref{proposition_no_LDP_for_bounded_observables} shows that if the transition probability function $p$ is constant at $\overline{p} \neq \frac{1}{2}$, then there exists a bounded observable for which $(\ell_n(f))_{n \in \NN}$ does not satisfy a LDP. Therefore, the sequence of empirical measures cannot satisfy an LDP in $\mathcal{P}(\beta \ZZ)$ with such a transition probability function.
\end{remark}

\begin{remark}\label{remark_on_projective_limit}
It remains unclear whether the result of Proposition \ref{proposition_no_LDP_for_bounded_observables} extends to a general random walk with transition probability function $p$ satisfying Hypothesis \ref{hypothesis_on_transition_function_p}. In particular, we were unable to construct an observable $f$ for which, in the case of the symmetric random walk $(S_n)_{n \in \NN}$, the sequence $(\ell_n(f))_{n \in \NN}$ fails to satisfy a LDP. We attempted to address this question via a projective-limit argument. Suppose that an LDP were to hold for all $d \in \NN$ and every observable $f:\ZZ \to \RR^d$. By \cite[Theorem 4.6.9]{DZ_LDP}, one can lift these finite-dimensional LDPs to an LDP on the algebraic dual of the space of bounded functions on $\ZZ$. One could then hope to restrict this LDP to the subset of probability measures. If such a restriction were valid, it would yield a full LDP for the sequence of empirical measures of the symmetric random walk on $\ZZ$, contradicting the result of \cite[pp.~922-923]{Baxter_Jain_Varadhan}. However, we were unable to justify this restriction step. A related strategy appears in \cite[Lemma 2]{Bryc_Dembo}, where the Dawson-Gärtner theorem is used to lift LDPs for observables to the space of finitely additive nonnegative set functions. As observed in \cite[Remark 1]{Bryc_Dembo}, this argument yields only a weak LDP, which is insufficient for deriving the contradiction we seek.
\end{remark}

\subsection{Rate functions for representative examples}\label{subsection_typical_examples}

\noindent In this section, we present explicit formulas for the rate function $I$ in selected cases and identify its minimisers. Throughout, we fix a probability measure $\mu=\sum_{\sigma \in \{-,0,+\}} \alpha_{\sigma} \mu_{\sigma} \in \mathcal{P}(\overline{\ZZ})$.

\textit{1.} The case where $(S_n)_{n \in \NN}$ is a simple symmetric random walk corresponds to the constant transition probability function $p \equiv 1 \slash 2$. Hypothesis~\ref{hypothesis_on_transition_function_p} is then satisfied with $p_-=1 \slash 2=p_+$. A direct computation shows that $I_{\textup{Cr}}^{1 \slash 2}(0)=0$. Therefore, the rate function $I$ reduces to
\begin{align*}
I(\mu)=\alpha_0 I_{\textup{DV}}(\mu_0).
\end{align*}
By \cite[Lemma 2.5]{Donsker_varadhan}, if there existed a probability measure $\mu_0 \in \mathcal{P}(\ZZ)$ such that $I_{\textup{DV}}(\mu_0)=0$, then $\mu_0$ would be an invariant measure for the symmetric random walk. Since no such probability measure exists, we deduce that $I(\mu_0)>0$ for all $\mu_0 \in \mathcal{P}(\ZZ)$. Thus, the minimisers of the rate function $I$ are
\begin{align*}
\big\{ \mu \in \mathcal{P}(\overline{\ZZ}) \, : \, I(\mu)=0 \big\} = \big\{ \alpha \delta_{-\infty} + (1-\alpha)\delta_{+\infty} \in \mathcal{P}(\overline{\ZZ}) \, : \, \alpha \in [0,1] \big\}.
\end{align*}

\textit{2.} A simple random walk with constant drift corresponds to the case where $p \equiv \overline{p} \in (0,1) \backslash \{1 \slash 2\}$. In this setting, the rate function $I$ takes the form
\begin{align*}
I(\mu)=
\begin{cases}
\alpha_0 I_{\textup{DV}}(\mu_0) + \frac{\alpha_+}{2} \log \left( \frac{1}{4 \overline{p}(1-\overline{p})} \right), & \text{if } \overline{p} < \frac{1}{2}, \\
\alpha_0 I_{\textup{DV}}(\mu_0) + \frac{\alpha_{-}}{2} \log \left( \frac{1}{4 \overline{p}(1-\overline{p})} \right), & \text{if } \overline{p} > \frac{1}{2}.
\end{cases}
\end{align*}

When the drift is towards $+\infty$, corresponding to $\overline{p}>\frac{1}{2}$, the empirical measure incurs an increasing cost in assigning mass to a neighbourhood of $-\infty$. This is reflected in the presence of the term $\frac{\alpha_{-}}{2} \log \left( \frac{1}{4 \overline{p}(1-\overline{p})} \right)$, which grows both as $\mu$ allocates more weight to $-\infty$ and as the drift parameter $\overline{p}$ increases to 1. Since a simple random walk with drift is transient, it admits no invariant probability measure, and therefore $I_{\textup{DV}}(\mu_0)>0$ for all $\mu_0 \in \mathcal{P}(\ZZ)$. Thus, if $\overline{p}<\frac{1}{2}$, then $\delta_{-\infty}$ is the unique minimiser of the rate function $I$ and if $\overline{p}>\frac{1}{2}$, then $\delta_{+\infty}$ is the unique minimiser of $I$.

\textit{3.} Suppose that the transition probability function $p$ is given by
\begin{align*}
p(k)=
\begin{cases}
p_+, & \text{if } k \geq 0, \\
p_-, & \text{if } k <0,
\end{cases}
\end{align*}
with $p_- < 1 \slash 2<p_+$. This is a particular instance of an oscillating random walk in the sense of \cite{Kemperman_RW}. In this regime, the walk experiences a drift towards $+\infty$ when it is on the right of 0, and a drift towards $-\infty$ when it is on its left. The rate function then reduces to
\begin{align*}
I(\mu)=\alpha_0 I_{\textup{DV}}(\mu_0) + \min \left\{ \alpha_{-} I_{\textup{Cr}}^{p_-}(0) \, , \, \alpha_{+} I_{\textup{Cr}}^{p_+}(0) \right\}.
\end{align*}
In particular, on the segment $[\delta_{-\infty},\delta_{+\infty}]$, we have for all $\alpha \in [0,1]$,
\begin{align*}
I\big((1-\alpha) \delta_{-\infty}+ \alpha \delta_{+\infty}\big)&=\min \left\{ (1-\alpha) I_{\textup{Cr}}^{p_-}(0) \, , \, \alpha I_{\textup{Cr}}^{p_+}(0)  \right\}.
\end{align*}
As the minimum of two affine functions, this is concave in $\alpha$ and satisfies $I(\delta_{-\infty})=0$ and $I(\delta_{+\infty})=0$. Figure~\ref{fig:concave_functions} illustrates the concavity of the rate function on the segment $[\delta_{-\infty},\delta_{+\infty}]$.
\begin{figure}[ht]
\centering
\begin{tikzpicture}
\begin{axis}[
    domain=0:1,
    samples=200,
    xlabel={$\mu \in [\delta_{-\infty},\delta_{+\infty}]$},
    ylabel={$I(\mu)$},
    ylabel style={rotate=-90},  
    width=12cm,
    height=5.6cm,
    legend style={font=\small, fill=white, fill opacity=0.8, draw opacity=1, at={(0.95,0.95)}, anchor=north east},
    xtick={0,1},
    xticklabels={$\delta_{-\infty}$,$\delta_{+\infty}$},
    ytick=\empty,
    axis x line=bottom,
    axis y line=left,
    grid=major,
    major grid style={gray!30}, 
    xmajorgrids=false,          
]

\addplot[
    blue,
    thick,
] 
({x}, {0.5*min(x*ln(1/(4*0.62*(1-0.62))), (1-x)*ln(1/(4*0.31*(1-0.31))))});
\addlegendentry{$p_+=0.62,\, p_-=0.31$}

\addplot[
    red,
    thick,
] 
({x}, {0.5*min(x*ln(1/(4*0.8*(1-0.8))), (1-x)*ln(1/(4*0.35*(1-0.35))))});
\addlegendentry{$p_+=0.8,\, p_-=0.35$}

\end{axis}
\end{tikzpicture}
\caption{The rate function $I$ on the segment $[\delta_{-\infty},\delta_{+\infty}]$ for two different sets of parameters.}
\label{fig:concave_functions}
\end{figure}
With this transition probability function, the process typically escapes either to $+\infty$ or to $-\infty$. Thus, the empirical measure can concentrate near $\delta_{+\infty}$ or $\delta_{-\infty}$ without incurring any large-deviation cost. In contrast, for the empirical measure to approximate a non-trivial convex combination of these two Dirac masses, the random walk must spend significant time in neighbourhoods of both infinities. This requires moving against the prevailing drift for part of its trajectory, which generates a large-deviation cost. The variational expression thus reflects the direction in which such upstream motion is least costly. The set of minimisers of the rate function $I$ is given by
\begin{align*}
\big\{ \mu \in \mathcal{P}(\overline{\ZZ}) \, : \, I(\mu)=0 \big\} = \big\{ \delta_{-\infty},\delta_{+\infty} \big\}.
\end{align*}
For empirical measures of non-irreducible discrete Markov chains, the rate function $I$ may also fail to be convex, see \cite[Example 2.1 and 2.2]{Dinwoodie} and \cite{Daures}. Indeed, if two target measures $\mu$ and $\nu$ are supported on distinct, non-communicating classes, then $I$ can be finite at $\mu$ and $\nu$, yet infinite at any non-trivial convex combination of the two, since the chain cannot allocate mass to both classes simultaneously. In contrast, in our setting the rate function is non-convex on its effective domain. Although transitions between neighbourhoods of the two infinities are possible, they carry an additional large-deviation cost.

\textit{4.} Suppose that $p$ is as in \textit{3.}, but now with $p_+<1 \slash 2<p_-$. In this regime, the simple random walk is biased towards 0. The rate function admits the affine decomposition
\begin{align*}
I(\mu)=\alpha_0 I_{\textup{DV}}(\mu_0) + \alpha_{-}I_{\textup{Cr}}^{p_-}(0) + \alpha_{+}I_{\textup{Cr}}^{p_+}(0).
\end{align*}
If we consider the random walk on $\ZZ$, the process admits as unique invariant probability measure $\mu_{\infty}$, given by $\mu_{\infty}(k)=C\left( \frac{p_+}{1-p_+} \right)^k$ if $k \geq 0$ and $\mu_{\infty}(k)=C \frac{1-p_+}{p_-} \left( \frac{1-p_-}{p_-} \right)^{-k-1}$ if $k \leq -1$, for some $C>0$. Thus, the Donsker--Varadhan rate function for the process on $\ZZ$ vanishes at $\mu$ if and only if $\mu=\mu_{\infty}$. Therefore, since $\log\left( \frac{1}{4p_{\pm}(1-p_{\pm})} \right)>0$, the rate function $I$ admits a unique minimiser, the invariant measure $\mu_{\infty}$.

\subsection{Sketch of proof}\label{subsection_sketch_proof}

Let us present a brief sketch of the proof of Theorem~\ref{theorem_existence_and_identification_of_rate_function}. Since the state space $\mathcal{P}(\overline{\ZZ})$ is compact for the topology of weak convergence, exponential tightness automatically holds. Hence, it suffices to establish a weak LDP. By \cite[Theorem~4.1.11]{DZ_LDP}, this reduces to proving local large deviation bounds on open balls: for every $\mu \in \mathcal{P}(\overline{\ZZ})$,
\begin{align*}
-I(\mu) \leq \inf_{\varepsilon>0} \liminf_{n \to \infty} \frac{1}{n} \log \PP_m \big( \ell_n \in B(\mu,\varepsilon)\big) \quad \text{and} \quad \inf_{\varepsilon>0} \limsup_{n \to \infty} \frac{1}{n} \log \PP_m \big( \ell_n \in B(\mu,\varepsilon) \big) \leq -I(\mu),
\end{align*}
where $B(\mu,\varepsilon)$ denotes an open ball for a metric that metrises the topology of weak convergence on $\mathcal{P}(\overline{\ZZ})$. Thus, the proof is split into a lower bound and an upper bound on $\PP_m\left( \ell_n \in B(\mu,\varepsilon) \right)$, addressed in Sections~\ref{section_lower_bound} and \ref{section_upper_bound} respectively. 

The proof relies on a partition of $\overline{\ZZ}$ into three disjoint regions: a finite central region $A^0$ and neighbourhoods $A^{\pm}$ of $\pm \infty$. With this decomposition in mind, we present in Section~\ref{section_regional_estimates} a series of exponential rates of decay for events localised in each region.

Using this decomposition, we partition the space of trajectories into three \textit{classes} according to the region in which they end. For the lower bound, we construct, for each target measure  $\mu=\sum_{\sigma \in \{-,0,+\}} \alpha_{\sigma} \mu_{\sigma}$ and each class, a set of \textit{typical trajectories} whose empirical measures approximate $\mu$ and whose visits to the regions occur in a prescribed sequential order. For instance, within the class of trajectories that end in the neighbourhood of $-\infty$, the set of typical trajectories consists of trajectories that first approximate the central measure $\mu_0$ for $\alpha_0 n$ steps, then make a single excursion of length $\alpha_+ n$ into $A^+$ and finally enter $A^{-}$, where they stay for the remaining $\alpha_{-}n$ steps in the form of a meander. For each $\sigma \in \{-,0,+\}$, the corresponding set of typical trajectories is, by construction, a subset of the event $\{\ell_n \in B(\mu,\varepsilon),S_n \in A^{\sigma}\}$ and therefore yields a lower bound. The Markov property at successive exit times of regions allows the probability of such trajectories to factorise into contributions from the three regions. Using the exponential rates of decay obtained in Section~\ref{section_regional_estimates}, we obtain for each $\sigma \in \{-,0,+\}$, a lower bound on the exponential rate of decay of $\PP_m\big(\ell_n \in B(\mu,\varepsilon) \, , \, S_n \in A^{\sigma}\big)$. 

For the upper bound, we remove the sequentiality constraint and allow arbitrary trajectories with multiple returns between regions. In Section~\ref{section_upper_bound}, we show that this additional freedom produces a combinatorial factor which does not alter the exponential rate. Hence, the upper and lower bounds obtained in Sections~\ref{section_lower_bound} and \ref{section_upper_bound} match.

Finally, since the number of classes is fixed and finite, we have
\begin{align*}
\max_{\sigma \in \{-,0,+\}} 
\liminf \limits_{n \to \infty} \frac{1}{n}\log \PP_m\big(\ell_n \in B(\mu,\varepsilon) \, , \, S_n \in A^{\sigma}\big) \leq \liminf \limits_{n \to \infty} \frac{1}{n}\log \PP_m\big(\ell_n \in B(\mu,\varepsilon)\big),
\end{align*}
and the corresponding statement holds with $\liminf$ replaced by $\limsup$, in which case the inequality becomes an equality. In other words, each class yields a candidate exponential rate and the overall upper and lower bounds are obtained by taking the most likely class. This leads to the variational expression of $I$ given in Equation~(\ref{eq_def_rate_fuction}).

\section{Notations and preliminary results}\label{subsection_preliminary_results}

In this section we collect several notational conventions and auxiliary results that will be used throughout the article. A summary of all the useful notations is given in Appendix~\ref{section_appendix_list_of_notations}.

\subsection{Topological facts about the space of probability measures}\label{subsubsection_topological_facts_proba_measures}

The topological space $\overline{\ZZ}$ can be metrised by the distance $d_{\overline{\ZZ}}$ defined by setting for all $h,k \in \overline{\ZZ}$,
\begin{equation*}\label{eq_def_distance_Z_bar}
d_{\overline{\ZZ}}(h,k) \eqdef |\varphi(h)-\varphi(k)|,
\end{equation*}   
where $\varphi:\overline{\ZZ} \longrightarrow [-1,1]$ is the map defined by
\begin{align*}
\varphi(+\infty)=1 \quad , \quad \varphi(-\infty)=-1 \quad \text{and} \quad \varphi(k)=
\begin{cases}
1-2^{-k}, & \text{if } k \geq 0, \\
-1+2^{-|k|}, & \text{if } k<0,
\end{cases} \quad k \in \ZZ.
\end{align*} 
The exact metric chosen to metrise the topology on $\overline{\ZZ}$ is not important but in the rest of this article we will be using the one defined above for the sake of concreteness. 

Since $(\overline{\ZZ},d_{\overline{\ZZ}})$ is a compact metric space, the space of probability measures $\mathcal{P}(\overline{\ZZ})$ endowed with the topology of weak convergence is itself compact and metrisable \cite[Theorem~3.2.2]{Bogachev}. Consider the Kantorovich-Rubinstein (KR) norm on the space of signed measures $\mathcal{M}(\overline{\ZZ})$, defined by
\begin{equation}\label{eq_def_BL_norm}
\| \mu \| \eqdef \sup \left\{ \int_{\overline{\ZZ}} f(x) \, \mathrm{d} \mu(x) \; : \; f \in \text{Lip}_1  \; , \; \sup_{k \in \overline{\ZZ}}|f(k)| \leq 1 \right\},
\end{equation}
where $\text{Lip}_1$ is the space of functions whose Lipschitz constant is bounded by one. The restriction of the distance induced by the KR norm to the convex subset $\mathcal{P}(\overline{\ZZ}) \subseteq \mathcal{M}(\overline{\ZZ})$ metrises the topology of weak convergence on $\mathcal{P}(\overline{\ZZ})$ \cite[Theorem~3.2.2]{Bogachev}. In the rest of the article, when a norm is considered on the space of measures it will always be the KR norm and $B(\mu,\varepsilon)$ stands for the open ball in $\mathcal{P}(\overline{\ZZ})$ centred at $\mu$ and of radius $\varepsilon$ for the distance the KR norm induces on $\mathcal{P}(\overline{\ZZ})$.  We recall the following two properties of the KR norm which we will use later on. Their verification is straightforward. For all $h,k \in \overline{\ZZ}$,
\begin{equation}\label{eq_lemma_properties_KR_norm_distance_dirac}
\| \delta_h - \delta_k \| \leq d_{\overline{\ZZ}}(h,k).
\end{equation}
For all $\mu \in \mathcal{P}(\overline{\ZZ})$,
\begin{equation}\label{eq_lemma_properties_KR_norm_of_proba}
\| \mu \| = 1.
\end{equation}

\subsection{Reduction of the problem}\label{subsubsection_reduction_of_the_problem}

Since the open balls of the distance induced by the KR norm on $\mathcal{P}(\overline{\ZZ})$ form a basis for the weak topology, we obtain the following proposition.

\begin{proposition}\label{proposition_RL_functions}
Let $m \in \ZZ$ and suppose that for all $\mu \in \mathcal{P}(\overline{\ZZ})$,
\begin{align}\label{eq_prop_lower_bound_RL}
-I(\mu)\leq &\lim \limits_{\varepsilon \to 0^+} \liminf \limits_{n \to \infty} \frac{1}{n} \log \PP_m \big( \ell_n \in B(\mu,\varepsilon) \big), \qquad \qquad \qquad \qquad \qquad \qquad \\
\text{and} \qquad \qquad \qquad \qquad \quad  &\lim \limits_{\varepsilon \to 0^+} \limsup \limits_{n \to \infty} \frac{1}{n} \log \PP_m \big( \ell_n \in B(\mu,\varepsilon) \big) \leq -I(\mu).\label{eq_prop_upper_bound_RL}
\end{align}
Then, the sequence $(\ell_n)_{n \in \NN}$ satisfies under $\PP_m$ a full LDP with rate function $I$.
\end{proposition}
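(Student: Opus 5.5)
The plan is to use the standard route from local estimates to a full LDP on a compact metric space. There are four steps: the weak LDP, lower semicontinuity of $I$, goodness, and the upgrade to a full LDP.

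\emph{Weak LDP.} The balls $B(\mu,\varepsilon)$ for the KR distance form a base of the weak topology on $\mathcal{P}(\overline{\ZZ})$. Moreover $\varepsilon \mapsto \PP_m(\ell_n \in B(\mu,\varepsilon))$ is nondecreasing, so the limits as $\varepsilon \to 0^+$ in \eqref{eq_prop_lower_bound_RL} and \eqref{eq_prop_upper_bound_RL} exist and coincide with the infima over $\varepsilon>0$. Hence the hypotheses are exactly those of \cite[Theorem~4.1.11]{DZ_LDP}, which gives a weak LDP for $(\ell_n)$ with rate function
\[
\mathcal{I}(\mu) \eqdef -\inf_{\varepsilon>0} \liminf_{n\to\infty} \frac{1}{n}\log \PP_m(\ell_n\in B(\mu,\varepsilon)) = -\inf_{\varepsilon>0} \limsup_{n\to\infty} \frac{1}{n}\log \PP_m(\ell_n\in B(\mu,\varepsilon)).
\]
By the two assumed inequalities, $I \le \mathcal{I}$ and $\mathcal{I} \le I$, so $\mathcal{I}=I$.

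\emph{Lower semicontinuity.} The function $\mathcal{I}$ produced by the theorem is automatically lower semicontinuous. I would check this directly as a sanity step. If $\nu \in B(\mu,\varepsilon)$, then some ball $B(\nu,\delta)$ is contained in $B(\mu,\varepsilon)$. This gives $-\mathcal{I}(\nu) \le \limsup_n \frac1n \log\PP_m(\ell_n\in B(\mu,\varepsilon))$ for all $\nu$ near $\mu$, which yields $\liminf_{\nu\to\mu}\mathcal{I}(\nu)\ge \mathcal{I}(\mu)$. Thus $I$ is a genuine rate function, with no separate verification needed from formula \eqref{eq_def_rate_fuction}.

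\emph{Goodness and the full LDP.} $\mathcal{P}(\overline{\ZZ})$ is compact by \cite[Theorem~3.2.2]{Bogachev}. Every closed set is therefore compact, so the weak upper bound already is the full upper bound \eqref{eq_def_LDP_upper_bound}. For the same reason the level sets $\{I\le\alpha\}$, being closed by lower semicontinuity, are compact, so $I$ is good. Equivalently, exponential tightness is trivial with $K=\mathcal{P}(\overline{\ZZ})$, and \cite[Lemma~1.2.18]{DZ_LDP} applies.

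The only mild obstacle is bookkeeping: the statement uses $\lim_{\varepsilon\to0^+}$ rather than $\inf_{\varepsilon>0}$, and the monotonicity remark in the first step settles this. All the real work lies in verifying \eqref{eq_prop_lower_bound_RL} and \eqref{eq_prop_upper_bound_RL}, which is deferred to later sections.
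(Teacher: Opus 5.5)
Your proposal is correct and follows the same route as the paper. The local bounds, sandwiched via $\liminf \le \limsup$, feed into \cite[Theorem~4.1.11]{DZ_LDP} to give a weak LDP with lower semicontinuous rate function $I$, and compactness of $\mathcal{P}(\overline{\ZZ})$ upgrades this to a full LDP (with $I$ automatically good). Your additional remarks on monotonicity in $\varepsilon$, the direct lower-semicontinuity check, and goodness of $I$ are accurate elaborations of the paper's terser argument.
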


\begin{proof}
By \cite[Theorem~4.1.11]{DZ_LDP}, the local bounds in Equations~\eqref{eq_prop_lower_bound_RL} and \eqref{eq_prop_upper_bound_RL} imply that the sequence $(\ell_n)_{n \in \NN}$ satisfies a weak LDP under $\PP_m$ with rate function $I$, and furthermore, that $I$ is lower semicontinuous. Since $\mathcal{P}(\overline{\ZZ})$ is compact for the topology of weak convergence, the weak LDP upgrades to a full LDP. 
\end{proof}

Proposition~\ref{proposition_RL_functions} shows that in order to prove Theorem~\ref{theorem_existence_and_identification_of_rate_function} it is enough to obtain Equations~(\ref{eq_prop_lower_bound_RL}) and (\ref{eq_prop_upper_bound_RL}) for all $\mu \in \mathcal{P}(\overline{\ZZ})$. We further reduce the problem by showing that it is enough to obtain the result for random walks started at the origin.

\begin{lemma}\label{lemma_restriction_to_0}
Suppose Equations (\ref{eq_prop_lower_bound_RL}) and (\ref{eq_prop_upper_bound_RL}) are satisfied for $m=0$, then the result holds for all $m \in \ZZ$.
\end{lemma}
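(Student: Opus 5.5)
The plan is to couple the walk started at $m$ with the walk started at $0$ through a hitting-time decomposition, and to use that the empirical measure changes by at most $O(k/n)$ in KR norm when the first $k$ steps are deleted or shifted. The one subtlety is that the hypothesis at $m=0$ is stated for the transition function $p$ itself, not for a translated one. So we cannot simply translate the walk, since a translated walk would have transition function $p(\cdot+m)$ and a different $I_{\textup{DV}}$. Instead we connect $m$ and $0$ by a finite path of the same chain, using Hypothesis~\ref{hypothesis_on_transition_function_p}(1).

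\textbf{Lower bound at $m$.} Suppose $m>0$; the case $m<0$ is symmetric. Consider the deterministic path $m,m-1,\dots,0$ of length $|m|$. It has probability $c_m=\prod_{k=1}^{m}(1-p(k))>0$. By the Markov property at time $|m|+1$,
\begin{align*}
\PP_m(\ell_n\in B(\mu,\varepsilon))\ge c_m\,\PP_0(\ell'_{n-|m|}\in B(\mu,\varepsilon/2)),
\end{align*}
where $\ell'$ is the empirical measure of the shifted chain. This holds because
\begin{align*}
\|\ell_n-\ell'_{n-|m|}\|\le \frac{2|m|}{n}+\frac{|m|}{n}\,\|\cdot\|\text{-terms}\le \frac{C|m|}{n}<\frac{\varepsilon}{2}
\end{align*}
for $n$ large, using \eqref{eq_lemma_properties_KR_norm_of_proba} and writing $\ell_n=\frac{1}{n}\sum_{j\le |m|}\delta_{S_j}+\frac{n-|m|}{n}\ell'_{n-|m|}$. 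We then replace $n-|m|$ by $n$: we have $\frac1n\log c_m\to0$ and $\frac{1}{n}\log$ of a sequence shifted by $|m|$ has the same $\liminf$. Letting $\varepsilon\to0$ and applying \eqref{eq_prop_lower_bound_RL} at $0$ (with radius $\varepsilon/2\to0$) gives the lower bound at $m$.

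\textbf{Upper bound at $m$.} Run the reverse comparison: start from $0$ and force the path $0,1,\dots,m$, which has probability $c'_m=\prod_{k=0}^{m-1}p(k)>0$. Then
\begin{align*}
\PP_0(\ell_{n+|m|}\in B(\mu,2\varepsilon))\ge c'_m\,\PP_m(\ell_n\in B(\mu,\varepsilon))
\end{align*}
for $n$ large, by the same KR estimate applied to the prepended segment of $|m|$ steps. Taking $\frac1n\log$ and $\limsup$, the constant disappears and the index shift $n+|m|$ does not change the $\limsup$ rate, because $\frac{n+|m|}{n}\to1$ and probabilities are at most $1$. Sending $\varepsilon\to0$ and using \eqref{eq_prop_upper_bound_RL} at $0$ then gives $\limsup\le -I(\mu)$. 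Both limits in $\varepsilon$ exist by monotonicity in $\varepsilon$.

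\textbf{Main obstacle.} The only real care is the KR bookkeeping: $\|\ell_n-\ell_{n'}'\|\le 2|m|/n$ when the two sequences of points agree after removing $|m|$ leading terms and renormalising. One has to write this out explicitly with the convex-combination decomposition and $\|\nu\|=1$ for probabilities. There is also the possibility that $I(\mu)=\infty$ in the upper bound, but that case is handled identically.
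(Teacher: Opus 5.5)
Your proposal is correct and follows essentially the same route as the paper. You force the deterministic path from $m$ to $0$ (the paper bounds its probability below by $p_*^{|m|}$ rather than by your exact product $c_m$), apply the Markov property, control the KR discrepancy by $\|\ell_n-\ell'_{n-|m|}\|\le 2|m|/n$ using $\|\nu\|=1$, absorb the index shift in the $\liminf$, and get the upper bound from the reverse comparison that prepends a path from $0$ to $m$ — all exactly as the paper does.
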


\noindent We also remark that the case of a finitely supported initial distribution is a straightforward consequence of Theorem~\ref{theorem_existence_and_identification_of_rate_function}. The proof of Lemma~\ref{lemma_restriction_to_0} is given in Appendix~\ref{subsection_appendix_restriction_starting_point_origin}. Given that it is enough to show the result for $m=0$, we will simply write $\PP$ instead of $\PP_0$.

\subsection{The space of trajectories}\label{subsubsection_space_of_trajectories}

While the results are stated in a probabilistic setting, our analysis will primarily be carried out at the level of individual trajectories. For all $x,y \in \ZZ$, we write $x \sim y$ if $|x-y|=1$, in other words $x \sim y$ if and only if they are adjacent in $\ZZ$. Note that since $(S_n)_{n \in \NN}$ is a nearest-neighbour random walk, we have $S_{n} \sim S_{n+1}$ for all $n \in \NN$. We now introduce several sets of trajectories that will be used in the sequel. For all $n \in \NN$, let
\begin{equation}\label{def_Omega_n}
\Omega_n \eqdef \left\{ (x_1, \dots , x_{n}) \in \ZZ^n : \, x_i \sim x_{i+1} , \, \forall i =1,\dots,n-1 \right\} \;\, \text{ and } \;\, \Omega_n^{(0)} \eqdef \left\{ w \in \Omega_n  : \, w_1=0 \right\}. 
\end{equation}
We also define the set of finite trajectories
\begin{align*}
\Omega_{\textup{fin}} \eqdef \bigcup_{n \in \NN} \Omega_n.
\end{align*}

\noindent In what follows, we will often refer to elements $w \in \Omega_n$ as \textit{words}. A word $w \in \Omega_{\textup{fin}}$ is said to be of length $n$ if $w \in \Omega_n$, and we denote this by $|w|=n$. The $k$-th element of $w$, denoted $w_k$, will be called a \textit{letter} of the word $w$. We write $w=w_1w_2 \cdots w_n$ for the decomposition of $w$ into its letters. The last letter of $w$ will be denoted by $w_{-1}$. For $w \in \Omega_n$ and integers $p,q \in \NN$ satisfying $1 \leq p \leq q \leq n$, we define $w_{[p:q]}=w_pw_{p+1} \cdots w_q$, and refer to $w_{[p:q]}$ as a \textit{subword} of $w$. Given two words $w^1,w^2 \in \Omega_{\textup{fin}}$ such that $w^1_{-1} \sim w^2_1$, we can define the \textit{concatenation} of $w^1$ and $w^2$ as 
\begin{align*}
w^1 \cdot w^2 \eqdef w^1_1 w^1_2 \cdots w^1_{-1} w^2_1 w^2_2 \cdots w^2_{-1}.
\end{align*}
Note that $w^1 \cdot w^2 \in \Omega_{\textup{fin}}$ and both $w^1$ and $w^2$ appear as subwords of their concatenation $w^1 \cdot w^2$. Moreover, the length of the concatenation of two words satisfies $|w^1 \cdot w^2|=|w^1|+|w^2|$. 

For all $x,y \in \ZZ$, let
\begin{align*}
P(x,y) \eqdef 
\begin{cases}
p(x),  & \text{if } y=x+1, \\
1-p(x), & \text{if } y=x-1, \\
0, & \text{otherwise,}
\end{cases}
\end{align*}
be the probability of transitioning from $x$ to $y$. For a word $w=w_1 \cdots w_n \in \Omega_n$, we write 
\begin{equation}\label{eq_def_probability_of_word}
\mathfrak{p}(w) \eqdef \PP_{w_1} \left(S_j=w_j \, ,\, \forall j \in \dbl 1,n \dbr \right).
\end{equation}
By the Markov property we have 
\begin{align}\label{eq_link_p_w_with_probability}
\mathfrak{p}(w)=\prod_{j=1}^{n-1} P(w_j,w_{j+1}).
\end{align}
For a given word $w \in \Omega_{n}$, we define its empirical measure by setting
\begin{align*}
\ell(w)=\ell_{n}(w) \eqdef \frac{1}{n}\sum_{j=1}^{n} \delta_{w_j}.
\end{align*}

\subsection{Decomposition of the state space}\label{subsubsection_decomposition_of_the_state_space}

In what follows, we decompose the state space $\overline{\ZZ}$ into three regions, indexed by $\sigma \in \{-,0,+\}$. Fix an integer $R \in \NN$ and define for all $\sigma \in \{-,0,+\}$ the subsets 
\begin{equation*}\label{eq_def_A_sigma_R}
A^{-}_R \eqdef \dbl - \infty , -R \dbr \qquad , \qquad A^{0}_R \eqdef \dbl -R+1 , R-1 \dbr \qquad \text{and} \qquad A^{+}_R \eqdef \dbl R , +\infty \dbr.
\end{equation*}
When clear from context, we omit the dependence on $R$. Note that $A^{+}_R$ and $A^{-}_R$ are respectively open neighbourhoods of $+\infty$ and $-\infty$. Next, we let for all $\varepsilon>0$,
\begin{equation}\label{eq_def_R_Z_varepsilon}
R_{\overline{\ZZ}}(\varepsilon) \eqdef \left \lceil \log_2 \left( \frac{1}{\varepsilon} \right) \right \rceil+1.
\end{equation}
From direct computation and the definition of $\varphi$, we have for all $R \geq R_{\overline{\ZZ}}(\varepsilon)$,
\begin{equation}\label{eq_lemma_proximity_to_infty}
d_{\overline{\ZZ}}(k, -\infty)<\varepsilon , \quad k \in A^{-}_R \qquad \text{and} \qquad d_{\overline{\ZZ}}(k, +\infty)<\varepsilon , \quad k \in A^{+}_R.
\end{equation}

We will be interested in three particular sets of trajectories, each localised in one of the regions $A^{-}$, $A^{0}$, $A^{+}$ of $\overline{\ZZ}$. First, given a measure $\mu \in \mathcal{P}(\overline{\ZZ})$ and $\varepsilon>0$, for all $n \in \NN$, let
\begin{equation}\label{eq_def_trajectories_close_to_mu}
\Omega_n^{(0)}(\mu,\varepsilon) \eqdef \left\{ w \in \Omega_n^{(0)} \, : \, \ell(w) \in B(\mu,\varepsilon) \right\}
\end{equation} 
denote the set of trajectories of length $n$ which start at $0$ and whose empirical measures approximate $\mu$. Then, for all $n \in \NN$ and $\sigma \in \{-,+\}$, let
\begin{align}
\Omega_{n,\textup{exc}}^{\sigma} & \eqdef \big\{ w \in \Omega_n \, : \, w_1= \sigma R=w_n \quad \text{and} \quad w_j \in A^{\sigma}_R \, , \, \forall j \in \dbl 1, n \dbr \big\}, \label{eq_def_trajectories_excursions_above_R} \\ 
\Omega_{n,\textup{mea}}^{\sigma} & \eqdef \big\{ w \in \Omega_n \, : \, w_1=\sigma R \quad \text{and} \quad w_j \in A^{\sigma}_R \, , \, \forall j \in \dbl 1, n \dbr \big\}.\label{eq_def_trajectories_end_above_R}
\end{align}
Thus, $\Omega_{n,\textup{exc}}^{\sigma}$ is the set of excursions of length $n$ in $A_R^\sigma$ starting from $\sigma R$, while $\Omega_{n,\textup{mea}}^{\sigma}$ is the corresponding set of meanders. Finally, for all $\mu \in \mathcal{P}(\overline{\ZZ})$, $\varepsilon>0$, $n \in \NN$ and $\sigma \in \{-,0,+\}$, let 
\begin{equation}\label{eq_def_class}
\mathscr{C}_n^{\sigma} \eqdef \left\{ w \in \Omega_n^{(0)} \, : \, w_n \in A^{\sigma} \right\} \quad \text{and} \quad \mathscr{C}_n^{\sigma}(\mu,\varepsilon) \eqdef \left\{ w \in \mathscr{C}_n^{\sigma} \, : \, \ell(w) \in B(\mu,\varepsilon) \right\}.
\end{equation}
We refer to $\mathscr{C}_n^{-}$, $\mathscr{C}_n^{0}$ and $\mathscr{C}_n^{+}$ as the three \textit{classes} of trajectories. For instance, $\mathscr{C}_n^+$ consists of all trajectories of length $n$ that terminate above $R$, whereas $\mathscr{C}_n^0(\mu,\varepsilon)$ contains those trajectories of length $n$ that end in the interval $\dbl -R+1,R-1 \dbr$ and whose empirical measures approximate $\mu$. 

For any $U_n \subseteq \Omega_n$, we adopt the shorthand notation $\PP(U_n)$ to denote $\PP(S_{[1:n]} \in U_n)$. In particular, we will frequently apply this convention when $U_n$ is one of the sets defined in \eqref{eq_def_trajectories_close_to_mu}, (\ref{eq_def_trajectories_excursions_above_R}) or (\ref{eq_def_trajectories_end_above_R}).

\subsection{Estimates for the time spent in different regions}\label{subsection_occupation_time_estimates}

Fix a measure $\mu =\sum_{\sigma \in \{-,0,+\}} \alpha_{\sigma} \mu_{\sigma} \in \mathcal{P}(\overline{\ZZ})$, with $\mu_0 \in \mathcal{P}(\ZZ)$, and suppose the empirical measure of a trajectory is close to the probability measure $\mu$. In this section, we give bounds on the time such trajectory spends in the different regions $A^{\sigma}$ based on the coefficients $\alpha_{-}$, $\alpha_{0}$ and $\alpha_{+}$. We further provide some consequences of these bounds.

For all $w \in \Omega_n$, $\sigma \in \{-,0,+\}$ and $m \in \ZZ$, we define the following occupation times
\begin{equation}\label{eq_def_C_sigma_and_C_k}
N^{\sigma}(w) \eqdef \# \left\{ j \in \dbl 1,n \dbr \, : \, w_j \in A^{\sigma} \right\} \qquad \text{and} \qquad N_m(w)  \eqdef \# \left\{ j \in \dbl 1,n \dbr  \, : \, w_j=m \right\}.
\end{equation}
We also define for all $\varepsilon>0$,
\begin{equation}\label{eq_def_R_mu_0_varepsilon}
R_{\mu_0}(\varepsilon) \eqdef \min\big\{R \in \NN \, : \, \mu_0(\dbl -R+1,R-1 \dbr) > 1-\varepsilon \big\}.
\end{equation}

\begin{lemma}\label{lemma_bound_on_last_point_if_close_to_mu_c_appendix}
If $R > R_{\mu_0}(\varepsilon)$, then for all $n \in \NN$ and $w \in \Omega_{n}^{(0)}(\mu,2^{-R}\varepsilon)$,
\begin{align}
& \left \vert N^{\sigma}(w)-\alpha_{\sigma} n \right \vert < 2 \varepsilon n, \qquad \sigma \in \{-,0,+\} \label{eq_lemma_bound_on_last_point_if_close_to_mu_c_appendix_1}\\
\text{and} \qquad & \; \, N_m(w)  < 3 \varepsilon n, \qquad \qquad \;\;\, m \in \{-R,-R+1,R-1,R\}. \label{eq_lemma_bound_on_last_point_if_close_to_mu_c_appendix_2}
\end{align}
\end{lemma}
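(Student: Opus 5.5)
The plan is to test the closeness $\|\ell(w)-\mu\|<2^{-R}\varepsilon$ against well-chosen Lipschitz functions. Each such test converts the KR bound into a bound on the $\ell(w)$-mass or $\mu$-mass of a set. The basic tool is that, for a set $E \subseteq \overline{\ZZ}$ and a function $f$ with $f=1$ on $E$, $0\le f\le 1$, and Lipschitz constant $L$ for $d_{\overline{\ZZ}}$, the function $f/L$ lies in the unit ball of \eqref{eq_def_BL_norm} whenever $L\ge 1$. Hence
\[
\Big|\int f\,\mathrm d\ell(w)-\int f\,\mathrm d\mu\Big| \le L\,\|\ell(w)-\mu\| < L\,2^{-R}\varepsilon .
\]
Everything then reduces to computing Lipschitz constants near the points $\pm R$, $\pm(R-1)$. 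The factor $2^{-R}$ is chosen precisely to cancel them. Indeed, the gap in $\varphi$ between $R-1$ and $R$ equals $2^{-R}$, and the gap between $R$ and $R+1$ equals $2^{-R-1}$.

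\textbf{Step 1: single-point functions.} For $m\in\{-R,-R+1,R-1,R\}$, I take $f_m(k)=\max\{0,1-d_{\overline{\ZZ}}(k,m)/\delta_m\}$, where $\delta_m$ is the distance from $m$ to its nearest neighbour. This gives $\delta_m\ge 2^{-R-1}$ for the relevant $m$, except for $m=\pm R$, where the neighbour $\pm(R+1)$ sits at distance $2^{-R-1}$. So $f_m=\mathbbm 1_{\{m\}}$ on $\overline{\ZZ}$, and its Lipschitz constant is at most $2^{R+1}$. Consequently
\[
N_m(w)/n=\ell(w)(\{m\})<\mu(\{m\})+2\varepsilon .
\]
Since $m\notin\dbl -R_{\mu_0}(\varepsilon)+1,R_{\mu_0}(\varepsilon)-1\dbr$ when $R>R_{\mu_0}(\varepsilon)$ (here I need $|m|\ge R-1\ge R_{\mu_0}(\varepsilon)$), and $m$ is finite, we get $\mu(\{m\})=\alpha_0\mu_0(\{m\})\le\mu_0(\ZZ\setminus A^0_{R_{\mu_0}(\varepsilon)})<\varepsilon$. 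This gives \eqref{eq_lemma_bound_on_last_point_if_close_to_mu_c_appendix_2} with the constant $3\varepsilon$.

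\textbf{Step 2: region functions.} For $A^+$, I use $g^+(k)=\min\{1,\max\{0,(\varphi(k)-\varphi(R-1))/2^{-R}\}\}$. It equals $\mathbbm 1_{A^+}$ on $\overline{\ZZ}$, since there are no points strictly between $R-1$ and $R$, and it has Lipschitz constant $2^{R}$. Thus $|N^+(w)/n-\mu(A^+)|<\varepsilon$. Now
\[
\mu(A^+)=\alpha_+ +\alpha_0\mu_0(\dbl R,\infty\dbr), \qquad 0\le \alpha_0\mu_0(\dbl R,\infty\dbr)<\varepsilon .
\]
Hence $|N^+(w)-\alpha_+n|<2\varepsilon n$. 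The case of $A^-$ is symmetric. For $A^0$, either $\mathbbm 1_{A^0}=1-\mathbbm 1_{A^+}-\mathbbm 1_{A^-}$ can be used directly, or the same one-step argument applied to $\mu(A^0)=\alpha_0\mu_0(A^0)\in(\alpha_0-\varepsilon,\alpha_0]$. Using the indicator of $A^0$ as $1-g^+-g^-$ gives Lipschitz constant $2^{R+1}$, which is too costly. So I will instead bound $|\ell(w)(A^0)-\mu(A^0)|$ by the sum of the two one-sided errors only where necessary. Preferably, I will combine the bounds into a single test function $1-g^+-g^-$ and track the constants more carefully, using $\mu(A^0)\le\alpha_0$ and $\mu(A^0)>\alpha_0-\varepsilon$.

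\textbf{Main obstacle.} The only delicate point is the bookkeeping of constants: the precise Lipschitz constants of these indicator-type functions relative to $2^{-R}$, and ensuring that the $A^0$ estimate still lands below $2\varepsilon n$. If the direct constant comes out as $2^{R+1}$, I will compensate using the fact that the tail mass $\mu_0(\ZZ\setminus A^0_R)$ is controlled. This is where I expect some care, or a slightly sharper test function, to be needed.
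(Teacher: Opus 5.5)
Your method, testing $\|\ell(w)-\mu\|<2^{-R}\varepsilon$ against rescaled indicator functions, is the paper's. Your arguments for $\sigma=\pm$ and for \eqref{eq_lemma_bound_on_last_point_if_close_to_mu_c_appendix_2} are correct. Your tent functions $f_m$, with Lipschitz constant at most $2^{R+1}$ because $\pm(R+1)$ lies at distance $2^{-R-1}$ from $\pm R$, do exactly the job of the paper's $2^{-|m|-1}\mathbbm{1}_{\{m\}}$.

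The gap is the case $\sigma=0$ of \eqref{eq_lemma_bound_on_last_point_if_close_to_mu_c_appendix_1}, which you leave unresolved, and the obstacle you describe there does not exist. Summing Lipschitz constants only gives an upper bound. The function $1-g^+-g^-$ coincides with $\mathbbm{1}_{A^0}$ on $\overline{\ZZ}$, and its Lipschitz constant is $1/\min\{d_{\overline{\ZZ}}(h,k) : h\in A^0,\ k\notin A^0\}$. Since $\varphi$ is increasing on $\overline{\ZZ}$, this minimum is $\min\{\varphi(R)-\varphi(R-1),\,\varphi(-R+1)-\varphi(-R)\}=2^{-R}$. So $\mathbbm{1}_{A^0}$ is $2^{R}$-Lipschitz, exactly like $\mathbbm{1}_{A^{\pm}}$; the ramps of $g^+$ and $g^-$ do not overlap, so nothing doubles. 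Your basic tool then gives $|N^0(w)/n-\mu(A^0)|<\varepsilon$. Combined with $\alpha_0-\varepsilon<\mu(A^0)=\alpha_0\mu_0(A^0)\le\alpha_0$, this yields $|N^0(w)-\alpha_0 n|<2\varepsilon n$. The paper does precisely this for all three $\sigma$ at once, using the single test function $f^{\sigma}=2^{-R}\mathbbm{1}_{A^{\sigma}}$. No piecewise-linear interpolation is needed, since only the Lipschitz property on $\overline{\ZZ}$ matters.

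The fallbacks you mention would not reach the stated constant. Writing $N^0=n-N^+-N^-$ and adding your two one-sided errors gives only $|N^0(w)/n-\mu(A^0)|<2\varepsilon$, hence only $N^0(w)/n-\alpha_0\in(-3\varepsilon,2\varepsilon)$. Appealing to control of the tail mass cannot compensate for this. The term $\mu(A^0)-\alpha_0=-\alpha_0\mu_0(\ZZ\setminus A^0)\in(-\varepsilon,0]$ is exactly what costs the extra $\varepsilon$ on the lower side. That lower side is the one used later, e.g.\ to deduce $N^0(w)>0$ from $\varepsilon<\alpha_0/2$ in Lemma~\ref{lemma_restriction_of_measure_good_approx}.
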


\begin{proof}
Consider $R > R_{\mu_0}(\varepsilon)$, $n \in \NN$, $w \in \Omega_{n}^{(0)}(\mu,2^{-R}\varepsilon)$ and let
\begin{align*}
f^{\sigma} \eqdef 2^{-R} \mathbbm{1}_{A^{\sigma}}.
\end{align*}
From the definition of $d_{\overline{\ZZ}}$, one checks that $f^{\sigma} \in \textup{Lip}_1$. Moreover, $\sup_{k \in \overline{\ZZ}}|f^{\sigma}(k)|\leq 1$. Thus, since $\ell(w) \in B\big( \mu, 2^{-R}\varepsilon \big)$, the definition of the KR norm, given in (\ref{eq_def_BL_norm}), applied to $f^{\sigma}$ and $-f^{\sigma}$, yields
\begin{align}
2^{-R}\varepsilon & > \| \ell(w)-\mu \| \notag \\
& \geq \left \vert \int f^{\sigma} \, \mathrm{d} \ell(w) - \int f^{\sigma} \,\mathrm{d}\mu \right \vert \notag \\
& = 2^{-R} \Big|\frac{1}{n} N^{\sigma}(w) - \mu(A^{\sigma}) \Big|. \label{eq_lemma_technical_for_measures_1}
\end{align}
Since $R > R_{\mu_0}(\varepsilon)$, the definition of $R_{\mu_0}(\varepsilon)$ implies that $|\mu(A^{\sigma})-\alpha_{\sigma}| < \varepsilon$ for all $\sigma \in \{-,0,+\}$. Thus, multiplying inequality (\ref{eq_lemma_technical_for_measures_1}) by $2^{R}$, we obtain for each $\sigma \in \{-,0,+\}$,
\begin{align*}
\Big| \frac{1}{n} N^{\sigma}(w)-\alpha_{\sigma} \Big| \leq \Big| \frac{1}{n} N^{\sigma}(w)-\mu(A^{\sigma}) \Big| + \Big| \mu(A^{\sigma})-\alpha_{\sigma} \Big| < 2 \varepsilon. 
\end{align*}
By multiplying by $n$ on both sides, we obtain \eqref{eq_lemma_bound_on_last_point_if_close_to_mu_c_appendix_1}.

To show (\ref{eq_lemma_bound_on_last_point_if_close_to_mu_c_appendix_2}), we use a similar argument but this time by using, for $m \in \{-R,-R+1,R-1,R\}$, the map $f_m \eqdef 2^{-|m|-1} \mathbbm{1}_{\{m\}}$. We have
\begin{align*}
\int f_m \mathrm{d}\ell(w) = 2^{-|m|-1} \frac{1}{n} \sum_{j=1}^n \mathbbm{1}_{\{w_j=m\}} = 2^{-|m|-1} \frac{N_m(w)}{n} \qquad \text{and} \qquad \int f_m \mathrm{d}\mu = 2^{-|m|-1} \alpha_0 \mu_0(\{m\}).  
\end{align*}
Similarly as before, one can verify that $f_m$ is 1-Lipschitz and uniformly bounded above by 1. Using the definition of the KR norm, with the maps $f_{m}$ and $-f_{m}$, we get
\begin{align*}
2^{-R}\varepsilon & > \left| \int f_m \mathrm{d}\ell(w) - \int f_m \mathrm{d}\mu \right| \\
& \geq 2^{-|m|-1} \left| \frac{N_m(w)}{n}-\alpha_0 \mu_0(\{m\}) \right|.
\end{align*}
Thus, since $R > R_{\mu_0}(\varepsilon)$ and $R-1 \leq |m| \leq R$, we know that $\mu_0(\{m\}) < \varepsilon$ and thus 
\begin{align*}
N_m(w) & \leq \left( 2^{|m|+1-R}\varepsilon + \alpha_0 \mu_0(\{m\}) \right) n < 3 \varepsilon n. \qedhere
\end{align*}
\end{proof}

\noindent Let $\mu \in  \mathcal{P}(\overline{\ZZ})$ be a probability measure that assigns no mass to the points at infinity, corresponding to the case $\alpha_0=1$. Then the position of the last letter of any word whose empirical measure is close to $\mu$ can be bounded. This is the content of the following corollary.

\begin{corollary}\label{corollary_bound_on_last_point_if_close_to_mu_c_appendix}
If $\mu_0 \in \mathcal{P}(\ZZ)$ and $R > R_{\mu_0}(\varepsilon)$ then for all $n \in \NN$ and $w \in \Omega_{n}^{(0)}(\mu_0,2^{-R}\varepsilon)$,
\begin{equation*}\label{eq_corollary_bound_on_last_point_if_close_to_mu_c_appendix}
|w_n| \leq R+2 \varepsilon n.
\end{equation*}
\end{corollary}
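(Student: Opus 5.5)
The plan is to apply Lemma~\ref{lemma_bound_on_last_point_if_close_to_mu_c_appendix} with the measure $\mu=\mu_0$. This measure has $\alpha_0=1$ and $\alpha_{-}=\alpha_{+}=0$, so the condition $R>R_{\mu_0}(\varepsilon)$ is exactly the hypothesis of the lemma, and $\Omega_n^{(0)}(\mu_0,2^{-R}\varepsilon)$ is the set to which the lemma applies. From \eqref{eq_lemma_bound_on_last_point_if_close_to_mu_c_appendix_1} with $\sigma=0$ we get $N^0(w)>(1-2\varepsilon)n$. Equivalently, the number of indices $j$ with $w_j\notin A^0=\dbl -R+1,R-1\dbr$ is $N^{-}(w)+N^{+}(w)=n-N^0(w)<2\varepsilon n$; one can also read this directly from the $\sigma=\pm$ cases, which give $N^{\pm}(w)<2\varepsilon n$.

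Next comes a nearest-neighbour path argument. If $|w_n|\le R-1$ there is nothing to prove. Otherwise, say $w_n\ge R$ (the case $w_n\le -R$ is symmetric). Let $j_0=\max\{j\le n : w_j\in A^0\}$; this index exists because $w_1=0\in A^0$. Every letter $w_{j_0+1},\dots,w_n$ then lies outside $A^0$. Since the walk makes steps of size $1$ and cannot jump over $A^0$, all these letters lie in $A^{+}$, and $w_{j_0}=R-1$. Consequently $w_n-(R-1)\le n-j_0$, and $n-j_0\le N^{+}(w)$ because each of the indices $j_0+1,\dots,n$ counts towards $N^{+}(w)$.

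Combining the two steps gives $w_n\le R-1+N^{+}(w)<R-1+2\varepsilon n\le R+2\varepsilon n$. The symmetric argument gives $w_n\ge -R-2\varepsilon n$, and together these yield $|w_n|\le R+2\varepsilon n$.

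There is no real obstacle here. The only point needing care is noticing that $\alpha_{\pm}=0$, so that the lemma's occupation-time bound controls the time spent outside the central block. After that, the nearest-neighbour structure converts the length of the final excursion into a bound on its height.
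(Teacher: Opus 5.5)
Your proposal is correct and follows essentially the paper's argument: apply Lemma~\ref{lemma_bound_on_last_point_if_close_to_mu_c_appendix} to $\mu_0$, where $\alpha_{\pm}=0$, to get $N^{\pm}(w)<2\varepsilon n$, then use the nearest-neighbour structure to turn a distant endpoint into a long final stretch inside $A^{\pm}$. The only difference is that you argue directly through the last visit to $A^0$, which gives the slightly sharper bound $R-1+2\varepsilon n$, whereas the paper assumes $w_n>R+2\varepsilon n$, deduces that the last $\lfloor 2\varepsilon n\rfloor+1$ letters lie in $A^{+}$, and reaches a contradiction.
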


\begin{proof}
Suppose for the sake of contradiction that $w_{n}>2\varepsilon n +R$. Since we are considering a simple random walk, we have $w_j \geq R$ for all $j \geq n-2\varepsilon n$. This would imply that
\begin{align*}
N^+(w) \geq \# \dbl n- \lfloor 2\varepsilon n \rfloor , n \dbr \geq 2\varepsilon n.
\end{align*}
This contradicts (\ref{eq_lemma_bound_on_last_point_if_close_to_mu_c_appendix_1})  since in our case we have $\alpha_{+}=0$. If we suppose that $w_n<-2 \varepsilon n -R$, we obtain in a similar way that $N^{-}(w) \geq 2\varepsilon n$ which again contradicts (\ref{eq_lemma_bound_on_last_point_if_close_to_mu_c_appendix_1})  since $\alpha_{-}=0$. We therefore obtain the desired inequality.
\end{proof}

Define the empirical measure restricted to $A^{0}$ by setting 
\begin{equation}\label{eq_def_restricted_empirical_measure}
\ell^{0}(w) \eqdef \frac{1}{N^{0}(w)} \sum_{ \substack{ 1 \leq j \leq n \, : \\ w_j \in A^{0}}} \delta_{w_j}.
\end{equation}
Note that this measure is well defined as long as $N^{0}(w)\neq 0$. The following lemma relates this restricted empirical measure to the original empirical measure $\ell(w)$.

\begin{lemma}\label{lemma_restriction_of_measure_good_approx}
Suppose $\alpha_{0} \neq 0$, and fix $\varepsilon < \frac{\alpha_{0}}{2}$. Let $R>\max\{R_{\overline{\ZZ}}(\varepsilon),R_{\mu_0}(\varepsilon)\}$, $n \in \NN$ and $w \in \Omega_n^{(0)}(\mu,2^{-R}\varepsilon)$. Then,
\begin{equation*}\label{eq_lemma_restriction_of_measure_good_approx_int}
\| \ell^{0}(w) - \mu_0 \| < \frac{6 \varepsilon}{\alpha_{0}}.
\end{equation*}
\end{lemma}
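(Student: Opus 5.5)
The plan is to compare $\ell^0(w)$ with $\mu_0$ through the intermediate measure $\nu \eqdef \frac{1}{\alpha_0}\ell(w)|_{A^0}$, where $\ell(w)|_{A^0}$ denotes the restriction of $\ell(w)$ to $A^0$. We then split
\begin{align*}
\|\ell^0(w)-\mu_0\| \leq \|\ell^0(w)-\nu\| + \|\nu - \tfrac{1}{\alpha_0}\mu|_{A^0}\| + \|\tfrac{1}{\alpha_0}\mu|_{A^0}-\mu_0\|.
\end{align*}
Each term is controlled separately, using Lemma~\ref{lemma_bound_on_last_point_if_close_to_mu_c_appendix} with $R>R_{\mu_0}(\varepsilon)$. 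First, $N^0(w)>(\alpha_0-2\varepsilon)n>0$ because $\varepsilon<\alpha_0/2$, so $\ell^0(w)$ is well defined.

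\textbf{First term.} We have $\ell^0(w)=\frac{n}{N^0(w)}\ell(w)|_{A^0}$, so
\begin{align*}
\|\ell^0(w)-\nu\| \leq \Big|\frac{n}{N^0(w)}-\frac{1}{\alpha_0}\Big|\cdot\frac{N^0(w)}{n} = \frac{|\alpha_0 n - N^0(w)|}{\alpha_0 n} < \frac{2\varepsilon}{\alpha_0}.
\end{align*}
Here we use that the KR norm of a positive measure is at most its mass, and the bound \eqref{eq_lemma_bound_on_last_point_if_close_to_mu_c_appendix_1}.

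\textbf{Third term.} Since $\mu|_{A^0}=\alpha_0\mu_0|_{A^0}$, the difference is $-\mu_0|_{\ZZ\setminus A^0}$. Its KR norm is at most $\mu_0(\ZZ\setminus A^0)<\varepsilon$, because $R>R_{\mu_0}(\varepsilon)$ and $\mu_0(A^0_R)$ is nondecreasing in $R$. This gives a bound of $\varepsilon\le \varepsilon/\alpha_0$.

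\textbf{Middle term (main obstacle).} We must bound $\frac{1}{\alpha_0}\|(\ell(w)-\mu)|_{A^0}\|$ using only $\|\ell(w)-\mu\|<2^{-R}\varepsilon$. Restriction to $A^0$ is not KR-continuous in general. However, given a test function $f$ that is $1$-Lipschitz and bounded by $1$, we can set $g=2^{-R}f\mathbbm{1}_{A^0}$ and check that $g$ is still $1$-Lipschitz. The only pairs to check straddle the boundary of $A^0$, and there $d_{\overline{\ZZ}}$ between a point of $A^0$ and a point outside is at least of order $2^{-R}$, while $|g|\le 2^{-R}$. If the constant is off by a factor of $2$, we use $2^{-R-1}$ instead, just as $f_m$ was handled in Lemma~\ref{lemma_bound_on_last_point_if_close_to_mu_c_appendix}. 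Testing $\ell(w)-\mu$ against $\pm g$ then yields
\begin{align*}
\Big|\int_{A^0} f\,\mathrm{d}(\ell(w)-\mu)\Big| < 2^{R+1}\,2^{-R}\varepsilon = 2\varepsilon.
\end{align*}
Taking the supremum over $f$ bounds the middle term by $2\varepsilon/\alpha_0$ (or $3\varepsilon/\alpha_0$ with slack).

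Summing the three bounds gives at most $6\varepsilon/\alpha_0$. The hypothesis $R>R_{\overline{\ZZ}}(\varepsilon)$ is available if needed to absorb boundary or constant terms in the Lipschitz check. The delicate step is verifying the Lipschitz constant of the truncated test function $g$ near $\pm(R-1)$ versus $\pm R$, including pairs where one point is $\pm\infty$.
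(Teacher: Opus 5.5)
Your proposal is correct and is essentially the paper's argument. The paper also tests $\ell(w)-\mu$ against $2^{-R}\mathbbm{1}_{A^0}$ times a near-optimal test function, and absorbs the normalisation error $|N^0(w)/n-\alpha_0|<2\varepsilon$ and the tail $\mu_0(\ZZ\setminus A^0)<\varepsilon$; it just packages this as one chain of inequalities instead of your three-term triangle inequality.

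The step you flag as delicate is immediate. Since $\varphi$ is increasing and $d_{\overline{\ZZ}}(\pm(R-1),\pm R)=2^{-R}$, every pair straddling the boundary of $A^0$ (including pairs involving $\pm\infty$) is at distance at least $2^{-R}$. Hence $2^{-R}f\mathbbm{1}_{A^0}$ is genuinely $1$-Lipschitz, and your middle term is at most $\varepsilon/\alpha_0$, where the paper settles for the cruder product-rule constant $2$.
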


\begin{proof}
Since $\varepsilon < \frac{\alpha_{0}}{2}$, we obtain from Equation~(\ref{eq_lemma_bound_on_last_point_if_close_to_mu_c_appendix_1}) of Lemma~\ref{lemma_bound_on_last_point_if_close_to_mu_c_appendix}, that $N^{0}(w)>0$. Therefore, $\ell^{0}(w)$ is well defined. Define as in the proof of Lemma~\ref{lemma_bound_on_last_point_if_close_to_mu_c_appendix}, the function
\begin{align*}
f \eqdef 2^{-R}\mathbbm{1}_{A^0}.
\end{align*}
This function is 1-Lipschitz and bounded by one. Then, by definition of the KR norm, we know that there exists a function $g^* \in \text{Lip}_1$ such that $\|g^* \|_{\infty} \leq 1$ and 
\begin{align}\label{lemma_restriction_of_measure_good_approx_proof_1}
\| \ell^0(w) - \mu_0 \| - \varepsilon \leq \int g^* \mathrm{d}\ell^0(w) - \int g^* \mathrm{d}\mu_0.
\end{align}
The product $fg^*$ is 2-Lipschitz and bounded uniformly by one, and since $w \in \Omega_n^{(0)}(\mu,2^{-R}\varepsilon)$, we have
\begin{align}
2^{-R}\varepsilon & > \| \ell(w) - \mu \| \notag \\
& \geq \int \frac{fg^*}{2} \mathrm{d} \ell(w) - \int \frac{fg^*}{2} \mathrm{d} \mu \notag \\
& = 2^{-R-1} \left( \int g^* \mathbbm{1}_{A^0} \mathrm{d} \ell(w) - \int g^* \mathbbm{1}_{A^0} \mathrm{d} \mu \right). \label{lemma_restriction_of_measure_good_approx_proof_2}
\end{align}
Using the definition of $\ell^0(w)$, $\mu$ and $\mu_0$ we have
\begin{align*}
\int g^* \mathbbm{1}_{A^0} \mathrm{d} \ell(w) - \int g^* \mathbbm{1}_{A^0} \mathrm{d} \mu & = \frac{N^0(w)}{n} \int g^* \mathrm{d}\ell^0(w) - \alpha_0 \int g^* \mathrm{d} \mu_0 + \alpha_0 \sum_{|k| \geq R} \mu_0(\{k\}) g^*(k).
\end{align*}
Since $\|g^*\|_{\infty} \leq 1$ and $R > R_{\mu_0}(\varepsilon)$, the last term can be bounded below by $-\alpha_0 \varepsilon$, while $\int g^* \mathrm{d}\ell^0(w)$ is bounded below by $-1$. We thus obtain
\begin{align*}
\int g^* \mathbbm{1}_{A^0} \mathrm{d} \ell(w) - \int g^* \mathbbm{1}_{A^0} \mathrm{d} \mu & \geq \alpha_0 \left( \int g^* \mathrm{d}\ell^0(w) - \int g^* \mathrm{d} \mu_0 \right) - \left| \frac{N^0(w)}{n}-\alpha_0 \right| - \alpha_0 \varepsilon.
\end{align*}
Using (\ref{eq_lemma_bound_on_last_point_if_close_to_mu_c_appendix_1}) of Lemma~\ref{lemma_bound_on_last_point_if_close_to_mu_c_appendix}, we deduce that $\left| \frac{N^0(w)}{n}-\alpha_0 \right| < 2 \varepsilon$. Plugging this back into (\ref{lemma_restriction_of_measure_good_approx_proof_2}) and using (\ref{lemma_restriction_of_measure_good_approx_proof_1}) yields
\begin{align*}
2\varepsilon & \geq \alpha_0 \left( \int g^* \mathrm{d}\ell^0(w) -  \int g^* \mathrm{d} \mu_0 \right) - \left| \frac{N^0(w)}{n}-\alpha_0 \right| - \alpha_0 \varepsilon \\
& > \alpha_0 \left( \| \ell^0(w) - \mu_0 \| - \varepsilon \right) - 3 \varepsilon ,
\end{align*}
which gives us the desired inequality.
\end{proof}

\section{Regional estimates}\label{section_regional_estimates}

As mentioned in the sketch of the proof in Section~\ref{subsection_sketch_proof}, both lower and upper bounds rely on a decomposition of the probability into three parts, each localised in one of the regions defined in Section~\ref{subsubsection_decomposition_of_the_state_space}. In this section, we present large deviation estimates for trajectories defined in (\ref{eq_def_trajectories_close_to_mu}), (\ref{eq_def_trajectories_excursions_above_R}) and (\ref{eq_def_trajectories_end_above_R}). We fix for the rest of this section a measure $\mu=\sum_{\sigma \in \{-,0,+\}} \alpha_{\sigma} \mu_{\sigma} \in \mathcal{P}(\overline{\ZZ})$, $\varepsilon>0$ and an integer $R \in \NN$. 

\subsection{Estimates on the central part}\label{subsection_central_part}

For $\mu_0 \in \mathcal{P}(\ZZ)$, we are interested in the exponential rate of decay of
\begin{equation}\label{eq_probability_of_interest_central}
\PP\big( \ell_n \in B(\mu_0,\varepsilon)\big)=\PP\big(\Omega_n^{(0)}(\mu_0,\varepsilon)\big),
\end{equation}
where, as recalled above, $\PP$ stands for $\PP_0$. The reduction to the case $m=0$ is justified by Lemma~\ref{lemma_restriction_to_0}.  

Since the empirical measure may be viewed either as an element of $\mathcal{P}(\ZZ)$ or, after compactification, as an element of $\mathcal{P}(\overline{\ZZ})$, we first recall the general Donsker--Varadhan functional in order to compare the corresponding rate functions. Given a Markov chain $(X_n)_{n \in \NN}$ on a space $\mathcal{X}$ with transition kernel $\Pi$, Donsker and Varadhan introduced in \cite{Donsker_varadhan} the functional
\begin{equation}\label{eq_DV_general}
\begin{array}{cccc}
I_{\Pi} : & \mathcal{P}(\mathcal{X}) & \longrightarrow & [0,+\infty] \\
& \mu & \longmapsto & \sup \limits_{u \in \mathcal{U}_1(\mathcal{X})} \int_{\mathcal{X}} \log \left( \frac{u(x)}{\Pi u(x)} \right) \mathrm{d} \mu (x),
\end{array}
\end{equation} 
where $\mathcal{U}_1(\mathcal{X})$ denotes the set of bounded Borel functions on $\mathcal{X}$ that are bounded from below by 1 and $\Pi u(x)=\int u(y) \Pi(x,\mathrm{d}y)$. 

In this subsection, we denote by $B_{\overline{\ZZ}}(\mu,\varepsilon)$ an open ball in $\mathcal{P}(\overline{\ZZ})$. The subscript is used to emphasise that the ambient space is $\mathcal{P}(\overline{\ZZ})$, and to distinguish it from balls in $\mathcal{P}(\ZZ)$, which will not be used here. We also write $I_{\textup{DV},\overline{\ZZ}}$ for the Donsker--Varadhan rate function associated with $(\ell_n)_{n \in \NN}$ viewed as a process in $\mathcal{P}(\overline{\ZZ})$ and $I_{\textup{DV},\ZZ}$ for the corresponding rate function when $(\ell_n)_{n \in \NN}$ is viewed as a process in $\mathcal{P}(\ZZ)$. The explicit expression of $I_{\textup{DV},\ZZ}$ is recalled in \eqref{Donsker_Varadhan_our_case}. It is the specialisation of \eqref{eq_DV_general} to the simple random walk on $\ZZ$ with transition function $p$. We now show that the two functionals $I_{\textup{DV},\ZZ}$ and $I_{\textup{DV},\overline{\ZZ}}$ coincide on $\mathcal{P}(\ZZ)$, and that the exponential rate of decay in \eqref{eq_probability_of_interest_central} is given by their common value.

\begin{lemma}\label{lemma_I_DV_same_for_central_measures}
For all $\mu_0 \in \mathcal{P}(\ZZ)$,
\begin{equation}\label{eq_lemma_I_DV_same_for_central_measures}
I_{\textup{DV},\overline{\ZZ}}(\mu_0)=I_{\textup{DV},\ZZ}(\mu_0).
\end{equation}
\end{lemma}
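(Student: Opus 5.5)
We prove the two inequalities separately by comparing the admissible test functions in the two variational problems. On $\overline{\ZZ}$, the kernel $\overline{\Pi}$ is the random-walk kernel $P$ on $\ZZ$ together with $\overline{\Pi}(\pm\infty,\cdot)=\delta_{\pm\infty}$. On $\ZZ$, the functional $I_{\textup{DV},\ZZ}$ is given by \eqref{Donsker_Varadhan_our_case}. Since $\mu_0(\{\pm\infty\})=0$, the values of a test function at $\pm\infty$ enter only through $\overline{\Pi}u(k)$ for $k\in\ZZ$. For $k\in\ZZ$ we have $\overline{\Pi}u(k)=p(k)u(k+1)+(1-p(k))u(k-1)$, so the points at infinity are never reached from $\ZZ$ in one step. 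Hence for every $u\in\mathcal{U}_1(\overline{\ZZ})$,
\begin{equation*}
\int_{\overline{\ZZ}}\log\frac{u}{\overline{\Pi}u}\,\mathrm{d}\mu_0=\sum_{k\in\ZZ}\mu_0(k)\log\frac{u(k)}{p(k)u(k+1)+(1-p(k))u(k-1)}.
\end{equation*}

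\textbf{First inequality, $I_{\textup{DV},\overline{\ZZ}}(\mu_0)\le I_{\textup{DV},\ZZ}(\mu_0)$.} The identity above shows that the restriction $u|_{\ZZ}$ is an admissible sequence with $u_k\ge 1$. Taking the supremum over $u$ gives this inequality directly.

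\textbf{Second inequality, $I_{\textup{DV},\ZZ}(\mu_0)\le I_{\textup{DV},\overline{\ZZ}}(\mu_0)$.} The only issue is that \eqref{Donsker_Varadhan_our_case} allows unbounded sequences $(u_k)$, whereas $\mathcal{U}_1(\overline{\ZZ})$ requires bounded functions. Given an arbitrary $(u_k)$ with $u_k\ge1$, we truncate: set $u^N_k=\min(u_k,N)$ and extend by any value at least $1$ at $\pm\infty$. We then need
\begin{equation*}
\liminf_{N\to\infty}\sum_k\mu_0(k)\log\frac{u^N_k}{Pu^N(k)}\ \ge\ \sum_k\mu_0(k)\log\frac{u_k}{Pu(k)}.
\end{equation*}
Pointwise, $u^N_k\to u_k$ and $Pu^N(k)\to Pu(k)$, since only the two neighbours $k\pm1$ are involved. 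Each summand therefore converges.

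To pass the limit through the sum, we want a lower bound on the summands that is uniform in $N$ and integrable against $\mu_0$, so that Fatou's lemma applies. We use $Pu^N(k)\le \max(u^N_{k+1},u^N_{k-1})$. The dangerous terms are those where $u_k$ is small and a neighbour is large, and there no uniform integrable bound is obvious. One remedy is first to reduce to sequences with controlled ratios: if the right-hand side is finite, we can assume $\sum_k\mu_0(k)\log(u_k/Pu(k))$ has an integrable negative part, and the truncated summand differs from the original only on the set where some $u_{k\pm1}$ exceeds $N$. Alternatively, we may approximate $u$ by sequences that agree with $u$ on $\dbl -M,M\dbr$ and are constant beyond. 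On $\{|k|\le M-1\}$ the summands are unchanged. At the two boundary sites the summands change, but by a finite amount. Outside, the summand equals $\log\frac{u_k}{u_k}=0$. Hence the truncated sum equals the partial sum of the original series up to boundary errors of size $\mu_0(\{\pm M\})\cdot O(\cdot)$.

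\textbf{Main obstacle and plan to handle it.} The main obstacle is controlling these boundary terms so the approximation converges. The boundary error at the edge is $\mu_0(\{\pm M\})\log\frac{u_M}{p u_{M}+(1-p)u_{M-1}}$, and it is bounded below by $\mu_0(\{\pm M\})\log\min(1,u_M/u_{M-1})$, which can be very negative. I would handle this by choosing $M$ along a subsequence where the ratios $u_{\pm M}/u_{\pm M\mp1}$ are not too small, or simply by using the cutoff value $\max(u_{M},u_{M-1})$ beyond $M$. With the latter choice the edge term becomes $\ge \log 1$ up to a factor involving $p(M)$, which is bounded by uniform ellipticity, giving an error $\le \mu_0(\{|k|\ge M-1\})\cdot C\to0$.

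If the summands of the original series are not absolutely summable, we first restrict to the case where the sum is well defined, using the convention that $\log(u_k/Pu(k))\le\log(1/\min(p,1-p))$ is bounded above by ellipticity, so the positive parts are bounded and the sum is meaningful.
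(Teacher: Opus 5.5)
Your first inequality, restricting $u\in\mathcal{U}_1(\overline{\ZZ})$ to $\ZZ$, is exactly the paper's argument. For the second, the obstacle you identify does not arise in the paper's setting. There $I_{\textup{DV},\ZZ}$ is defined as the specialisation of \eqref{eq_DV_general} to $\ZZ$, so the supremum in \eqref{Donsker_Varadhan_our_case} runs over $\mathcal{U}_1(\ZZ)$, i.e.\ over \emph{bounded} sequences with $u_k\ge 1$. Extending such a $u$ by $1$ at $\pm\infty$ gives an element of $\mathcal{U}_1(\overline{\ZZ})$ with the same integral against $\mu_0$, and taking suprema finishes the proof. This is what the paper does; in your write-up it is just the case $N\ge\sup_k u_k$ of your truncation. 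So the proposal is correct in substance, but everything after ``The only issue is\ldots'' is unnecessary.

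If you nevertheless want to cover unbounded sequences, your sketch contains false steps.
\begin{itemize}
\item The positive parts are not bounded by ellipticity. With $u_0=N$ and $u_k=1$ for $k\neq0$, you get $\log(u_0/Pu(0))=\log N$.
\item The cutoff value $c=\max(u_M,u_{M-1})$ beyond $M$ does not control the edge term. If $u_{M-1}>u_M$, the term at $M$ equals $\log(u_M/u_{M-1})$, which can be arbitrarily negative, so there is no error bound of the form $C\mu_0(\{|k|\ge M-1\})$.
\end{itemize}
The observation you are missing is that plain truncation $u^N=\min(u,N)$ is monotone in the right direction. If $u_k\le N$, then $u^N_k=u_k$ and $Pu^N(k)\le Pu(k)$, so the summand can only increase; your ``dangerous terms'' are therefore harmless. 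If $u_k>N$, then $Pu^N(k)\le N=u^N_k$, so the summand is nonnegative. Hence
\begin{equation*}
\sum_{k\in\ZZ}\mu_0(k)\log\frac{u^N_k}{Pu^N(k)}\;\ge\;\sum_{k\,:\,u_k\le N}\mu_0(k)\log\frac{u_k}{Pu(k)}.
\end{equation*}
Since $\{k : u_k\le N\}\uparrow\ZZ$, apply monotone convergence separately to the positive and negative parts. This gives the desired $\liminf$ inequality whenever the negative part is summable. If it is not summable (and the sum is defined), the sum equals $-\infty$ and there is nothing to prove.
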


\begin{proof}
Fix $\mu_0 \in \mathcal{P}(\ZZ)$ and denote by $\Pi_{\overline{\ZZ}}$ and $\Pi_{\ZZ}$ the kernels associated to the Markov chain $(S_n)_{n \in \NN}$ viewed as a process in $\overline{\ZZ}$ and $\ZZ$ respectively. Let $\overline{u} \in \mathcal{U}_1(\overline{\ZZ})$, and denote by $u$ its restriction to $\ZZ$. Since $\mu_0$ is supported on $\ZZ$ and $\Pi_{\overline{\ZZ}}\overline{u}(k)=\Pi_{\ZZ}u(k)$ for all $k \in \ZZ$, we have
\begin{equation}\label{eq_proof_lemma_I_DV_same_for_central_measures}
\int_{\overline{\ZZ}} \log\left(\frac{\overline{u}(k)}{\Pi_{\overline{\ZZ}} \overline{u}(k)}\right)\,\mathrm{d}\mu_0(k)
=
\int_{\ZZ} \log\left(\frac{u(k)}{\Pi_{\ZZ} u(k)}\right)\,\mathrm{d}\mu_0(k).
\end{equation}
Taking the supremum over all $u \in \mathcal{U}_1(\ZZ)$ yields $I_{\textup{DV},\overline{\ZZ}}(\mu_0)\leq I_{\textup{DV},\ZZ}(\mu_0)$. Conversely, let $u \in \mathcal{U}_1(\ZZ)$, and extend it to a function $\overline{u} \in \mathcal{U}_1(\overline{\ZZ})$ by setting $\overline{u}(+\infty)=\overline{u}(-\infty)=1$. Then \eqref{eq_proof_lemma_I_DV_same_for_central_measures} still holds. Taking the supremum over all $\overline{u} \in \mathcal{U}_1(\overline{\ZZ})$ gives $I_{\textup{DV},\overline{\ZZ}}(\mu_0)\geq I_{\textup{DV},\ZZ}(\mu_0)$. This proves Lemma~\ref{lemma_I_DV_same_for_central_measures}.
\end{proof}

As noted above, the sequence $(\ell_n)_{n \in \NN}$ satisfies a weak LDP in $\mathcal{P}(\ZZ)$. However, this weak LDP cannot be transferred directly to $\mathcal{P}(\overline{\ZZ})$ by a direct application of the contraction principle. Indeed, \cite[Theorem~4.2.1]{DZ_LDP} does not provide the upper bound in the weak LDP setting, since the preimage of a compact set under a continuous map need not be compact. Nevertheless, Lemma \ref{lemma_I_DV_lower_bound} provides the required general upper bound, while the contraction principle remains applicable for the LDP lower bound. This approach is carried out in the proof of Proposition~\ref{proposition_exponential_rate_decay_I_DV}.

\begin{lemma}\label{lemma_I_DV_lower_bound}
For all $\mu \in \mathcal{P}(\overline{\ZZ})$,
\begin{equation*}\label{eq_lemma_I_DV_lower_bound}
\lim \limits_{\varepsilon\to 0^+} \limsup_{n \to \infty} \frac{1}{n} \log \PP \big( \ell_n \in B_{\overline{\ZZ}}(\mu,\varepsilon)\big) \leq -I_{\textup{DV},\overline{\ZZ}}(\mu).
\end{equation*}
\end{lemma}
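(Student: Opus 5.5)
We will use the classical Donsker--Varadhan exponential martingale argument, applied directly on the compact space $\overline{\ZZ}$ with the kernel $\Pi_{\overline{\ZZ}}$.

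\textbf{Step 1: a supermartingale bound.} Fix $\overline{u} \in \mathcal{U}_1(\overline{\ZZ})$ and assume first that $\overline{u}$ is continuous on $\overline{\ZZ}$. Set $V \eqdef \log(\overline{u}/\Pi_{\overline{\ZZ}}\overline{u})$. By the Markov property, the process
\begin{equation*}
M_n \eqdef \overline{u}(S_{n+1}) \prod_{j=1}^{n} \frac{1}{\Pi_{\overline{\ZZ}}\overline{u}(S_j)}
\end{equation*}
is a martingale under $\PP$, and $\EE[M_n]=\Pi_{\overline{\ZZ}}\overline{u}(0)/\Pi_{\overline{\ZZ}}\overline{u}(0)\cdot\overline{u}(0)$ up to the first factor, hence bounded. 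Since $1\le \overline{u}\le \|\overline{u}\|_\infty$, we get
\begin{equation*}
\EE\Big[\exp\Big(n\int V\,\mathrm{d}\ell_n\Big)\Big] \le C(\overline{u}),
\end{equation*}
with $C(\overline{u})$ independent of $n$. This is the standard Feynman--Kac telescoping.

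\textbf{Step 2: continuity of $V$ and the local bound.} Because $p$ extends continuously to $\overline{\ZZ}$ by Hypothesis~\ref{hypothesis_on_transition_function_p}, the kernel $\Pi_{\overline{\ZZ}}$ is Feller: $\Pi_{\overline{\ZZ}}\overline{u}(k)=p(k)\overline{u}(k+1)+(1-p(k))\overline{u}(k-1)\to\overline{u}(\pm\infty)$ as $k\to\pm\infty$, which equals $\Pi_{\overline{\ZZ}}\overline{u}(\pm\infty)$. Hence $V$ is bounded and continuous, and $\nu\mapsto\int V\,\mathrm{d}\nu$ is weakly continuous. Chebyshev's inequality then gives
\begin{equation*}
\PP\big(\ell_n\in B_{\overline{\ZZ}}(\mu,\varepsilon)\big)\le C(\overline{u})\exp\Big(-n\inf_{\nu\in B_{\overline{\ZZ}}(\mu,\varepsilon)}\int V\,\mathrm{d}\nu\Big).
\end{equation*}
Letting $n\to\infty$ and then $\varepsilon\to0$, continuity yields the bound $-\int V\,\mathrm{d}\mu$. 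Taking the supremum over continuous $\overline{u}\ge1$ gives the claimed upper bound with the supremum restricted to $\mathcal{C}(\overline{\ZZ})\cap\mathcal{U}_1(\overline{\ZZ})$.

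\textbf{Step 3 (main obstacle): removing the continuity restriction.} We expect this to be the hardest step. Since $I_{\textup{DV},\overline{\ZZ}}$ is defined with bounded Borel $u$, we must show that restricting to continuous $u$ does not decrease the supremum. We would argue directly as follows. Given bounded $\overline{u}\ge1$, truncate it: set $\overline{u}_N=\overline{u}$ on $\dbl -N,N\dbr$ and extend by the constants $\overline{u}(\pm\infty)$ on $|k|>N$. Each $\overline{u}_N$ is continuous on $\overline{\ZZ}$. For every $k\in\ZZ$, $\Pi\overline{u}_N(k)\to\Pi\overline{u}(k)$, and at $\pm\infty$ the values are unchanged. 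All log-ratios are uniformly bounded by $\log\|\overline{u}\|_\infty$, so dominated convergence gives $\int V_N\,\mathrm{d}\mu\to\int V\,\mathrm{d}\mu$ for every $\mu\in\mathcal{P}(\overline{\ZZ})$. Note that $\mu$ may charge $\pm\infty$, but there $V_N=V=0$. This step is the only one that uses the specific nearest-neighbour structure, and it concludes the proof.
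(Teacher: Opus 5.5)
Your argument is correct in substance, but Step 1 contains a slip. The process $M_n=\overline{u}(S_{n+1})\prod_{j=1}^{n}\Pi_{\overline{\ZZ}}\overline{u}(S_j)^{-1}$ is not a martingale. One has $\EE[M_n \mid S_1,\dots,S_n]=M_{n-1}/\overline{u}(S_n)$, so it is only a supermartingale. More to the point, it does not control
\begin{equation*}
e^{n\int V\,\mathrm{d}\ell_n}=\prod_{j=1}^{n}\frac{\overline{u}(S_j)}{\Pi_{\overline{\ZZ}}\overline{u}(S_j)},
\end{equation*}
which equals $M_n$ times $\prod_{j=1}^{n}\overline{u}(S_j)/\overline{u}(S_{n+1})$, a factor that is not bounded uniformly in $n$. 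The telescoping you have in mind uses
\begin{equation*}
Z_n \eqdef \prod_{j=1}^{n}\frac{\overline{u}(S_{j+1})}{\Pi_{\overline{\ZZ}}\overline{u}(S_j)}, \qquad Z_0 \eqdef 1,
\end{equation*}
which is a mean-one martingale. Since $\Pi_{\overline{\ZZ}}\overline{u}\geq 1$, one gets $e^{n\int V\,\mathrm{d}\ell_n}=\overline{u}(S_1)\,Z_{n-1}/\Pi_{\overline{\ZZ}}\overline{u}(S_n)\leq \overline{u}(0)\,Z_{n-1}$. This gives your bound with $C(\overline{u})=\overline{u}(0)$. With this correction, Steps 2 and 3 go through as written.

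Your route also differs from the paper's. The paper invokes the Donsker--Varadhan upper bound for arbitrary closed sets $F\subseteq\mathcal{P}(\overline{\ZZ})$; beyond the exponential Chebyshev estimate, that proof covers $F$ by finitely many small neighbourhoods using compactness. It applies this bound to the closed ball $\overline{B}_{\overline{\ZZ}}(\mu,\varepsilon)$ and lets $\varepsilon\to0$ via lower semicontinuity and goodness of $I_{\textup{DV},\overline{\ZZ}}$ and \cite[Lemma~4.1.6]{DZ_LDP}.

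You instead prove the ball estimate directly for each continuous test function and let $\varepsilon\to0$ by weak continuity of $\nu\mapsto\int V\,\mathrm{d}\nu$. So neither the covering argument nor lower semicontinuity is needed; the price is Step 3. That step is worth having. The Donsker--Varadhan argument naturally yields the supremum over continuous $u$. The lower semicontinuity of $I_{\textup{DV},\overline{\ZZ}}$ invoked in the paper is most naturally seen by writing it as a supremum over continuous $u$ as well. Your truncation is exactly what identifies this with the definition \eqref{eq_DV_general} over bounded Borel $u$, a point the paper leaves implicit.

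The paper's route gives the stronger closed-set bound, of which only the ball case is used here. Yours is more elementary and self-contained.
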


\begin{proof}
One can verify that the Markov chain $(S_n)_{n \in \NN}$ is Feller in $\overline{\ZZ}$. Thus, the proof of \cite[pp.~7-10]{Donsker_varadhan} can be replicated here to show that for any closed set $F \subseteq \mathcal{P}(\overline{\ZZ})$,
\begin{align*}
\limsup_{n \to \infty} \frac{1}{n} \log \PP( \ell_n \in F) \leq - \inf \limits_{\mu \in F} I_{\textup{DV},\overline{\ZZ}}(\mu).
\end{align*}
In particular, with $F$ as the closed ball $\overline{B}_{\overline{\ZZ}}(\mu,\varepsilon)$ and using that $B_{\overline{\ZZ}}(\mu,\varepsilon) \subseteq \overline{B}_{\overline{\ZZ}}(\mu,\varepsilon)$, we obtain
\begin{align*}
\limsup_{n \to \infty} \frac{1}{n} \log \PP\big( \ell_n \in B_{\overline{\ZZ}}(\mu,\varepsilon)\big) \leq - \inf \limits_{\nu \in \overline{B}_{\overline{\ZZ}}(\mu,\varepsilon)} I_{\textup{DV},\overline{\ZZ}}(\nu).
\end{align*}
In addition, since the level sets of lower semi-continuous functions are closed and $\overline{\ZZ}$ is compact, we deduce that $I_{\textup{DV},\overline{\ZZ}}$ is a good rate function. Thus, applying \cite[Lemma 4.1.6]{DZ_LDP} and letting $\varepsilon$ go to 0, we obtain
\begin{align*}
\lim \limits_{\varepsilon \to 0^+} \limsup_{n \to \infty} \frac{1}{n} \log \PP\big( \ell_n \in B_{\overline{\ZZ}}(\mu,\varepsilon)\big)
& \leq - \lim \limits_{\varepsilon \to 0^+} \inf \limits_{\nu \in \overline{B}_{\overline{\ZZ}}(\mu,\varepsilon)} I_{\textup{DV},\overline{\ZZ}}(\nu) \\
& =-I_{\textup{DV},\overline{\ZZ}}(\mu),
\end{align*}
which concludes the proof.
\end{proof}

\begin{proposition}[$I_{\textup{DV}}$ as rate function]\label{proposition_exponential_rate_decay_I_DV}
For all $\mu_0 \in \mathcal{P}(\ZZ) \subseteq \mathcal{P}(\overline{\ZZ})$,
\begin{align*}
\lim_{\varepsilon \to 0^+} \liminf_{n \to \infty} \frac{1}{n} \log \PP\left( \ell_n \in B_{\overline{\ZZ}}(\mu_0,\varepsilon) \right) & = \lim_{\varepsilon \to 0^+} \limsup_{n \to \infty} \frac{1}{n} \log \PP\left( \ell_n \in B_{\overline{\ZZ}}(\mu_0,\varepsilon) \right) \notag \\
& = -I_{\textup{DV},\ZZ}(\mu_0).
\end{align*}
\end{proposition}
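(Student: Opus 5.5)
The proof splits into an upper and a lower bound, and the paper has already indicated how each will go.

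\emph{Upper bound.} Lemma~\ref{lemma_I_DV_lower_bound}, applied with $\mu=\mu_0$, gives
\begin{align*}
\lim_{\varepsilon\to0^+}\limsup_{n\to\infty}\frac1n\log\PP\big(\ell_n\in B_{\overline{\ZZ}}(\mu_0,\varepsilon)\big)\leq -I_{\textup{DV},\overline{\ZZ}}(\mu_0).
\end{align*}
Lemma~\ref{lemma_I_DV_same_for_central_measures} then identifies the right-hand side with $-I_{\textup{DV},\ZZ}(\mu_0)$. Since $\liminf\le\limsup$, it remains to prove
\begin{align*}
\lim_{\varepsilon\to0^+}\liminf_{n\to\infty}\frac1n\log\PP\big(\ell_n\in B_{\overline{\ZZ}}(\mu_0,\varepsilon)\big)\geq -I_{\textup{DV},\ZZ}(\mu_0).
\end{align*}
Both limits in $\varepsilon$ exist because the quantities are monotone in $\varepsilon$. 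Once this lower bound is in hand, all the terms are squeezed together and the proposition follows.

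\emph{Lower bound.} I would use the weak LDP on $\mathcal{P}(\ZZ)$, equipped with the weak topology of $\ZZ$ as a discrete space. The walk is irreducible on $\ZZ$ by Hypothesis~\ref{hypothesis_on_transition_function_p}, so \cite{Donsker_varadhan_3}, \cite[Lemma~5]{Bryc_Dembo} and \cite[Theorem~1.4]{Daures} give this weak LDP with rate function $I_{\textup{DV},\ZZ}$. The lower bound of a weak LDP holds for all open sets.

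Let $\iota:\mathcal{P}(\ZZ)\to\mathcal{P}(\overline{\ZZ})$ be the natural embedding. It is continuous: if $\nu_n\to\nu$ weakly in $\mathcal{P}(\ZZ)$ and $g\in\mathcal{C}_b(\overline{\ZZ})$, then $g|_{\ZZ}\in\mathcal{C}_b(\ZZ)$, so $\int g\,\mathrm{d}\nu_n\to\int g\,\mathrm{d}\nu$. Consequently $U_\varepsilon\eqdef\iota^{-1}\big(B_{\overline{\ZZ}}(\mu_0,\varepsilon)\big)$ is open in $\mathcal{P}(\ZZ)$ and contains $\mu_0$.

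Started from $0$, the walk $S_n$ stays in $\ZZ$ almost surely. Hence $\{\ell_n\in B_{\overline{\ZZ}}(\mu_0,\varepsilon)\}=\{\ell_n\in U_\varepsilon\}$, and the weak LDP lower bound gives
\begin{align*}
\liminf_{n\to\infty}\frac1n\log\PP(\ell_n\in U_\varepsilon)\ge-\inf_{U_\varepsilon}I_{\textup{DV},\ZZ}\ge -I_{\textup{DV},\ZZ}(\mu_0).
\end{align*}
This is the lower-bound half of the contraction principle, which needs no goodness or tightness. Letting $\varepsilon\to0$ completes the argument.

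\emph{Main obstacle.} The only delicate point is the upper bound, which does not transfer through $\iota$: preimages of compact sets need not be compact, and the weak LDP on $\mathcal{P}(\ZZ)$ lacks exponential tightness. This is exactly why the upper bound must come from Lemma~\ref{lemma_I_DV_lower_bound}, which uses the Feller property on the compact space $\overline{\ZZ}$ to run the Donsker--Varadhan argument directly there. Lemma~\ref{lemma_I_DV_same_for_central_measures} is then needed to ensure that the resulting rate agrees with $I_{\textup{DV},\ZZ}$ on central measures.

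A secondary check is that the cited weak LDP on $\ZZ$ really applies to this nearest-neighbour inhomogeneous chain with rate function given by \eqref{Donsker_Varadhan_our_case}. I would verify this against the irreducibility hypotheses of \cite[Theorem~1.4]{Daures}.
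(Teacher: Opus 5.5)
Your proposal is correct and follows the paper's proof essentially step for step. Like the paper, you obtain the lower bound by pulling the ball back through the continuous embedding $\iota_*$ and applying the weak LDP lower bound on $\mathcal{P}(\ZZ)$, and you obtain the upper bound by combining Lemma~\ref{lemma_I_DV_lower_bound} with Lemma~\ref{lemma_I_DV_same_for_central_measures}.
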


\begin{proof}
As shown in \cite{Donsker_varadhan_3} and \cite[Lemma~5]{Bryc_Dembo}, the sequence of empirical measures associated with the simple random walk on $\ZZ$ satisfies a weak LDP in $\mathcal{P}(\ZZ)$ with rate function $I_{\textup{DV},\ZZ}$. Let $\iota : \ZZ \hookrightarrow \overline{\ZZ}$ be the canonical inclusion, and denote by $\iota_* : \mathcal{P}(\ZZ) \longrightarrow \mathcal{P}(\overline{\ZZ})$ the induced push-forward map on probability measures. By continuity of $\iota_*$, there exists an open set $\mathcal{U} \subseteq \mathcal{P}(\ZZ)$ such that $\mu_0 \in \mathcal{U}$ and $ \mathcal{U} \subseteq \iota_*^{-1} \left(B_{\overline{\ZZ}}(\mu_0,\varepsilon)\right)$. Thus, by the weak LDP lower bound in $\mathcal{P}(\ZZ)$,
\begin{align*}
-I_{\textup{DV},\ZZ}(\mu_0) & \leq \liminf_{n \to \infty} \frac{1}{n} \log \PP\left( \ell_n \in \mathcal{U} \right) \\
& \leq \liminf_{n \to \infty} \frac{1}{n} \log \PP\left( \ell_n \in B_{\overline{\ZZ}}(\mu_0,\varepsilon) \right).
\end{align*}
Taking $\varepsilon$ to 0, we obtain
\begin{align*}
-I_{\textup{DV},\ZZ}(\mu_0) \leq \lim \limits_{\varepsilon \to 0^+} \liminf_{n \to \infty} \frac{1}{n} \log \PP\left( \ell_n \in B_{\overline{\ZZ}}(\mu_0,\varepsilon) \right).
\end{align*}
By combining the results of Lemma~\ref{lemma_I_DV_same_for_central_measures} and Lemma~\ref{lemma_I_DV_lower_bound}, one obtains
\begin{align*}
\lim \limits_{\varepsilon \to 0^+} \limsup_{n \to \infty} \frac{1}{n} \log \PP \big( \ell_n \in B_{\overline{\ZZ}}(\mu_0,\varepsilon) \big) & \leq -I_{\textup{DV},\overline{\ZZ}}(\mu_0) \\
& = -I_{\textup{DV},\ZZ}(\mu_0).
\end{align*}
This gives us the other bound and thereby proves Proposition~\ref{proposition_exponential_rate_decay_I_DV}.
\end{proof}

\subsection{Estimates on excursions and meanders}

We are now interested in estimating, for $\sigma \in \{-,+\}$, the exponential rate of decay of the quantities
\begin{align*}
\PP_{\sigma R}\left( \Omega_{n,\textup{exc}}^{\sigma} \right) \qquad \text{and} \qquad \PP_{\sigma R}\left( \Omega_{n,\textup{mea}}^{\sigma} \right),
\end{align*}
where the sets $\Omega_{n,\textup{exc}}^{\sigma}$ and $\Omega_{n,\textup{mea}}^{\sigma}$ are defined in \eqref{eq_def_trajectories_excursions_above_R} and \eqref{eq_def_trajectories_end_above_R}. We begin by stating an analogous result for homogeneous random walks.

Let $p \in (0,1)$, and let $(X_n)_{n \in \NN}$ be a sequence of i.i.d.~biased Rademacher random variables with parameter $p$, defined on some probability space $\big( \Omega,\mathcal{A},\QQ\big)$. We define the associated random walk by $S_n^X \eqdef \sum_{k=1}^n X_k$ for all $n \in \NN$. Denote by $(\mathcal{F}_n)_{n \in \NN}$ the natural filtration generated by $(X_n)_{n \in \NN}$ and let $\mathcal{F}=\sigma\left( \bigcup_{n \in \NN} \mathcal{F}_n \right)$. When no ambiguity is possible, we also write $\QQ$ for its restriction to $\mathcal{F}$. 

\begin{lemma}[Homogeneous case]\label{lemma_exp_decay_rate_homogeneous_case}
For each $\sigma \in \{-,+\}$,
\begin{align}
\lim_{n \to \infty} \frac{1}{n} \log \QQ\left( \sigma S_k^X \geq 0 \, , \,\forall k \in \dbl 1,n \dbr \right)&=-\inf_{\sigma x \in [0,1]}I_{\textup{Cr}}^p (x), \label{eq_lemma_exp_decay_rate_homogeneous_case_end} \\
\lim_{n \to \infty} \frac{1}{2n} \log \QQ\big(\{\sigma S_k^X \geq 0 \, , \, \forall k \in \dbl 1,2n \dbr\} \cap&\{S_{2n}^X=0\}\big)=-I_{\textup{Cr}}^p(0). \label{eq_lemma_exp_decay_rate_homogeneous_case_exc}
\end{align}
\end{lemma}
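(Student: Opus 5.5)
By the reflection $X_k \mapsto -X_k$, which exchanges $p$ and $1-p$ and satisfies $I_{\textup{Cr}}^{1-p}(x)=I_{\textup{Cr}}^{p}(-x)$, it suffices to treat $\sigma=+$. The plan is to prove matching upper and lower bounds for each of the two limits. For the upper bounds we drop the positivity constraint and use Cramér's theorem. For the lower bounds we use an explicit path-counting argument: the ballot theorem for the meander case and Catalan numbers for the excursion case. The inhomogeneity plays no role here, since the walk is i.i.d.

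\textbf{Upper bounds.} For the meander, $\{S_k^X\geq 0,\ \forall k\leq n\}\subseteq\{S_n^X/n\in[0,1]\}$. Since $[0,1]$ is closed, Cramér's theorem (or simply the Chernoff bound) gives
\[
\limsup_n \tfrac1n\log\QQ \leq -\inf_{x\in[0,1]}I_{\textup{Cr}}^p(x).
\]
For the excursion, the event is contained in $\{S_{2n}^X=0\}$. Any path with $S_{2n}^X=0$ has exactly $n$ up-steps, so it has probability exactly $(p(1-p))^n$. Hence
\[
\QQ(S_{2n}^X=0)=\binom{2n}{n}(p(1-p))^n\leq (4p(1-p))^n = e^{-2n I_{\textup{Cr}}^p(0)},
\]
using the value $I_{\textup{Cr}}^p(0)=\tfrac12\log\frac{1}{4p(1-p)}$ from Remark~\ref{remark_other_expression_of_rate_function}.

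\textbf{Lower bound for the excursion.} The nonnegative paths of length $2n$ ending at $0$ are Dyck-type paths. There are $\binom{2n}{n}/(n+1)$ of them, each of probability $(p(1-p))^n$. Since $\binom{2n}{n}\geq 4^n/(2n+1)$, the probability is at least $(4p(1-p))^n/\mathrm{poly}(n)$. This gives the matching rate $-I_{\textup{Cr}}^p(0)$ after dividing by $2n$. One small check is needed regarding the convention: the event requires $S_k\geq0$ for $k\geq1$ with $S_0=0$ implicit. The Catalan count is exactly the number of such weak paths, so no adjustment is required.

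\textbf{Lower bound for the meander.} Let $x^*\in[0,1]$ be the minimiser, namely $x^*=\max(2p-1,0)$. Fix $x\in(0,1)$ close to $x^*$ (when $x^*=0$ we cannot take $x=0$, so we take $x$ small and positive). Consider paths of length $n$ with $j=\lceil n(1+x)/2\rceil$ up-steps, adjusting parity so that the endpoint $h=2j-n>0$. By the ballot theorem, a fraction at least $h/n$ of the $\binom{n}{j}$ such paths stay nonnegative; in fact the count of strictly positive paths is $\frac hn\binom nj$. Each such path has probability $p^j(1-p)^{n-j}$. Stirling's formula then gives
\[
\tfrac1n\log\Big(\tfrac hn\tbinom nj p^j(1-p)^{n-j}\Big)\to -I_{\textup{Cr}}^p(x),
\]
because $h/n\to x>0$ is subexponential. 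Letting $x\downarrow x^*$ and using the continuity of $I_{\textup{Cr}}^p$ on $[-1,1]$ gives the liminf bound. An alternative route is the exponential change of measure to a tilted walk with drift $x>0$, under which the positivity event has probability bounded away from $0$.

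The main point requiring care is the boundary case $x^*=0$ (that is, $p\leq\tfrac12$) in the meander bound, where a polynomial ballot factor must be controlled. The approximation by $x>0$ together with continuity handles it. Everything else is routine Stirling asymptotics.
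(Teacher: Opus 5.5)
Your proposal is correct, but it takes a different route from the paper.

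\textbf{The paper's route.} The upper bounds come from Cramér's theorem, as in yours. The lower bounds use exponential tilting. Under the tilted measure $\widetilde{\QQ}_x$, the walk has mean $x$. The event of staying nonnegative with a prescribed endpoint is then bounded below by $C n^{-3/2}$, using the Vatutin--Wachtel local limit theorem for walks conditioned to stay positive. The endpoint $x=1$ is treated separately via the single path of probability $p^n$.

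\textbf{Your route.} You count paths exactly. You use Catalan numbers for excursions and the ballot formula $\frac{h}{n}\binom{n}{j}$ for meanders. For the excursion upper bound you use the elementary bound $\binom{2n}{n}\le 4^n$ together with the explicit value of $I_{\textup{Cr}}^p(0)$. Apart from the Cramér/Chernoff step in the meander upper bound, the only asymptotic input is Stirling's formula.

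\textbf{What yours buys.} It is shorter and self-contained, and it gives explicit subexponential prefactors. It never needs the endpoint $x=1$, since $x^*=\max(2p-1,0)<1$. The boundary case $x^*=0$ is also easier than you suggest. For fixed $x>0$ the ballot factor satisfies $h/n\ge x$, so it is bounded below by a constant, not merely polynomially.

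\textbf{What the paper's buys.} The paper itself acknowledges your combinatorial alternative in Remark~\ref{remark_after_tilting_for_Catalan}, but prefers tilting. Tilting does not depend on the $\pm1$ step structure. It therefore extends, with the period replacing $2$, to the compactly supported step distributions of Remark~\ref{remark_generalisation_to_step_distribution}. Exact Catalan and ballot enumeration, by contrast, is specific to nearest-neighbour walks, or at best skip-free ones.
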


\begin{proof}
The proof consists of matching upper and lower bounds. The upper bound is an immediate consequence of the general large deviation upper bound. The lower bound, in contrast, requires an adaptation of the exponential tilting argument from Cramér's theorem \cite[Theorem~2.2.3]{DZ_LDP} to accommodate the positivity constraint on the walk. The key observation is that, under the appropriately tilted measure, the probability of staying positive decays only polynomially. For clarity of exposition, we treat only the case $\sigma=+$. The case $\sigma=-$ follows by symmetry after a change of sign.

Let us start with the upper bound leading to \eqref{eq_lemma_exp_decay_rate_homogeneous_case_end}. We have
\begin{align*}
\QQ\big( S_k^X \geq 0 \,,\, \forall k \in \dbl 1,n \dbr \big) & \leq \QQ\left( \frac{S_n^X}{n} \geq 0\right) = \QQ\left( \frac{S_n^X}{n} \in [ 0, 1 ] \right),
\end{align*}
where the last equality comes from the fact that $|S_n^X| \leq n$. Taking the logarithm, dividing by $n$ and taking the $\limsup_{n \to \infty}$, we obtain thanks to the LDP upper bound on the sequence of empirical means of $(X_n)_{n \in \NN}$, obtained by Cramér's theorem \cite[Theorem~2.2.3]{DZ_LDP}, that
\begin{align*}
\limsup_{n \to \infty} \frac{1}{n} \log \QQ\big( S_k^X \geq 0 \,,\, \forall k \in \dbl 1,n \dbr \big) & \leq \limsup_{n \to \infty} \frac{1}{n} \log \QQ\left( \frac{S_n^X}{n} \in [ 0, 1 ] \right) \\
& \leq -\inf_{x \in [ 0 , 1]} \Lambda^*(x), 
\end{align*}
where $\Lambda^*$ is the Fenchel--Legendre transform of $\Lambda$, the cumulant generating function associated with the law of $X_1$, i.e. 
\begin{align*}
\Lambda(\lambda)\eqdef \log \EE[e^{\lambda X_1}].
\end{align*}
With the definition of $I_{\textup{Cr}}^p$, given in \eqref{eq_I_Cr_Fenchel_Legendre}, we obtain
\begin{equation}\label{eq_lemma_upper_bound_tilting_for_end}
\limsup_{n \to \infty} \frac{1}{n} \log \QQ\big( S_k^X \geq 0 \,,\, \forall k \in \dbl 1,n \dbr \big) \leq -\inf_{x \in [ 0 , 1]} I_{\textup{Cr}}^p(x).
\end{equation}

Let us now turn to the lower bound. We treat the case $x \in [0,1)$ first, and defer the case $x=1$ to the end. Since $X_1$ has a compact support, $\Lambda$ is well defined on $\RR$, $\lim_{|\lambda| \to \infty} \Lambda(\lambda)=+\infty$ and $\Lambda$ is differentiable everywhere \cite[Lemma~2.2.5]{DZ_LDP}. In addition, there exists $\theta_x \in \RR$ such that 
\begin{align*}
\Lambda'(\theta_x)=x \qquad \text{and} \qquad \Lambda(\theta_x)-\theta_x x=\inf_{\lambda \in \RR} \big\{ \Lambda(\lambda)-\lambda x \big\}.
\end{align*}
Define the tilted measure $\widetilde{\QQ}_{x}$ on $\mathcal{F}$ by requiring that, for every $n \in \NN$,
\begin{align*}
\frac{\mathrm{d}\widetilde{\QQ}_x\big|_{\mathcal{F}_n}}{\mathrm{d}\QQ\big|_{\mathcal{F}_n}}
\eqdef
\exp\big(\theta_x S_n^X - n\Lambda(\theta_x)\big).
\end{align*}
One can verify that under the tilted measure $\widetilde{\QQ}_{x}$, the sequence $(X_n)_{n \in \NN}$ is a sequence of i.i.d.~random variables following a biased Rademacher distribution. In addition, by choice of $\theta_x$, one has
\begin{align*}
x = \Lambda'(\theta_x) =\widetilde{\EE}_x \big[ X_1 \big],
\end{align*}
and thereby, under $\widetilde{\QQ}_x$, the sequence $(X_n-x)_{n \in \NN}$ is i.i.d. and centred. Fix $\delta>0$. By inclusion of events, we obtain that
\begin{align*}
& \QQ\big( \{S_k^X \geq 0 \, , \, \forall k \in \dbl 1, n \dbr\} \cap \{ S_n^X \in [xn,(x+\delta)n) \}  \big) \\
& \qquad \qquad \qquad \qquad \qquad \qquad \qquad \qquad \qquad \geq \QQ\big( \{ S_k^X- k x \geq 0 \, , \, \forall k \in \dbl 1, n \dbr\} \cap \{ S_n^X-nx \in [0,\delta n) \}  \big).
\end{align*}
Therefore, by using the tilted measure $\widetilde{\QQ}_{x}$, we obtain
\begin{align}\label{eq_def_tilting_lower_bound}
& \QQ\big( \{ S_k^X \geq 0 \,,\, \forall k \in \dbl 1, n \dbr\} \cap \{ S_n^X \in [xn,(x+\delta)n) \}  \big) \notag \\
& \qquad \qquad \qquad  \geq \widetilde{\EE}_{x}\big[ \mathbbm{1}_{\{ S_k^X- kx \geq 0 \, , \, \forall k \in \dbl 1, n \dbr\} \cap \{ S_n^X-nx \in [0,\delta n) \}} e^{-\theta_x S_n^X} \big] e^{n \Lambda(\theta_x)} \notag \\
& \qquad \qquad \qquad  \geq \widetilde{\QQ}_{x}\big(\{ S_k^X- kx \geq 0 \,,\, \forall k \in \dbl 1, n \dbr\} \cap \{ S_n^X-nx \in [0,\delta n) \}\big) e^{n \big( \Lambda(\theta_x)-\theta_x x - |\theta_x| \delta \big)},
\end{align} 
where in the last line, we have used that $-\theta_x S_n^X > -\theta_x x n - \vert \theta_x \vert \delta n$ on the event $\{S_n^X-nx \in [0,\delta n)\}$. To control the first term on the right-hand side, we invoke a local central limit theorem for random walks conditioned to remain positive, see \cite{Vatutin}, which yields a lower bound of order $n^{-3 \slash 2}$. In particular, this polynomial decay is negligible on the exponential scale relevant for the large deviation principle. For all $n \in \NN$, let
\begin{align*}
S_n^{X,x} \eqdef \sum_{k=1}^n (X_k-x)=S_n^X-nx.
\end{align*}
As previously noted, under $\widetilde{\QQ}_x$ the sequence $(X_i-x)_{i \in \NN}$ is i.i.d. and centred, so $(S_n^{X,x})_{n \in \NN}$ forms a centred random walk. In the terminology of \cite[p.~180]{Vatutin}, $(S_n^{X,x})_{n \in \NN}$ is $(2,1-x)$-lattice. Moreover, by the central limit theorem, the step distribution of $S_n^{X,x}$ under $\widetilde{\QQ}_x$ belongs to the domain of attraction of the standard Gaussian distribution. Let $\tau^-$ denote the first weak descending epoch
\begin{align*}
\tau^- \eqdef \min\{n \geq 1 \, : \, S_n^{X,x} \leq 0\}.
\end{align*}
Next, define for all $n \in \NN$,
\begin{align*}
k(n,x)\eqdef 2 \left \lfloor \frac{-(1-x)n}{2} \right \rfloor +2,
\end{align*}
so that $(1-x)n+k(n,x) \in (0,2]$. Fix $n$ sufficiently large so that $(1-x)n+k(n,x) \in [0,\delta  n)$. It then follows that
\begin{align}\label{eq_precise_event_tilting_lower_bound}
\widetilde{\QQ}_{x}\big(\{ S_k^{X,x} \geq 0 \,,\, \forall k \in \dbl 1, n \dbr\} \cap \{ S_n^{X,x} \in [0,\delta n) \}\big) \geq \widetilde{\QQ}_{x}\big( \tau^->n \, , \, S_n^{X,x}=(1-x)n+k(n,x) \big).
\end{align} 
By \cite[Theorem~6]{Vatutin}, and noting that $k(n,x) \in (-(1-x)n,-(1-x)n+2] \cap 2 \ZZ$, there exists a constant $C'>0$ such that
\begin{equation}\label{eq_equivalence_Vatutin}
\sqrt{n}\widetilde{\QQ}_{x}\big( S_n^{X,x}=(1-x)n+k(n,x) \big\vert \tau^->n \big) \underset{n \to \infty}{\sim} C' \frac{H\big((1-x)n+k(n,x)\big)}{n \widetilde{\QQ}_x(\tau^->n)},
\end{equation}
where $H$ is the renewal function defined in \cite[p.~179]{Vatutin}. Thus, for all $n$ sufficiently large, since $(1-x)n+k(n,x) \in (0,2]$, it follows from the definition of $H$ in \cite[(7), p.~179]{Vatutin} that
\begin{align*}
H\big((1-x)n+k(n,x)\big)\geq 1.
\end{align*}
Combining this with \eqref{eq_equivalence_Vatutin}, we deduce that there exists a constant $C>0$ such that
\begin{align*}
\widetilde{\QQ}_{x}\big( \tau^->n \, , \, S_n^{X,x}=(1-x)n+k(n,x) \big) \geq \frac{C}{n^{3/2}}.
\end{align*}
This provides a polynomial lower bound on the right-hand side of inequality~(\ref{eq_precise_event_tilting_lower_bound}). Substituting this into inequality~(\ref{eq_def_tilting_lower_bound}), then taking the logarithm, dividing by $n$ and finally taking $\liminf_{n \to \infty}$, we obtain for all $x \in [0,1)$ and all $\delta>0$,
\begin{align*}
& \liminf_{n \to \infty} \frac{1}{n} \log \QQ\big( S_k^X \geq 0 \, , \, \forall k \in \dbl 1, n \dbr  \big) \\
& \qquad \qquad \qquad \qquad \geq \liminf_{n \to \infty} \frac{1}{n} \log \QQ\big( \{S_k^X \geq 0 \, , \, \forall k \in \dbl 1, n \dbr\} \cap \{ S_n^X \in [xn,(x+\delta)n) \}  \big) \\
& \qquad \qquad \qquad \qquad \geq \Lambda(\theta_x)-\theta_x x - |\theta_x| \delta.
\end{align*}
By choice of $\theta_x$, we have
\begin{align*}
\Lambda(\theta_x)-\theta_x x=\inf_{\lambda \in \RR} \big\{ \Lambda(\lambda)-\lambda x \big\} = -\Lambda^*(x)=-I_{\textup{Cr}}^p(x).
\end{align*}
Thus, taking $\delta$ to 0, we deduce that for all $x \in [0,1)$ ,
\begin{equation}\label{eq_liminf_after_exponential_tilting}
\liminf_{n \to \infty} \frac{1}{n} \log \QQ\big( S^X_k \geq 0 \, , \, \forall k \in \dbl 1, n \dbr  \big) \geq -I_{\textup{Cr}}^p(x).
\end{equation}
To handle the case $x=1$, observe that 
\begin{align*}
\QQ\big( S_k^X \geq 0 \, , \, \forall k \in \dbl 1,n \dbr \big) & \geq \QQ \big( X_k=1 \, , \, \forall k \in \dbl 1,n \dbr \big) = p^n.
\end{align*}
Thus, using the definition of $I_{\textup{Cr}}^p$ given in \eqref{eq_def_I_Cr_p}, we deduce that
\begin{align*}
\liminf_{n \to \infty} \frac{1}{n} \log \QQ\big( S^X_k \geq 0 \, , \, \forall k \in \dbl 1, n \dbr  \big) \geq \log(p)=-I_{\textup{Cr}}^p(1).
\end{align*}

Leveraging this inequality and taking the supremum over $x \in [0,1]$ on the right-hand side of \eqref{eq_liminf_after_exponential_tilting} yields 
\begin{equation*}
\liminf_{n \to \infty} \frac{1}{n} \log \QQ\big( S_k^X \geq 0 \,,\, \forall k \in \dbl 1,n \dbr \big) \geq -\inf_{x \in [ 0 , 1]} I_{\textup{Cr}}^p(x),
\end{equation*}
which, together with the upper bound obtained in \eqref{eq_lemma_upper_bound_tilting_for_end} establishes \eqref{eq_lemma_exp_decay_rate_homogeneous_case_end}.

Equation~(\ref{eq_lemma_exp_decay_rate_homogeneous_case_exc}) is shown in a similar manner. We start with the upper bound. 
\begin{align*}
\QQ\big( \{S_k^X \geq 0 \, , \, \forall k \in \dbl 1, 2n \dbr\} \cap \{ S_{2n}^X=0\} \big) & \leq \QQ \left( \left \vert \frac{S_{2n}^X}{2n}  \right \vert \in \{0\} \right).
\end{align*}
Taking the logarithm, dividing by $2n$ and taking the $\limsup_{n \to \infty}$, we obtain thanks to the LDP upper bound on the sequence of empirical means of $(X_n)_{n \in \NN}$, obtained by Cramér's theorem \cite[Theorem~2.2.3]{DZ_LDP}, that
\begin{align*}
\limsup_{n \to \infty} \frac{1}{2n} \log \QQ\big( \{S_k^X \geq 0 \, , \, \forall k \in \dbl 1, 2n \dbr\} \cap \{ S_{2n}^X=0\} \big) \leq - I_{\textup{Cr}}^{p}(0).
\end{align*}
For the lower bound, we use that
\begin{align*}
\QQ\big( \{S_k^X \geq 0 \, , \, \forall k \in \dbl 1, 2n \dbr\} \cap \{ S_{2n}^X=0\} \big) \geq \QQ\big( \{S_k^X > 0 \, , \, \forall k \in \dbl 2, 2n-2 \dbr\} \cap \{ S_{2n-2}^X=2\} \big)(1-p)^2.
\end{align*}
By considering the tilted measure $\widetilde{\QQ}_0$ and the fact that, in the terminology of \cite[p.~180]{Vatutin}, $(S_n^X)_{n \in \NN}$ is $(2,1)$-lattice, we obtain using again \cite[Theorem~6]{Vatutin}, that
\begin{align*}
\liminf_{n \to \infty} \frac{1}{2n} \log \QQ \big( \{ S_k^X \geq 0 \,,\, \forall k \in \dbl 2,2n-2 \dbr\} \cap\{S_{2n-2}^X=2\}\big) \geq \Lambda(\theta_0).
\end{align*}
By choice of $\theta_0$, we have $\Lambda(\theta_0)=-I_{\textup{Cr}}^{p}(0)$ and Equation~(\ref{eq_lemma_exp_decay_rate_homogeneous_case_exc}) follows.
\end{proof}

\begin{remark}\label{remark_after_tilting_for_Catalan}
Lemma~\ref{lemma_exp_decay_rate_homogeneous_case} could alternatively be proved by a combinatorial argument, using Catalan numbers to enumerate excursions of a fixed length and Stirling's approximation to obtain the right exponential decay rate. Although this approach is simpler, the proof presented above is more robust, as it applies to random walks with an arbitrary step distribution of compact support. In particular, it readily extends to the generalisation discussed in Remark~\ref{remark_generalisation_to_step_distribution}. The only modification required is that, if the walk has period $d$ instead of 2, the term $2n$ in Equation~(\ref{eq_lemma_exp_decay_rate_homogeneous_case_exc}) must be replaced by $dn$.
\end{remark}

\begin{remark}\label{remark_other_tilting}
Note that in the proof of Lemma~\ref{lemma_exp_decay_rate_homogeneous_case}, we could have also defined the tilted measures $\widetilde{\QQ}_x$ by setting for $x \in [0,1)$,
\begin{align*}
\frac{\mathrm{d}\widetilde{\QQ}_{x}}{\mathrm{d}\QQ} \Big \vert_{\mathcal{F}_n} \eqdef \left( \frac{p_x}{p} \right)^{\frac{n+S_n^X}{2}} \left(\frac{1-p_x}{1-p} \right)^{\frac{n-S_n^X}{2}}, \quad \text{ where } p_x=\frac{1+x}{2}.
\end{align*}
This change of measure turns a simple random walk with step distribution of mean $2p-1$ into a simple random walk with step distribution of mean $2p_{x}-1=x$. Note that, upon taking the logarithm, dividing by $n$ and taking the limit $n \to \infty$ in the above expression involving $S_n^X$, one recovers the entropic expression of $I_{\textup{Cr}}^{p_{\sigma}}(x)$ given in (\ref{eq_def_I_Cr_p}).
\end{remark}

\noindent The following proposition adapts Lemma~\ref{lemma_exp_decay_rate_homogeneous_case} to our setting, in which the random walk is no longer spatially homogeneous. Recall that by Hypothesis~\ref{hypothesis_on_transition_function_p}, for each $\sigma \in \{-,+\}$, the limits $p_{\sigma} = \lim \limits_{k \to \sigma \infty} p(k)$ exist and $p_{\sigma} \in (0,1)$. We introduce the function $G:\RR_+^* \to \RR$, defined for all $\varepsilon>0$ by
\begin{equation*}\label{eq_def_G_varepsilon}
G(\varepsilon) \eqdef \max \limits_{\sigma \in \{-,+\}} \left\{ \frac{1}{\min\{p_{\sigma},1-p_{\sigma}\}} \sup_{\sigma k \geq R_{\overline{\ZZ}}(\varepsilon)} |p(k)-p_{\sigma}| \right\}.
\end{equation*}
We know that $\lim_{\varepsilon \to 0^+}R_{\overline{\ZZ}}(\varepsilon)=+\infty$. Since $\lim_{k \to \sigma \infty}p(k)=p_{\sigma}$, we deduce that 
\begin{equation}\label{eq_for_lemma_G_varepsilon_to_zero}
\lim \limits_{\varepsilon \to 0^+} G(\varepsilon)=0.
\end{equation}
In particular, there exists $\varepsilon^*>0$ such that for all $0<\varepsilon<\varepsilon^*$, we have $0 \leq G(\varepsilon)<1$. The following proposition gives the local estimates we are looking for. Since the length of an excursion is measured here by the number of positions in the trajectory, rather than by the number of steps, every excursion of a simple random walk has odd length. Therefore, $\Omega_{n,\textup{exc}}^{\sigma}=\emptyset$ for all $n \in 2 \NN$. This is why, in  the following proposition, we consider $2\left \lceil \frac{n}{2} \right \rceil+1$, which is odd, rather than $n$.

\begin{proposition}\label{proposition_exponential_rate_of_decay_exc_and_end}
There exists a function $g:\RR_+ \to \RR$ such that $\lim \limits_{\varepsilon \to 0^+}g(\varepsilon)=0$ and such that if $0<\varepsilon<\varepsilon^*$ and $R \geq R_{\overline{\ZZ}}(\varepsilon) $, then for all $\sigma \in \{-,+\}$,
\begin{align}
-I_{\textup{Cr}}^{p_{\sigma}}(0) - g(\varepsilon) & \leq \liminf \limits_{n \to \infty} \frac{1}{n} \log \PP_{\sigma R}\left(\Omega_{2 \left \lceil \frac{n}{2} \right \rceil+1,\textup{exc}}^{\sigma} \right), \label{eq_proposition_exponential_rate_of_decay_exc_lower} \\
-I_{\textup{Cr}}^{p_{\sigma}}(0) + g(\varepsilon) & \geq \limsup \limits_{n \to \infty} \frac{1}{n} \log \PP_{\sigma R}\left(\Omega_{2 \left \lceil \frac{n}{2} \right \rceil+1,\textup{exc}}^{\sigma} \right), \label{eq_proposition_exponential_rate_of_decay_exc_upper}\\
-\inf_{\sigma x \in [0,1]} I_{\textup{Cr}}^{p_{\sigma}}(x) - g(\varepsilon) & \leq \liminf \limits_{n \to \infty} \frac{1}{n} \log \PP_{\sigma R}\left( \Omega_{2 \left \lceil \frac{n}{2} \right \rceil +1,\textup{mea}}^{\sigma} \right), \label{eq_proposition_exponential_rate_of_decay_final_lower}\\
-\inf_{\sigma x \in [0,1]} I_{\textup{Cr}}^{p_{\sigma}}(x) + g(\varepsilon)& \geq \limsup \limits_{n \to \infty} \frac{1}{n} \log \PP_{\sigma R}\left( \Omega_{2 \left \lceil \frac{n}{2} \right \rceil + 1,\textup{mea}}^{\sigma} \right). \label{eq_proposition_exponential_rate_of_decay_final_upper}
\end{align}
\end{proposition}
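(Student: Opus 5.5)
We compare the inhomogeneous walk, restricted to $A^{\sigma}_R$, with the homogeneous walk of parameter $p_\sigma$ from Lemma~\ref{lemma_exp_decay_rate_homogeneous_case}. For definiteness take $\sigma=+$; the case $\sigma=-$ follows by reflection. Fix $0<\varepsilon<\varepsilon^*$ and $R\ge R_{\overline{\ZZ}}(\varepsilon)$.

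\textbf{Step 1: pathwise likelihood comparison.} Every word $w$ in $\Omega^{+}_{N,\textup{exc}}$ or $\Omega^{+}_{N,\textup{mea}}$ visits only sites $k\ge R\ge R_{\overline{\ZZ}}(\varepsilon)$. At each such site, by definition of $G$,
\[
|p(k)-p_+|\le G(\varepsilon)\min\{p_+,1-p_+\}.
\]
Hence both $p(k)/p_+$ and $(1-p(k))/(1-p_+)$ lie in $[1-G(\varepsilon),1+G(\varepsilon)]$. Since $G(\varepsilon)<1$, the lower end is positive. Write $\mathfrak{p}_{+}(w)$ for the probability of $w$ under the homogeneous walk with parameter $p_+$. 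Using the product formula \eqref{eq_link_p_w_with_probability}, each of the $N-1$ factors of $\mathfrak p(w)$ is within these ratios of the corresponding factor of $\mathfrak{p}_{+}(w)$. This gives
\[
(1-G(\varepsilon))^{N-1}\,\mathfrak p_{+}(w)\le \mathfrak p(w)\le (1+G(\varepsilon))^{N-1}\,\mathfrak p_{+}(w).
\]

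\textbf{Step 2: summing over words.} We sum this comparison over the word sets. Translating by $-R$ turns $\Omega^{+}_{N,\textup{mea}}$ (under $\PP_R$) into the set of paths of the homogeneous walk started at $0$ with $S^X_k\ge 0$ for $k\le N-1$. In the same way, $\Omega^{+}_{N,\textup{exc}}$ becomes that set intersected with $\{S^X_{N-1}=0\}$. The time index shifts by one because words count positions rather than steps. Take $N=2\lceil n/2\rceil+1$, so $N-1=2\lceil n/2\rceil$ is even and $(N-1)/n\to1$. Then
\[
\PP_R(\Omega^{+}_{N,\textup{exc}})\in\Big[(1-G)^{N-1},(1+G)^{N-1}\Big]\cdot \QQ\big(S^X_k\ge0\ \forall k\le N-1,\ S^X_{N-1}=0\big),
\]
and the analogous bracket holds for meanders.

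\textbf{Step 3: passing to the limit.} Take $\frac1n\log$ and let $n\to\infty$, applying \eqref{eq_lemma_exp_decay_rate_homogeneous_case_exc} and \eqref{eq_lemma_exp_decay_rate_homogeneous_case_end} with $p=p_+$. Since $(N-1)/n\to1$, we obtain the four bounds with
\[
g(\varepsilon)\eqdef\max\{-\log(1-G(\varepsilon)),\ \log(1+G(\varepsilon))\}=-\log(1-G(\varepsilon)).
\]
By \eqref{eq_for_lemma_G_varepsilon_to_zero}, $g(\varepsilon)\to0$ as $\varepsilon\to0^+$. To get a function defined on all of $\RR_+$, set $g(\varepsilon)=0$ for $\varepsilon\ge\varepsilon^*$.

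\textbf{Expected difficulty.} No step is deep. The main care is bookkeeping: matching the off-by-one between word length and number of steps, and checking that the parity choice $N-1$ even is exactly what the lemma's $2n$ formulation requires. For meanders the parity is irrelevant, since \eqref{eq_lemma_exp_decay_rate_homogeneous_case_end} holds along all $n$. One also needs the ratio bound for the factor $1-p(k)$, which is why $G$ normalizes by $\min\{p_\sigma,1-p_\sigma\}$.
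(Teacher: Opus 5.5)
Your proposal is correct and follows the paper's proof essentially step for step. Both arguments use the same three ingredients: the pathwise sandwich $(1-G(\varepsilon))^{N-1}$ and $(1+G(\varepsilon))^{N-1}$ around the homogeneous weight $p_+^{N_\uparrow}(1-p_+)^{N_\downarrow}$, a translation by $R$ identifying the sums with the constrained homogeneous probabilities over $2\lceil n/2\rceil$ steps, and Lemma~\ref{lemma_exp_decay_rate_homogeneous_case}. Your choice $g(\varepsilon)=-\log(1-G(\varepsilon))$ coincides with the paper's $\max\{|\log(1-G(\varepsilon))|,\log(1+G(\varepsilon))\}$.
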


\begin{proof}
We will restrict our attention to the case where $\sigma=+$, since the case where $\sigma=-$ follows by the same arguments. For a trajectory $w \in \Omega_{n}$, consider the number of upward and downward steps
\begin{align*}
N_{\uparrow}(w) \eqdef \# \big\{ j \in \dbl 1,n-1 \dbr : w_{j+1}=w_j+1 \big\} \quad \text{and} \quad N_{\downarrow}(w) \eqdef \# \big\{ j \in \dbl 1,n-1 \dbr : w_{j+1}=w_j-1 \big\}.
\end{align*} 
For all $k \geq R_{\overline{\ZZ}}(\varepsilon)$, we have by definition of $G(\varepsilon)$, that
\begin{align*}
\left \vert \frac{p(k)}{p_+}-1 \right \vert = \frac{1}{p_+}|p(k)-p_+| \leq G(\varepsilon) \qquad \text{and} \qquad \left \vert \frac{1-p(k)}{1-p_+} -1 \right \vert = \frac{1}{1-p_+}|p(k)-p_+| \leq G(\varepsilon).
\end{align*}
If $w \in \Omega_{n,\textup{mea}}^{+}$, then for all $j \in \dbl 1,n \dbr$, $w_j \geq R \geq R_{\overline{\ZZ}}(\varepsilon)$. Therefore, for all $j \in \dbl 1,n \dbr$,
\begin{equation}\label{eq_bounds_proba_transition_close_to_R}
1-G(\varepsilon) \leq \frac{p(w_j)}{p_+} \leq 1+G(\varepsilon) \qquad \text{and} \qquad 1-G(\varepsilon) \leq \frac{1-p(w_j)}{1-p_+} \leq 1+G(\varepsilon).
\end{equation}
We deduce that for all $n \in \NN$ and $w \in \Omega_{n,\textup{mea}}^{+}$, and hence also for all $w \in \Omega_{n,\textup{exc}}^+$, we have
\begin{equation}\label{eq_bound_proba_of_trajectory}
\big(1-G(\varepsilon)\big)^{n-1} p_+^{N_{\uparrow}(w)}(1-p_+)^{N_{\downarrow}(w)} \leq \mathfrak{p}(w) \leq \big(1+G(\varepsilon)\big)^{n-1} p_+^{N_{\uparrow}(w)}(1-p_+)^{N_{\downarrow}(w)}.
\end{equation}
In other words, up to the factors $(1-G(\varepsilon))^{n-1}$ and $(1+G(\varepsilon))^{n-1}$, we have reduced the problem to that of a simple random walk with constant transition probability function $p \equiv p_+$. We are now in a position to apply the results obtained in Lemma~\ref{lemma_exp_decay_rate_homogeneous_case}. Let $(X_n)_{n \in \NN}$ be a sequence of i.i.d. random variables with distribution $\QQ(X_1=1)=p_+$ and $\QQ(X_1=-1)=1-p_+$, and $S_n^X=\sum_{k=1}^n X_k$. By translation invariance of homogeneous random walks and by identifying excursions above $R$ with excursions above 0, we deduce that
\begin{align*}
\sum_{w \in \Omega_{2 \left \lceil \frac{n}{2} \right \rceil+1,\textup{exc}}^+}p_+^{N_{\uparrow}(w)}(1-p_+)^{N_{\downarrow}(w)} & = \QQ\left( \left\{S^X_{k} \geq 0 \,,\, \forall k \in \left\dbl 1,2 \left \lceil \frac{n}{2} \right \rceil \right\dbr \right\} \cap \left\{ S_{2 \left \lceil \frac{n}{2} \right \rceil}^X=0 \right\} \right) \\
\text{and} \qquad \sum_{w \in \Omega_{2 \left \lceil \frac{n}{2} \right \rceil+1, \textup{mea}}^+}p_+^{N_{\uparrow}(w)}(1-p_+)^{N_{\downarrow}(w)} & = \QQ\left( S^X_{k} \geq 0 \,,\, \forall k \in \left\dbl 1,2 \left \lceil \frac{n}{2} \right \rceil \right\dbr \right).
\end{align*} 
Therefore, summing over all $w \in \Omega_{2 \left \lceil \frac{n}{2} \right \rceil+1,\textup{exc}}^+$ in Equation~(\ref{eq_bound_proba_of_trajectory}), we obtain
\begin{align*}
& \big(1-G(\varepsilon)\big)^{2 \left \lceil \frac{n}{2} \right \rceil} \QQ\left( \left\{S^X_{k} \geq 0 \, , \, \forall k \in \left\dbl 1,2 \left \lceil \frac{n}{2} \right \rceil \right\dbr \right\} \cap \left\{ S_{2 \left \lceil \frac{n}{2} \right \rceil}^X=0\right\} \right) \leq \PP_{ R}\left(\Omega_{2 \left \lceil \frac{n}{2} \right \rceil+1,\textup{exc}}^{+} \right) \\
\text{and} \qquad & \PP_{ R}\left(\Omega_{2 \left \lceil \frac{n}{2} \right \rceil+1,\textup{exc}}^{+} \right) \leq \big(1+G(\varepsilon)\big)^{2 \left \lceil \frac{n}{2} \right \rceil } \QQ\left( \left\{S^X_{k} \geq 0 \, , \, \forall k \in \left\dbl 1,2 \left \lceil \frac{n}{2} \right \rceil \right\dbr \right\} \cap \left\{ S_{2 \left \lceil \frac{n}{2} \right \rceil}^X=0\right\} \right).
\end{align*}
By taking the logarithm, dividing by $n$ and then taking $\liminf_{n \to \infty}$ on both sides of the first inequality and then using Equation~(\ref{eq_lemma_exp_decay_rate_homogeneous_case_exc}) of Lemma~\ref{lemma_exp_decay_rate_homogeneous_case}, we obtain
\begin{align*}
-I_{\textup{Cr}}^{p_+}(0) + \log\big(1-G(\varepsilon)\big) & \leq \liminf \limits_{n \to \infty} \frac{1}{n} \log \PP_R\left( \Omega_{2\left \lceil \frac{n}{2} \right \rceil +1,\textup{exc}}^{+} \right).
\end{align*}
If instead, we take the logarithm, divide by $n$ and take $\limsup_{n \to \infty}$ in the second inequality, we obtain 
\begin{align*}
\limsup \limits_{n \to \infty} \frac{1}{n} \log \PP_R\left( \Omega_{2\left \lceil \frac{n}{2} \right \rceil +1,\textup{exc}}^{+} \right) & \leq -I_{\textup{Cr}}^{p_+}(0) + \log\big(1+G(\varepsilon)\big).
\end{align*}
Thus, inequalities (\ref{eq_proposition_exponential_rate_of_decay_exc_lower}) and (\ref{eq_proposition_exponential_rate_of_decay_exc_upper}) follow by setting for all $\varepsilon>0$,
\begin{align*}
g(\varepsilon) \eqdef \max\big\{ \big\vert \log\big(1-G(\varepsilon)\big) \big \vert \, ; \, \log\big(1+G(\varepsilon) \big)\big\}.
\end{align*}
By Equation~(\ref{eq_for_lemma_G_varepsilon_to_zero}), the function $g$ satisfies $\lim_{\varepsilon \to 0^+} g(\varepsilon)=0$. To obtain inequalities (\ref{eq_proposition_exponential_rate_of_decay_final_lower}) and (\ref{eq_proposition_exponential_rate_of_decay_final_upper}), we sum over all $w \in \Omega_{2 \left \lceil \frac{n}{2} \right \rceil+1,\textup{mea}}$ in Equation~(\ref{eq_bound_proba_of_trajectory}) and obtain
\begin{align*}
\big(1-G(\varepsilon)\big)^{2 \left \lceil \frac{n}{2} \right \rceil} \QQ\left(S^X_{k} \geq 0 \, , \, \forall k \in \left\dbl 1,2 \left \lceil \frac{n}{2} \right \rceil \right\dbr \right)
& \leq \PP_{ R}\left(\Omega_{2 \left \lceil \frac{n}{2} \right \rceil+1,\textup{mea}}^{+} \right) \\
& \leq \big(1+G(\varepsilon)\big)^{2 \left \lceil \frac{n}{2} \right \rceil} \QQ\left( S^X_{k} \geq 0 \, , \, \forall k \in \left\dbl 1,2 \left \lceil \frac{n}{2} \right \rceil \right\dbr \right).
\end{align*}
Taking $\liminf_{n \to \infty} \frac{1}{n} \log$ on both sides of the first inequality and $\limsup_{n \to \infty} \frac{1}{n} \log$ on both sides of the second inequality and using Equation~(\ref{eq_lemma_exp_decay_rate_homogeneous_case_end}) of Lemma~\ref{lemma_exp_decay_rate_homogeneous_case}, we obtain inequalities (\ref{eq_proposition_exponential_rate_of_decay_final_lower}) and (\ref{eq_proposition_exponential_rate_of_decay_final_upper}).
\end{proof}

\section{The lower bound}\label{section_lower_bound}

The aim of this section is to establish inequality (\ref{eq_prop_lower_bound_RL}) of Proposition~\ref{proposition_RL_functions}, which we restate below.

\begin{proposition}\label{prop_lower_bound}
For all $\mu \in \mathcal{P}(\overline{\ZZ})$, 
\begin{equation*}\label{eq_prop_lower_bound}
-I(\mu) \leq \lim_{\varepsilon \to 0^+} \liminf_{n \to \infty} \frac{1}{n} \log \PP\big( \ell_n \in B(\mu,\varepsilon) \big),
\end{equation*}
where the rate function $I$ is defined in (\ref{eq_def_rate_fuction}).
\end{proposition}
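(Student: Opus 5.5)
The plan is to construct, for a target $\mu=\sum_\sigma \alpha_\sigma\mu_\sigma$ with $I(\mu)<\infty$, explicit sets of typical trajectories whose empirical measures lie in $B(\mu,\varepsilon)$. We bound their probability from below via the Markov property, and then take the best of the two variational branches in \eqref{eq_def_rate_fuction}. By symmetry it suffices to treat the branch $\alpha_{-}I_{\textup{Cr}}^{p_{-}}(0)+\alpha_+\inf_{x\in[0,1]}I_{\textup{Cr}}^{p_+}(x)$, realised by trajectories ending in $A^+$. Fix $\varepsilon$ small, then $R\geq\max\{R_{\overline{\ZZ}}(\varepsilon),R_{\mu_0}(\varepsilon)\}$ with $R$ also large enough that $\mu_0(\dbl -R+1,R-1\dbr)>1-\varepsilon$. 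The trajectory will be built by concatenating three pieces:
\begin{itemize}[label=(\roman*)]
\item[(i)] a central word of length $\approx\alpha_0 n$ started at $0$ with $\ell\in B(\mu_0,\varepsilon)$;
\item[(ii)] a bridge of bounded length to $-R$, followed by an excursion in $A^-$ of length $\approx\alpha_- n$ returning to $-R$;
\item[(iii)] a bridge of length $2R$ (through $A^0$) from $-R$ to $R$, followed by a meander in $A^+$ of length $\approx\alpha_+ n$.
\end{itemize}

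The first technical step is to control the endpoint of piece (i), since the central word may end anywhere in $\dbl -R-2\varepsilon n, R+2\varepsilon n\dbr$ (Corollary~\ref{corollary_bound_on_last_point_if_close_to_mu_c_appendix}). Rather than requiring it to end at a fixed point, we allow it to end anywhere and then append a straight path to $-R$ of length at most $R+2\varepsilon n$. This path costs at most $(\min_k\min\{p(k),1-p(k)\})^{-(R+2\varepsilon n)}$, where uniform ellipticity (Hypothesis~\ref{hypothesis_on_transition_function_p}) makes the infimum over $\ZZ$ positive. The extra cost is therefore at most $e^{-c\varepsilon n}$, which vanishes as $\varepsilon\to0$. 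The path spends $O(\varepsilon n)$ time, so it perturbs the empirical measure by $O(\varepsilon)$ in KR norm, using \eqref{eq_lemma_properties_KR_norm_of_proba}. The lengths of the excursion and meander are adjusted (parity via $2\lceil\cdot/2\rceil+1$) so that the total length is exactly $n$.

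Next, we check that the concatenated word lies in $B(\mu,C\varepsilon)$ for the chosen radius. Write $\ell(w)$ as a convex combination of the piece empirical measures with weights $\approx\alpha_\sigma$. The central piece is within $2^{-R}\varepsilon$ of $\mu_0$. The excursion and meander pieces are within $\varepsilon$ of $\delta_{\mp\infty}$ by \eqref{eq_lemma_proximity_to_infty} and \eqref{eq_lemma_properties_KR_norm_distance_dirac}. The bridges contribute $O(R+\varepsilon n)/n$.

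Finally, by the Markov property at the junction times, $\PP$ of this set is at least
\begin{align*}
\PP\big(\ell_{\alpha_0 n}\in B(\mu_0,2^{-R}\varepsilon)\big)\cdot e^{-c(R+\varepsilon n)}\cdot\PP_{-R}\big(\Omega^{-}_{\cdot,\textup{exc}}\big)\cdot e^{-cR}\cdot\PP_{R}\big(\Omega^{+}_{\cdot,\textup{mea}}\big).
\end{align*}
Proposition~\ref{proposition_exponential_rate_decay_I_DV} gives the rate $-\alpha_0 I_{\textup{DV}}(\mu_0)$ (up to $o_\varepsilon(1)$) for the first factor. Here we use that the weak LDP lower bound holds on any open ball, so the liminf over the ball $B(\mu_0,2^{-R}\varepsilon)$ still satisfies the bound, since the lower bound holds for every radius. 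Proposition~\ref{proposition_exponential_rate_of_decay_exc_and_end} gives rates $-\alpha_-I_{\textup{Cr}}^{p_-}(0)-\alpha_-g(\varepsilon)$ and $-\alpha_+\inf_{[0,1]}I_{\textup{Cr}}^{p_+}-\alpha_+g(\varepsilon)$ for the excursion and meander factors.

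Degenerate cases, where some $\alpha_\sigma=0$, are handled by simply omitting the corresponding piece. When $\alpha_0=0$, piece (i) is omitted, and we start by walking directly from $0$ to $\pm R$. Letting $n\to\infty$ and then $\varepsilon\to0$ yields the bound for this branch, and the symmetric construction gives the other; taking the maximum gives $-I(\mu)$.

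The main obstacle is the bookkeeping in the first step. We must ensure the rescaled lengths (e.g. $\alpha_0 n$ versus $n$) are compatible with the liminf rates, which requires passing from $\liminf_n\frac1n\log$ along arbitrary lengths $\lfloor\alpha n\rfloor$. We must also ensure that the $\varepsilon n$-length correction path does not spoil the estimates, which relies on uniform ellipticity to bound its cost by $e^{-c\varepsilon n}$.
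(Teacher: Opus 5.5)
Your proposal is correct and follows the paper's proof essentially step for step. You use the same concatenation: a central word with empirical measure in $B(\mu_0,2^{-R}\varepsilon)$, a straight bridge to $-\sigma R$ whose length is controlled by Corollary~\ref{corollary_bound_on_last_point_if_close_to_mu_c_appendix}, an excursion in $A^{-\sigma}$, the bridge $\chi^{\sigma}$ to $\sigma R$, and a meander in $A^{\sigma}$. As in the paper, bridge costs are paid with $p_*^{O(R+\varepsilon n)}$, the rates come from Propositions~\ref{proposition_exponential_rate_decay_I_DV} and~\ref{proposition_exponential_rate_of_decay_exc_and_end} after rescaling lengths, and you finish with a maximum over the two branches.

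The only difference is bookkeeping. Instead of adjusting the excursion and meander lengths, the paper shortens each block to length $\approx(\alpha_\sigma-3\varepsilon)n$ and fills the remainder with a straight padding word $b$ inside $A^{\sigma}$. Reserving this slack also avoids the negative-length problem you would face when some $\alpha_\sigma$ is positive but small compared with $\varepsilon$. All connecting words together then cost a factor of at least $p_*^{10\varepsilon n}$.
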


\noindent To obtain such a lower bound, we define a set of typical trajectories whose probability decays at the right exponential order and which is included in the event whose probability we aim at estimating. Throughout Sections~\ref{subsection_construction_of_typical_trajectories} and \ref{subsection_properties_of_typical_trajectories}, we fix in the following order
\begin{enumerate}
\item $\mu=\sum_{\sigma \in \{-,0,+\}} \alpha_{\sigma} \mu_{\sigma} \in \mathcal{P}(\overline{\ZZ})$ and  $0<\varepsilon<\min\left\{\frac{1}{9},\varepsilon^*\right\}$, where $\varepsilon^*$ is defined after \eqref{eq_for_lemma_G_varepsilon_to_zero},
\item $R>\max\{R_{\mu_0}(\varepsilon),R_{\overline{\ZZ}}(\varepsilon)\}$, where $R_{\overline{\ZZ}}(\varepsilon)$ is defined in (\ref{eq_def_R_Z_varepsilon}) and $R_{\mu_0}(\varepsilon)$ in \eqref{eq_def_R_mu_0_varepsilon},
\item $n > \frac{4R+9}{\varepsilon}$.
\end{enumerate}

\subsection{Construction of the typical trajectories}\label{subsection_construction_of_typical_trajectories}

For $\sigma \in \{-,0,+\}$, set
\begin{align}\label{eq_def_times_for_typical_traj}
t^{\sigma} \eqdef \max \left\{ 2 \left \lceil \frac{(\alpha_{\sigma}-3\varepsilon) n}{2} \right \rceil+1 \, , \, 1 \right\}.
\end{align}
The quantities $t^0$, $t^-$ and $t^+$ are times which will be used in the construction of the set of typical trajectories and are essentially the times spent by these trajectories in $A^0$, $A^-$ and $A^{+}$ respectively. Note that $t^{\sigma} \simeq \alpha_{\sigma} n$. We will make one construction for each class of trajectories $\mathscr{C}_{n}^{-}$ and $\mathscr{C}_n^{+}$. The class $\mathscr{C}_{n}^{0}$ will not play a role in our rate function and so we will not describe a construction for it. For the sake of readability, we introduce the following notations:
\begin{align*}
\Omega_{\textup{c}} = \left\{
\begin{array}{cl}
\Omega_{t^0}^{(0)}(\mu_0,2^{-R} \varepsilon) & \text{, if } \alpha_0>3 \varepsilon \\
\{(0)\} & \text{, if } \alpha_0 \leq 3 \varepsilon 
\end{array}
\right. \qquad , \qquad \Omega_{\textup{exc}}^{\sigma} = \Omega_{t^{\sigma},\textup{exc}}^{\sigma} \qquad \text{and} \qquad
\Omega_{\textup{mea}}^{\sigma} = \Omega_{t^{\sigma},\textup{mea}}^{\sigma}. 
\end{align*}
where we recall these sets are defined respectively in Equations~(\ref{eq_def_trajectories_close_to_mu}), (\ref{eq_def_trajectories_excursions_above_R}) and (\ref{eq_def_trajectories_end_above_R}). Note that when $\alpha_{\sigma} \leq 3 \varepsilon$, then $t^{\sigma}=1$. This is why, in this case and for $\sigma=0$, $\Omega_{\textup{c}}$ consists of a single trajectory: the one-letter word $(0)$. To give a more concrete description, the set $\Omega_{\textup{exc}}^{+}$ consists of trajectories which are excursions of length $t^{+} \simeq \alpha_{+} n$ above $R$ and the set $\Omega_{\textup{mea}}^{-}$ consists of meanders of length $t^{-} \simeq \alpha_{-}n$ below $-R$.

To construct the typical trajectories, we assemble words from the sets defined above, linking them with short intermediate subwords, which we define next. For each $\sigma \in \{-,+\}$, let
\begin{equation}\label{eq_def_xi}
\xi^{\sigma}(k) \eqdef 
\begin{cases}
(k+\sigma 1,k+\sigma 2, \cdots , \sigma(R-2),\sigma(R-1)), & \text{if } \sigma k \leq R, \\
(k-\sigma 1,k-\sigma 2, \cdots , \sigma(R+2),\sigma(R+1)), & \text{otherwise.}
\end{cases}
\end{equation}
with the convention that the word is empty when $k=\sigma(R-1)$ or $k=\sigma(R+1)$ and $\xi^{\sigma}(\sigma R)=(\sigma(R-1))$. We also define
\begin{equation}\label{eq_def_chi}
\chi^{\sigma} \eqdef \big(-\sigma (R-1),\cdots,-\sigma 1,0,\sigma 1, \cdots ,\sigma(R-1) \big) = \xi^{\sigma}(-\sigma R).
\end{equation}
The word $\xi^{\sigma}(k)$ is a word connecting the letter $k$ to the letter $\sigma R$ and $\chi^{\sigma}$ is a word connecting $-\sigma R$ to $\sigma R$. The following lemma allows us to bound the length of these connecting words. 

\begin{lemma}[Bound on the last letter of $v^{\textup{c}}$]\label{lemma_bound_on_last_point_if_close_to_mu_c}
If $v^{\textup{c}} \in \Omega_{\textup{c}}$, then
\begin{equation*}\label{eq_lemma_bound_on_last_point_if_close_to_mu_c}
|v^{\textup{c}}_{-1}| \leq R+2 \varepsilon n.
\end{equation*}
\end{lemma}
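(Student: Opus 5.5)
We split according to the definition of $\Omega_{\textup{c}}$. If $\alpha_0 \leq 3\varepsilon$, then $\Omega_{\textup{c}}=\{(0)\}$, so $v^{\textup{c}}_{-1}=0$ and the bound $|v^{\textup{c}}_{-1}| \leq R+2\varepsilon n$ holds trivially.

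If $\alpha_0>3\varepsilon$, then $v^{\textup{c}} \in \Omega_{t^0}^{(0)}(\mu_0,2^{-R}\varepsilon)$. Here $\mu_0 \in \mathcal{P}(\ZZ)$ is regarded as an element of $\mathcal{P}(\overline{\ZZ})$ with no mass at $\pm\infty$, so its decomposition has $\alpha_0=1$ and $\alpha_\pm=0$. Its central part is $\mu_0$ itself, so $R_{\mu_0}(\varepsilon)$ is the same quantity as for $\mu$. Since $R>R_{\mu_0}(\varepsilon)$ by the standing choice of $R$, Corollary~\ref{corollary_bound_on_last_point_if_close_to_mu_c_appendix} applies with word length $t^0$ in place of $n$. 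It gives
\begin{equation*}
|v^{\textup{c}}_{-1}| \leq R+2\varepsilon t^0 .
\end{equation*}

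It remains to check that $t^0 \leq n$. Since $\alpha_0 \leq 1$,
\begin{equation*}
t^0 = 2\left\lceil \tfrac{(\alpha_0-3\varepsilon)n}{2}\right\rceil+1 \leq (\alpha_0-3\varepsilon)n+3 \leq n - 3\varepsilon n + 3 .
\end{equation*}
The standing assumption $n>\frac{4R+9}{\varepsilon}$ gives $3\varepsilon n>3$, so $t^0 \leq n$ and therefore $|v^{\textup{c}}_{-1}| \leq R+2\varepsilon n$.

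No step is a real obstacle. The only care needed is the rounding in $t^0$, which is handled by the lower bound on $n$, and the fact that the corollary is used for the measure $\mu_0$ rather than $\mu$.
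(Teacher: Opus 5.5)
Your proof is correct and follows the paper's argument: the trivial case $\alpha_0 \leq 3\varepsilon$, then Corollary~\ref{corollary_bound_on_last_point_if_close_to_mu_c_appendix} applied to the word of length $t^0$, then $t^0 \leq n$. You also check $t^0 \leq n$ explicitly from the rounding bound and $n > \frac{4R+9}{\varepsilon}$, a step the paper states without comment.
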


\begin{proof}
If $\alpha_0 \leq 3 \varepsilon$, then $v^{\textup{c}}=(0)$ and the result is clear. Otherwise, $v^{\textup{c}} \in \Omega_{t^0}^{(0)}(\mu_0,2^{-R}\varepsilon)$. Since $R>R_{\mu_0}(\varepsilon)$, we can apply Corollary~\ref{corollary_bound_on_last_point_if_close_to_mu_c_appendix} to obtain
\begin{equation*}
|v^{\textup{c}}_{-1}| \leq R+2 \varepsilon t^0 \leq R+2\varepsilon n. \qedhere
\end{equation*}
\end{proof} 

\begin{lemma}\label{lemma_existence_b_lower_bound}
For all $\sigma \in \{-,+\}$ and $ \big(v^{\textup{c}},v^{\textup{exc}},v^{\textup{mea}}\big) \in \Omega_{\textup{c}} \times \Omega_{\textup{exc}}^{-\sigma} \times \Omega_{\textup{mea}}^{\sigma}$, there exists a word $b \in \Omega_{\textup{fin}}$ such that
\begin{enumerate}
\item $v^{\textup{mea}}_{-1} \sim b_1$,
\item $|v^{\textup{c}}|+|\xi^{-\sigma}(v^{\textup{c}}_{-1})|+|v^{\textup{exc}}|+|\chi^{\sigma}|+|v^{\textup{mea}}|+|b|=n$,
\item For all $j \in \dbl 1,|b| \dbr$, $b_j \in A^{\sigma}$.
\end{enumerate}
\end{lemma}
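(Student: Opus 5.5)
We take $\sigma=+$; the case $\sigma=-$ is symmetric. The idea is to let $b$ be a simple back-and-forth word near $v^{\textup{mea}}_{-1}$ inside $A^+$, whose length is exactly what remains of the budget $n$. Set
\[
L \eqdef n-\big(|v^{\textup{c}}|+|\xi^{-}(v^{\textup{c}}_{-1})|+|v^{\textup{exc}}|+|\chi^{+}|+|v^{\textup{mea}}|\big).
\]
The proof has two parts: show that $L\geq 1$, then write down a word $b$ of length exactly $L$ satisfying conditions 1 and 3.

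\emph{Step 1: $L\geq1$.} We bound each term. By construction $|v^{\textup{c}}|=t^0$, $|v^{\textup{exc}}|=t^-$ and $|v^{\textup{mea}}|=t^+$. From the definition \eqref{eq_def_times_for_typical_traj}, each satisfies
\[
t^{\sigma'}\leq \max\{(\alpha_{\sigma'}-3\varepsilon)n+3,\,1\}.
\]
We also have $|\chi^+|=2R-1$. By Lemma~\ref{lemma_bound_on_last_point_if_close_to_mu_c},
\[
|\xi^-(v^{\textup{c}}_{-1})|\leq |v^{\textup{c}}_{-1}|+R\leq 2R+2\varepsilon n+1.
\]
Summing, and considering separately the cases $\alpha_{\sigma'}\leq 3\varepsilon$ (where $t^{\sigma'}=1\leq \alpha_{\sigma'}n-3\varepsilon n+3\varepsilon n+1$), we get
\[
n-L\leq \sum_{\sigma'}\alpha_{\sigma'}n-9\varepsilon n+(\text{at most }3\varepsilon n\text{ from the degenerate cases})+2\varepsilon n+4R+O(1).
\]
The crude form of this is $n-L\leq n-7\varepsilon n+3\varepsilon n+\dots$. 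More carefully, each $t^{\sigma'}\leq \max(\alpha_{\sigma'}n-3\varepsilon n+3,1)\leq \alpha_{\sigma'}n-3\varepsilon n+3+3\varepsilon n\,\mathbbm{1}_{\alpha_{\sigma'}\leq3\varepsilon}$. So in the worst case the total is still at most $n+2\varepsilon n+4R+\text{const}$, which is not good enough.

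The obstacle is therefore the degenerate cases. There one has to use that the number of such $\sigma'$ is at most two, and that at least one $\alpha_{\sigma'}\geq 1/3>3\varepsilon$ because $\varepsilon<1/9$. This yields a net saving of at least $3\varepsilon n-2\varepsilon n=\varepsilon n$ minus constants. The careful bookkeeping gives
\[
n-L\leq n-\varepsilon n+4R+8,
\]
so $L\geq \varepsilon n-4R-8\geq1$ by the choice $n>(4R+9)/\varepsilon$. This bookkeeping is where I expect the real work: one checks that the constants fit the threshold $(4R+9)/\varepsilon$.

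\emph{Step 2: constructing $b$.} Let $k=v^{\textup{mea}}_{-1}\geq R$. Define $b$ of length $L$ by alternating $b_j=k+1$ for odd $j$ and $b_j=k$ for even $j$. Then:
\begin{itemize}
\item $b_1=k+1\sim k$, which is condition 1;
\item every $b_j\geq R$, so $b_j\in A^+$, which is condition 3;
\item consecutive letters are adjacent, so $b\in\Omega_{\textup{fin}}$;
\item condition 2 holds by the definition of $L$.
\end{itemize}
No parity constraint is needed, since $b$ is allowed to end anywhere in $A^+$. For $\sigma=-$ we use $k-1$ instead of $k+1$, which keeps $b$ inside $A^-$.
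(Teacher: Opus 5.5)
Your proposal is correct and follows essentially the paper's argument. In Step 1, the paper fixes $\sigma^*$ with $\alpha_{\sigma^*}\geq 3\varepsilon$, uses $t^{\sigma^*}\leq(\alpha_{\sigma^*}-3\varepsilon)n+3$, and applies the cruder bound $t^{\sigma'}\leq\alpha_{\sigma'}n+3$ to the other two indices whether or not they are degenerate; this removes the case analysis you found awkward and gives a total length of at most $n-\varepsilon n+4R+9<n$. In Step 2, the paper fills the remaining length with the monotone word $(v^{\textup{mea}}_{-1}+\sigma 1,\dots,v^{\textup{mea}}_{-1}+\sigma h)$ rather than your back-and-forth word, which is a purely cosmetic difference since both stay in $A^{\sigma}$.
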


\begin{proof}
Let us start by showing that for all $\sigma \in \{-,+\}$ and $ \big(v^{\textup{c}},v^{\textup{exc}},v^{\textup{mea}}\big) \in \Omega_{\textup{c}} \times \Omega_{\textup{exc}}^{-\sigma} \times \Omega_{\textup{mea}}^{\sigma}$, 
\begin{align}\label{eq_subwords_and_connecting_not_too_long}
|v^{\textup{c}}|+|\xi^{-\sigma}(v^{\textup{c}}_{-1})|+|v^{\textup{exc}}|+|\chi^{\sigma}|+|v^{\textup{mea}}|<n.
\end{align}
First, if we had $\alpha_{\sigma} \leq 3 \varepsilon$ for all $\sigma \in \{-,0,+\}$, then, using that $\varepsilon<\frac{1}{9}$, we would have
\begin{align*}
1=\sum_{\sigma \in \{-,0,+\}} \alpha_{\sigma} \leq 9 \varepsilon < 1,
\end{align*}
which is a contradiction. Let $\sigma^* \in \{-,0,+\}$ be such that $\alpha_{\sigma^*} \geq 3 \varepsilon$. The lengths of the subwords $v^{\textup{c}}$, $v^{\textup{exc}}$ and $v^{\textup{mea}}$ are given respectively by the times $t^0$, $t^{-\sigma}$ and $t^{\sigma}$ defined in Equation~(\ref{eq_def_times_for_typical_traj}). We will use the following bounds: 
\begin{align*}
t^{\sigma^*} \leq (\alpha_{\sigma^*}-3\varepsilon)n+3 \qquad \text{and} \qquad t^{\sigma} \leq \alpha_{\sigma}n+3 \quad \text{for } \sigma \neq \sigma^*.
\end{align*}
We can also bound the lengths of the connecting subwords $\xi^{\sigma}(v^{\textup{c}}_{-1})$ and $\chi^{\sigma}$. Since $v^{\textup{c}} \in \Omega_{\textup{c}}$, we have by Lemma~\ref{lemma_bound_on_last_point_if_close_to_mu_c} that $|v^{\textup{c}}_{-1}| \leq 2 \varepsilon n + R$. Thus, using Equation~(\ref{eq_def_xi}), we obtain for all $\sigma \in \{-,+\}$ that 
\begin{align*}
|\xi^{-\sigma}(v^{\textup{c}}_{-1})| & \leq |v^{\textup{c}}_{-1}|+R-1 \\
& \leq 2 \varepsilon n + 2R.
\end{align*}
We also directly get from Equation~(\ref{eq_def_chi}) that
\begin{align*}
|\chi^{\sigma}| \leq 2R.
\end{align*}
Thus, using that $\alpha_0+\alpha_{+}+\alpha_{-}=1$, we obtain
\begin{align*}
|v^{\textup{c}}|+|\xi^{-\sigma}(v^{\textup{c}}_{-1})|+|v^{\textup{exc}}|+|\chi^{\sigma}|+|v^{\textup{mea}}| & \leq t^0 + 2 \varepsilon n + 2R +t^{-\sigma} + 2R + t^{\sigma} \\
& \leq (\alpha_{\sigma^*}-3\varepsilon)n+\sum_{\sigma \neq \sigma^*}\alpha_{\sigma}n+2 \varepsilon n+ 4R+9 \\
& = n + 4R+9-\varepsilon n.
\end{align*}
Since $n>\frac{4R+9}{\varepsilon}$, we obtain Equation~(\ref{eq_subwords_and_connecting_not_too_long}). Therefore,
\begin{align*}
h \eqdef n-|v^{\textup{c}}|-|\xi^{-\sigma}(v^{\textup{c}}_{-1})|-|v^{\textup{exc}}|-|\chi^{\sigma}|-|v^{\textup{mea}}|>0.
\end{align*}
Let $b=\big(v^{\textup{mea}}_{-1}+\sigma 1 \, , \cdots , \, v^{\textup{mea}}_{-1}+\sigma h \big)$. One can verify that $b$ satisfies the three required properties.
\end{proof}

\begin{definition}\label{def_map_psi_sigma}
For $\sigma \in \{-,+\}$ we define the map $\psi^{\sigma}$ by setting
\begin{equation*}\label{eq_def_typical_trajectory_function}
\begin{array}{cccc}
\psi^{\sigma} & : \, \Omega_{\textup{c}} \times \Omega_{\textup{exc}}^{-\sigma} \times \Omega_{\textup{mea}}^{\sigma} & \longrightarrow & \mathscr{C}_{n}^{\sigma} \\
& \big(v^{\textup{c}},v^{\textup{exc}},v^{\textup{mea}}\big) & \longmapsto & v^{\textup{c}} \cdot \xi^{-\sigma}(v^{\textup{c}}_{-1}) \cdot v^{\textup{exc}} \cdot \chi^{\sigma} \cdot v^{\textup{mea}} \cdot b
\end{array},
\end{equation*}
where $b \in \Omega_{\textup{fin}}$ is an arbitrary word which satisfies the properties \textit{1.}-\textit{3.} of Lemma~\ref{lemma_existence_b_lower_bound}.
\end{definition}

\begin{remark}\label{remark_psi_well_defined}
Let us verify that $\psi^{\sigma}(v^{\textup{c}},v^{\textup{exc}},v^{\textup{mea}}) \in \mathscr{C}_n^{\sigma}$. First, the length of $b$ ensures that the concatenated word is of length $n$. Then, by construction of $\xi^{-\sigma}$, $\chi^{\sigma}$ and $b$ we also have $v^{\textup{c}}_{-1} \sim \xi^{-\sigma}(v^{\textup{c}}_{-1})_1$, $\xi^{-\sigma}(v^{\textup{c}}_{-1})_{-1} \sim v^{\textup{exc}}_1$, $v^{\textup{exc}}_{-1} \sim \chi^{\sigma}_1$, $\chi^{\sigma}_{-1} \sim v^{\textup{mea}}_1$ and $v^{\textup{mea}}_{-1} \sim b_1$. Thus, $\psi^{\sigma}(v^{\textup{c}},v^{\textup{exc}},v^{\textup{mea}}) \in \Omega_n$. We also have $v^{\textup{c}}_1=0$ and so, as required, the concatenated word starts at $0$. Finally, by property \textit{3.} of Lemma~\ref{lemma_existence_b_lower_bound}, $b_{-1} \in A^{\sigma}$. Thus, $\psi^{\sigma}( v^{\textup{c}},v^{\textup{exc}},v^{\textup{mea}})_{-1} \in A^{\sigma}$ and the range of $\psi^{\sigma}$ is indeed included in $\mathscr{C}_n^{\sigma}$.
\end{remark}

\noindent For all $\sigma \in \{-,+\}$, we define the \textit{typical trajectories} of the class $\sigma$, as the set
\begin{equation}\label{eq_def_typical_traj}
E_{\textup{typ}}^{\sigma} \eqdef \psi^{\sigma}\big( \Omega_{\textup{c}} \times \Omega_{\textup{exc}}^{-\sigma} \times \Omega_{\textup{mea}}^{\sigma}\big) \subseteq \mathscr{C}_{n}^{\sigma}.
\end{equation}
For instance, the trajectories in $E_{\textup{typ}}^{+}$ are the trajectories whose empirical measures first approximate $\mu_0$, then make an excursion below $-R$ and finally stay above $R$. Figure~\ref{figure_typical_traj_class_minus_one} illustrates a typical trajectory for the class $\mathscr{C}_n^-$.

\begin{figure}[ht]
    \centering
    \includegraphics[width=\textwidth]{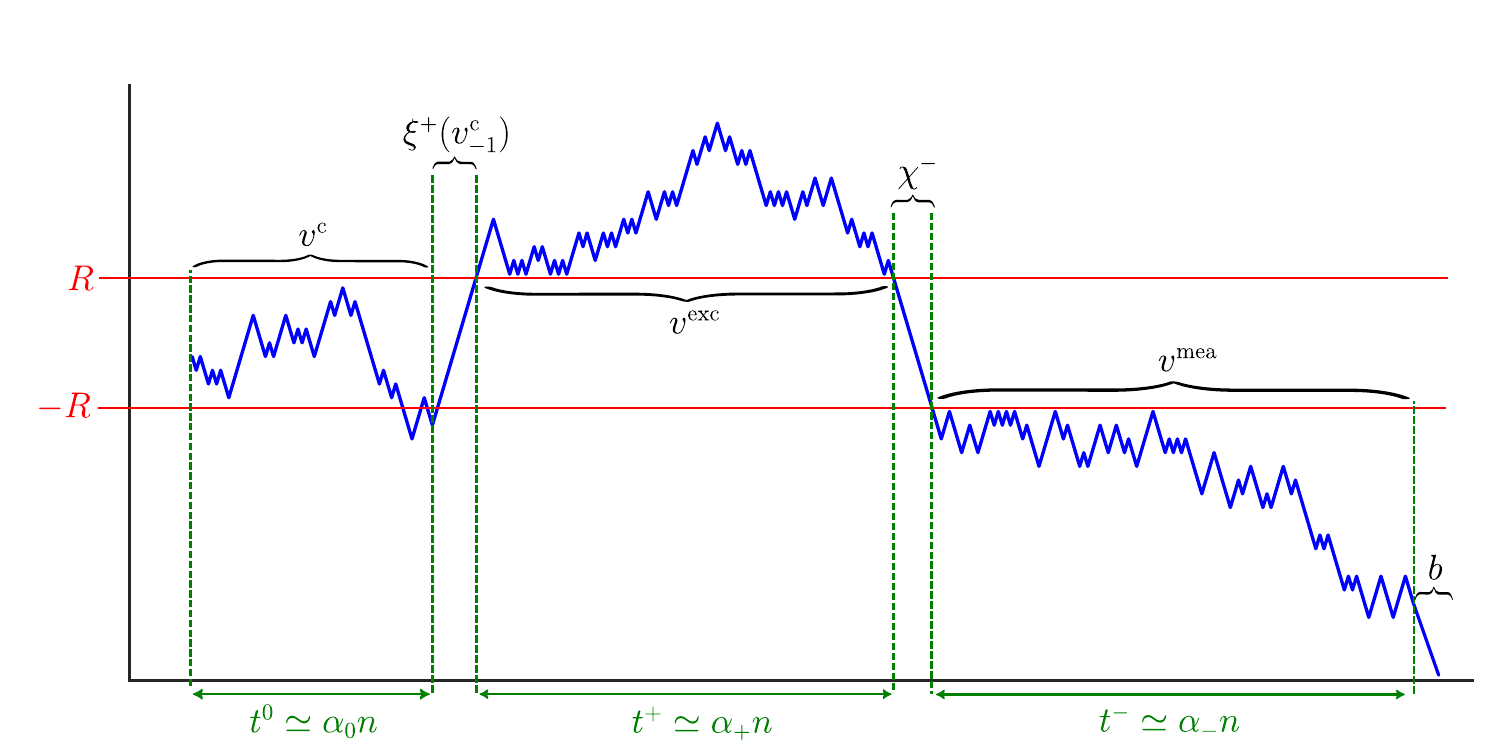}
    \caption{Example of a trajectory $w \in E_{\text{typ}}^{-}$.}
    \label{figure_typical_traj_class_minus_one}
\end{figure}

The role of the words $\xi^{-\sigma}$ and $\chi^{\sigma}$ is to connect $v^{\textup{c}}$, $v^{\textup{exc}}$ and $v^{\textup{mea}}$ together and the role of $b$ is to ensure that the length of the concatenated word $\psi^{\sigma}(v^{\textup{c}},v^{\textup{exc}},v^{\textup{mea}})$ is $n$. These words will not play a major role at the level of the empirical measure. Indeed, the following lemma gives a bound on their relative length.

\begin{lemma}[Connecting subwords]\label{lemma_bound_length_xi_chi_b}
For all $\sigma \in \{-,+\}$ and $\big(v^{\textup{c}},v^{\textup{exc}},v^{\textup{mea}}\big) \in \Omega_{\textup{c}} \times \Omega_{\textup{exc}}^{-\sigma} \times \Omega_{\textup{mea}}^{\sigma}$, 
\begin{equation}\label{eq_lemma_bound_length_xi_chi_b}
|\xi^{-\sigma}(v^{\textup{c}}_{-1})|+|\chi^{\sigma}|+|b| \leq 9 \varepsilon n.
\end{equation}
\end{lemma}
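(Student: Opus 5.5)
The bound follows from the identity in property \textit{2.} of Lemma~\ref{lemma_existence_b_lower_bound}. Since the total length is exactly $n$, we can write
\begin{align*}
|\xi^{-\sigma}(v^{\textup{c}}_{-1})|+|\chi^{\sigma}|+|b| = n - |v^{\textup{c}}| - |v^{\textup{exc}}| - |v^{\textup{mea}}| = n - t^0 - t^{-\sigma} - t^{\sigma}.
\end{align*}
Here we use that the lengths of $v^{\textup{c}}$, $v^{\textup{exc}}$ and $v^{\textup{mea}}$ are exactly $t^0$, $t^{-\sigma}$ and $t^{\sigma}$. This holds also in the degenerate case $\alpha_0 \leq 3\varepsilon$, where $v^{\textup{c}}=(0)$ and $t^0=1$. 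So it suffices to give a lower bound on $t^0+t^-+t^+$.

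From definition~(\ref{eq_def_times_for_typical_traj}), every $\sigma \in \{-,0,+\}$ satisfies
\begin{align*}
t^{\sigma} \geq 2\left\lceil \tfrac{(\alpha_\sigma - 3\varepsilon)n}{2}\right\rceil + 1 \geq (\alpha_\sigma-3\varepsilon)n + 1 \geq (\alpha_\sigma - 3\varepsilon) n .
\end{align*}
This is valid whichever term realises the maximum, since $t^{\sigma}$ is at least the first term. Summing over the three values of $\sigma$ and using $\alpha_-+\alpha_0+\alpha_+=1$ gives $t^0+t^-+t^+ \geq n - 9\varepsilon n$. Substituting into the identity above yields $|\xi^{-\sigma}(v^{\textup{c}}_{-1})|+|\chi^{\sigma}|+|b| \leq 9\varepsilon n$, which is \eqref{eq_lemma_bound_length_xi_chi_b}.

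There is no genuine obstacle. The only points to check are these:
\begin{itemize}
\item The lower bound on $t^{\sigma}$ must hold even when $\alpha_\sigma - 3\varepsilon<0$. It does, because then the right-hand side is negative while $t^{\sigma}=1$.
\item The word lengths must match the times exactly, i.e.\ $|v^{\textup{exc}}|=t^{-\sigma}$ and $|v^{\textup{mea}}|=t^{\sigma}$. This is immediate from the definitions of $\Omega_{\textup{exc}}^{-\sigma}$ and $\Omega_{\textup{mea}}^{\sigma}$.
\end{itemize}
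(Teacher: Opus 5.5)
Your proof is correct and is essentially the same argument as the paper's. You use the length constraint on $b$ to rewrite the left-hand side as $n-t^0-t^{-\sigma}-t^{\sigma}$, then bound each $t^{\sigma}\geq(\alpha_\sigma-3\varepsilon)n$ and use $\alpha_-+\alpha_0+\alpha_+=1$.
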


\begin{proof}
For each $\sigma \in \{-,0,+\}$, we have the lower bound $t^{\sigma} \geq (\alpha_{\sigma}-3\varepsilon)n$. Thus, using the fact that $\alpha_0+\alpha_{-\sigma}+\alpha_{\sigma}=1$ and the definition of $b$, we obtain
\begin{align*}
|\xi^{-\sigma}(v^{\textup{c}}_{-1})|+|\chi^{\sigma}|+|b| & = n-|v^{\textup{c}}|-|v^{\textup{exc}}|-|v^{\textup{mea}}| \\
& \leq n-(\alpha_0-3\varepsilon)n-(\alpha_{-}-3\varepsilon) n-(\alpha_{+}-3\varepsilon) n  \\
& = 9\varepsilon n. \qedhere
\end{align*}
\end{proof}

\begin{lemma}[Injectivity of $\psi^{\sigma}$]\label{lemma_psi_R_n_is_injective}
For all $\sigma \in \{-,+\}$, the map $\psi^{\sigma}$ is injective.
\end{lemma}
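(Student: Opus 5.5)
The plan is to show that from the concatenated word $w=\psi^{\sigma}(v^{\textup{c}},v^{\textup{exc}},v^{\textup{mea}})$ one can read off the three components. The key observation is that all the lengths involved are either fixed in advance or determined by $w$ itself. The lengths $|v^{\textup{c}}|=t^0$, $|v^{\textup{exc}}|=t^{-\sigma}$, $|v^{\textup{mea}}|=t^{\sigma}$ and $|\chi^{\sigma}|=2R-1$ depend only on $n,\mu,\varepsilon,R$, and not on the triple. The only length that varies with the triple is that of the connecting word $\xi^{-\sigma}(v^{\textup{c}}_{-1})$, and by \eqref{eq_def_xi} this length is a function of the last letter $v^{\textup{c}}_{-1}$ alone.

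The recovery proceeds in order. First, $v^{\textup{c}}=w_{[1:t^0]}$, since $v^{\textup{c}}$ is the initial subword of fixed length $t^0$. In particular $w_{t^0}=v^{\textup{c}}_{-1}$ is known, so $L\eqdef|\xi^{-\sigma}(w_{t^0})|$ is determined by $w$. Then $v^{\textup{exc}}=w_{[t^0+L+1:\,t^0+L+t^{-\sigma}]}$. Skipping the $2R-1$ letters of $\chi^{\sigma}$, we get $v^{\textup{mea}}=w_{[t^0+L+t^{-\sigma}+2R:\,t^0+L+t^{-\sigma}+2R-1+t^{\sigma}]}$.

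Now suppose $\psi^{\sigma}(v^{\textup{c}},v^{\textup{exc}},v^{\textup{mea}})=\psi^{\sigma}(\tilde v^{\textup{c}},\tilde v^{\textup{exc}},\tilde v^{\textup{mea}})$. Comparing the first $t^0$ letters gives $v^{\textup{c}}=\tilde v^{\textup{c}}$, hence equal last letters and equal $L$. Comparing the subsequent blocks at the same positions then gives $v^{\textup{exc}}=\tilde v^{\textup{exc}}$ and $v^{\textup{mea}}=\tilde v^{\textup{mea}}$.

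The arbitrary filler word $b$ causes no trouble, because it sits at the end and the three components are recovered from positions preceding it. The choice of $b$ in Definition~\ref{def_map_psi_sigma} need not even be consistent across triples.

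The only point requiring care is the degenerate cases. When $\alpha_0\le3\varepsilon$, $\Omega_{\textup{c}}=\{(0)\}$ and $t^0=1$, so the first step is trivial. When $\xi^{-\sigma}(k)$ is empty, $L=0$ and the formula still applies. One should also check that all positions above lie in $\dbl 1,n\dbr$; this follows from \eqref{eq_subwords_and_connecting_not_too_long} in the proof of Lemma~\ref{lemma_existence_b_lower_bound}. I expect no real obstacle: the argument is essentially bookkeeping of indices.
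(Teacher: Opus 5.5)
Your proposal is correct and follows the same route as the paper: equal first blocks of fixed length $t^0$ force $v^{\textup{c}}$ to agree, the length of $\xi^{-\sigma}(v^{\textup{c}}_{-1})$ is then determined, so $v^{\textup{exc}}$ and (after skipping $\chi^{\sigma}$) $v^{\textup{mea}}$ sit at the same positions, and the trailing filler $b$ plays no role. Your explicit index bookkeeping, such as $|\chi^{\sigma}|=2R-1$, simply spells out what the paper leaves implicit.
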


\begin{proof}
Suppose $(v^{\textup{c}},v^{\textup{exc}},v^{\textup{mea}}) \, , \, (u^{\textup{c}},u^{\textup{exc}},u^{\textup{mea}}) \in \Omega_{\textup{c}} \times \Omega_{\textup{exc}}^{-\sigma} \times \Omega_{\textup{mea}}^{\sigma}$ are such that
\begin{align*}
\psi^{\sigma}(v^{\textup{c}},v^{\textup{exc}},v^{\textup{mea}})=\psi^{\sigma}(u^{\textup{c}},u^{\textup{exc}},u^{\textup{mea}}).
\end{align*}
There exist $b,b' \in \Omega_{\textup{fin}}$ such that
\begin{align*}
\psi^{\sigma}(v^{\textup{c}},v^{\textup{exc}},v^{\textup{mea}})& =v^{\textup{c}} \cdot \xi^{-\sigma}(v^{\textup{c}}_{-1}) \cdot v^{\textup{exc}} \cdot \chi^{\sigma} \cdot v^{\textup{mea}} \cdot b \\
\text{and} \qquad \psi^{\sigma}(u^{\textup{c}},u^{\textup{exc}},u^{\textup{mea}}) & = u^{\textup{c}} \cdot \xi^{-\sigma}(u^{\textup{c}}_{-1}) \cdot u^{\textup{exc}} \cdot \chi^{\sigma} \cdot u^{\textup{mea}} \cdot b'.
\end{align*}
Since $|v^{\textup{c}}|=t^0=|u^{\textup{c}}|$ we deduce that $v^{\textup{c}}=u^{\textup{c}}$. Since $v^{\textup{c}}_{-1}=u^{\textup{c}}_{-1}$, we obtain that $\xi^{-\sigma}(v^{\textup{c}}_{-1})=\xi^{-\sigma}(u^{\textup{c}}_{-1})$ and in particular their lengths match. We can then deduce that $v^{\textup{exc}}=u^{\textup{exc}}$. Similarly, we can again show that $v^{\textup{mea}}=u^{\textup{mea}}$ and thus $\psi^{\sigma}$ is indeed injective.
\end{proof}

\subsection{Properties of the typical trajectories}\label{subsection_properties_of_typical_trajectories}

In the previous section, for each class $\mathscr{C}_n^{\sigma}$ and each measure $\mu \in \mathcal{P}(\overline{\ZZ})$, we have constructed  a corresponding set of typical trajectories $E_{\textup{typ}}^{\sigma}$. We now describe their properties. The following lemma shows that any typical trajectory lies in the appropriate class and that its empirical measure approximates the target measure $\mu$. 

\begin{lemma}[Typical trajectories are well constructed]\label{lemma_typical_events_included}
For each $\sigma \in \{-,+\}$,
\begin{equation*}\label{eq_lemma_typical_events_included}
E_{\textup{typ}}^{\sigma} \subseteq \mathscr{C}_{n}^{\sigma}(\mu,22 \varepsilon).
\end{equation*}
\end{lemma}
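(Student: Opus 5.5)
Membership in $\mathscr{C}_n^{\sigma}$ is already given by Remark~\ref{remark_psi_well_defined}. It remains to show $\|\ell(w)-\mu\|<22\varepsilon$ for $w=\psi^{\sigma}(v^{\textup{c}},v^{\textup{exc}},v^{\textup{mea}})$. I would split the empirical measure along the concatenation:
\begin{align*}
\ell(w)=\frac{1}{n}\Big(t^0\,\ell(v^{\textup{c}})+t^{-\sigma}\,\ell(v^{\textup{exc}})+t^{\sigma}\,\ell(v^{\textup{mea}})+\sum_{j\in J}\delta_{w_j}\Big),
\end{align*}
where $J$ indexes the letters of the connecting words $\xi^{-\sigma}(v^{\textup{c}}_{-1})$, $\chi^{\sigma}$ and $b$. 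Writing $\mu=\alpha_0\mu_0+\alpha_{-\sigma}\delta_{-\sigma\infty}+\alpha_{\sigma}\delta_{\sigma\infty}$, the triangle inequality for the KR norm bounds $\|\ell(w)-\mu\|$ by a sum of three kinds of terms, which I would control separately.

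\emph{Connecting words.} By Lemma~\ref{lemma_bound_length_xi_chi_b}, $|J|\le 9\varepsilon n$. Since each Dirac mass has KR norm $1$ by \eqref{eq_lemma_properties_KR_norm_of_proba}, this part contributes at most $9\varepsilon$.

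\emph{Weights.} We have $|t^{\sigma'}/n-\alpha_{\sigma'}|\le 3\varepsilon+3/n$. Replacing $t^{\sigma'}/n$ by $\alpha_{\sigma'}$ for each of the three blocks therefore costs at most $9\varepsilon+9/n$ in total. The choice $n>(4R+9)/\varepsilon$ gives $9/n<\varepsilon$, so this part contributes less than $10\varepsilon$. One caveat: when $\alpha_{\sigma'}\le 3\varepsilon$, the block has length $t^{\sigma'}=1$, and its weight error is still at most $3\varepsilon+1/n$, so the same bound holds.

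\emph{Shapes.} These are the terms $\alpha_0\|\ell(v^{\textup{c}})-\mu_0\|$, $\alpha_{-\sigma}\|\ell(v^{\textup{exc}})-\delta_{-\sigma\infty}\|$ and $\alpha_{\sigma}\|\ell(v^{\textup{mea}})-\delta_{\sigma\infty}\|$. The excursion and the meander stay in $A^{\mp\sigma}_R$ respectively. Since $R\ge R_{\overline{\ZZ}}(\varepsilon)$, \eqref{eq_lemma_proximity_to_infty} together with \eqref{eq_lemma_properties_KR_norm_distance_dirac} shows that each letter $\delta_{w_j}$ is within $\varepsilon$ of the corresponding $\delta_{\pm\infty}$, and averaging gives a bound of $\varepsilon$ for each of these two terms. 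For the central block with $\alpha_0>3\varepsilon$, $v^{\textup{c}}\in\Omega_{t^0}^{(0)}(\mu_0,2^{-R}\varepsilon)$, so $\|\ell(v^{\textup{c}})-\mu_0\|<2^{-R}\varepsilon\le\varepsilon$. If instead $\alpha_0\le 3\varepsilon$, then $\alpha_0\|\delta_0-\mu_0\|\le 2\alpha_0\le 6\varepsilon$. The shapes thus contribute at most $2\varepsilon+\max(\varepsilon,6\varepsilon)$, i.e. at most $8\varepsilon$.

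Adding up, $9\varepsilon+10\varepsilon+8\varepsilon$ overshoots $22\varepsilon$, so I would tighten the bookkeeping. The degenerate cases (where the weight error is bounded by $\alpha_{\sigma'}$ itself) cannot all occur at once with the worst-case weight errors. The cleaner route is to bound $\|\frac{t^{\sigma'}}{n}\ell(\cdot)-\alpha_{\sigma'}\nu_{\sigma'}\|$ jointly for each block rather than in two separate steps. The main obstacle is exactly this constant accounting, including the small blocks $t^{\sigma'}=1$ and the fact that $t^{\sigma'}\ne\alpha_{\sigma'}n$ exactly. There is no conceptual difficulty: any bound of the form $C\varepsilon$ suffices for the later argument, and I expect careful tracking to land within $22\varepsilon$.
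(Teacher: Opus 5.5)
Your decomposition is the paper's: split $\ell(w)$ along the concatenation, pay at most $9\varepsilon$ for the connecting words via Lemma~\ref{lemma_bound_length_xi_chi_b}, and compare each block with $\alpha_0\mu_0$, $\alpha_{-\sigma}\delta_{-\sigma\infty}$ and $\alpha_\sigma\delta_{\sigma\infty}$. However, the proposal does not prove the inclusion as stated. Your own tally gives $27\varepsilon$, and the last paragraph only says you expect tighter bookkeeping to get below $22\varepsilon$.

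The joint per-block bound you mention is the right fix, but two of your estimates are too lossy and must be sharpened.

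\emph{Non-degenerate blocks.} If $\alpha_{\sigma'}>3\varepsilon$, then $(\alpha_{\sigma'}-3\varepsilon)n+1\le t^{\sigma'}<(\alpha_{\sigma'}-3\varepsilon)n+3\le\alpha_{\sigma'}n$. The last step needs only $n\ge 1/\varepsilon$, which holds since $n>9/\varepsilon$. Hence $|t^{\sigma'}/n-\alpha_{\sigma'}|<3\varepsilon$, with no extra $3/n$. Write $v$ for the block and $\nu_{\sigma'}\in\{\mu_0,\delta_{-\sigma\infty},\delta_{\sigma\infty}\}$ for its target. Then
\[
\Big\|\frac{1}{n}\sum_{j=1}^{t^{\sigma'}}\delta_{v_j}-\alpha_{\sigma'}\nu_{\sigma'}\Big\|
\le\Big|\frac{t^{\sigma'}}{n}-\alpha_{\sigma'}\Big|+\alpha_{\sigma'}\big\|\ell(v)-\nu_{\sigma'}\big\|
<3\varepsilon+\alpha_{\sigma'}\varepsilon\le 4\varepsilon .
\]

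\emph{Degenerate blocks.} If $\alpha_{\sigma'}\le 3\varepsilon$, so that $t^{\sigma'}=1$, do not split into weight and shape; that split is what produces your $2\alpha_0\le 6\varepsilon$. Instead use \eqref{eq_lemma_properties_KR_norm_of_proba} directly: $\|\frac1n\delta_{v_1}-\alpha_{\sigma'}\nu_{\sigma'}\|\le\frac1n+\alpha_{\sigma'}\le\frac1n+3\varepsilon$.

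With these two estimates, each block contributes less than $4\varepsilon+\frac1n$. The total is then less than $12\varepsilon+\frac3n+9\varepsilon<22\varepsilon$, because $n>3/\varepsilon$; this is exactly the paper's count.

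Your heuristic that the degenerate cases cannot all coincide with worst-case weight errors is not what closes the gap. The two estimates above hold uniformly in every case. You are right that any fixed constant $C$ in $B(\mu,C\varepsilon)$ would suffice for Proposition~\ref{prop_lower_bound}, since $\varepsilon\to0$ afterwards. That does not, however, prove the lemma with the constant $22$.
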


\begin{proof}
Consider $w \in E^{\sigma}_{\textup{typ}}$. We want to show that $\| \ell(w)-\mu\|<22 \varepsilon$. By definition of $E^{\sigma}_{\textup{typ}}$ and injectivity of $\psi^{\sigma}$, there exists a unique triple $(v^{\textup{c}},v^{\textup{exc}},v^{\textup{mea}}) \in \Omega_{\textup{c}} \times \Omega_{\textup{exc}}^{-\sigma} \times \Omega_{\textup{mea}}^{\sigma}$ such that $w=\psi^{\sigma}(v^{\textup{c}},v^{\textup{exc}},v^{\textup{mea}})$. By decomposing the empirical measure of $w$ according to the decomposition of $w$ into the subwords $w=v^{\textup{c}} \cdot \xi^{-\sigma} \cdot v^{\textup{exc}} \cdot \chi^{\sigma} \cdot v^{\textup{mea}} \cdot b$, we obtain
\begin{align}
\| \ell(w)-\mu \| \notag & = \left\| \frac{1}{n} \left( \sum_{j=1}^{t^0} \delta_{v^{\textup{c}}_j} + \sum_{j=1}^{|\xi^{-\sigma}|} \delta_{\xi^{-\sigma}_j} + \sum_{j=1}^{t^{-\sigma}} \delta_{v^{\textup{exc}}_j} + \sum_{j=1}^{|\chi^{\sigma}|} \delta_{\chi^{\sigma}_j} + \sum_{j=1}^{t^{\sigma}} \delta_{v^{\textup{mea}}_j} + \sum_{j=1}^{|b|} \delta_{b_j} \right) - \mu \right\| \notag \\
& \leq \left\| \frac{1}{n} \sum_{j=1}^{t^0} \delta_{v^{\textup{c}}_j} - \alpha_0 \mu_0 \right\| + \left\| \frac{1}{n} \sum_{j=1}^{t^{-\sigma}} \delta_{v^{\textup{exc}}_j} - \alpha_{-\sigma} \delta_{-\sigma \infty} \right\|+\left\| \frac{1}{n} \sum_{j=1}^{t^{\sigma}} \delta_{v^{\textup{mea}}_j} - \alpha_{\sigma}\delta_{\sigma \infty} \right\| \label{eq_lemma_typ_traj_good_emp_measure_main_term} \\
& \quad +\frac{|\xi^{-\sigma}|+|\chi^{\sigma}|+|b|}{n}, \notag 
\end{align}
where in the last line we have used Equation~(\ref{eq_lemma_properties_KR_norm_of_proba}). We now treat each term on the right-hand side of inequality (\ref{eq_lemma_typ_traj_good_emp_measure_main_term}) separately. Consider first the case where $\alpha_0>3 \varepsilon$. Since $\alpha_0 \neq 0$, we have
\begin{align*}
\alpha_0 \left\| \frac{1}{\alpha_0 n} \sum_{j=1}^{t^0} \delta_{v^{\textup{c}}_j} - \mu_0 \right\| & \leq \alpha_0 \left| \frac{t^0}{\alpha_0 n}-1 \right| + \alpha_0 \left\| \frac{1}{t^0}\sum_{j=1}^{t^0} \delta_{v^{\textup{c}}_j} - \mu_0 \right\|.
\end{align*}
Since $\alpha_0>3\varepsilon$ and $v^{\textup{c}} \in \Omega_{\textup{c}}$, we have $\|\ell(v^{\textup{c}})-\mu_0 \| < 2^{-R}\varepsilon \leq \varepsilon$. In addition, by definition of $t^0$ and the fact that $n>\frac{4R+9}{\varepsilon}>\frac{1}{3\varepsilon}$, we deduce that $|\alpha_0 n-t^0|\leq 3\varepsilon n$. Therefore,
\begin{equation*}
\left\| \frac{1}{n} \sum_{j=1}^{t^0} \delta_{v^{\textup{c}}_j} - \alpha_0 \mu_0 \right\| < 3 \varepsilon + \alpha_0 \varepsilon.
\end{equation*}
If $\alpha_0 \leq 3 \varepsilon$, then $v^c=(0)$ and we obtain by using Equation~(\ref{eq_lemma_properties_KR_norm_of_proba}) that
\begin{align*}
\left\| \frac{1}{n} \delta_{v^{\textup{c}}_1}-\alpha_0 \mu_0 \right\| & \leq \frac{1}{n} + \alpha_0 \leq \frac{1}{n} + 3 \varepsilon. 
\end{align*}
Thus, in both cases, we obtain
\begin{equation}\label{eq_lemma_typ_traj_good_emp_measure_bound_first_term}
\left\| \frac{1}{n} \sum_{j=1}^{t^0} \delta_{v^{\textup{c}}_j} - \alpha_0 \mu_0 \right\| < 4 \varepsilon + \frac{1}{n}.
\end{equation}
Let us now consider the second term of (\ref{eq_lemma_typ_traj_good_emp_measure_main_term}). Since $v^{\textup{exc}} \in \Omega_{\textup{exc}}^{-\sigma}$, we have $v^{\textup{exc}}_j \in A^{-\sigma}$ for all $j \in \dbl 1,t^{-\sigma} \dbr$. Thus, by applying (\ref{eq_lemma_properties_KR_norm_distance_dirac}) and then (\ref{eq_lemma_proximity_to_infty}) with the fact that $R \geq R_{\overline{\ZZ}}(\varepsilon)$, we deduce that,
\begin{equation}\label{eq_excursion_points_close_to_infinity}
\big\| \delta_{-\sigma \infty} - \delta_{v^{\textup{exc}}_j} \big\| \leq d_{\overline{\ZZ}}\left(-\sigma \infty \, , \, v^{\textup{exc}}_j \right) < \varepsilon.
\end{equation} 
If $\alpha_{-\sigma}>3\varepsilon$, then $\vert t^{-\sigma}-\alpha_{-\sigma}n \vert \leq 3\varepsilon n$. Thus, we obtain with the help of Equation~(\ref{eq_excursion_points_close_to_infinity}) that
\begin{align*}
\alpha_{-\sigma} \left\| \frac{1}{\alpha_{-\sigma} n} \sum_{j=1}^{t^{-\sigma}} \delta_{v^{\textup{exc}}_j} - \delta_{-\sigma \infty} \right\| & \leq \alpha_{- \sigma} \left \vert \frac{t^{-\sigma}}{\alpha_{-\sigma}n} -1  \right \vert + \frac{\alpha_{-\sigma}}{t^{-\sigma}} \sum_{j=1}^{t^{-\sigma}} \left \| \delta_{v^{\textup{exc}}_j}-\delta_{-\sigma \infty} \right \| \\
& < 3 \varepsilon + \alpha_{-\sigma} \varepsilon. 
\end{align*}
If $\alpha_{-\sigma}\leq 3 \varepsilon$, then $t^{-\sigma}=1$ and
\begin{align*}
\left\| \frac{1}{n} \delta_{v^{\textup{exc}}_1}-\alpha_{-\sigma}\delta_{-\sigma\infty} \right\| \leq \frac{1}{n}+3\varepsilon.
\end{align*}
In any case, we have
\begin{equation}\label{eq_lemma_typ_traj_good_emp_measure_bound_second_term}
\left\| \frac{1}{n} \sum_{j=1}^{t^{-\sigma}} \delta_{v^{\textup{exc}}_j} - \alpha_{-\sigma} \delta_{-\sigma \infty} \right\| < 4\varepsilon + \frac{1}{n}.
\end{equation}

The third term is dealt with in a similar way. Indeed, since $R \geq R_{\overline{\ZZ}}(\varepsilon)$ and $v^{\textup{mea}}_j \in A^{\sigma}$, we obtain that $\|\delta_{\sigma \infty}-\delta_{v^{\textup{mea}}_j} \|<\varepsilon$ for all $j \in \dbl 1,t^{\sigma} \dbr$. With the same arguments, one finds that
\begin{equation}\label{eq_lemma_typ_traj_good_emp_measure_bound_third_term}
\left\| \frac{1}{n} \sum_{j=1}^{t^{\sigma}} \delta_{v^{\textup{mea}}_j} - \alpha_{\sigma} \delta_{\sigma \infty} \right\| < 4\varepsilon + \frac{1}{n}.
\end{equation}
Finally, a bound on the fourth term of (\ref{eq_lemma_typ_traj_good_emp_measure_main_term}) is given by Equation~(\ref{eq_lemma_bound_length_xi_chi_b}) of Lemma~\ref{lemma_bound_length_xi_chi_b}. Plugging this with (\ref{eq_lemma_typ_traj_good_emp_measure_bound_first_term}), (\ref{eq_lemma_typ_traj_good_emp_measure_bound_second_term}),  and (\ref{eq_lemma_typ_traj_good_emp_measure_bound_third_term}) into (\ref{eq_lemma_typ_traj_good_emp_measure_main_term}), we obtain
\begin{align*}
\| \ell(w)-\mu \| & < 21 \varepsilon + \frac{3}{n} < 22\varepsilon,
\end{align*}
where the last inequality comes from the fact that $n>\frac{4R+9}{\varepsilon}>\frac{3}{\varepsilon}$. Thus, we have shown that $\ell(w) \in B(\mu,22\varepsilon)$. In addition, by definition of $\psi^{\sigma}$, we have $w \in \mathscr{C}_{n}^{\sigma}$. Therefore, we obtain the desired inclusion and the lemma is proved.
\end{proof}

\noindent The following lemma relates the probability we are interested in to the product of the three terms whose exponential rate of decay have been computed in Section~\ref{section_regional_estimates}. We introduce the quantity
\begin{equation}\label{eq_def_p_star_R}
p_{*} \eqdef \inf \big\{ p(k),1-p(k) \, : \, k \in \ZZ \big\} >0 \, ,
\end{equation}
where the strict positivity of $p_*$ comes from Hypothesis~\ref{hypothesis_on_transition_function_p}.

\begin{lemma}[Lower bound in three blocks]\label{lemma_lower_bound_decomposition_three_terms}
For each $\sigma \in \{-,+\}$,
\begin{equation*}\label{eq_lemma_lower_bound_decomposition_three_terms}
\PP\big( \mathscr{C}_{n}^{\sigma}(\mu,22 \varepsilon) \big) \geq p_{*}^{10\varepsilon n} \; \PP \left(  \Omega_{\textup{c}} \right) \, \PP_{-\sigma R}\left(  \Omega_{\textup{exc}}^{-\sigma} \right) \, \PP_{\sigma R}\left( \Omega_{\textup{mea}}^{\sigma} \right).
\end{equation*}
\end{lemma}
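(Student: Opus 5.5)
By Lemma~\ref{lemma_typical_events_included}, $E_{\textup{typ}}^{\sigma} \subseteq \mathscr{C}_n^{\sigma}(\mu,22\varepsilon)$. Since distinct words correspond to disjoint events, this gives
\begin{align*}
\PP\big(\mathscr{C}_n^{\sigma}(\mu,22\varepsilon)\big) \geq \sum_{w \in E_{\textup{typ}}^{\sigma}} \mathfrak{p}(w).
\end{align*}
By injectivity of $\psi^{\sigma}$ (Lemma~\ref{lemma_psi_R_n_is_injective}), the right-hand side equals the sum of $\mathfrak{p}\big(\psi^{\sigma}(v^{\textup{c}},v^{\textup{exc}},v^{\textup{mea}})\big)$ over all triples in $\Omega_{\textup{c}} \times \Omega_{\textup{exc}}^{-\sigma} \times \Omega_{\textup{mea}}^{\sigma}$. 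This reduces the claim to a bound for each individual word $w=\psi^{\sigma}(v^{\textup{c}},v^{\textup{exc}},v^{\textup{mea}})$.

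For such a $w$, I factor $\mathfrak{p}(w)$ with the product formula \eqref{eq_link_p_w_with_probability}, following the concatenation $w=v^{\textup{c}}\cdot\xi^{-\sigma}(v^{\textup{c}}_{-1})\cdot v^{\textup{exc}}\cdot\chi^{\sigma}\cdot v^{\textup{mea}}\cdot b$. The transitions inside $v^{\textup{c}}$, $v^{\textup{exc}}$ and $v^{\textup{mea}}$ give exactly $\mathfrak{p}(v^{\textup{c}})\mathfrak{p}(v^{\textup{exc}})\mathfrak{p}(v^{\textup{mea}})$. The remaining transitions are those inside the connecting words $\xi^{-\sigma}$, $\chi^{\sigma}$ and $b$, together with the five junction steps between consecutive blocks. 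Each such factor is at least $p_*$, and their number is at most $|\xi^{-\sigma}|+|\chi^{\sigma}|+|b|+5$. By Lemma~\ref{lemma_bound_length_xi_chi_b} this is at most $9\varepsilon n+5 \leq 10\varepsilon n$, using $n>\frac{4R+9}{\varepsilon}$ so that $\varepsilon n>9$. Since $p_* \leq \tfrac12 <1$, having fewer factors only helps, and therefore
\begin{align*}
\mathfrak{p}(w)\geq p_*^{10\varepsilon n}\,\mathfrak{p}(v^{\textup{c}})\,\mathfrak{p}(v^{\textup{exc}})\,\mathfrak{p}(v^{\textup{mea}}).
\end{align*}

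Summing over the product set makes the sum factorise. The sum $\sum_{v^{\textup{c}}\in\Omega_{\textup{c}}}\mathfrak{p}(v^{\textup{c}})$ equals $\PP(\Omega_{\textup{c}})$, since every word in $\Omega_{\textup{c}}$ starts at $0$; in the degenerate case $\Omega_{\textup{c}}=\{(0)\}$ both sides equal $1$. Likewise, $\sum \mathfrak{p}(v^{\textup{exc}})=\PP_{-\sigma R}(\Omega_{\textup{exc}}^{-\sigma})$ and $\sum\mathfrak{p}(v^{\textup{mea}})=\PP_{\sigma R}(\Omega_{\textup{mea}}^{\sigma})$, because these words start at $-\sigma R$ and $\sigma R$ respectively.

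The only delicate point is the step count. The junction transitions must be included and compared carefully with the $9\varepsilon n$ bound from Lemma~\ref{lemma_bound_length_xi_chi_b}. One also needs the edge cases where a connecting word is empty: then a junction joins two main blocks directly, which adds one factor at least $p_*$ and is still covered by the bound. Everything else is bookkeeping.
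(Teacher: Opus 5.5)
Your proof is correct and follows the paper's argument essentially step by step. Both use the inclusion $E_{\textup{typ}}^{\sigma}\subseteq\mathscr{C}_n^{\sigma}(\mu,22\varepsilon)$ and the injectivity of $\psi^{\sigma}$, then a per-word bound that separates the transitions of $v^{\textup{c}},v^{\textup{exc}},v^{\textup{mea}}$ from those touching connecting letters, and finally factorise the sum over the product set. The only difference is cosmetic: the paper counts the extra transitions exactly as $|\xi^{-\sigma}|+|\chi^{\sigma}|+|b|+2$ (whether or not $\xi^{-\sigma}$ is empty), while your cruder bound $|\xi^{-\sigma}|+|\chi^{\sigma}|+|b|+5$ is equally absorbed into $10\varepsilon n$ because $\varepsilon n>4R+9$.
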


\begin{proof}
Let us first establish an analogous lower bound at the level of individual trajectories. We recall that for all $w \in \Omega_n$, we defined $\mathfrak{p}(w)$ in (\ref{eq_def_probability_of_word}). As mentioned in (\ref{eq_link_p_w_with_probability}), $\mathfrak{p}(w)$ is the product of all the transitions $P(w_i,w_{i+1})$ constituting the trajectory. Let us first show that if $(v^{\textup{c}},v^{\textup{exc}} ,v^{\textup{mea}}) \in \Omega_{\textup{c}} \times \Omega_{\textup{exc}}^{-\sigma} \times \Omega_{\textup{mea}}^{\sigma}$, then
\begin{equation}\label{eq_probability_individual_traj_lower_bound}
\mathfrak{p}\big( \psi^{\sigma}(v^{\textup{c}},v^{\textup{exc}} ,v^{\textup{mea}}) \big) \geq p_{*}^{10\varepsilon n} \, \mathfrak{p}(v^{\textup{c}}) \, \mathfrak{p}(v^{\textup{exc}}) \, \mathfrak{p}(v^{\textup{mea}}).
\end{equation}
We decompose $\mathfrak{p}\big( \psi^{\sigma}(v^{\textup{c}},v^{\textup{exc}} ,v^{\textup{mea}}) \big)$ according to the transitions forming the trajectory, grouping together those that constitute the subwords $v^{\textup{c}}$, $v^{\textup{exc}}$ and $v^{\textup{mea}}$. In the case where $\xi^{-\sigma}$ is not empty, we obtain
\begin{align*}
\mathfrak{p}\big( \psi^{\sigma}(v^{\textup{c}},v^{\textup{exc}} ,v^{\textup{mea}}) \big) & = \mathfrak{p}(v^{\textup{c}}) \, P(v^{\textup{c}}_{-1},\xi^{-\sigma}_{1}) \, \mathfrak{p}(\xi^{-\sigma}) \, P(\xi^{-\sigma}_{-1},v^{\textup{exc}}_1) \, \mathfrak{p}(v^{\textup{exc}}) \, P(v^{\textup{exc}}_{-1},\chi^{\sigma}_1) \, \mathfrak{p}(\chi^{\sigma}) \, P(\chi^{\sigma}_{-1},v^{\textup{mea}}_1) \\
& \quad \times \mathfrak{p}(v^{\textup{mea}}) \, P(v^{\textup{mea}}_{-1},b_1) \, \mathfrak{p}(b). 
\end{align*}
There are $|\xi^{-\sigma}|+|\chi^{\sigma}|+|b|+2$ transitions involving one of the letters $\xi^{-\sigma}_j$, $\chi^{\sigma}_j$, or $b_j$. Each of these transitions is bounded below by $p_*$, and therefore we obtain
\begin{equation}\label{eq_ineq_for_individual_traj_techn}
\mathfrak{p}\big( \psi^{\sigma}(v^{\textup{c}},v^{\textup{exc}} ,v^{\textup{mea}}) \big) \geq p_{*}^{|\xi^{-\sigma}|+|\chi^{\sigma}|+|b|+2} \, \mathfrak{p}(v^{\textup{c}}) \, \mathfrak{p}(v^{\textup{exc}}) \, \mathfrak{p}(v^{\textup{mea}}).
\end{equation}
If $\xi^{-\sigma}$ is empty, the same reasoning applies, yielding
\begin{align*}
& \mathfrak{p}\big( \psi^{\sigma}(v^{\textup{c}},v^{\textup{exc}} ,v^{\textup{mea}}) \big) \\
& \qquad \qquad =\mathfrak{p}(v^{\textup{c}}) \, P(v^{\textup{c}}_{-1},v^{\textup{exc}}_1) \, \mathfrak{p}(v^{\textup{exc}}) \, P(v^{\textup{exc}}_{-1},\chi^{\sigma}_1) \, \mathfrak{p}(\chi^{\sigma}) \, P(\chi^{\sigma}_{-1},v^{\textup{mea}}_1) \, \mathfrak{p}(v^{\textup{mea}}) \, P(v^{\textup{mea}}_{-1},b_1) \, \mathfrak{p}(b)
\end{align*}
and thus Equation~(\ref{eq_ineq_for_individual_traj_techn}) continues to hold. Using (\ref{eq_lemma_bound_length_xi_chi_b}) and the fact that $n>\frac{4R+9}{\varepsilon}>\frac{2}{\varepsilon}$ we obtain $|\xi^{-\sigma}|+|\chi^{\sigma}|+|b|+2 \leq 10\varepsilon n$. Substituting this inequality into \eqref{eq_ineq_for_individual_traj_techn} yields \eqref{eq_probability_individual_traj_lower_bound}.

We can now turn towards the claim of the lemma. Using the inclusion obtained in Lemma~\ref{lemma_typical_events_included}, we have for each $\sigma \in \{-,+\}$,
\begin{align*}
\PP\big( \mathscr{C}_{n}^{\sigma}(\mu,22 \varepsilon) \big) \geq \PP\big(E_{\textup{typ}}^{\sigma}\big).
\end{align*}
Decomposing the event on the right hand side of the above inequality in terms of the individual trajectories constituting it, we obtain for all $\sigma \in \{-,+\}$,
\begin{align*}
\PP\big( \mathscr{C}_{n}^{\sigma}(\mu,22 \varepsilon) \big) \geq \sum_{w \in E_{\textup{typ}}^{\sigma}} \mathfrak{p}(w).
\end{align*}
Using the injectivity of $\psi^{\sigma}$ obtained in Lemma~\ref{lemma_psi_R_n_is_injective} and that $E_{\textup{typ}}^{\sigma}=\psi^{\sigma}\left( \Omega_{\textup{c}} \times \Omega_{\textup{exc}}^{-\sigma} \times \Omega_{\textup{mea}}^{\sigma} \right)$, we obtain
\begin{align*}
\sum_{w \in E_{\textup{typ}}^{\sigma}} \mathfrak{p}(w)=\sum_{(v^{\textup{c}},v^{\textup{exc}},v^{\textup{mea}}) \in \Omega_{\textup{c}} \times \Omega_{\textup{exc}}^{-\sigma} \times \Omega_{\textup{mea}}^{\sigma}} \mathfrak{p}\big(\psi^{\sigma}(v^{\textup{c}},v^{\textup{exc}},v^{\textup{mea}})\big).
\end{align*}
The lower bound on $\mathfrak{p}\big( \psi^{\sigma}(v^{\textup{c}},v^{\textup{exc}} ,v^{\textup{mea}}) \big)$ obtained in (\ref{eq_probability_individual_traj_lower_bound}), then gives us
\begin{align*}
\PP\big(\mathscr{C}_{n}^{\sigma}(\mu,22 \varepsilon) \big) & \geq p_{*}^{10 \varepsilon n} \left( \sum_{v^{\textup{c}} \in \Omega_{\textup{c}}} \mathfrak{p}(v^{\textup{c}}) \right)\left( \sum_{v^{\textup{exc}} \in \Omega_{\textup{exc}}^{-\sigma}} \mathfrak{p}(v^{\textup{exc}})\right)\left( \sum_{v^{\textup{mea}} \in \Omega_{\textup{mea}}^{\sigma}} \mathfrak{p}(v^{\textup{mea}})\right).
\end{align*}
Then, with the help of Equation~(\ref{eq_def_probability_of_word}), 
\begin{align*}
\sum_{v^{\textup{c}} \in \Omega_{\textup{c}}} \mathfrak{p}(v^{\textup{c}}) = \PP \left( \Omega_{\textup{c}} \right) \quad , \quad \sum_{v^{\textup{exc}} \in \Omega_{\textup{exc}}^{-\sigma}} \mathfrak{p}(v^{\textup{exc}}) = \PP_{-\sigma R}\left( \Omega^{-\sigma}_{\textup{exc}} \right)  \quad \text{and} \quad \sum_{v^{\textup{mea}} \in \Omega_{\textup{mea}}^{\sigma}} \mathfrak{p}(v^{\textup{mea}}) = \PP_{\sigma R}\left(\Omega^{\sigma}_{\textup{mea}} \right).
\end{align*}
We thereby obtain the desired inequality and the lemma is proved.
\end{proof}

\subsection{Proof of the lower bound}\label{subsection_proof_of_the_lower_bound}

With the help of the results of Section~\ref{section_regional_estimates} and the decomposition given in Lemma~\ref{lemma_lower_bound_decomposition_three_terms}, we are ready to show the lower bound. Let us first reformulate the exponential rates
obtained in Propositions~\ref{proposition_exponential_rate_decay_I_DV} and \ref{proposition_exponential_rate_of_decay_exc_and_end} in a form better suited to prove Proposition~\ref{prop_lower_bound}.

\begin{lemma}[Lower bound at rate $\gamma$]\label{lemma_exponential_decay_with_gamma_n_lower}
Fix $\varepsilon>0$ and let $\gamma>0$ be a constant, which may depend on $\varepsilon$. Then, for all $R \geq R_{\overline{\ZZ}}(\varepsilon)$,
\begin{align}
-\gamma I_{\textup{Cr}}^{p_{\sigma}}(0) - \gamma g(\varepsilon) & \leq \liminf \limits_{n \to \infty} \frac{1}{n} \log \PP_{\sigma R}\left( \Omega_{2 \left \lceil \frac{\gamma n}{2} \right \rceil+1,\textup{exc}}^{\sigma} \right), \label{eq_lemma_exponential_rate_of_decay_exc_lower_gamma} \\
-\gamma \inf_{\sigma x \in [0,1]}I_{\textup{Cr}}^{p_{\sigma}}(x)  - \gamma g(\varepsilon) & \leq \liminf \limits_{n \to \infty} \frac{1}{n} \log \PP_{\sigma R}\left( \Omega_{2\left \lceil \frac{\gamma n}{2} \right \rceil+1,\textup{mea}}^{\sigma} \right), \label{eq_lemma_exponential_rate_of_decay_final_lower_gamma}
\end{align}
where $g$ is as in Proposition~\ref{proposition_exponential_rate_of_decay_exc_and_end}. In addition, for all $\mu_0 \in \mathcal{P}(\ZZ)$,
\begin{align}
-\gamma I_{\textup{DV}}(\mu_0)& \leq \lim_{R \to \infty} \liminf \limits_{n \to \infty} \frac{1}{n} \log \PP \left( \ell_{2 \left \lceil \frac{\gamma n}{2} \right \rceil+1} \in B(\mu_0,2^{-R}\varepsilon) \right). \label{eq_lemma_I_DV_with_gamma_n_lower}
\end{align}
\end{lemma}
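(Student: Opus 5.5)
The plan is a time change $m = m(n) \eqdef \lceil \gamma n \rceil$ (or a comparable integer), which reduces everything to Proposition~\ref{proposition_exponential_rate_of_decay_exc_and_end} and Proposition~\ref{proposition_exponential_rate_decay_I_DV}. The only care needed is to check that the lengths match and that $\liminf$ transforms correctly along the subsequence $m(n)$.

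For \eqref{eq_lemma_exponential_rate_of_decay_exc_lower_gamma} and \eqref{eq_lemma_exponential_rate_of_decay_final_lower_gamma}, set $m(n) \eqdef \lceil \gamma n \rceil$.

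1. Since $\lceil m/2 \rceil = \lceil \lceil \gamma n\rceil /2 \rceil = \lceil \gamma n/2 \rceil$ (nested ceilings: $\lceil \lceil x \rceil /2\rceil = \lceil x/2\rceil$), the sets $\Omega^{\sigma}_{2\lceil \gamma n/2\rceil+1,\textup{exc}}$ and $\Omega^{\sigma}_{2\lceil m/2\rceil+1,\textup{exc}}$ coincide, and likewise for meanders.
2. Write $a_m \eqdef \frac1m \log \PP_{\sigma R}(\Omega^{\sigma}_{2\lceil m/2\rceil+1,\textup{exc}})$. Then
\begin{equation*}
\frac1n \log \PP_{\sigma R}\bigl(\Omega^{\sigma}_{2\lceil \gamma n/2\rceil+1,\textup{exc}}\bigr) = \frac{m(n)}{n}\, a_{m(n)} .
\end{equation*}
3. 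We have $m(n)/n \to \gamma$ and $m(n) \to \infty$, and the $a_m$ are bounded below eventually (they are at least $\log p_*$ up to a constant, by following the explicit path that goes up and comes back). Hence $\liminf_n \frac{m(n)}{n} a_{m(n)} \ge \gamma \liminf_m a_m$.
4. Proposition~\ref{proposition_exponential_rate_of_decay_exc_and_end} gives $\liminf_m a_m \ge -I_{\textup{Cr}}^{p_\sigma}(0) - g(\varepsilon)$, which yields \eqref{eq_lemma_exponential_rate_of_decay_exc_lower_gamma}. The meander case is identical and gives \eqref{eq_lemma_exponential_rate_of_decay_final_lower_gamma}.

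The hypothesis $\varepsilon<\varepsilon^*$ of Proposition~\ref{proposition_exponential_rate_of_decay_exc_and_end} is implicitly required. I would state it, or note that for $\varepsilon \ge \varepsilon^*$ one can enlarge $g$, for instance to $+\infty$, so that the inequality is trivial.

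For \eqref{eq_lemma_I_DV_with_gamma_n_lower}, set $m(n) \eqdef 2\lceil \gamma n/2\rceil +1$, so that $m(n)/n \to \gamma$.

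1. Fix $\eta>0$. By Proposition~\ref{proposition_exponential_rate_decay_I_DV} (its lower bound part, i.e.\ the weak LDP lower bound on $\mathcal P(\ZZ)$ pushed forward), for every $\delta>0$ we have $\liminf_m \frac1m \log \PP(\ell_m \in B(\mu_0,\delta)) \ge -I_{\textup{DV}}(\mu_0)$.
2. Apply this with $\delta = 2^{-R}\varepsilon$, which is fixed once $R$ is fixed. Rescaling along the subsequence $m(n)$ as above gives
\begin{equation*}
\liminf_n \frac1n \log \PP\bigl(\ell_{m(n)} \in B(\mu_0,2^{-R}\varepsilon)\bigr) \ge -\gamma I_{\textup{DV}}(\mu_0)
\end{equation*}
for each $R$.
3. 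Because the balls shrink, the quantity is nonincreasing in $R$, so the limit in $R$ exists. The lower bound holds for every $R$, so it persists in the limit.
4. If $I_{\textup{DV}}(\mu_0)=+\infty$, the claim is trivial.

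The subtle point is step 3 of the first argument: passing $\liminf$ through multiplication by $m(n)/n$ requires the $a_m$ not to be $-\infty$, and requires $m(n)$ to run through a subsequence, which only increases the $\liminf$. Since $p_*>0$ makes each probability positive, this is harmless.
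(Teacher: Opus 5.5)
Your argument is correct and is essentially the paper's proof. You reparametrize along a time change $m(n)$ with $m(n)/n\to\gamma$ and $m(n)\to\infty$ (your choice $\lceil\gamma n\rceil$ with the nested-ceiling identity is equivalent to the paper's odd lengths $2\lceil \gamma n/2\rceil+1$), transfer the $\liminf$, apply the fixed-radius regional and Donsker--Varadhan bounds, and let $R\to\infty$. Your extra care on the implicit hypothesis $\varepsilon<\varepsilon^*$ (needed for $g(\varepsilon)$ to be defined) and on keeping $\frac1m\log$ of the probabilities bounded, so the factor $m(n)/n$ passes through the $\liminf$, covers points the paper leaves unstated.
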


\begin{proof}
Let us start with Equation~(\ref{eq_lemma_I_DV_with_gamma_n_lower}). Using the fact that $\lim_{n \to \infty} \frac{2 \left \lceil \frac{\gamma n}{2} \right \rceil+1}{n}=\gamma$ and then the fact that for all $k \in \NN$, $
\left\{2 \left \lceil \frac{\gamma n}{2} \right \rceil+1 \, : \, n \geq k \right\} \subseteq \left\{ n \in \NN \, : \, n \geq 2 \left \lceil \frac{\gamma k}{2} \right \rceil+1 \right\}$, we obtain for all $\varepsilon>0$,
\begin{align*}
\liminf_{n \to \infty} \frac{1}{n} \log \PP \left( \ell_{2 \left \lceil \frac{\gamma n}{2} \right \rceil+1} \in B(\mu_0,2^{-R}\varepsilon) \right)&=\gamma \liminf_{n \to \infty} \frac{1}{2 \left \lceil \frac{\gamma n}{2} \right \rceil+1} \log \PP \left( \ell_{2 \left \lceil \frac{\gamma n}{2} \right \rceil+1} \in B(\mu_0,2^{-R}\varepsilon) \right) \\
& \geq \gamma\liminf_{n \to \infty} \frac{1}{n} \log \PP \left( \ell_{n} \in B(\mu_0,2^{-R}\varepsilon) \right).
\end{align*}
We obtain Equation~(\ref{eq_lemma_I_DV_with_gamma_n_lower}) by taking $R$ to $+\infty$ and using Proposition~\ref{proposition_exponential_rate_decay_I_DV}.
Equations~(\ref{eq_lemma_exponential_rate_of_decay_exc_lower_gamma}) and (\ref{eq_lemma_exponential_rate_of_decay_final_lower_gamma}) are shown in the same way but this time using Equations~(\ref{eq_proposition_exponential_rate_of_decay_exc_lower}) and (\ref{eq_proposition_exponential_rate_of_decay_final_lower}) of Proposition~\ref{proposition_exponential_rate_of_decay_exc_and_end}.
\end{proof}

\begin{proof}[Proof of Proposition~\ref{prop_lower_bound}]
We recall that our aim is to show that
\begin{align*}
-I(\mu) \leq \lim \limits_{\varepsilon \to 0^+} \liminf \limits_{n \to \infty} \frac{1}{n} \log \PP\big( \ell_n \in B(\mu,\varepsilon) \big).
\end{align*}
Consider $0<\varepsilon < \varepsilon^*$, $R > \max\{R_{\overline{\ZZ}}(\varepsilon),R_{\mu_0}(\varepsilon)\}$ and $n>\frac{4R+9}{\varepsilon}$. First, since $\mathscr{C}_{n}^{\sigma}(\mu,22\varepsilon) \subseteq \Omega_n^{(0)}(\mu,22 \varepsilon)$, we deduce that for each $\sigma \in \{-,+\}$,
\begin{align*}
\PP\left(\mathscr{C}_{n}^{\sigma}(\mu,22 \varepsilon) \right) & \leq \PP\left(\Omega_n^{(0)}(\mu,22 \varepsilon) \right) \\
& = \PP\big(\ell_n \in B(\mu,22 \varepsilon) \big).
\end{align*}
Therefore, using Lemma~\ref{lemma_lower_bound_decomposition_three_terms}, we obtain that for each $\sigma \in \{-,+\}$,
\begin{align*}
\PP\big( \ell_n \in B(\mu,22 \varepsilon)\big) & \geq p_{*}^{10 \varepsilon n} \, \PP\big(\Omega_{\textup{c}} \big) \PP_{-\sigma R} \left( \Omega_{\textup{exc}}^{-\sigma} \right)\PP_{\sigma R} \left( \Omega_{\textup{mea}}^{\sigma} \right).
\end{align*}
Taking the logarithm, dividing by $n$, and then taking $\liminf_{n \to \infty}$, we obtain for each $\sigma \in \{-,+\}$,
\begin{align}
& \liminf \limits_{n \to \infty} \frac{1}{n} \log \PP\big( \ell_n \in B(\mu,22\varepsilon) \big) \label{eq_proof_lower_bound_liminf_1_n_log_three_blocks} \\
& \qquad \qquad \geq 10 \varepsilon \log(p_{*}) + \liminf \limits_{n \to \infty} \frac{1}{n} \log \PP\big(\Omega_{\textup{c}} \big) + \liminf \limits_{n \to \infty} \frac{1}{n} \log \PP_{-\sigma R} \left( \Omega_{\textup{exc}}^{-\sigma} \right)  + \liminf \limits_{n \to \infty} \frac{1}{n} \log\PP_{\sigma R} \left( \Omega_{\textup{mea}}^{\sigma} \right). \notag
\end{align}

Assume first that $\alpha_{\sigma}>0$ for every $\sigma \in \{-,0,+\}$ and choose $\varepsilon< \frac{1}{3} \min \big\{ \alpha_{\sigma} \, : \, \sigma \in \{-,0,+\} \big\}$. Then, $t^{\sigma}=2 \left \lceil \frac{(\alpha_{\sigma}-3\varepsilon)n}{2} \right \rceil +1$ for every $\sigma \in \{-,0,+\}$. Since $R>R_{\overline{\ZZ}}(\varepsilon)$, Lemma~\ref{lemma_exponential_decay_with_gamma_n_lower}, applied with $\gamma=\alpha_{\pm \sigma}-3\varepsilon$, gives
\begin{align*}
\liminf \limits_{n \to \infty} \frac{1}{n} \log \PP_{-\sigma R} \left( \Omega_{\textup{exc}}^{-\sigma} \right) & \geq -(\alpha_{-\sigma}-3\varepsilon) I_{\textup{Cr}}^{p_{-\sigma}}(0) - (\alpha_{-\sigma}-3\varepsilon)g(\varepsilon), \\
\liminf \limits_{n \to \infty} \frac{1}{n} \log\PP_{\sigma R} \left( \Omega_{\textup{mea}}^{\sigma} \right) & \geq -(\alpha_{\sigma}-3\varepsilon) \inf_{\sigma x \in [0,1]} I_{\textup{Cr}}^{p_{\sigma}}(x) - (\alpha_{\sigma}-3\varepsilon)g(\varepsilon).
\end{align*}
Combining these estimates with
\eqref{eq_proof_lower_bound_liminf_1_n_log_three_blocks}, and using the central-block estimate
\eqref{eq_lemma_I_DV_with_gamma_n_lower}, we obtain, after taking $R$ to $\infty$, 
\begin{align*}
& \liminf \limits_{n \to \infty} \frac{1}{n} \log \PP\big( \ell_n \in B(\mu,22\varepsilon) \big)\\
& \qquad \qquad \geq 10 \varepsilon \log(p_{*}) -(\alpha_0-3\varepsilon)I_{\textup{DV}}(\mu_0) -(\alpha_{-\sigma}-3\varepsilon)I_{\textup{Cr}}^{p_{-\sigma}}(0) - (\alpha_{\sigma}-3\varepsilon) \inf_{\sigma x \in [0,1]}I_{\textup{Cr}}^{p_{\sigma}}(x) - g(\varepsilon).
\end{align*}
Finally, by taking $\varepsilon$ to 0, we deduce that for each $\sigma \in \{-,+\}$,
\begin{align}\label{eq_proof_lemma_lower_bound_almost_end}
& \lim \limits_{\varepsilon \to 0^+}\liminf \limits_{n \to \infty} \frac{1}{n} \log \PP\big( \ell_n \in B(\mu,\varepsilon) \big) \geq -\alpha_0 I_{\textup{DV}}(\mu_0) -\alpha_{-\sigma} I_{\textup{Cr}}^{p_{-\sigma}}(0) - \alpha_{\sigma} \inf_{\sigma x \in [0,1]}I_{\textup{Cr}}^{p_{\sigma}}(x),
\end{align}
where the fact that $\lim_{\varepsilon \to 0^+} g(\varepsilon)=0$ follows from Proposition~\ref{proposition_exponential_rate_of_decay_exc_and_end}. 

Let us now show that \eqref{eq_proof_lemma_lower_bound_almost_end} remains valid when at least one of the coefficients
$\alpha_{-},\alpha_0,\alpha_{+}$ is zero. If $\alpha_0=0$, then $t^0=1$ and $\Omega_{\textup{c}}=\{(0)\}$, so
\begin{align*}
\liminf_{n\to\infty}\frac1n\log \PP(\Omega_{\textup{c}})=0.
\end{align*}
If $\alpha_{-\sigma}=0$, then $t^{-\sigma}=1$ and
$\Omega_{\textup{exc}}^{-\sigma}=\{(-\sigma R)\}$. Thus,
\begin{align*}
\liminf_{n\to\infty}\frac1n\log
\PP_{-\sigma R}\left(\Omega_{\textup{exc}}^{-\sigma}\right)=0.
\end{align*}
The same argument applies to the meander term when $\alpha_{\sigma}=0$. Thus, in \eqref{eq_proof_lower_bound_liminf_1_n_log_three_blocks}, the terms corresponding to strictly positive coefficients are handled as before, whereas those associated with vanishing coefficients are equal to zero. It follows, with the convention $0\cdot\infty=0$, that \eqref{eq_proof_lemma_lower_bound_almost_end} remains valid even when at least one of the coefficients $\alpha_-$, $\alpha_0$, $\alpha_+$ is zero.

Taking the maximum over $\sigma \in \{-,+\}$ in \eqref{eq_proof_lemma_lower_bound_almost_end} and invoking the definition of $I$ from \eqref{eq_def_rate_fuction} yields
\begin{align*}
\lim \limits_{\varepsilon \to 0^+}\liminf \limits_{n \to \infty} \frac{1}{n} \log \PP\big( \ell_n \in B(\mu,\varepsilon) \big) \geq - I(\mu).
\end{align*}
Since this holds for all $\mu \in \mathcal{P}(\overline{\ZZ})$, this proves Proposition~\ref{prop_lower_bound}.
\end{proof}

\section{The upper bound}\label{section_upper_bound}

The aim of this section is to establish inequality (\ref{eq_prop_upper_bound_RL}) of Proposition~\ref{proposition_RL_functions}, which we restate below.

\begin{proposition}\label{prop_upper_bound}
For all $\mu \in \mathcal{P}(\overline{\ZZ})$, 
\begin{equation*}\label{eq_prop_upper_bound}
\lim \limits_{\varepsilon \to 0^+} \limsup \limits_{n \to \infty} \frac{1}{n} \log \PP \left( \ell_n \in B(\mu,\varepsilon) \right) \leq -I(\mu) ,
\end{equation*}
where the rate function $I$ is defined in (\ref{eq_def_rate_fuction}).
\end{proposition}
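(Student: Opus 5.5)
The plan is to mirror the lower bound. We fix $0<\varepsilon<\varepsilon^*$ and $R>\max\{R_{\overline{\ZZ}}(\varepsilon),R_{\mu_0}(\varepsilon)\}$, and we work with the smaller ball $B(\mu,2^{-R}\varepsilon)$, which suffices since the limit in $\varepsilon$ is monotone. We then split
\[
\PP\big(\ell_n\in B(\mu,2^{-R}\varepsilon)\big)=\sum_{\sigma\in\{-,0,+\}}\PP\big(\mathscr{C}_n^{\sigma}(\mu,2^{-R}\varepsilon)\big).
\]
Since there are only three classes, it is enough to bound each class separately, and then take the maximum of the three exponential rates, which corresponds to the minimum in \eqref{eq_def_rate_fuction}.

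For a word $w$ in class $\sigma$, we cut $w$ into its maximal sojourns in $A^-$, $A^0$ and $A^+$. Because the walk is nearest-neighbour, every sojourn in $A^{\pm}$ begins at $\pm R$. Every such sojourn is an excursion returning to $\pm R$, with one exception: the final sojourn, when $w$ ends in $A^{\sigma}$, $\sigma\in\{-,+\}$, is a meander. In class $\mathscr{C}_n^0$ all outer sojourns are excursions, and since $I_{\textup{Cr}}^{p}(0)\ge\inf_{\pm x\in[0,1]}I_{\textup{Cr}}^p(x)$, this class gives a rate no better than the other two. By \eqref{eq_link_p_w_with_probability}, $\mathfrak{p}(w)$ factorises exactly as the product of the block probabilities times the connecting transitions, and the connecting transitions are at most $1$.

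The key device is \emph{gluing}. All the excursions in $A^+$ are concatenated, identifying the common endpoint $R$, into a single excursion in $A^+$; the final meander, if present, is appended at the end. Exactly one combined word of $\Omega^+_{T,\textup{exc}}$ or $\Omega^+_{T,\textup{mea}}$ arises, with the same probability, where $T\approx N^+(w)$. Conversely, a combined word is recovered from $w$ given the positions of the cut points. These cut points sit at visits to $R$, and there are fewer than $3\varepsilon n$ of them by \eqref{eq_lemma_bound_on_last_point_if_close_to_mu_c_appendix_2} of Lemma~\ref{lemma_bound_on_last_point_if_close_to_mu_c_appendix}. Therefore the multiplicity is at most $\sum_{k\le 3\varepsilon n}\binom{n}{k}\le e^{nH(3\varepsilon)+o(n)}$, which is negligible as $\varepsilon\to0$. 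Also by Lemma~\ref{lemma_bound_on_last_point_if_close_to_mu_c_appendix}, $|N^{\pm}(w)-\alpha_{\pm}n|<2\varepsilon n$. Summing over the at most $O(n)$ admissible lengths and applying the upper bounds \eqref{eq_proposition_exponential_rate_of_decay_exc_upper} and \eqref{eq_proposition_exponential_rate_of_decay_final_upper} of Proposition~\ref{proposition_exponential_rate_of_decay_exc_and_end}, rescaled as in Lemma~\ref{lemma_exponential_decay_with_gamma_n_lower}, we get the following factors. For class $\sigma=+$, the outer parts contribute $e^{-n(\alpha_-I_{\textup{Cr}}^{p_-}(0)+\alpha_+\inf_{x\in[0,1]}I_{\textup{Cr}}^{p_+}(x))+O(\varepsilon+g(\varepsilon))n}$, and symmetrically for $\sigma=-$. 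This reproduces the constraint set $\mathcal{C}$ of \eqref{eq_def_rate_fuction_other_form_2}.

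The central part is the main obstacle. Its pieces start and end at $\pm(R-1)$, so they cannot be glued into a single walk (there are jumps between $R-1$ and $-R+1$), and Proposition~\ref{proposition_exponential_rate_decay_I_DV} does not apply directly. I would instead use a Donsker--Varadhan tilt inside the same sum. Fix a finitely supported $u\ge1$ on $\ZZ$ with $u\le M$, and weight each central transition $x\to y$ by $u(y)/\Pi u(x)$. Summing over the central blocks of a given shape, the weights telescope within each block, so each block contributes at most $M/\Pi u(x_{\text{start}})\le M$. This yields a factor $\exp(-N^0(w)\int\log(u/\Pi u)\,\mathrm{d}\ell^0(w))$ times $M^{\#\text{blocks}}$, and $\#\text{blocks}\le 6\varepsilon n$ because each block starts at a visit to $\pm(R-1)$. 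The difficulty is that the multiplicative structure must be preserved jointly with the gluing of the outer parts. I would handle this by doing the tilt first, block by block, as an upper bound on each central factor with the outer blocks held fixed. By Lemma~\ref{lemma_restriction_of_measure_good_approx}, $\ell^0(w)\in B(\mu_0,6\varepsilon/\alpha_0)$, so the central exponent is at most $-(\alpha_0-2\varepsilon)\inf_{\nu\in \overline{B}(\mu_0,6\varepsilon/\alpha_0)}\int\log(u/\Pi u)\,\mathrm{d}\nu+6\varepsilon\log M$. The integrand is continuous and bounded on $\overline{\ZZ}$ for finitely supported $u$, so this infimum tends to $\int\log(u/\Pi u)\,\mathrm{d}\mu_0$.

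To conclude, we let $n\to\infty$, then $R\to\infty$, then $\varepsilon\to0$, and finally take the supremum over $u$. The supremum over finitely supported bounded $u$ recovers $I_{\textup{DV}}(\mu_0)$ by monotone approximation in \eqref{Donsker_Varadhan_our_case}. Combined with the outer factors and the maximum over $\sigma$, this gives $-I(\mu)$. The degenerate cases where some $\alpha_\sigma=0$ are treated as in the lower bound, using the convention $0\times\infty=0$.
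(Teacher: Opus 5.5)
Your outline is sound, but for the central part it takes a genuinely different route from the paper. It is also motivated by an obstacle that does not exist. Two consecutive central sojourns are separated by a sojourn in a single outer region, so $u^{h,0}_{-1}=u^{h+1,0}_1=\epsilon_h(R-1)$ for a common sign $\epsilon_h$; no jump between $R-1$ and $-R+1$ ever occurs.

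The paper uses exactly this fact (Lemma~\ref{lemma_existence_connecting_one_letter_word}):
\begin{itemize}
\item It stitches the central pieces together with the one-letter connectors $\epsilon_h R$ and pads to the fixed length $\overline{\alpha}_{0,n}$.
\item It checks, via Lemma~\ref{lemma_restriction_of_measure_good_approx}, that the stitched word lies in $\Omega^{(0)}_{\overline{\alpha}_{0,n}}(\mu_0,C_1\varepsilon)$.
\item It then invokes Proposition~\ref{proposition_exponential_rate_decay_I_DV} directly.
\end{itemize}
The outer regions are treated the same way, with connectors $\sigma(R+1)$ and padding to $\overline{\alpha}_{\sigma,n}$, at a cost $p_*^{-O(\varepsilon n)}$. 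Preimages are counted through the sequence $(t_i,\sigma_i)$ in Lemma~\ref{lemma_upper_bound_combinatorial_on_phi_sigma}.

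Your alternative also works. You fix the shape (all transition times and region labels) and factorise the sum over blocks. You bound the central factor by the Donsker--Varadhan change of measure and the outer factors by gluing at the common endpoints $\pm R$. This makes the central estimate self-contained: you need neither Lemmas~\ref{lemma_I_DV_same_for_central_measures}--\ref{lemma_I_DV_lower_bound} nor item~1 of Lemma~\ref{lemma_image_og_phi_sigma_right_set}. Endpoint identification also preserves probabilities exactly.

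The paper's route, in turn, keeps one uniform ``stitch, then apply a regional estimate'' scheme with fixed lengths. This lets the limsup statements of Section~\ref{section_regional_estimates} be applied verbatim.

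Your route has two costs. First, you need bounds uniform over a window of lengths $T$; these are available, e.g.\ from the Chernoff form of the homogeneous upper bound. Second, you need a supremum over $u$ taken after $n\to\infty$, $R\to\infty$, $\varepsilon\to0$. This is legitimate, since the left-hand side does not depend on $u$ and the $O(\varepsilon n\log M)$ loss vanishes for fixed $u$.

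A few points should be tightened:
\begin{itemize}
\item Take $u\ge1$ with $u-1$ finitely supported.
\item The coefficient in front of the infimum must be $\alpha_0+2\varepsilon$ when that infimum is negative.
\item The shape count must include all $O(\varepsilon n)$ transition times and labels, not only the visits to $\pm R$.
\item Recovering $I_{\textup{DV}}(\mu_0)$ (read with bounded $u$, as in \eqref{eq_DV_general}) is not a monotone limit. It is still fine: replacing a bounded $u\le M$ by $1$ outside $\dbl -N,N\dbr$ lowers the functional by at most $2\log M\,\mu_0(\{|k|>N\})\to0$.
\end{itemize}
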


\subsection{Decomposition and stitching of trajectories}\label{subsection_decomp_and_stitch_traj_upper_bound}

In the rest of this section, we fix in the following order
\begin{enumerate}
\item a probability measure $\mu=\sum_{\sigma \in \{-,0,+\}} \alpha_{\sigma} \mu_{\sigma} \in \mathcal{P}(\overline{\ZZ})$ with $\mu_0 \in \mathcal{P}(\ZZ)$,
\item $\varepsilon>0$ sufficiently small so that $\varepsilon < \frac{1}{2} \min_{\sigma \in \{-,0,+\}} \left\{ \alpha_{\sigma} \, : \, \alpha_{\sigma}>0 \right\}$ and $\varepsilon< \min \left\{\frac{1}{40},\varepsilon^* \right\}$, where $\varepsilon^*$ is defined after Equation~(\ref{eq_for_lemma_G_varepsilon_to_zero}),
\item $R > \max\{R_{\mu_0}(\varepsilon),R_{\overline{\ZZ}}(\varepsilon)\}$ and $n > \frac{3}{\varepsilon}$.
\end{enumerate}

Fix a trajectory $w \in \Omega_n^{(0)}$. The idea is to decompose $w$ into successive pieces according to the region visited, and then regroup together the pieces corresponding to the same region. In this way, the original trajectory is reorganised into three trajectories, each confined to a single region. In the following definitions, we will not write the dependency on $w$ for the sake of readability, except when it is necessary. 

We define by induction the sequence $(t_i,\sigma_i)_{i \in \NN} \in \big((\NN \cup \{+\infty\}) \times \{-,0,+\}\big)^{\NN}$ by first setting $t_1 = 1$ and $\sigma_1=0$. Suppose the sequence is constructed up to the $i$-th step and that $w_{t_i} \in A^{\sigma_i}$. Then, we set
\begin{equation*}\label{eq_def_t_and_sigma}
t_{i+1} \eqdef \inf \big\{ k \in \dbl t_{i}+1,n \dbr  \, : \, w_k \notin A^{\sigma_i} \big\},
\end{equation*}
with the convention that the $\inf$ over an empty set is $+\infty$. If $t_{i+1} \leq n$, we set $\sigma_{i+1}$ to be the unique element in $\{-,0,+\}$ such that $w_{t_{i+1}} \in A^{\sigma_{i+1}}$, otherwise $t_{i+1}=+\infty$ and we assign an arbitrary value to $\sigma_{i+1}$. Thus, the sequence $(t_i)_{i \in \NN}$ records the times at which the trajectory moves from one region to another, while $(\sigma_i)_{i \in \NN}$ records the successive regions visited. Note that the sequence $(t_i)_{i \in \NN}$ is strictly increasing up to the index at which it first attains the value $+\infty$. We then let
\begin{equation}\label{eq_def_L}
L \eqdef \# \left\{ i \in \NN \, : \, t_i \leq n \right\}.
\end{equation}
For all $\sigma \in \{-,0,+\}$, we also let
\begin{equation}\label{eq_def_J_sigma}
J^{\sigma} \eqdef \{ i \in \dbl 1,L \dbr \, : \, \sigma_i=\sigma \}.
\end{equation}
Hence, $L$ is the number of successive visits of the trajectory to the regions $A^-$, $A^0$ and $A^+$, and for each $\sigma \in \{-,0,+\}$, the set $J^{\sigma}$ records which of these visits take place in the region $A^{\sigma}$. We enumerate $J^{\sigma}$ in the following way. There exists a sequence $1 \leq i^{\sigma}_1<i^{\sigma}_2< \cdots < i^{\sigma}_{\# J^{\sigma}}\leq L$ such that
\begin{align*}
J^{\sigma}=\big\{i_1^{\sigma} \, , \, i_2^{\sigma} \, , \, \dots \, , \,  i^{\sigma}_{\#J^{\sigma}}\big\}.
\end{align*}
For all $\sigma \in \{-,0,+\}$ and all $h \in \dbl 1,\#J^{\sigma} \dbr$, we define the subwords
\begin{equation}\label{eq_def_u_h_sigma}
u^{h,\sigma} \eqdef w_{[t_{i_h^{\sigma}} \, : \, (t_{i_h^{\sigma}+1}-1) \wedge n]}.
\end{equation}
These definitions are illustrated in Figure~\ref{figure_example_random_walk_for_def}. 
\begin{figure}[ht]
    \centering
    \includegraphics[width=\textwidth]{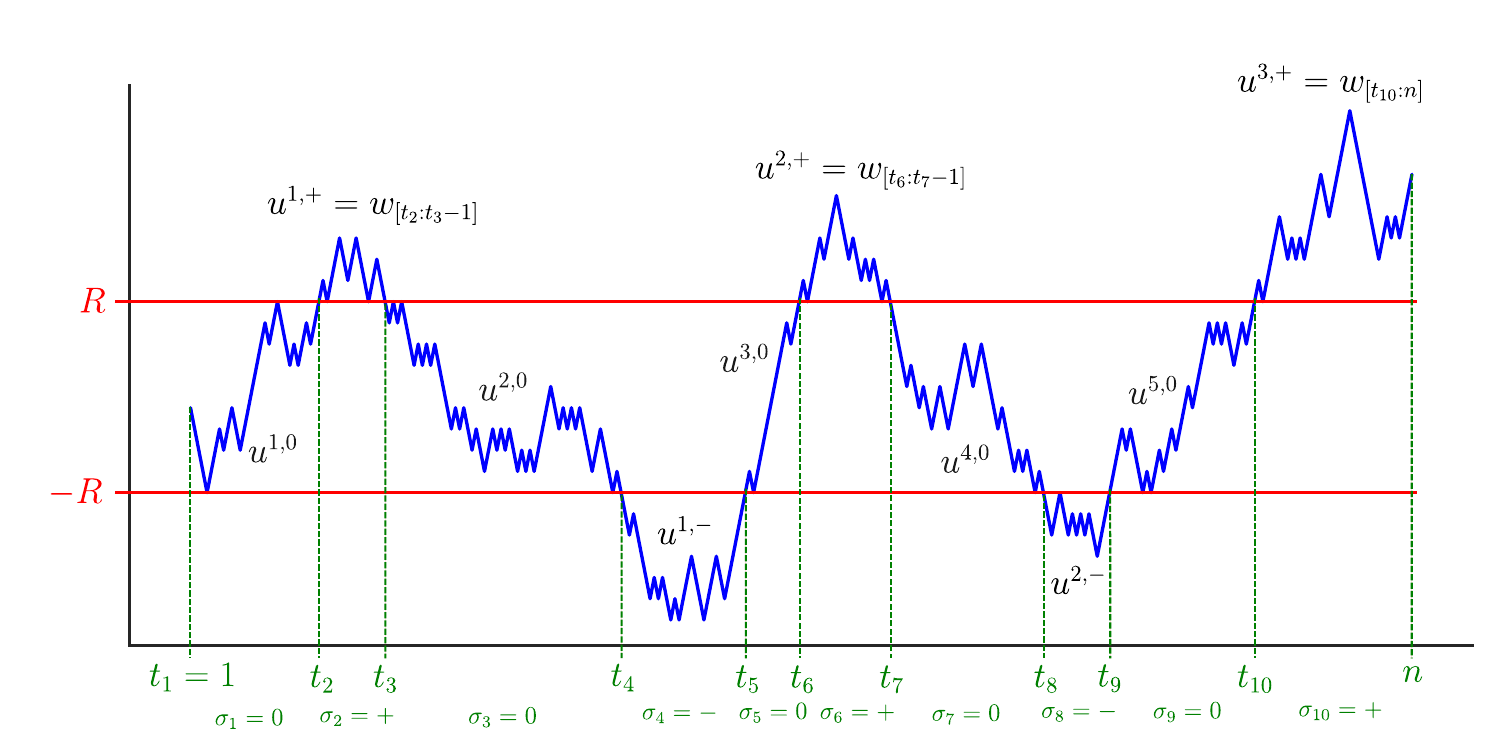}
    \caption{Example of a random walk with the associated sequence $(t_i,\sigma_i)_{i=1}^{10}$ and subwords $u^{h,\sigma}$.}
    \label{figure_example_random_walk_for_def}
\end{figure}
In this example, we have $L=10$, $J^{0}=\{1,3,5,7,9\} $, $J^{-}=\{4,8\}$ and $J^{+}=\{2,6,10\}$. 

The next lemma bounds the number of subwords associated with each $\sigma \in \{-,0,+\}$. We recall that for all $\sigma \in \{-,0,+\}$ and all $k \in \ZZ$, we introduced in \eqref{eq_def_C_sigma_and_C_k} the occupation times
\begin{equation*}
N^{\sigma}(w) \eqdef \sum_{j=1}^n \mathbbm{1}_{\{w_j \in A^{\sigma}\}} \qquad \text{and} \qquad N_k(w) \eqdef \sum_{j=1}^n \mathbbm{1}_{\{w_j =k\}}.
\end{equation*}
For each $\sigma \in \{-,0,+\}$ and each $j \in \dbl 1,n \dbr$ such that $w_j \in A^{\sigma}$, there exists by definition of the stopping times $(t_i)_{i \in \NN}$ and of the index set $J^{\sigma}$, a unique $h_j \in \dbl 1,\#J^{\sigma} \dbr$ such that $t_{i_{h_j}^{\sigma}} \leq j < t_{i_{h_j}^{\sigma}+1}$. Conversely, if $t_{i^{\sigma}_h} \leq j < t_{i^{\sigma}_h+1}$ for some $h \in \dbl 1,\#J^{\sigma} \dbr$, then $w_j \in A^{\sigma}$. Hence, for each $\sigma$, the map
\begin{equation}\label{eq_bijection_index_set}
\begin{array}{cccc}
\mathcal{I}^{\sigma} : & \{j \in \dbl 1,n \dbr \, : \, w_j \in A^{\sigma}\} & \longrightarrow & \{(h,l) \in \NN^2 \, : \, h \in \dbl 1,\#J^{\sigma} \dbr \, , \, l \in \dbl 1,|u^{h,\sigma}| \dbr \} \\
& j & \longmapsto & (h_j,j-t_{i^{\sigma}_{h_j}}+1)
\end{array},
\end{equation}
is a bijection. In particular,
\begin{equation}\label{eq_N_sigma_with_subwords}
N^{\sigma}(w)=\sum_{h=1}^{\#J^{\sigma}}|u^{h,\sigma}|.
\end{equation}

\begin{lemma}[Bound on the number of subwords]\label{lemma_bound_on_J_sigma}
Suppose $w \in \Omega_n^{(0)}(\mu,2^{-R}\varepsilon)$. Then, for every $\sigma \in \{-,0,+\}$,
\begin{equation*}\label{eq_lemma_bound_on_J_sigma}
\#J^{\sigma} < 6 \varepsilon n+1.
\end{equation*}
\end{lemma}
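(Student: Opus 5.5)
The plan is to exploit the nearest-neighbour structure. A trajectory cannot pass between regions without stepping through the boundary letters $\{-R,-R+1,R-1,R\}$, and Lemma~\ref{lemma_bound_on_last_point_if_close_to_mu_c_appendix}, equation \eqref{eq_lemma_bound_on_last_point_if_close_to_mu_c_appendix_2}, says each of these is visited fewer than $3\varepsilon n$ times. The hypotheses $R>R_{\mu_0}(\varepsilon)$ and $w\in\Omega_n^{(0)}(\mu,2^{-R}\varepsilon)$ are exactly those of that lemma, so it applies.

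First I note that since $|w_{t_{i+1}}-w_{t_{i+1}-1}|=1$, the region changes at time $t_{i+1}$ only between adjacent regions. Thus $A^-$ and $A^+$ are never adjacent in the sequence $(\sigma_i)$, and every transition is of the form $0\leftrightarrow\pm$. Concretely, a transition from $A^0$ to $A^+$ is a step $R-1\to R$, one from $A^+$ to $A^0$ is a step $R\to R-1$, and similarly at $-R+1,-R$.

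Fix $\sigma=+$. Each $i\in J^+$ with $i\ge 2$ is the index of an entry into $A^+$, so $w_{t_i}=R$. Distinct $i$ give distinct times $t_i$, so $\#(J^+\setminus\{1\})\le N_R(w)<3\varepsilon n$. In fact $1\notin J^+$, since $\sigma_1=0$, so $\#J^+<3\varepsilon n$. The case $\sigma=-$ is identical, using $N_{-R}(w)<3\varepsilon n$.

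For $\sigma=0$, every $i\in J^0$ with $i\ge2$ is an entry into $A^0$ from either $A^+$ or $A^-$, so $w_{t_i}\in\{R-1,-R+1\}$. Counting the distinct times again gives $\#J^0\le 1+N_{R-1}(w)+N_{-R+1}(w)<1+6\varepsilon n$. This is where the $+1$ and the constant $6$ come from, and the bound $6\varepsilon n+1$ then covers all three cases.

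The only delicate point is the injectivity of $i\mapsto t_i$ together with the fact that the entry letter is exactly a boundary letter. Both follow from the definition of $t_{i+1}$ as a first exit time combined with the nearest-neighbour property. No genuine obstacle is expected beyond this bookkeeping.
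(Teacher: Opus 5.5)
Your proposal is correct and is essentially the paper's proof. Each visit to $A^{\pm}$ begins at $\pm R$, which gives $\#J^{\pm}\le N_{\pm R}(w)$, and each visit to $A^0$ after the first begins at $\pm(R-1)$, which gives $\#J^0-1\le N_{R-1}(w)+N_{-R+1}(w)$; both bounds then follow from \eqref{eq_lemma_bound_on_last_point_if_close_to_mu_c_appendix_2}, using the standing assumption $R>R_{\mu_0}(\varepsilon)$.
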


\begin{proof}
Consider $\sigma \in \{-,+\}$ and $h \in \dbl 1, \#J^{\sigma} \dbr$. Since we are considering random walks with steps of size 1, a trajectory must pass through $\sigma R$ in order to enter $A^{\sigma}$. Consequently, the first letter of the subword $u^{h,\sigma}$ is $\sigma R$, that is, $u^{h,\sigma}_1=\sigma R$. It follows that each visit to $A^{\sigma}$ contributes at least one visit to $\sigma R$, and therefore $\#J^{\sigma} \leq N_{\sigma R}(w)$. Since $R>R_{\mu_0}(\varepsilon)$ and $w \in \Omega_n^{(0)}(\mu,2^{-R} \varepsilon)$, we may apply inequality (\ref{eq_lemma_bound_on_last_point_if_close_to_mu_c_appendix_2}) of Lemma~\ref{lemma_bound_on_last_point_if_close_to_mu_c_appendix} to bound $N_{\sigma R}(w)$ from above. We thereby obtain
\begin{align*}
\# J^{\sigma} \leq N_{\sigma R}(w) < 3 \varepsilon n.
\end{align*}
If $\sigma=0$ and $h \in \dbl 2,\#J^{0} \dbr$, we similarly obtain that $u^{h,0}_1 \in \{-R+1,R-1\}$. By using again (\ref{eq_lemma_bound_on_last_point_if_close_to_mu_c_appendix_2}) of Lemma~\ref{lemma_bound_on_last_point_if_close_to_mu_c_appendix} to bound $N_{R-1}(w)$ and $N_{-R+1}(w)$ from above, we obtain
\begin{align*}
\# J^{0}-1 \leq N_{R-1}(w) + N_{-R+1}(w) < 6 \varepsilon n.
\end{align*}
We thereby obtain the desired inequality for each $\sigma \in \{-,0,+\}$.
\end{proof}

\begin{lemma}[Existence of a connecting letter]\label{lemma_existence_connecting_one_letter_word}
Consider $w \in \Omega_n^{(0)}$ and $\sigma \in \{-,0,+\}$. For all $h \in \dbl 1,\#J^{\sigma}-1 \dbr$, there exists a one-letter word $\xi^{h,\sigma}=(k)$, with $R \leq |k| \leq R+1$ such that
\begin{equation*}\label{eq_lemma_existence_connecting_one_letter_word}
u^{h,\sigma}\cdot\xi^{h,\sigma} \cdot u^{h+1,\sigma} \in \Omega_{\textup{fin}}.
\end{equation*}
\end{lemma}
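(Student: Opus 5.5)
The plan is a direct case analysis on $\sigma$. It uses only the nearest-neighbour structure of the walk and the definition of the exit times $(t_i)_{i\in\NN}$, and it pins down the last letter of $u^{h,\sigma}$ and the first letter of $u^{h+1,\sigma}$ explicitly.

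First I would note that for $h \le \#J^\sigma - 1$ the visit indexed by $i^\sigma_h$ is not the last one. Hence $t_{i^\sigma_h+1} \le n$, and $u^{h,\sigma}$ ends at time $t_{i^\sigma_h+1}-1$, the step just before the walk leaves $A^\sigma$. By definition of $t_{i^\sigma_h+1}$ we have $w_{t_{i^\sigma_h+1}-1}\in A^\sigma$ and $w_{t_{i^\sigma_h+1}}\notin A^\sigma$, and these two letters are adjacent. Symmetrically, $u^{h+1,\sigma}_1 = w_{t_{i^\sigma_{h+1}}}\in A^\sigma$ is adjacent to $w_{t_{i^\sigma_{h+1}}-1}\notin A^\sigma$; this uses $i^\sigma_{h+1}\ge 2$, which holds because $h+1\ge 2$.

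\textbf{Case $\sigma=\pm$.} Take $\sigma=+$; the case $\sigma=-$ is symmetric. The only letter of $A^+$ adjacent to a letter outside $A^+$ is $R$. Therefore $u^{h,+}_{-1}=R$ and $u^{h+1,+}_1=R$. I would set $\xi^{h,+}=(R+1)$. Since $R\sim R+1\sim R$, the concatenation $u^{h,+}\cdot\xi^{h,+}\cdot u^{h+1,+}$ lies in $\Omega_{\textup{fin}}$. For $\sigma=-$ I would take $\xi^{h,-}=(-R-1)$. In both cases $|k|=R+1$.

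\textbf{Case $\sigma=0$.} Here $u^{h,0}_{-1}\in\{R-1,-R+1\}$, and the walk exits $A^0$ into $R$ or into $-R$ respectively; likewise $u^{h+1,0}_1\in\{R-1,-R+1\}$. The key step, which is the only non-trivial point, is to show that both letters lie on the same side. Suppose the walk exits at time $t_{i^0_h+1}$ into $A^+$, i.e. $w=R$ there. Between the two visits to $A^0$, the walk has not returned to $A^0$. Since steps have size one, it cannot cross from $A^+$ to $A^-$ without passing through $A^0$. So all intermediate letters lie in $A^+$, and the re-entry into $A^0$ happens from $R$, which gives $u^{h+1,0}_1=R-1$. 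Formally, I would argue that $i^0_{h+1}=i^0_h+2$ and that $\sigma_{i^0_h+1}=+$ with a single visit in between. I would then take $\xi^{h,0}=(R)$, and $(-R)$ in the mirrored case, so that $|k|=R$.

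The main obstacle is this same-side argument in the case $\sigma=0$; everything else is immediate from the definitions. It reduces to the observation that $A^+$ and $A^-$ are separated by $A^0$, which holds since $R\ge 1$ makes $A^0\neq\emptyset$, and that nearest-neighbour paths cannot jump over $A^0$.
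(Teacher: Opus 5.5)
Your proposal is correct and takes the same approach as the paper. You choose $\xi^{h,\sigma}=(\sigma(R+1))$ for $\sigma\in\{-,+\}$ and $\xi^{h,0}=(\epsilon_h R)$ for $\sigma=0$, exactly as the paper does. You also spell out the facts the paper asserts without proof: that $u^{h,\sigma}_{-1}=u^{h+1,\sigma}_1=\sigma R$, and that consecutive visits to $A^0$ exit and re-enter on the same side $\epsilon_h(R-1)$, since a nearest-neighbour path cannot move between $A^+$ and $A^-$ without entering $A^0$.
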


\begin{proof}
If $\sigma \in \{-,+\}$, set $\xi^{h,\sigma}=(\sigma (R+1)) \in \Omega_1$. This is a one-letter word with the required property. If $\sigma=0$, there exists $\epsilon_h \in \{-,+\}$ such that $u_{-1}^{h,0}=\epsilon_h (R-1)=u^{h+1,0}_1$. In this case, define $\xi^{h,0}=(\epsilon_h R) \in \Omega_1$, which again satisfies the required property.
\end{proof}

Our aim is to define maps that, for each $\sigma \in \{-,0,+\}$,  concatenate the subwords $u^{h,\sigma}$ into a single word. We set the length of the concatenated word to be
\begin{align*}
\overline{\alpha}_{\sigma,n} \eqdef 2 \left \lceil \frac{\alpha_{\sigma}+8 \varepsilon}{2} n \right \rceil +1.
\end{align*}

\begin{lemma}\label{lemma_overline_alpha_sigma_long_enough}
Fix $\sigma \in \{-,0,+\}$ and $w \in \Omega_n^{(0)}(\mu,2^{-R}\varepsilon)$. If $\#J^{\sigma} \neq 0$, then
\begin{equation}\label{eq_lemma_overline_alpha_sigma_long_enough}
\sum_{h=1}^{\#J^{\sigma}}|u^{h,\sigma}|+\#J^{\sigma}-1 < \overline{\alpha}_{\sigma,n}.
\end{equation}
\end{lemma}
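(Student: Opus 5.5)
Use identity \eqref{eq_N_sigma_with_subwords}, which rewrites the sum of the lengths of the subwords as the occupation time $N^{\sigma}(w)$. The left-hand side of \eqref{eq_lemma_overline_alpha_sigma_long_enough} then equals $N^{\sigma}(w)+\#J^{\sigma}-1$. It is enough to bound each of these two terms with the estimates already available for $w \in \Omega_n^{(0)}(\mu,2^{-R}\varepsilon)$ under the standing choice $R>R_{\mu_0}(\varepsilon)$.

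For the first term, apply \eqref{eq_lemma_bound_on_last_point_if_close_to_mu_c_appendix_1} of Lemma~\ref{lemma_bound_on_last_point_if_close_to_mu_c_appendix}:
\begin{align*}
N^{\sigma}(w) < (\alpha_{\sigma}+2\varepsilon)n.
\end{align*}
For the second term, Lemma~\ref{lemma_bound_on_J_sigma} gives $\#J^{\sigma}<6\varepsilon n+1$, so $\#J^{\sigma}-1<6\varepsilon n$. Adding the two bounds yields
\begin{align*}
\sum_{h=1}^{\#J^{\sigma}}|u^{h,\sigma}|+\#J^{\sigma}-1 < (\alpha_{\sigma}+8\varepsilon)n.
\end{align*}

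It remains to compare this with $\overline{\alpha}_{\sigma,n}$. Since $2\lceil x/2\rceil \geq x$ for every real $x$, we have
\begin{align*}
\overline{\alpha}_{\sigma,n} \geq (\alpha_{\sigma}+8\varepsilon)n+1 > (\alpha_{\sigma}+8\varepsilon)n,
\end{align*}
and the strict inequality follows. The hypothesis $\#J^{\sigma}\neq 0$ is only needed so that the sum and the term $\#J^{\sigma}-1$ are meaningful; it is not used in the estimates.

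There is no real obstacle here. The only point to check is that the hypotheses of the two cited lemmas hold: both require exactly $R>R_{\mu_0}(\varepsilon)$ and $w\in\Omega_n^{(0)}(\mu,2^{-R}\varepsilon)$, and these are fixed at the start of the section.
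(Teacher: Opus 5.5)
Your proposal is correct and matches the paper's proof: rewrite the subword lengths as $N^{\sigma}(w)$, bound it by $(\alpha_{\sigma}+2\varepsilon)n$ via the occupation-time lemma, bound $\#J^{\sigma}-1$ by $6\varepsilon n$, and compare with $\overline{\alpha}_{\sigma,n}\geq(\alpha_{\sigma}+8\varepsilon)n+1$. You also verify the standing hypotheses of the cited lemmas and the rounding step explicitly, which the paper leaves implicit.
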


\begin{proof}
As mentioned in \eqref{eq_N_sigma_with_subwords}, the total time spent in the region $A^{\sigma}$ is given by
\begin{align*}
N^{\sigma}(w)=\sum_{h=1}^{\#J^{\sigma}}|u^{h,\sigma}|.
\end{align*}
Since $R>R_{\mu_0}(\varepsilon)$ and $\ell(w) \in B(\mu,2^{-R}\varepsilon)$, we obtain for each $\sigma \in \{-,0,+\}$, by (\ref{eq_lemma_bound_on_last_point_if_close_to_mu_c_appendix_1}) of Lemma~\ref{lemma_bound_on_last_point_if_close_to_mu_c_appendix}, that $|N^{\sigma}(w)-\alpha_{\sigma} n| < 2 \varepsilon n$. In addition, using Lemma~\ref{lemma_bound_on_J_sigma} to bound $\#J^{\sigma}-1$ from above, we obtain
\begin{align*}
\sum_{h=1}^{\#J^{\sigma}}|u^{h,\sigma}|+\#J^{\sigma} -1 & = N^{\sigma}(w)+\#J^{\sigma}-1 \\
& < \alpha_{\sigma}n+8\varepsilon n.
\end{align*}
The upper bound given in Equation~(\ref{eq_lemma_overline_alpha_sigma_long_enough}) then follows by definition of $\overline{\alpha}_{\sigma,n}$.
\end{proof}

\begin{definition}\label{def_b_sigma}
For $\sigma \in \{-,0,+\}$ and $w \in \Omega_n^{(0)}(\mu,2^{-R}\varepsilon)$, we define the word $b^{\sigma} \in \Omega_{\textup{fin}}$ as follows. 
\begin{itemize}
\item[-] If $\#J^{\sigma}=0$, which can only happen if $\sigma \neq 0$, then we set for $j \in \dbl 1,\overline{\alpha}_{\sigma,n} \dbr$,
\begin{align*}
b^{\sigma}_j = \sigma(R+\mathbbm{1}_{j \in 2 \NN}).
\end{align*}
\item[-] If $\#J^{\sigma}>0$, let $m \eqdef \overline{\alpha}_{\sigma,n} - \left(\sum_{h=1}^{\#J^{\sigma}}|u^{h,\sigma}|+\#J^{\sigma}-1\right)$, which is positive by Lemma~\ref{lemma_overline_alpha_sigma_long_enough}. Then, for all $j \in \dbl 1, m \dbr$, set
\begin{align*}
b^{\sigma}_j = u^{\#J^{\sigma},\sigma}_{-1}+ \delta \mathbbm{1}_{j \in 2\NN-1},
\end{align*}
where $\delta \in \{-1,1\}$ is chosen so that $b^{\sigma}_j \in A^{\sigma}$ for all $j \in \dbl 1,m \dbr$.
\end{itemize}
\end{definition}

\noindent The existence of a $\delta \in \{-1,1\}$ which satisfies the condition of the second item of Definition~\ref{def_b_sigma} follows from the fact that for each $\sigma \in \{-,0,+\}$ and $k \in A^{\sigma}$, we have $k-1 \in A^{\sigma}$ or $k+1 \in A^{\sigma}$.

\begin{lemma}\label{lemma_bound_b_sigma}
For $\sigma \in \{-,0,+\}$ and $w \in \Omega_n^{(0)}(\mu,2^{-R}\varepsilon)$, we have
\begin{equation}\label{eq_lemma_bound_b_sigma}
|b^{\sigma}| < 11 \varepsilon n.
\end{equation}
\end{lemma}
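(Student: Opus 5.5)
The plan is to split according to the two cases of Definition~\ref{def_b_sigma} and bound $|b^{\sigma}|$ directly in each, using the occupation-time estimate of Lemma~\ref{lemma_bound_on_last_point_if_close_to_mu_c_appendix} together with the bound on the number of subwords from Lemma~\ref{lemma_bound_on_J_sigma}. Throughout we use the crude bounds
\begin{equation*}
(\alpha_{\sigma}+8\varepsilon)n+1 \leq \overline{\alpha}_{\sigma,n} \leq (\alpha_{\sigma}+8\varepsilon)n+3,
\end{equation*}
which follow from $x \leq 2\lceil x/2\rceil < x+2$.

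\emph{Case $\#J^{\sigma}>0$.} By definition,
\begin{equation*}
|b^{\sigma}| = m = \overline{\alpha}_{\sigma,n} - N^{\sigma}(w) - (\#J^{\sigma}-1),
\end{equation*}
using \eqref{eq_N_sigma_with_subwords}. Drop the nonnegative term $\#J^{\sigma}-1$. Since $w \in \Omega_n^{(0)}(\mu,2^{-R}\varepsilon)$ and $R>R_{\mu_0}(\varepsilon)$, inequality \eqref{eq_lemma_bound_on_last_point_if_close_to_mu_c_appendix_1} gives $N^{\sigma}(w) > \alpha_{\sigma}n - 2\varepsilon n$. Hence
\begin{equation*}
|b^{\sigma}| < (\alpha_{\sigma}+8\varepsilon)n + 3 - \alpha_{\sigma}n + 2\varepsilon n = 10\varepsilon n + 3.
\end{equation*}
Since $n>3/\varepsilon$, we have $3<\varepsilon n$, so $|b^{\sigma}|<11\varepsilon n$.

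\emph{Case $\#J^{\sigma}=0$.} This forces $\sigma\in\{-,+\}$ and $N^{\sigma}(w)=0$, and here $|b^{\sigma}|=\overline{\alpha}_{\sigma,n}$. The key step is showing $\alpha_{\sigma}=0$. Inequality \eqref{eq_lemma_bound_on_last_point_if_close_to_mu_c_appendix_1} gives $\alpha_{\sigma} n = |N^{\sigma}(w)-\alpha_{\sigma}n| < 2\varepsilon n$, i.e.\ $\alpha_{\sigma}<2\varepsilon$. By the choice of $\varepsilon$ in item 2, every positive coefficient satisfies $\alpha_{\sigma} > 2\varepsilon$, so $\alpha_{\sigma}=0$. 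Then
\begin{equation*}
|b^{\sigma}| = \overline{\alpha}_{\sigma,n} \leq 8\varepsilon n + 3 < 9\varepsilon n < 11\varepsilon n,
\end{equation*}
again using $3<\varepsilon n$.

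The only delicate point is this second case, where we must rule out $\alpha_{\sigma}>0$ with no visits to $A^{\sigma}$. This is exactly where the standing assumption $\varepsilon<\tfrac12\min\{\alpha_{\sigma}:\alpha_{\sigma}>0\}$ enters. The rest is bookkeeping with ceilings.
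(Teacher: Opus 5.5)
Your proof is correct and follows the paper's argument: the same split into the cases $\#J^{\sigma}>0$ and $\#J^{\sigma}=0$, using $\overline{\alpha}_{\sigma,n}\le(\alpha_{\sigma}+8\varepsilon)n+3$, the occupation-time bound of Lemma~\ref{lemma_bound_on_last_point_if_close_to_mu_c_appendix}, and $n>3/\varepsilon$. The only difference is in the case $\#J^{\sigma}=0$, where you use the standing assumption on $\varepsilon$ to upgrade $\alpha_{\sigma}<2\varepsilon$ to $\alpha_{\sigma}=0$; this is valid but not needed, and it is not really the ``delicate point'' you describe, since $\alpha_{\sigma}<2\varepsilon$ alone already gives $|b^{\sigma}|<10\varepsilon n+3<11\varepsilon n$, which is how the paper concludes.
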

\begin{proof}
If $\#J^{\sigma}=0$, then by definition of $b^{\sigma}$, we have
\begin{align*}
|b^{\sigma}|&=\overline{\alpha}_{\sigma,n} \leq (\alpha_{\sigma}+8\varepsilon)n+3.
\end{align*}
Since $\#J^{\sigma}=0$, we have $N^{\sigma}(w)=0$. Moreover, because $R>R_{\mu_0}(\varepsilon)$ and $w \in \Omega_n^{(0)}(\mu,2^{-R}\varepsilon)$, Equation~(\ref{eq_lemma_bound_on_last_point_if_close_to_mu_c_appendix_1}) implies $\alpha_{\sigma}<2 \varepsilon$. Together with the assumption $n > \frac{3}{\varepsilon}$, this gives
\begin{align*}
|b^{\sigma}| & < 10 \varepsilon n+3 < 11 \varepsilon n.
\end{align*}
If $\#J^{\sigma}>0$, then by definition of $b^{\sigma}$,
\begin{align*}
|b^{\sigma}| & \leq \overline{\alpha}_{\sigma,n}-\sum_{h=1}^{\#J^{\sigma}}|u^{h,\sigma}| \leq (\alpha_{\sigma}+8\varepsilon) n +3-N^{\sigma}(w).
\end{align*}
Using again Equation~(\ref{eq_lemma_bound_on_last_point_if_close_to_mu_c_appendix_1}), we deduce 
\begin{align*}
|b^{\sigma}| < (\alpha_{\sigma}+8\varepsilon) n +3 - \alpha_{\sigma}n+2 \varepsilon n.
\end{align*}
Since $n>\frac{3}{\varepsilon}$, we conclude that $|b^{\sigma}| < 11 \varepsilon n$, which establishes \eqref{eq_lemma_bound_b_sigma}.
\end{proof}

\begin{definition}\label{def_phi}
For each $\sigma \in \{-,0,+\}$, we define the maps $\phi^{\sigma}$ by setting,
\begin{equation*}\label{eq_def_phi}
\begin{array}{cccc}
\phi^{\sigma}:&\Omega_n^{(0)}(\mu,2^{-R}\varepsilon) & \longrightarrow & \Omega_{\overline{\alpha}_{\sigma,n}} \\
& w & \longmapsto &
\begin{cases}
u^{1,\sigma} \cdot \xi^{1,\sigma} \cdot u^{2,\sigma} \cdot \xi^{2,\sigma} \, \cdots \, u^{\#J^{\sigma},\sigma} \cdot b^{\sigma}, & \text{if } \#J^{\sigma}>0, \\
b^{\sigma}, & \text{if } \#J^{\sigma}=0,
\end{cases}
\end{array}
\end{equation*}
where for all $h \in \dbl 1,\#J^{\sigma}-1 \dbr$, $\xi^{h,\sigma}$ is the one-letter word connecting $u^{h,\sigma}_{-1}$ to $u^{h+1,\sigma}_1$, which exists by Lemma~\ref{lemma_existence_connecting_one_letter_word} and $b^{\sigma}$ is the word introduced in Definition~\ref{def_b_sigma}.
\end{definition}

\begin{remark}\label{remark_phi_sigma_well_defined}
The choice of $b^{\sigma}$ in Definition \ref{def_b_sigma} ensures that, for all $w \in \Omega_n^{(0)}(\mu,2^{-R}\varepsilon)$, one has $|\phi^{\sigma}(w)|=\overline{\alpha}_{\sigma,n}$. This is clear when $\#J^{\sigma}=0$. Otherwise, since $\xi^{h,\sigma}$ is a one-letter word, we have
\begin{align*}
|\phi^{\sigma}(w)|=\sum_{h=1}^{\#J^{\sigma}}|u^{h,\sigma}|+\sum_{h=1}^{\#J^{\sigma}-1}|\xi^{h,\sigma}|+|b^{\sigma}| & = \sum_{h=1}^{\#J^{\sigma}}|u^{h,\sigma}|+\#J^{\sigma}-1+|b^{\sigma}|.
\end{align*}
The result then follows from the choice of the length of $b^{\sigma}$. 
\end{remark}

\noindent The role of the $\xi^{h,\sigma}$ is to connect the $u^{h,\sigma}$ together, while $b^{\sigma}$ ensures that concatenated word $\phi^{\sigma}(w)$ has length $\overline{\alpha}_{\sigma,n}$. These additional words play no significant role at the level of the empirical measure. Indeed, Lemmas~\ref{lemma_bound_on_J_sigma} and~\ref{lemma_bound_b_sigma} show that both $\#J^{\sigma}-1=\sum_{h=1}^{\#J^{\sigma}-1}|\xi^{h,\sigma}|$ and $|b^{\sigma}|$ are negligible compared to $n$. The maps $\phi^{-}$, $\phi^{0}$ and $\phi^{+}$ are depicted in Figure~\ref{figure_random_walk_decomposed}. 

\begin{figure}[ht]
    \centering
    \includegraphics[width=\textwidth]{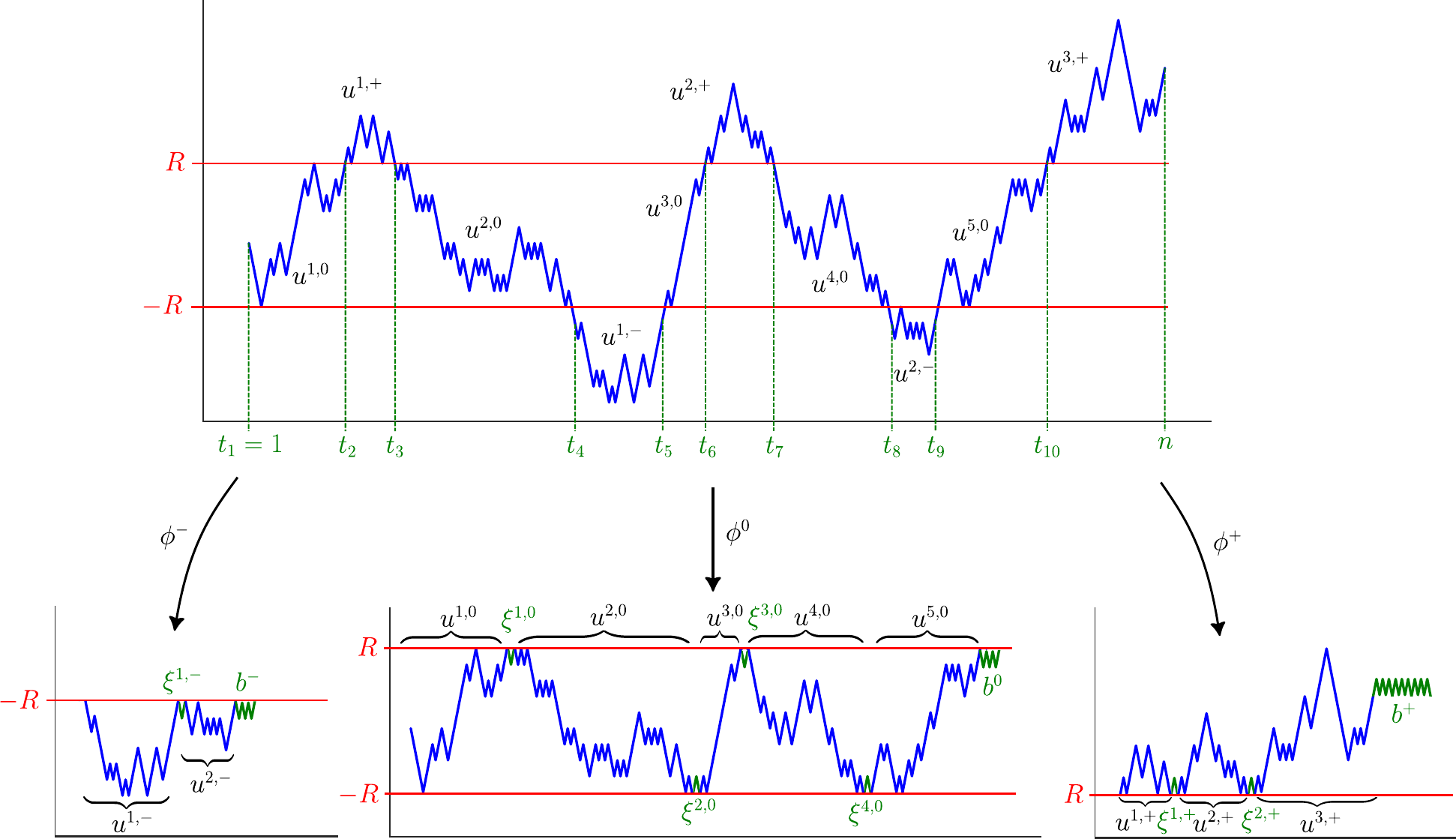}
    \caption{Decomposition of a trajectory $w \in \mathscr{C}_n^{+}$ with the maps $\phi^{\sigma}$.}
    \label{figure_random_walk_decomposed}
\end{figure}

\subsection{Properties of the stitched trajectories}

\noindent The maps $\phi^{\sigma}$ separate a trajectory $w$ into three smaller trajectories $\phi^{-}(w)$, $\phi^{0}(w)$ and $\phi^{+}(w)$. Let us present properties that the trajectories $\phi^{\sigma}(w)$ inherit from $w$. Until now we have indexed the regions by $\sigma \in \{-,0,+\}$. In what follows, it will be convenient to introduce a second index $\rho \in \{-,0,+\}$, where $\rho$ denotes the class of the trajectory, while $\sigma$ continues to label the regions themselves. The following lemma, characterises for $w \in \mathscr{C}_n^{\rho}$ each of the three trajectories $\phi^{-}(w)$, $\phi^{0}(w)$ and $\phi^{+}(w)$, depending on the value of $\rho \in \{-,0,+\}$.

\begin{lemma}[Properties of stitched trajectories]\label{lemma_image_og_phi_sigma_right_set}
Consider $\rho \in \{-,0,+\}$ and $w \in \mathscr{C}_{n}^{\rho}(\mu,2^{-R}\varepsilon)$.
\begin{enumerate}
\item If $\alpha_0 > 0$, then there exists a constant $C_1>0$, which only depends on $\alpha_0$, such that
\begin{equation*}\label{eq_lemma_image_og_phi_sigma_right_set_phi_0}
\phi^0(w) \in \Omega^{(0)}_{\overline{\alpha}_{0,n}}(\mu_0,C_1 \varepsilon).
\end{equation*}

\item If $\sigma \in \{-,+\}$ and $\sigma =\rho$, 
\begin{equation*}\label{eq_lemma_image_og_phi_sigma_right_set_widehat_sigma_neq_0}
\phi^{\sigma}(w) \in 
\Omega_{\overline{\alpha}_{\sigma,n},\textup{mea}}^{\sigma}.
\end{equation*}

\item If $\sigma \in \{-,+\}$ and $\sigma \neq \rho$, 
\begin{equation*}\label{eq_lemma_image_og_phi_sigma_right_set_widehat_sigma_eq_0}
\phi^{\sigma}(w) \in \Omega_{\overline{\alpha}_{\sigma,n},\textup{exc}}^{\sigma}.
\end{equation*}

\end{enumerate}
\end{lemma}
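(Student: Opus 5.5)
We treat the three items separately, since each only uses structural facts about the subwords $u^{h,\sigma}$, the connectors $\xi^{h,\sigma}$ and the padding word $b^{\sigma}$.

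\emph{Items 2 and 3.} Fix $\sigma\in\{-,+\}$. Every letter of $\phi^{\sigma}(w)$ lies in $A^{\sigma}$. Indeed, the $u^{h,\sigma}$ lie in $A^{\sigma}$ by definition of the times $t_i$, the connector $\xi^{h,\sigma}=(\sigma(R+1))$ lies in $A^{\sigma}$, and $b^{\sigma}$ lies in $A^{\sigma}$ by Definition~\ref{def_b_sigma}. The length equals $\overline{\alpha}_{\sigma,n}$ by Remark~\ref{remark_phi_sigma_well_defined}.

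For the first letter there are two cases. If $\#J^{\sigma}>0$, the first letter is $u^{1,\sigma}_1=\sigma R$, since nearest-neighbour walks enter $A^{\sigma}$ through $\sigma R$ (as in the proof of Lemma~\ref{lemma_bound_on_J_sigma}). Note that $w_1=0\notin A^{\sigma}$, so no visit starts at time $1$. If $\#J^{\sigma}=0$, the first letter is $b^{\sigma}_1=\sigma R$. This already gives item 2.

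For item 3, $\rho\neq\sigma$ means $w_n\notin A^{\sigma}$, so every visit to $A^{\sigma}$ is completed before time $n$. Each $u^{h,\sigma}$ therefore ends at the exit point $\sigma R$, because the walk leaves $A^{\sigma}$ only from $\sigma R$. It then remains to check that $b^{\sigma}$ ends at $\sigma R$.
\begin{itemize}
\item If $\#J^{\sigma}=0$, then $b^{\sigma}_j=\sigma(R+\mathbbm{1}_{j\in2\NN})$, and $\overline{\alpha}_{\sigma,n}$ is odd, so the last letter is $\sigma R$.
\item If $\#J^{\sigma}>0$, then $u^{\#J^{\sigma},\sigma}_{-1}=\sigma R$, which forces $\delta=\sigma$. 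The word $b^{\sigma}$ alternates $\sigma(R+1),\sigma R,\dots$, so it ends at $\sigma R$ exactly when $m$ is even.
\end{itemize}
Parity is the one delicate point here. The concatenation $u^{1,\sigma}\cdot\xi\cdots u^{\#J^{\sigma},\sigma}$ starts and ends at $\sigma R$, so it has odd length. Since $\overline{\alpha}_{\sigma,n}$ is also odd, $m$ is even, and the word ends at $\sigma R$. Hence $\phi^{\sigma}(w)\in\Omega^{\sigma}_{\overline{\alpha}_{\sigma,n},\textup{exc}}$.

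\emph{Item 1.} This is the main work. The first letter is $w_1=0$, so $\phi^0(w)$ starts at $0$; the remaining task is to bound $\|\ell(\phi^0(w))-\mu_0\|$. Write
\[
\overline{\alpha}_{0,n}\,\ell(\phi^0(w))=N^0(w)\,\ell^0(w)+\sum_{h}\delta_{\xi^{h,0}_1}+\sum_j\delta_{b^0_j}.
\]
By the triangle inequality and \eqref{eq_lemma_properties_KR_norm_of_proba}, $\|\ell(\phi^0(w))-\mu_0\|$ is bounded by the sum of three terms:
\begin{itemize}
\item $\|\ell^0(w)-\mu_0\|<6\varepsilon/\alpha_0$, by Lemma~\ref{lemma_restriction_of_measure_good_approx}. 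This lemma needs $\varepsilon<\alpha_0/2$, which holds by our choice of $\varepsilon$, and $R>R_{\overline{\ZZ}}(\varepsilon)$.
\item $2\,|1-N^0(w)/\overline{\alpha}_{0,n}|$.
\item $(\#J^0-1+|b^0|)/\overline{\alpha}_{0,n}$.
\end{itemize}
To control the last two, use $|N^0(w)-\alpha_0n|<2\varepsilon n$ from Lemma~\ref{lemma_bound_on_last_point_if_close_to_mu_c_appendix}, the bound $\#J^0<6\varepsilon n+1$ from Lemma~\ref{lemma_bound_on_J_sigma}, the bound $|b^0|<11\varepsilon n$ from Lemma~\ref{lemma_bound_b_sigma}, and $\overline{\alpha}_{0,n}\ge\alpha_0 n$. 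Each ratio is then $O(\varepsilon/\alpha_0)$, since the differences in numerators are $O(\varepsilon n)$ while the denominators are at least $\alpha_0 n$. This gives $\|\ell(\phi^0(w))-\mu_0\|<C_1\varepsilon$ with $C_1$ an explicit multiple of $1/\alpha_0$, for example $C_1=50/\alpha_0$.

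The step needing most care is the bookkeeping of constants in item 1, together with the parity argument in item 3. Both are routine once organized as above.
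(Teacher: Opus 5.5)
Your proposal is correct and follows the paper's proof essentially step by step. For item 1 you use the same decomposition of $\ell(\phi^0(w))$, with Lemma~\ref{lemma_restriction_of_measure_good_approx} and the bounds on $\#J^0$ and $|b^0|$. For items 2--3 you use the same entry/exit-through-$\sigma R$ argument and the same parity argument. Your parity check, via the odd length of the whole concatenation $u^{1,\sigma}\cdot\xi^{1,\sigma}\cdots u^{\#J^{\sigma},\sigma}$, is just a compact version of the paper's sum over the odd lengths $|u^{h,\sigma}|$. Your constant differs harmlessly from the paper's $C_1=40/\alpha_0$.
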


\begin{proof}
Since the first claim does not depend on $\rho \in \{-,0,+\}$, fix $w \in \Omega_{n}^{(0)}(\mu,2^{-R}\varepsilon)$. Moreover, when $\sigma=0$, one necessarily has $\#J^{\sigma}>0$, so it suffices to treat this case. Consider the restricted empirical measure
\begin{equation*}
\ell^{0}(w) \eqdef \frac{1}{N^{0}(w)} \sum_{\substack{ 1 \leq j \leq n \, : \\ w_j \in A^0}} \delta_{w_j} \in \mathcal{P}(\overline{\ZZ}),
\end{equation*}
defined in \eqref{eq_def_restricted_empirical_measure}. We recall that $N^0(w)$, defined in (\ref{eq_def_C_sigma_and_C_k}), counts the number of letters in $w$ that belong to $A^0$. By inequality (\ref{eq_lemma_bound_on_last_point_if_close_to_mu_c_appendix_1}) of Lemma~\ref{lemma_bound_on_last_point_if_close_to_mu_c_appendix} and the assumption $\varepsilon<\frac{\alpha_0}{2}$, we have $N^{0}(w) > 0$, so $\ell^{0}(w)$ is well defined. With the bijection $\mathcal{I}^{0}$ given in \eqref{eq_bijection_index_set}, we can rewrite
\begin{align*}
\ell^{0}(w) = \frac{1}{N^{0}(w)} \sum_{h=1}^{\#J^{0}} \sum_{l=1}^{|u^{h,0}|} \delta_{u^{h,0}_l}.
\end{align*}

\noindent Decomposing $\phi^{0}(w)$ into its constituent subwords and using \eqref{eq_lemma_properties_KR_norm_of_proba}, we obtain 
\begin{align*}
\left\| \ell\big(\phi^{0}(w)\big) - \mu_{0} \right\| 
& = \left\| \frac{1}{|\phi^{0}(w)|} \left( \sum_{h=1}^{\#J^{0}} \sum_{j=1}^{|u^{h,0}|} \delta_{u^{h,0}_j} + \sum_{h=1}^{\#J^{0}-1} \delta_{\xi^{h,0}_1} + \sum_{j=1}^{|b^{0}|} \delta_{b^{0}_j} \right) - \mu_{0} \right\| \\
& \leq \left| \frac{N^{0}(w)}{|\phi^{0}(w)|}-1 \right| + \left\| \frac{1}{N^{0}(w)} \sum_{h=1}^{\#J^{0}} \sum_{j=1}^{|u^{h,0}|} \delta_{u^{h,0}_j} -\mu_{0} \right\| + \frac{\sum_{h=1}^{\#J^{0}-1}|\xi^{h,0}| + |b^{0}|}{|\phi^{0}(w)|}.
\end{align*}
Note that 
\begin{align*}
|\phi^{0}(w)|-N^{0}(w) & = \sum_{h=1}^{\#J^{0}-1}|\xi^{h,0}| + |b^{0}|.
\end{align*}
Therefore,
\begin{align*}
\left\| \ell\big(\phi^{0}(w)\big) - \mu_{0} \right\| 
\leq 2 \frac{\sum_{h=1}^{\#J^{0}-1}|\xi^{h,0}| + |b^{0}|}{|\phi^{0}(w)|} + \left\| \ell^{0}(w)-\mu_{0} \right\|.
\end{align*}
We can now use the quantitative bounds proved previously. Lemma~\ref{lemma_bound_on_J_sigma} gives a bound on $\#J^0-1=\sum_{h=1}^{\#J^0-1}|\xi^{h,0}|$, Equation~(\ref{eq_lemma_bound_b_sigma}) bounds $|b^0|$, and by construction
\begin{align*}
|\phi^{0}(w)|=\overline{\alpha}_{0,n} \geq \alpha_{0} n.
\end{align*}
Combining these bounds yields
\begin{align*}
\left\| \ell\big(\phi^{0}(w)\big) - \mu_{0} \right\| \leq \frac{34}{\alpha_{0}}\varepsilon + \left\| \ell^{0}(w)-\mu_{0} \right\|.
\end{align*}
Finally, apply Lemma~\ref{lemma_restriction_of_measure_good_approx} to bound $\left\| \ell^{0}(w)-\mu_{0} \right\|$ by $\frac{6 \varepsilon}{\alpha_0}$. Altogether, we obtain
\begin{align*}
\left\| \ell\big(\phi^{0}(w)\big) - \mu_{0} \right\| \leq \frac{40}{\alpha_{0}} \varepsilon,
\end{align*}
and the first item of the lemma follows with $C_1=\frac{40}{\alpha_{0}}$.

We now turn to the second item, and fix $\sigma=\rho \in \{-,+\}$. To prove that $\phi^{\sigma}(w)$ is a meander, it suffices to show that $\phi^{\sigma}(w)_1=\sigma R$ and $\phi^{\sigma}(w)_j \in A^{\sigma}$ for every $j \in \dbl 1, |\phi^{\sigma}(w)| \dbr$. If $\#J^{\sigma}=0$, then by definitions of $b^{\sigma}$ and $\phi^{\sigma}$, we have for $j \in \dbl 1, \overline{\alpha}_{\sigma,n} \dbr$,
\begin{align*}
\phi^{\sigma}(w)_j&=b^{\sigma}_j=\sigma(R+\mathbbm{1}_{j \in 2 \NN}).
\end{align*}
In particular, $\phi^{\sigma}(w)_1=\sigma R$ and every letter belongs to $A^{\sigma}$. Suppose $\#J^{\sigma}>0$. Then,
\begin{align*}
\phi^{\sigma}(w)_1=u^{1,\sigma}_1=w_{t_{i^{\sigma}_1}}.
\end{align*}
By definition of the stopping times $t_{i^{\sigma}_1}$, we have $w_{t_{i^{\sigma}_1}-1} \notin A^{\sigma}$ and $w_{t_{i^{\sigma}_1}} \in A^{\sigma}$. Hence, $\phi^{\sigma}(w)_1=\sigma R$. Furthermore, for every $h \in \dbl 1,\#J^{\sigma}\dbr$ and $j \in \dbl 1,|u^{h,\sigma}| \dbr$, 
\begin{align*}
u^{h,\sigma}_j \in A^{\sigma} \, , \, \forall j \in \dbl 1, |u^{h,\sigma}| \dbr \qquad \text{and} \qquad \xi^{h,\sigma}_1=\sigma(R+1) \in A^{\sigma}.
\end{align*}
In addition, by choice of $\delta$ in the construction of $b^{\sigma}$ in Definition~\ref{def_b_sigma}, we again have $b^{\sigma}_j \in A^{\sigma}$ for all $j \in \dbl 1,|b^{\sigma}| \dbr$. We deduce that for all $j \in \dbl 1,|\phi^{\sigma}(w)| \dbr$,
\begin{align*}
\phi^{\sigma}(w)_j \in A^{\sigma}.
\end{align*}
Thus, the two conditions defining $\Omega_{\overline{\alpha}_{\sigma,n},\textup{mea}}^{\sigma}$ are satisfied and so $\phi^{\sigma}(w) \in \Omega_{\overline{\alpha}_{\sigma,n},\textup{mea}}^{\sigma}$.

Finally, we prove Item~\textit{3}. Fix $\sigma \in \{-,+\}$ with $\sigma \neq \rho$. From the argument in the proof of Item~\textit{2.}, which does not rely on $\sigma=\rho$, we already know that $\phi^{\sigma}(w)_1=\sigma R$ and $\phi^{\sigma}(w)_j \in A^{\sigma}$ for every $j$. To prove that $\phi^{\sigma}(w)$ is an excursion, it remains to show that the last letter satisfies $\phi^{\sigma}(w)_{-1}=\sigma R$. 

If $\#J^{\sigma}=0$, then $\overline{\alpha}_{\sigma,n}$ is odd by construction, and hence
\begin{align*}
\phi^{\sigma}(w)_{-1}=\sigma(R+\mathbbm{1}_{\overline{\alpha}_{\sigma,n} \in 2\NN})=\sigma R,
\end{align*}
so the claim holds. 

If $\#J^{\sigma}>0$, by definitions of $b^{\sigma}$ and $\phi^{\sigma}$, we have
\begin{equation}\label{eq_lemma_last_letter_phi_sigma}
\phi^{\sigma}(w)_{-1}=b^{\sigma}_{-1}=u^{\#J^{\sigma},\sigma}_{-1}+\delta \mathbbm{1}_{|b^{\sigma}| \in 2\NN-1}.
\end{equation}
For each $h \leq \#J^{\sigma}$, the stopping times satisfy
\begin{align*}
w_{t_{i^{\sigma}_h}-1} \notin A^{\sigma} \qquad \text{and} \qquad u^{h,\sigma}_1=w_{t_{i^{\sigma}_h}} \in A^{\sigma}.
\end{align*}
Since $\sigma \neq 0$, we have $t_{i^{\sigma}_h}-1 \geq 1$, so these statements are also well defined for $h=1$. Therefore, for all $h \in \dbl 1,\#J^{\sigma} \dbr$, we have $u^{h,\sigma}_1=\sigma R$. 

Define $t_{*,h} \eqdef (t_{i^{\sigma}_{h}+1}-1) \wedge n$. If $t_{*,h}=n$ for some $h \in \dbl 1,\#J^{\sigma} \dbr$, then the trajectory $w$ would end in $A^{\sigma}$, contradicting $w \in \mathscr{C}^{\rho}_n$ with $\rho \neq \sigma$. Hence, $t_{*,h}+1 \leq  n$ for every $h$, which gives
\begin{align*}
u^{h,\sigma}_{-1}=w_{t_{*,h}} \in A^{\sigma} \qquad \text{and} \qquad w_{t_{*,h}+1} \notin A^{\sigma}.
\end{align*}
Thus, for all $h \in \dbl 1,\#J^{\sigma} \dbr$, we have $u^{h,\sigma}_{-1}=\sigma R$. In particular,
\begin{align*}
u^{\#J^{\sigma},\sigma}_{-1}=\sigma R.
\end{align*}
By Equation~(\ref{eq_lemma_last_letter_phi_sigma}), it remains to show that $|b^{\sigma}|$ is even. Since $u^{h,\sigma}_{1}=\sigma R=u^{h,\sigma}_{-1}$, parity of the simple random walk implies $|u^{h,\sigma}| \in 2\NN-1$ for every $h$. Consequently, 
\begin{align*}
\sum_{h=1}^{\#J^{\sigma}}|u^{h,\sigma}|+\#J^{\sigma}-1&=\sum_{h=1}^{\#J^{\sigma}}(|u^{h,\sigma}|+1)-1 \in 2\NN-1.
\end{align*}
Since $\overline{\alpha}_{\sigma,n}$ is odd by construction and 
\begin{align*}
|b^{\sigma}|&=\overline{\alpha}_{\sigma,n}-\left(\sum_{h=1}^{\#J^{\sigma}}|u^{h,\sigma}|+\#J^{\sigma}-1\right),
\end{align*}
we deduce that $|b^{\sigma}|$ is even. Therefore, $\phi^{\sigma}(w)_{-1}=\sigma R$, showing that all three conditions defining $\Omega_{\overline{\alpha}_{\sigma,n},\textup{exc}}^{\sigma}$ are satisfied. Hence, $\phi^{\sigma}(w) \in \Omega_{\overline{\alpha}_{\sigma,n},\textup{exc}}^{\sigma}$.
\end{proof}

The last two assertions of Lemma~\ref{lemma_image_og_phi_sigma_right_set} are illustrated in Figure~\ref{figure_random_walk_decomposed}. If $w \in \mathscr{C}^{+}_{n}$, then $\phi^{+}(w)$ starts at $R$ and remains in $A^+$ throughout its evolution. In particular, it never goes below $R$. Hence, $\phi^{+}(w) \in \Omega_{q,\textup{mea}}^+$ for some $q \in \NN$. Concerning $\phi^{-}(w)$, it starts at $-R$, stays below $-R$ during the whole trajectory and then finishes at $-R$. Thus, $\phi^{-}(w)  \in \Omega_{m,\textup{exc}}^{-}$ for some $m \in \NN$.

Lemma~\ref{lemma_image_og_phi_sigma_right_set} links the images of the maps $\phi^{\sigma}$ to sets for which we have the exponential rate of decay of their probabilities. The next lemma gives a combinatorial bound on the preimages of the maps $\phi^{\sigma}$. 

\begin{lemma}[Cardinality of the preimage of $\phi^{\sigma}$]\label{lemma_upper_bound_combinatorial_on_phi_sigma}
There exists a function $\widetilde{f}:\RR_+^* \to \RR$ such that $\lim_{\varepsilon \to 0^+}\widetilde{f}(\varepsilon)=0$ and for all $\rho \in \{-,0,+\}$ and all $ \overline{w}^{-}\in \phi^{-}\big( \mathscr{C}_n^{\rho}(\mu,2^{-R}\varepsilon)\big)$, $\overline{w}^{0}\in \phi^{0}\big( \mathscr{C}_n^{\rho}(\mu,2^{-R}\varepsilon)\big)$ and $ \overline{w}^{+}\in \phi^{+}\big( \mathscr{C}_n^{\rho}(\mu,2^{-R}\varepsilon)\big)$, 
\begin{equation*}\label{eq_lemma_upper_bound_combinatorial_on_phi_sigma}
\# \left\{ w \in \mathscr{C}_n^{\rho}(\mu,2^{-R}\varepsilon) \, : \, \phi^{\sigma}(w)=\overline{w}^{\sigma} \, , \, \forall \sigma \in \{-,0,+\} \right\} \leq e^{\widetilde{f}(\varepsilon) n+o(n)}.
\end{equation*}
\end{lemma}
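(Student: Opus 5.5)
We plan to show that $w$ is determined by the triple $(\overline{w}^{-},\overline{w}^{0},\overline{w}^{+})$ together with a small amount of extra data. The number of choices for that data will be $e^{\widetilde{f}(\varepsilon)n+o(n)}$. We reconstruct $w$ as follows. For each $\sigma$, the word $\overline{w}^{\sigma}=\phi^{\sigma}(w)$ is the concatenation $u^{1,\sigma}\cdot\xi^{1,\sigma}\cdots u^{\#J^{\sigma},\sigma}\cdot b^{\sigma}$. Knowing $\#J^{\sigma}$ and the lengths $|u^{1,\sigma}|,\dots,|u^{\#J^{\sigma},\sigma}|$ determines every $u^{h,\sigma}$, since the connecting letters $\xi^{h,\sigma}$ have length one and $b^{\sigma}$ is the remaining tail. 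The sequence of visited regions $(\sigma_1,\dots,\sigma_L)$ then tells us in which order to interleave the pieces. Since $w=u^{\cdot}$-pieces concatenated in the order prescribed by $(\sigma_i)_{i\le L}$, with $u^{h,\sigma}$ inserted at the $h$-th occurrence of $\sigma$, the map
\[
w\longmapsto \Big(\phi^{-}(w),\phi^{0}(w),\phi^{+}(w),\,(\sigma_i)_{i\le L},\,\big(|u^{h,\sigma}|\big)_{\sigma,h}\Big)
\]
is injective. It therefore suffices to count the possible values of the last two components once the first three are fixed.

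By Lemma~\ref{lemma_bound_on_J_sigma}, on $\Omega_n^{(0)}(\mu,2^{-R}\varepsilon)$ we have $L=\sum_\sigma\#J^{\sigma}<18\varepsilon n+3$. The sequence $(\sigma_i)_{i\le L}$ is a word of length at most $18\varepsilon n+3$ over three letters. Moreover, consecutive letters differ and $\sigma_1=0$, so the word is in fact determined by $L$ and at most one binary choice per step. This gives at most $n\cdot 2^{18\varepsilon n+3}$ possibilities. For the lengths, fix $\sigma$. The numbers $|u^{1,\sigma}|,\dots,|u^{\#J^{\sigma},\sigma}|$ are positive integers with sum $N^{\sigma}(w)\le n$. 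We encode them by the set of their partial sums, a subset of $\dbl 1,n\dbr$ of size at most $6\varepsilon n+1$. The number of such subsets is at most
\[
\sum_{k\le 6\varepsilon n+1}\binom{n}{k}\le (n+1)\,e^{nH(6\varepsilon)+o(n)}
\]
for $6\varepsilon<1/2$, which holds since $\varepsilon<1/40$. Here $H(x)=-x\log x-(1-x)\log(1-x)$ is the binary entropy, and the bound uses the standard estimate $\binom{n}{\lfloor xn\rfloor}\le e^{nH(x)}$.

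Multiplying the three contributions, one for each $\sigma$, with the bound on $(\sigma_i)$ gives the claim with
\[
\widetilde{f}(\varepsilon)=18\varepsilon\log 2+3H(6\varepsilon),
\]
which tends to $0$ as $\varepsilon\to0^+$. All polynomial factors in $n$ are absorbed into $e^{o(n)}$. This bound is uniform in $\rho$ and in the fixed images.

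The main point needing care is the injectivity claim. We must check that the pieces $u^{h,\sigma}$ really are recovered from $\overline{w}^{\sigma}$ given their lengths, i.e.\ that no ambiguity arises from $b^{\sigma}$ or from the last piece being truncated at $n$. This holds because the positions of the pieces in $\overline{w}^{\sigma}$ are the partial sums of $|u^{h,\sigma}|+1$. We must also check that interleaving according to $(\sigma_i)$ reproduces $w$ exactly: the pieces $u^{i}$, $i\le L$, taken in order, partition $\dbl 1,n\dbr$ by definition of the $t_i$. We also use that $L$ and the $\#J^{\sigma}$ are read off from $(\sigma_i)$.
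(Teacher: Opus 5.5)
Your proof is correct and follows essentially the same route as the paper. The paper shows injectivity on the fibre of the map recording the cut times $\{t_i\}_{i\le L}\subseteq\dbl 1,n\dbr$ and the regions $(\sigma_i)_{i\le L}$, whereas you record $(\sigma_i)$ and the per-region lengths $|u^{h,\sigma}|$. Given $(\sigma_i)$, these two encodings carry equivalent data. Your counting (consecutive $\sigma_i$ differ, plus the binary entropy bound) only changes the explicit $\widetilde f$ relative to the paper's estimate $\sum_{k}\binom{n}{k}3^k\le e n\,\bigl(3e/(20\varepsilon)\bigr)^{20\varepsilon n}$.
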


\begin{proof}
The general idea of the proof is to show that a trajectory $w \in \mathscr{C}_n^{\rho}(\mu,2^{-R}\varepsilon)$ such that $\phi^{\sigma}(w)=\overline{w}^{\sigma}$ for all $\sigma \in \{-,0,+\}$, is fully characterised by the associated sequence $(t_i,\sigma_i)_{i=1}^L$. Consider
\begin{align*}
\begin{array}{cccc}
\Phi : & \left\{ w \in \mathscr{C}_n^{\rho}(\mu,2^{-R}\varepsilon) \, : \, \phi^{\sigma}(w)=\overline{w}^{\sigma} \, , \, \forall \sigma \in \{-,0,+\} \right\} & \longrightarrow & \bigcup \limits_{k=1}^{\lceil 18 \varepsilon n \rceil + 3} \big( \mathcal{P}_k(n) \times \{-,0,+\}^k \big) \\
& w & \longmapsto & \{ t_i(w)\}_{i=1}^{L(w)} \times \big(\sigma_i(w)\big)_{i=1}^{L(w)}
\end{array},
\end{align*}
where $\mathcal{P}_k(n)$ denotes the subsets of $\dbl 1,n \dbr$ of size $k$. Note that in this proof, we will write the dependence in $w$ for the quantities $t_i$, $\sigma_i$ and $L$ since we will have to compare such quantities across different trajectories. First of all, using the bound obtained in Lemma~\ref{lemma_bound_on_J_sigma}, we obtain for all $ w \in \mathscr{C}_n^{\rho}(\mu,2^{-R}\varepsilon)$,
\begin{align*}
L(w) & = \sum_{\sigma \in \{-,0,+\}} \#J^{\sigma}(w) \leq 18 \varepsilon n +3.
\end{align*}
Thus, the map $\Phi$ is well defined. Let us show that $\Phi$ is injective. Suppose that $w, \widehat{w} \in \mathscr{C}_n^{\rho}(\mu,2^{-R}\varepsilon)$ satisfy
\begin{enumerate}
\item $\phi^{\sigma}(w)=\overline{w}^{\sigma}=\phi^{\sigma} (\widehat{w})$ for all $\sigma \in \{-,0,+\}$, and
\item $\Phi(w)=\Phi(\widehat{w})$.
\end{enumerate} 
By Item~2, we have $L(w)=L(\widehat{w})$. We call this common quantity $L$. Item~2 further implies that $t_i(w)=t_i(\widehat{w})$ and $\sigma_i(w)=\sigma_i(\widehat{w})$ for all $i \in \dbl 1,L \dbr$. Therefore, $J^{\sigma}(w)=J^{\sigma}(\widehat{w})$ for all $\sigma \in \{-,0,+\}$. 
Fix $\sigma \in \{-,0,+\}$ and $h \in \dbl 1,\#J^{\sigma} \dbr$. Let $u^{h,\sigma}$ and $\widehat{u}^{h,\sigma}$ be the subwords defined in (\ref{eq_def_u_h_sigma}) for $w$ and $\widehat{w}$ respectively. Since $t_i(w)=t_i(\widehat{w})$ and $\sigma_i(w)=\sigma_i(\widehat{w})$, we have $t_{i^{\sigma}_h}(w)=t_{\widehat{i}^{\sigma}_h}(\widehat{w})$. Therefore, $|u^{h,\sigma}|=|\widehat{u}^{h,\sigma}|$. By construction of $\phi^{\sigma}(w)$ and $\phi^{\sigma}(\widehat{w})$ and using Item~1, we obtain 
\begin{align*}
u^{h,\sigma}&=\phi^{\sigma}(w)_{\left[h+\sum_{k=1}^{h-1}|u^{k,\sigma}| \, : \, h-1+\sum_{k=1}^{h}|u^{k,\sigma}| \right]} \\
& = \phi^{\sigma}(\widehat{w})_{\left[h+\sum_{k=1}^{h-1}|\widehat{u}^{k,\sigma}| \, : \, h-1+\sum_{k=1}^{h}|\widehat{u}^{k,\sigma}| \right]} = \widehat{u}^{h,\sigma}.
\end{align*}
Therefore, by the definition of the subwords $u^{h,\sigma}$ in (\ref{eq_def_u_h_sigma}), it follows that 
\begin{align*}
w_{\left[t_{i^{\sigma}_h}:(t_{i^{\sigma}_h+1}-1) \wedge n \right]}=\widehat{w}_{\left[t_{i^{\sigma}_h}:(t_{i^{\sigma}_h+1}-1) \wedge n \right]}.
\end{align*}
This holds for all $\sigma \in \{-,0,+\}$ and $h \in \dbl 1,\#J^{\sigma} \dbr$ and since the intervals $\left[t_{i^{\sigma}_h}:(t_{i^{\sigma}_h+1}-1) \wedge n \right]$ cover $\dbl 1,n \dbr$, we deduce that $w=\widehat{w}$. Therefore $\Phi$ is injective. From this injectivity we deduce that
\begin{align*}
\# \left\{ w \in \mathscr{C}_n^{\rho}(\mu,2^{-R}\varepsilon) \, : \, \phi^{\sigma}(w)=\overline{w}^{\sigma} \, , \, \forall \sigma \in \{-,0,+\} \right\} & \leq \# \bigcup \limits_{k=1}^{\lceil 18 \varepsilon n \rceil + 3} \big( \mathcal{P}_k(n) \times \{-,0,+\}^k \big) \\
& = \sum_{k=1}^{\lceil 18 \varepsilon n \rceil + 3} \binom{n}{k} 3^k.
\end{align*}
Since $\varepsilon<\frac{1}{40}$, we have $\lceil 18 \varepsilon n \rceil +3 \leq 20 \varepsilon n < \frac{n}{2}$. Thus, using the unimodality of the binomial coefficient, we obtain
\begin{align*}
\sum_{k=1}^{\lceil 18 \varepsilon n \rceil + 3} \binom{n}{k} 3^k & \leq 20 \varepsilon n \times 3^{20 \varepsilon n} \binom{n}{\lceil 20 \varepsilon n \rceil}.
\end{align*} 
Using the standard Stirling-type upper bound $\binom{n}{k} < \left( \frac{e n}{k} \right)^k$, we deduce that	
\begin{align*}
\# \left\{ w \in \mathscr{C}_n^{\rho}(\mu,2^{-R}\varepsilon) \, : \, \phi^{\sigma}(w)=\overline{w}^{\sigma} \, , \, \forall \sigma \in \{-,0,+\} \right\} \leq e n \times \left( \frac{3 e}{20 \varepsilon} \right)^{20 \varepsilon n}.
\end{align*}
The result follows by setting $\widetilde{f}(\varepsilon)=20 \varepsilon \log \left( \frac{3 e}{20} \right) - 20 \varepsilon \log \varepsilon$ for all $\varepsilon>0$.
\end{proof}
\begin{remark}\label{remark_interpretation_counting_traj_lemma}
The maps $\phi^{\sigma}$ induce a three-part decomposition of each trajectory: a central piece, an excursion, and a meander above $R$ (when $\sigma=+$) or below $-R$ (when $\sigma=-$). This mirrors the three-block decomposition introduced for typical trajectories in Section~\ref{section_lower_bound}. Lemma~\ref{lemma_upper_bound_combinatorial_on_phi_sigma} allows us to compare the set $\mathscr{C}_n^{\rho}(\mu,2^{-R}\varepsilon)$ with its subset of associated typical trajectories. It shows that enlarging our class from the typical trajectories to all trajectories in $\mathscr{C}_n^{\rho}(\mu,2^{-R}\varepsilon)$ introduces only the multiplicative factor $e^{\widetilde{f}(\varepsilon)n+o(n)}$ and therefore does not affect the exponential rate of decay.
\end{remark}

The next lemma gives an upper bound in terms of three quantities whose exponential rates of decay have been established in Section~\ref{section_regional_estimates}. 
\begin{lemma}[Upper bound for each class]\label{lemma_upper_bound_three_terms}
Let $C_1>0$ be as in Item~1.~of Lemma~\ref{lemma_image_og_phi_sigma_right_set}. Then there exists $f:\RR_+^* \to \RR$ with $\lim_{\varepsilon \to 0^+}f(\varepsilon)=0$ such that, for every $\rho \in \{-,+\}$, 
\begin{align}
& \PP\left( \mathscr{C}_n^{\rho}(\mu,2^{-R}\varepsilon) \right)  \notag \\
& \qquad \leq 
\begin{cases}
e^{f(\varepsilon) n + o(n)} \PP\Big( \Omega_{\overline{\alpha}_{0,n}}^{(0)}(\mu_0,C_1\varepsilon) \Big) \PP_{-\rho R} \left( \Omega_{\overline{\alpha}_{-\rho,n},\textup{exc}}^{-\rho} \right)  \PP_{\rho R} \left( \Omega_{\overline{\alpha}_{\rho,n},\textup{mea}}^{\rho} \right), & \text{if } \alpha_0>0, \\
e^{f(\varepsilon) n + o(n)} \PP_{-\sigma R} \left( \Omega_{\overline{\alpha}_{-\rho,n},\textup{exc}}^{-\rho} \right)  \PP_{\rho R} \left( \Omega_{\overline{\alpha}_{\rho,n},\textup{mea}}^{\rho} \right), & \text{if } \alpha_0=0,
\end{cases} \label{eq_lemma_upper_bound_three_terms_sigma_1}
\end{align}
and for $\rho=0$, 
\begin{align}
& \PP\left( \mathscr{C}_n^{0}(\mu,2^{-R}\varepsilon) \right) \notag \\
& \qquad  \leq
\begin{cases}
e^{f(\varepsilon) n + o(n)} \PP\Big( \Omega_{\overline{\alpha}_{0,n}}^{(0)}(\mu_0,C_1\varepsilon) \Big) \PP_{-R} \left( \Omega_{\overline{\alpha}_{-,n},\textup{exc}}^{-} \right)  \PP_{R} \left( \Omega_{\overline{\alpha}_{+,n},\textup{exc}}^{+} \right), & \text{if } \alpha_0>0, \\
e^{f(\varepsilon) n + o(n)} \PP_{-R} \left( \Omega_{\overline{\alpha}_{-,n},\textup{exc}}^{-} \right)  \PP_{R} \left( \Omega_{\overline{\alpha}_{+,n},\textup{exc}}^{+} \right), & \text{if } \alpha_0=0.
\end{cases} \label{eq_lemma_upper_bound_three_terms_0}
\end{align}
\end{lemma}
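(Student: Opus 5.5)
The plan is to write $\PP(\mathscr{C}_n^{\rho}(\mu,2^{-R}\varepsilon))=\sum_{w}\mathfrak{p}(w)$ and compare $\mathfrak{p}(w)$ with the product $\mathfrak{p}(\phi^{-}(w))\,\mathfrak{p}(\phi^{0}(w))\,\mathfrak{p}(\phi^{+}(w))$. Then we group the sum according to the triple $(\phi^-(w),\phi^0(w),\phi^+(w))$, use Lemma~\ref{lemma_upper_bound_combinatorial_on_phi_sigma} to bound the number of $w$ in each group, and finally use Lemma~\ref{lemma_image_og_phi_sigma_right_set} to identify the sets in which the three images lie.

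\textbf{Step 1: trajectory-wise comparison.} Fix $w\in\mathscr{C}_n^{\rho}(\mu,2^{-R}\varepsilon)$. By \eqref{eq_link_p_w_with_probability}, $\mathfrak{p}(w)$ is the product of the transitions $P(w_j,w_{j+1})$. We split the transitions into two groups.
\begin{itemize}
\item[(i)] Transitions internal to some subword $u^{h,\sigma}$. Each such transition appears exactly once as a factor of $\mathfrak{p}(\phi^{\sigma}(w))$.
\item[(ii)] Transitions between consecutive blocks, i.e.\ at the times $t_i$. There are at most $L-1\le 18\varepsilon n+2$ of them, by Lemma~\ref{lemma_bound_on_J_sigma}. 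Each is at most $1$, so dropping them only increases the bound.
\end{itemize}
Conversely, $\mathfrak{p}(\phi^{\sigma}(w))$ contains some extra factors: the transitions into and out of each connecting letter $\xi^{h,\sigma}$, and the transitions inside and into $b^{\sigma}$. Each of these is at least $p_*$ from \eqref{eq_def_p_star_R}, and their number is at most $2\#J^{\sigma}+|b^{\sigma}|\le 12\varepsilon n+2+11\varepsilon n$, by Lemmas~\ref{lemma_bound_on_J_sigma} and~\ref{lemma_bound_b_sigma}. Writing $\mathfrak{p}(\phi^\sigma(w))=(\text{internal part})\times(\text{extra factors})$ and using that the extra factors are at least $p_*^{\text{their count}}$, we obtain
\begin{equation*}
\mathfrak{p}(w)\le p_*^{-3(23\varepsilon n+2)}\prod_{\sigma\in\{-,0,+\}}\mathfrak{p}\big(\phi^{\sigma}(w)\big).
\end{equation*}
One point needs care: $\phi^0(w)$ starts at $0$, since $u^{1,0}_1=w_1=0$. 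So $\mathfrak{p}(\phi^0(w))$ is exactly its $\PP$-probability, and similarly $\phi^{\pm}(w)$ starts at $\pm R$.

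\textbf{Step 2: summation.} We sum over $w$, grouping by the image triple. By Lemma~\ref{lemma_upper_bound_combinatorial_on_phi_sigma}, each group has at most $e^{\widetilde f(\varepsilon)n+o(n)}$ elements. Hence
\begin{equation*}
\PP\big(\mathscr{C}_n^{\rho}(\mu,2^{-R}\varepsilon)\big)\le e^{(\widetilde f(\varepsilon)+69\varepsilon|\log p_*|)n+o(n)}\prod_{\sigma}\sum_{\overline{w}^{\sigma}\in\phi^{\sigma}(\mathscr{C}_n^{\rho}(\mu,2^{-R}\varepsilon))}\mathfrak{p}(\overline{w}^{\sigma}).
\end{equation*}
Here we used that summing over a subset of the product of the three image sets is bounded by the product of the individual sums. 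By Lemma~\ref{lemma_image_og_phi_sigma_right_set}, each image set is contained in the appropriate set, and the sums of $\mathfrak{p}$ over those sets equal the corresponding probabilities:
\begin{itemize}
\item[(a)] $\phi^{0}$ maps into $\Omega^{(0)}_{\overline{\alpha}_{0,n}}(\mu_0,C_1\varepsilon)$ when $\alpha_0>0$;
\item[(b)] $\phi^{\rho}$ maps into the meanders $\Omega^{\rho}_{\overline{\alpha}_{\rho,n},\textup{mea}}$;
\item[(c)] $\phi^{-\rho}$ maps into the excursions $\Omega^{-\rho}_{\overline{\alpha}_{-\rho,n},\textup{exc}}$. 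For $\rho=0$, both $\phi^{\pm}$ map into excursions.
\end{itemize}
We then set $f(\varepsilon)=\widetilde f(\varepsilon)+69\varepsilon|\log p_*|$, which tends to $0$ as $\varepsilon\to0^+$.

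\textbf{Step 3: the case $\alpha_0=0$.} The central factor is simply bounded by $1$. This requires Step 1 to still make sense: $\#J^0\ge1$ always holds since $w_1=0\in A^0$, so $\phi^0(w)$ is a genuine word and its probability is at most $1$.

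The main obstacle is the bookkeeping in Step 1. We must check that every transition of $w$ internal to a block is counted exactly once, and that the number of extra factors in $\phi^{\sigma}(w)$ is $O(\varepsilon n)$ uniformly in $w$; this is what the earlier length bounds provide. Only $O(\varepsilon n)$ factors are ever added or removed, each bounded between $p_*$ and $1$, so the resulting error is absorbed into $f(\varepsilon)n$.
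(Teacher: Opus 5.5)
Your proposal is correct and follows the paper's proof essentially step for step. The paper first proves the trajectory-wise bound $\mathfrak{p}(w)\le p_*^{-O(\varepsilon n)}\prod_{\sigma}\mathfrak{p}(\phi^{\sigma}(w))$, by dropping the inter-block transitions and paying a factor $p_*^{-1}$ for each of the $2(\#J^{\sigma}-1)+|b^{\sigma}|$ added transitions. It then groups trajectories by their image triple, applies the counting bound of Lemma~\ref{lemma_upper_bound_combinatorial_on_phi_sigma}, and identifies the image sets via Lemma~\ref{lemma_image_og_phi_sigma_right_set}, bounding the central factor by $1$ when $\alpha_0=0$; even your constant $69\varepsilon|\log p_*|$ matches the paper's choice $C=p_*^{-69}$.
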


\begin{proof}
We begin by establishing an analogous inequality at the level of individual trajectories. More precisely, we show that there exists a constant $C>0$ such that for all $w \in \Omega_n^{(0)}(\mu,2^{-R}\varepsilon)$,
\begin{equation}\label{eq_lemma_comparaison_probabilities_individual_traj}
\mathfrak{p}(w) \leq C^{\varepsilon n} \prod_{\sigma \in \{-,0,+\}} \mathfrak{p}\big( \phi^{\sigma}(w) \big).
\end{equation}
By using the decomposition of $w$ into the subwords $u^{h,\sigma}$, we obtain
\begin{align*}
\mathfrak{p}(w) & \leq \prod_{\substack{\sigma \in \{-,0,+\}\, : \\ \#J^{\sigma} >0}} \prod_{h=1}^{\#J^{\sigma}} \mathfrak{p}(u^{h,\sigma}).
\end{align*} 
The uniform lower bound on the transition probabilities in (\ref{eq_def_p_star_R}) implies that, for each $\sigma \in \{-,0,+\}$ such that $\#J^{\sigma}>0$,
\begin{align*}
&\prod_{h=1}^{\#J^{\sigma}} \mathfrak{p}(u^{h,\sigma}) \\
& \qquad  \leq \frac{1}{p_*^{2(\#J^{\sigma}-1) + |b^{\sigma}|}} \left( \prod_{h=1}^{\#J^{\sigma}-1} \mathfrak{p}(u^{h,\sigma})P(u^{h,\sigma}_{-1},\xi^{h,\sigma}_1)P(\xi^{h,\sigma}_1,u^{h+1,\sigma}_{1}) \right) \mathfrak{p}(u^{\#J^{\sigma},\sigma})P(u^{\#J^{\sigma},\sigma}_{-1},b^{\sigma}_1)\mathfrak{p}(b) \\
& \qquad = \frac{1}{p_*^{2(\#J^{\sigma}-1) + |b^{\sigma}|}} \mathfrak{p}\big( \phi^{\sigma}(w) \big).
\end{align*}
By Lemma~\ref{lemma_bound_on_J_sigma} we have $\#J^{\sigma}-1 < 6 \varepsilon n$ and by Equation~(\ref{eq_lemma_bound_b_sigma}) we have $|b^{\sigma}| < 11 \varepsilon n$. Thus,
\begin{align*}
\frac{1}{p_*^{2(\#J^{\sigma}-1) + |b^{\sigma}|}} \leq \frac{1}{p_*^{23 \varepsilon n}} , \quad \sigma \in \{-,0,+\}.
\end{align*}
Therefore, for all $w \in \Omega_n^{(0)}(\mu,2^{-R}\varepsilon)$, 
\begin{align*}
\mathfrak{p}(w) & \leq \prod_{\substack{\sigma \in \{-,0,+\} \, : \\ \#J^{\sigma}>0}} \frac{1}{p_*^{\,23 \varepsilon n}} \, \mathfrak{p}\big( \phi^{\sigma}(w) \big).
\end{align*}
If $\#J^{\sigma}=0$, then $\phi^{\sigma}(w)=b^{\sigma}$. Since $|b^{\sigma}| \leq 23 \varepsilon n$, we deduce that $1 \leq \frac{1}{p_*^{23 \varepsilon n}} \mathfrak{p}(\phi^{\sigma}(w))$. Inequality~(\ref{eq_lemma_comparaison_probabilities_individual_traj}) then follows by setting $C=\big(\frac{1}{p_*}\big)^{69}$.

Let us now show Equation~(\ref{eq_lemma_upper_bound_three_terms_sigma_1}). Consider $\rho \in \{-,+\}$ and fix $w \in \mathscr{C}^{\rho}_n(\mu,2^{-R}\varepsilon)$. Suppose first that $\alpha_0>0$. By the three points of Lemma~\ref{lemma_image_og_phi_sigma_right_set}, we obtain
\begin{align*}
\phi^{0}(w) \in \Omega_{\overline{\alpha}_{0,n}}^{(0)}(\mu_0,C_1 \varepsilon) \quad , \quad \phi^{\rho}(w) \in \Omega^{\rho}_{\overline{\alpha}_{\rho,n},\textup{mea}} \quad \text{and} \quad \phi^{-\rho}(w) \in \Omega^{-\rho}_{\overline{\alpha}_{-\rho,n},\textup{exc}}.
\end{align*}
Thus, we deduce the following partition
\begin{align*}
\mathscr{C}_n^{\rho}(\mu,2^{-R}\varepsilon) = \bigcup_{\substack{\overline{w}^{0} \in \Omega^{(0)}_{\overline{\alpha}_{0,n}}(\mu_0,C_1 \varepsilon) \, , \\
\overline{w}^{-\rho} \in \Omega^{-\rho}_{\overline{\alpha}_{-\rho,n},\textup{exc}} \, , \\
\overline{w}^{\rho} \in \Omega^{\rho}_{\overline{\alpha}_{\rho,n},\textup{mea}}}} \Big\{ w \in \mathscr{C}_n^{\rho}(\mu,2^{-R}\varepsilon) \, : \, \phi^{\sigma}(w)=\overline{w}^{\sigma} \, , \, \forall \sigma \in \{-,0,+\} \Big\}.
\end{align*}
By decomposing $\PP\left(\mathscr{C}_n^{\rho}(\mu,2^{-R}\varepsilon)\right)$ in terms of its individual trajectories, we get
\begin{align}\label{eq_decomposition_of_Theta_sigma_with_phi}
\PP\left(\mathscr{C}_n^{\rho}(\mu,2^{-R}\varepsilon)\right) & = \sum_{w \in \mathscr{C}_n^{\rho}(\mu,2^{-R}\varepsilon)} \mathfrak{p}(w) \\
& = \sum_{\substack{\overline{w}^{0} \in \Omega^{(0)}_{\overline{\alpha}_{0,n}}(\mu_0,C_1 \varepsilon) \, , \\
\overline{w}^{-\rho} \in \Omega^{-\rho}_{\overline{\alpha}_{-\rho,n},\textup{exc}} \, , \\
\overline{w}^{\rho} \in \Omega^{\rho}_{\overline{\alpha}_{\rho,n},\textup{mea}}}} \sum_{\substack{w \in \mathscr{C}_n^{\rho}(\mu,2^{-R}\varepsilon) \, : \\ \phi^{\sigma}(w)=\overline{w}^{\sigma} \, , \, \forall \sigma \in \{-,0,+\}}} \mathfrak{p}(w). \notag
\end{align}
Using the upper bound (\ref{eq_lemma_comparaison_probabilities_individual_traj}), we obtain
\begin{align*}
& \sum_{\substack{w \in \mathscr{C}_n^{\rho}(\mu,2^{-R}\varepsilon) \, : \\ \phi^{\sigma}(w)=\overline{w}^{\sigma} \, , \, \forall \sigma \in \{-,0,+\}}} \mathfrak{p}(w) \\
& \qquad \qquad \qquad \leq \sum_{\substack{w \in \mathscr{C}_n^{\rho}(\mu,2^{-R}\varepsilon) \, : \\ \phi^{\sigma}(w)=\overline{w}^{\sigma} \, , \, \forall \sigma \in \{-,0,+\}}} C^{\varepsilon n} \prod_{\sigma \in \{-,0,+\}} \mathfrak{p}\big( \phi^{\sigma}(w) \big) \\
& \qquad \qquad \qquad = C^{\varepsilon n}\#\left\{ w \in \mathscr{C}_n^{\rho}(\mu,2^{-R}\varepsilon)\, : \, \phi^{\sigma}(w)=\overline{w}^{\sigma}  \, , \, \forall \sigma \in \{-,0,+\} \right\}\prod_{\sigma \in \{-,0,+\}} \mathfrak{p}\big( \overline{w}^{\sigma} \big).
\end{align*}
Then, using the combinatorial bound obtained in Lemma~\ref{lemma_upper_bound_combinatorial_on_phi_sigma}, we deduce that
\begin{align*}
\sum_{\substack{w \in \mathscr{C}_n^{\rho}(\mu,2^{-R}\varepsilon) \, : \\ \phi^{\sigma}(w)=\overline{w}^{\sigma} \, , \, \forall \sigma \in \{-,0,+\}}} \mathfrak{p}(w) \leq e^{f(\varepsilon) n + o(n)} \prod_{\sigma \in \{-,0,+\}} \mathfrak{p}\big( \overline{w}^{\sigma} \big),
\end{align*}
where $f(\varepsilon)=\widetilde{f}(\varepsilon)+\varepsilon \log C$. Plugging this back into (\ref{eq_decomposition_of_Theta_sigma_with_phi}) yields
\begin{align*}
& \PP\left(\mathscr{C}_n^{\rho}(\mu,2^{-R}\varepsilon)\right)\\
& \qquad \qquad \leq e^{f(\varepsilon) n + o(n)}\Bigg( \sum_{\overline{w}^{0} \in \Omega^{(0)}_{\overline{\alpha}_{0,n}}(\mu_0,C_1 \varepsilon)} \mathfrak{p}\big( \overline{w}^0 \big)\Bigg) \Bigg(\sum_{\overline{w}^{-\rho} \in \Omega^{-\rho}_{\overline{\alpha}_{-\rho,n},\textup{exc}}} \mathfrak{p}\big( \overline{w}^{-\rho} \big)\Bigg) \Bigg( \sum_{\overline{w}^{\rho} \in \Omega^{\rho}_{\overline{\alpha}_{\rho,n},\textup{mea}}} \mathfrak{p}(\overline{w}^{\rho})\Bigg) \\
& \qquad \qquad = e^{f(\varepsilon) n + o(n)} \PP \left(\Omega^{(0)}_{\overline{\alpha}_{0,n}}(\mu_0,C_1\varepsilon) \right)\PP_{-\rho R} \left(\Omega^{-\rho}_{\overline{\alpha}_{-\rho,n},\textup{exc}} \right)\PP_{\rho R} \left(\Omega^{\rho}_{\overline{\alpha}_{\rho,n},\textup{mea}} \right).
\end{align*}
This shows (\ref{eq_lemma_upper_bound_three_terms_sigma_1}) in the case where $\alpha_0>0$. Suppose now $\alpha_0=0$. By using points \textit{2.} and \textit{3.} of Lemma~\ref{lemma_image_og_phi_sigma_right_set} and the fact that $\phi^{0}(w) \in \Omega^{(0)}_{\overline{\alpha}_{0,n}}$, we obtain in a similar manner that
\begin{align*}
\PP\left(\mathscr{C}_n^{\rho}(\mu,2^{-R}\varepsilon)\right) \leq e^{f(\varepsilon) n + o(n)} \PP \left(\Omega^{(0)}_{\overline{\alpha}_{0,n}} \right)\PP_{-\rho R} \left(\Omega^{-\rho}_{\overline{\alpha}_{-\rho,n},\textup{exc}} \right)\PP_{\rho R} \left(\Omega^{\rho}_{\overline{\alpha}_{\rho,n},\textup{mea}} \right).
\end{align*}
Since $\Omega^{(0)}_{\overline{\alpha}_{0,n}}$ consists of all trajectories of length $\overline{\alpha}_{0,n}$ started from the origin, $\PP \big(\Omega^{(0)}_{\overline{\alpha}_{0,n}} \big)=1$. This shows (\ref{eq_lemma_upper_bound_three_terms_sigma_1}) in the case where $\alpha_0=0$.

Let us now turn to Equation~(\ref{eq_lemma_upper_bound_three_terms_0}). Fix $w \in \mathscr{C}^0_n(\mu,2^{-R}\varepsilon)$. Notice that in this case, point \textit{2.} of Lemma~\ref{lemma_image_og_phi_sigma_right_set} never applies and we have for all $\sigma \in \{-,+\}$,
\begin{align*}
\phi^{\sigma}(w) \in \Omega_{\overline{\alpha}_{\sigma,n},\textup{exc}}^{\sigma}.
\end{align*}
The rest of the proof follows the same steps as that of (\ref{eq_lemma_upper_bound_three_terms_sigma_1}) and we thereby obtain \eqref{eq_lemma_upper_bound_three_terms_0}.
\end{proof}

\subsection{Proof of the upper bound}\label{subsection_proof_of_the_upper_bound}

\noindent With the help of the results of Section~\ref{section_regional_estimates} and the decomposition given in Lemma~\ref{lemma_upper_bound_three_terms}, we are ready to show the upper bound. Let us first reformulate the exponential rates obtained in Propositions~\ref{proposition_exponential_rate_decay_I_DV} and \ref{proposition_exponential_rate_of_decay_exc_and_end} in a form better suited to prove Proposition~\ref{prop_upper_bound}.

\begin{lemma}[Upper bound at rate $\gamma$]\label{lemma_exponential_decay_with_gamma_n_upper}
Let $\gamma:\RR_+^* \to \RR_+^*$ be a function such that $\lim_{\varepsilon \to 0^+}\gamma(\varepsilon)=\overline{\gamma}$. Then, for all $\mu_0 \in \mathcal{P}(\ZZ)$,
\begin{align}
\limsup_{\varepsilon \to 0^+} \limsup \limits_{n \to \infty} \frac{1}{n} \log \PP \left( \ell_{2 \left \lceil \frac{\gamma(\varepsilon) n}{2} \right \rceil+1} \in B(\mu_0,\varepsilon) \right) & \leq -\overline{\gamma} I_{\textup{DV}}(\mu_0). \label{eq_lemma_I_DV_with_gamma_n_upper}
\end{align}
In addition, for all $\varepsilon>0$, all $R \geq R_{\overline{\ZZ}}(\varepsilon)$ and all $\rho \in \{-,+\}$,
\begin{align}
\limsup \limits_{n \to \infty} \frac{1}{n} \log \PP_{\rho R}\left(\Omega_{2 \left \lceil \frac{\gamma(\varepsilon) n}{2} \right \rceil+1,\textup{exc}}^{\rho} \right) & \leq -\gamma(\varepsilon) I_{\textup{Cr}}^{p_{\rho}}(0) + \gamma(\varepsilon) g(\varepsilon), \label{eq_lemma_exponential_rate_of_decay_exc_gamma_n_upper}\\
\limsup \limits_{n \to \infty} \frac{1}{n} \log \PP_{\rho R}\left( \Omega_{2 \left \lceil \frac{\gamma(\varepsilon) n}{2} \right \rceil + 1,\textup{mea}}^{\rho} \right) & \leq - \gamma(\varepsilon) \inf_{\rho x \in [0,1]} I_{\textup{Cr}}^{p_{\rho}}(x) + \gamma(\varepsilon) g(\varepsilon), \label{eq_lemma_exponential_rate_of_decay_final_gamma_n_upper}
\end{align}
where $g$ is as in Proposition~\ref{proposition_exponential_rate_of_decay_exc_and_end}. 
\end{lemma}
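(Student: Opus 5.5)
The plan mirrors the proof of Lemma~\ref{lemma_exponential_decay_with_gamma_n_lower}, now with $\limsup$ in place of $\liminf$ and with $\gamma$ allowed to depend on $\varepsilon$. Set $m_n \eqdef 2\lceil \gamma(\varepsilon) n/2\rceil+1$. Then $m_n/n\to\gamma(\varepsilon)$, and $m_n\to\infty$ in a nondecreasing way, so every $\limsup_{n}$ along $m_n$ is bounded by the corresponding $\limsup$ over all integers. Concretely, for any sequence $a_k\le 0$ we have
\begin{align*}
\limsup_{n\to\infty}\frac1n a_{m_n}=\gamma(\varepsilon)\limsup_{n\to\infty}\frac{1}{m_n}a_{m_n}\le \gamma(\varepsilon)\limsup_{k\to\infty}\frac1k a_k .
\end{align*}
The first equality holds because $m_n/n\to\gamma(\varepsilon)>0$. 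The inequality holds because $\{m_n : n\ge N\}$ is contained in $\{k\ge m_N\}$. The sign of $a_k$ plays no role here, since $\gamma(\varepsilon)>0$.

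For \eqref{eq_lemma_exponential_rate_of_decay_exc_gamma_n_upper} and \eqref{eq_lemma_exponential_rate_of_decay_final_gamma_n_upper}, I apply the display above to $a_k=\log\PP_{\rho R}(\Omega^{\rho}_{2\lceil k/2\rceil+1,\textup{exc}})$, respectively to the meander version. For this I need $m_n$ to be of the form $2\lceil k/2\rceil+1$ with $k\to\infty$. This is automatic: taking $k=2\lceil \gamma(\varepsilon) n/2\rceil$ gives $2\lceil k/2\rceil+1=m_n$ with $k/n\to\gamma(\varepsilon)$. So
\begin{align*}
\limsup_n\frac1n\log\PP_{\rho R}(\Omega^{\rho}_{m_n,\textup{exc}})\le\gamma(\varepsilon)\limsup_k\frac1k\log\PP_{\rho R}(\Omega^\rho_{2\lceil k/2\rceil+1,\textup{exc}}).
\end{align*}
The bound \eqref{eq_proposition_exponential_rate_of_decay_exc_upper} of Proposition~\ref{proposition_exponential_rate_of_decay_exc_and_end} then gives \eqref{eq_lemma_exponential_rate_of_decay_exc_gamma_n_upper}, and \eqref{eq_proposition_exponential_rate_of_decay_final_upper} gives \eqref{eq_lemma_exponential_rate_of_decay_final_gamma_n_upper}. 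That proposition requires $\varepsilon<\varepsilon^*$. For larger $\varepsilon$ I either restrict to $\varepsilon<\varepsilon^*$, which is the only regime used later, or note that $g$ can be redefined to be large there.

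For \eqref{eq_lemma_I_DV_with_gamma_n_upper}, the same rescaling gives, for each fixed $\varepsilon$,
\begin{align*}
\limsup_n\frac1n\log\PP(\ell_{m_n}\in B(\mu_0,\varepsilon))\le\gamma(\varepsilon)\limsup_k\frac1k\log\PP(\ell_k\in B(\mu_0,\varepsilon)).
\end{align*}
I then let $\varepsilon\to0^+$. The balls shrink, so $\varepsilon\mapsto\limsup_k\frac1k\log\PP(\ell_k\in B(\mu_0,\varepsilon))$ is monotone, and by Proposition~\ref{proposition_exponential_rate_decay_I_DV} it converges to $-I_{\textup{DV}}(\mu_0)$. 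Meanwhile $\gamma(\varepsilon)\to\overline{\gamma}$.

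The main obstacle is this product limit, namely the case $I_{\textup{DV}}(\mu_0)=+\infty$ and the case $\overline\gamma=0$. If $\overline\gamma>0$, the product of the two factors tends to $-\overline\gamma I_{\textup{DV}}(\mu_0)$, including the value $-\infty$. If $\overline\gamma=0$, the convention $0\times\infty=0$ makes the right-hand side $0$, and the bound is trivial because probabilities are at most one. Since the hypothesis only says $\gamma$ takes values in $\RR_+^*$, the case $\overline\gamma=0$ is not excluded, so I will handle it separately this way.
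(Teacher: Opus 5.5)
Your proposal is correct and follows essentially the same route as the paper. Both rescale using $m_n/n\to\gamma(\varepsilon)>0$ and the inclusion $\{m_n : n\ge N\}\subseteq\{k\ge m_N\}$, then invoke Proposition~\ref{proposition_exponential_rate_of_decay_exc_and_end} and Proposition~\ref{proposition_exponential_rate_decay_I_DV}. You also supply details the paper leaves implicit: the index shift $k=m_n-1$ for the excursion and meander sets, the restriction $\varepsilon<\varepsilon^*$, and the edge cases $\overline{\gamma}=0$ or $I_{\textup{DV}}(\mu_0)=+\infty$.
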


\begin{proof}
Observe that for all $\varepsilon>0$, $\lim_{n \to \infty} \frac{2 \left \lceil \frac{\gamma(\varepsilon) n}{2} \right \rceil+1}{n}=\gamma(\varepsilon)$. In addition, for all $k \in \NN$,
\begin{align*}
\left\{2 \left\lceil \frac{\gamma(\varepsilon) n}{2} \right \rceil+1 \, : \, n \geq k \right\} \subseteq \left\{ n \in \NN \, : \, n \geq 2 \left \lceil \frac{\gamma(\varepsilon) k}{2} \right \rceil+1 \right\}.
\end{align*}
The result then follows by the same argument as in the proof of Equation~(\ref{eq_lemma_I_DV_with_gamma_n_lower}) of Lemma~\ref{lemma_exponential_decay_with_gamma_n_lower}. Equations~(\ref{eq_lemma_exponential_rate_of_decay_exc_gamma_n_upper}) and (\ref{eq_lemma_exponential_rate_of_decay_final_gamma_n_upper}) are shown in the same way.
\end{proof}

\noindent We are now ready to give a proof of the upper bound.

\begin{proof}[Proof of Proposition~\ref{prop_upper_bound}] Our aim is to show that for all $\mu \in \mathcal{P}(\overline{\ZZ})$,
\begin{align*}
\lim \limits_{\varepsilon \to 0^+} \limsup \limits_{n \to \infty} \frac{1}{n} \log \PP \left( \ell_n \in B(\mu,\varepsilon) \right) \leq -I(\mu).
\end{align*}
Recall that $\{A^{\rho}\}_{\rho \in \{-,0,+\}}$ partitions $\overline{\ZZ}$. Hence, we obtain for all $\varepsilon>0$, $R \in \NN$ and $n\in \NN$, the following partition at the level of trajectories
\begin{align*}
\Omega_n^{(0)}(\mu,2^{-R}\varepsilon)=\bigcup_{\rho \in \{-,0,+\}} \mathscr{C}_n^{\rho}(\mu,2^{-R}\varepsilon).
\end{align*}
We thereby deduce that
\begin{align*}
\PP\left( \ell_n \in B(\mu,2^{-R}\varepsilon)\right) \leq 3 \max_{\rho \in \{-,0,+\}} \Big\{ \PP\left(\mathscr{C}_n^{\rho}(\mu,2^{-R}\varepsilon)\right) \Big\}.
\end{align*}
Therefore, 
\begin{equation}\label{eq_upper_bound_max_reduction_to_sigma}
\limsup \limits_{n \to \infty} \frac{1}{n} \log \PP\left( \ell_n \in B(\mu,2^{-R}\varepsilon)\right) \leq \max_{\rho \in \{-,0,+\}} \left\{ \limsup_{n \to \infty} \frac{1}{n} \log \PP\left(\mathscr{C}_n^{\rho}(\mu,2^{-R}\varepsilon)\right) \right\}.
\end{equation}
Consider now $\varepsilon>0$ sufficiently small and $R \in \NN$ and $n \in \NN$ sufficiently large so that the constraints given at the beginning of Section~\ref{subsection_decomp_and_stitch_traj_upper_bound} are satisfied. Lemma~\ref{lemma_upper_bound_three_terms} provides, for each $\rho \in \{-,0,+\}$, an upper bound on $\PP\left( \mathscr{C}_n^{\rho}(\mu,2^{-R}\varepsilon)\right)$ in the form of a product of three terms. Let us give exponential rates of decay for each term. Using that $\overline{\alpha}_{\rho,n}=2 \left \lceil \frac{\alpha_{\rho}+8 \varepsilon}{2} n \right \rceil + 1$, we obtain thanks to Lemma~\ref{lemma_exponential_decay_with_gamma_n_upper} with $\gamma(\varepsilon)=\alpha_{\rho}+8\varepsilon$, that 
\begin{align}
\limsup_{\varepsilon \to 0^+} \limsup \limits_{n \to \infty} \frac{1}{n} \log \PP \left( \Omega_{\overline{\alpha}_{0,n}}^{(0)}(\mu_0,C_1\varepsilon) \right) & \leq -\alpha_0  I_{\textup{DV}}(\mu_0), \label{eq_upper_bound_proof_technical_1} \\
\limsup \limits_{n \to \infty} \frac{1}{n} \log \PP_{-\rho R} \left( \Omega_{\overline{\alpha}_{-\rho,n},\textup{exc}}^{-\rho} \right) & \leq -(\alpha_{-\rho}+8\varepsilon) I_{\textup{Cr}}^{p_{-\rho}}(0) + (\alpha_{-\rho}+8\varepsilon) g(\varepsilon), \label{eq_upper_bound_proof_technical_2} \\
\limsup \limits_{n \to \infty} \frac{1}{n} \log \PP_{\rho R} \left( \Omega_{\overline{\alpha}_{\rho,n},\textup{mea}}^{\rho} \right) & \leq -(\alpha_{\rho}+8\varepsilon) \inf_{\rho x \in [0,1]} I_{\textup{Cr}}^{p_{\rho}}(x) + (\alpha_{\rho}+8\varepsilon)g(\varepsilon). \label{eq_upper_bound_proof_technical_3}
\end{align}
Given (\ref{eq_upper_bound_max_reduction_to_sigma}), let us compute each of the term within the $\max$ to obtain the upper bound. We start with the case where $\rho \in \{-,+\}$ and $\alpha_0>0$. Using Equation~(\ref{eq_lemma_upper_bound_three_terms_sigma_1}) of Lemma~\ref{lemma_upper_bound_three_terms}, we obtain
\begin{align*}
\limsup \limits_{n \to \infty} \frac{1}{n} \log \PP\left( \mathscr{C}_n^{\rho}(\mu,2^{-R}\varepsilon)\right) & \leq f(\varepsilon)  + \limsup \limits_{n \to \infty} \frac{1}{n} \log \PP\left( \Omega_{\overline{\alpha}_{0,n}}^{(0)}(\mu_0,C_1\varepsilon) \right) \\
& \;\;\, + \limsup \limits_{n \to \infty} \frac{1}{n} \log \PP_{-\rho R} \left( \Omega_{\overline{\alpha}_{-\rho,n},\textup{exc}}^{-\rho} \right) + \limsup \limits_{n \to \infty} \frac{1}{n} \log \PP_{\rho R} \left( \Omega_{\overline{\alpha}_{\rho,n},\textup{mea}}^{\rho} \right).
\end{align*}
Using the monotonicity of $\eta \mapsto \limsup_{n \to \infty} \frac{1}{n} \log \PP (\mathscr{C}_n^{\rho}(\mu,\eta))$ and Equations~(\ref{eq_upper_bound_proof_technical_2}) and (\ref{eq_upper_bound_proof_technical_3}) presented above, we obtain
\begin{align*}
\lim_{\eta \to 0^+} \limsup_{n \to \infty} \frac{1}{n} \log \PP\left( \mathscr{C}_n^{\rho}(\mu,\eta) \right)& = \lim_{R \to \infty} \limsup_{n \to \infty} \frac{1}{n} \log \PP\left( \mathscr{C}_n^{\rho}(\mu,2^{-R}\varepsilon) \right) \\
& \leq f(\varepsilon) + \limsup_{n \to \infty} \frac{1}{n} \log \PP \left( \Omega_{\overline{\alpha}_{0,n}}^{(0)}(\mu_0,C_1 \varepsilon) \right) \\
& \quad - (\alpha_{-\rho}+8 \varepsilon)I_{\textup{Cr}}^{p_{-\rho}}(0)-(\alpha_{\rho}+8\varepsilon) \inf_{\rho x \in [0,1]} I_{\textup{Cr}}^{p_{\rho}}(x) + (1+16 \varepsilon)g(\varepsilon).
\end{align*}
Taking $\limsup_{\varepsilon \to 0^+}$ on the right hand side, using the fact that $\lim_{\varepsilon \to 0^+}f(\varepsilon)=0$, $\lim_{\varepsilon \to 0^+}g(\varepsilon)=0$ and Equation~(\ref{eq_upper_bound_proof_technical_1}), we deduce that for each $\rho \in \{-,+\}$,
\begin{align*}
\lim \limits_{\varepsilon \to 0^+} \limsup \limits_{n \to \infty} \frac{1}{n} \log  \PP\left(\mathscr{C}_n^{\rho}(\mu,\varepsilon)\right) & \leq -\alpha_0 I_{\textup{DV}}(\mu_0) -\alpha_{-\rho} I_{\textup{Cr}}^{p_{-\rho}}(0)- \alpha_{\rho} \inf_{\sigma x \in [0,1]} I_{\textup{Cr}}^{p_{\rho}}(x) \\
& \leq -I(\mu).
\end{align*}
Let us now consider the case where $\rho \in \{-,+\}$ and $\alpha_0=0$. We obtain thanks to (\ref{eq_lemma_upper_bound_three_terms_sigma_1}), that
\begin{align*}
& \limsup \limits_{n \to \infty} \frac{1}{n} \log \PP\left( \mathscr{C}_n^{\rho}(\mu,2^{-R}\varepsilon)\right) \\
& \qquad \qquad \qquad \qquad \leq f(\varepsilon) + \limsup \limits_{n \to \infty} \frac{1}{n} \log \PP_{-\rho R} \left( \Omega_{\overline{\alpha}_{-\rho,n},\textup{exc}}^{-\rho} \right) + \limsup \limits_{n \to \infty} \frac{1}{n} \log \PP_{\rho R} \left( \Omega_{\overline{\alpha}_{\rho,n},\textup{mea}}^{\rho} \right).
\end{align*}
Similarly as before, we obtain when taking $R$ to infinity and then $\varepsilon$ to 0,
\begin{align*}
\lim \limits_{\varepsilon \to 0^+} \limsup \limits_{n \to \infty} \frac{1}{n} \log  \PP\left( \mathscr{C}_n^{\rho}(\mu,\varepsilon)\right) & \leq -\alpha_{-\rho}I_{\textup{Cr}}^{p_{-\rho}}(0) - \alpha_{\rho} \inf_{\rho x \in [0,1]} I_{\textup{Cr}}^{p_{\rho}}(x) \\
&  \leq -I(\mu).
\end{align*}
The case $\sigma =0$ is handled similarly, except that (\ref{eq_lemma_upper_bound_three_terms_0}) is used instead of (\ref{eq_lemma_upper_bound_three_terms_sigma_1}). To conclude, we have for every $\rho \in \{-,0,+\}$,
\begin{align*}
\lim \limits_{\varepsilon \to 0^+} \limsup \limits_{n \to \infty} \frac{1}{n} \log  \PP\left(\mathscr{C}_n^{\rho}(\mu,\varepsilon)\right) \leq -I(\mu).
\end{align*}
Finally, taking  $\varepsilon$ to 0 in (\ref{eq_upper_bound_max_reduction_to_sigma}), we deduce that
\begin{align*}
\lim \limits_{\varepsilon \to 0^+} \limsup \limits_{n \to \infty} \frac{1}{n} \log \PP\left( \ell_n \in B(\mu,\varepsilon)\right) \leq -I(\mu).
\end{align*}
Thus, we have shown the upper bound and Proposition~\ref{prop_upper_bound} is proved.
\end{proof}

\subsection*{Acknowledgements}
I would like to thank my supervisors, Tristan Benoist, Noé Cuneo, and Clément Pellegrini, for their guidance and support, and for their careful reading of this manuscript. I also thank Léo Daures, Pierre Petit and Ofer Zeitouni for fruitful discussions.

\begin{appendices}

\addtocontents{toc}{\setcounter{tocdepth}{1}}

\newpage

\section{Appendix}\label{appendix_A}

\subsection{Proof of Proposition~\ref{proposition_no_LDP_for_bounded_observables}}\label{subsection_appendix_proof_prop_no_LDP_bdd_obser}

\begin{proof}[Proof of Proposition~\ref{proposition_no_LDP_for_bounded_observables}]
Consider a simple random walk $(S_n)_{n \in \NN}$ with constant transition probability function $p \equiv \overline{p}>\frac{1}{2}$, which corresponds to a drift towards $+\infty$. The case $\overline{p}<\frac{1}{2}$ can be treated analogously, up to a change of sign. We construct a bounded observable $f:\ZZ \to \RR$ such that the sequence $(\ell_n(f))_{n \in \NN}$ does not satisfy a LDP. Set $c_1=1$ and define inductively
\begin{align*}
d_n \eqdef (n+1)c_n, \quad n \in \NN \qquad \text{and} \qquad c_n \eqdef d_{n-1}^2, \quad  n \in \NN \backslash \{1\}.
\end{align*}
Note that for all $n \in \NN$, we have $c_n < d_n < c_{n+1}$. Thus, the intervals $\dbl c_n,d_n \dbr$ do not overlap. In addition, we have $\lim_{n \to \infty} c_n=+\infty$ and $\lim_{n \to \infty} d_n = +\infty$. Define the bounded observable $f:\ZZ \to \RR$ by setting for all $x \in \ZZ$,
\begin{align*}
f(x) \eqdef \begin{cases}
1, & \text{if } x \in \bigcup_{n \in \NN} \dbl c_n,d_n \dbr, \\
0, & \text{otherwise.}
\end{cases}
\end{align*}
The function $f$ oscillates between 0 and 1, the blocks on which $f=1$ and $f=0$ become progressively longer. We will show that the sequence $(\ell_n(f))_{n \in \NN}$ does not satisfy a LDP by proving the strict inequality
\begin{align*}
\inf_{\varepsilon>0} \liminf_{n \to \infty} \frac{1}{n} \log \PP \left( |\ell_n(f)-1|<\varepsilon \right) < \inf_{\varepsilon>0} \limsup_{n \to \infty} \frac{1}{n} \log \PP \left( |\ell_n(f)-1|<\varepsilon \right).
\end{align*}
Indeed, by \cite[Theorem~4.1.18]{DZ_LDP}, this will imply that no LDP holds.

Fix $\varepsilon>0$ and choose $n>\frac{1}{\varepsilon}$. Consider the event 
\begin{align*}
A_n \eqdef \big\{S_k=k-1 \, , \, \forall k \in \dbl 1, c_n+1 \dbr \big\} \cap \big\{ S_k \geq c_n \, , \, \forall k \in \dbl c_n+1, d_n \dbr \big\}.
\end{align*}
Since the increments are bounded by 1, we have on $A_n$ that $S_k \in \dbl c_n , d_n \dbr$ for every $k \in \dbl c_n+1, d_n \dbr$. Hence $f(S_k)=1$ for those $k$, while $f(\cdot) \geq 0$ elsewhere, so
\begin{align*}
\ell_{d_n}(f)& =\frac{1}{d_n} \sum_{k=1}^{c_n} f(k-1) + \frac{1}{d_n} \sum_{k=c_n+1}^{d_n} f(S_k) \geq \frac{d_n-c_n}{d_n}.
\end{align*}
Since $d_{n}=(n+1)c_n$ and $n>\frac{1}{\varepsilon}$, we deduce that on $A_n$,
\begin{align*}
\ell_{d_n}(f)& \geq 1- \frac{1}{n+1} >1-\varepsilon.
\end{align*}
By inclusion of events, we obtain
\begin{align*}
\PP \left( \vert \ell_{d_n}(f) - 1 \vert < \varepsilon \right) & \geq \PP \big( A_n \big) \\
& = \overline{p}^{c_n}\PP\left( S_k \geq 0 \, , \, \forall k \in \dbl 1,d_n-c_n \dbr \right),
\end{align*}
where the equality follows from the Markov property and spatial homogeneity. Since $\overline{p}>\frac{1}{2}$ we obtain by \cite[p.~347, eq.~(2.8)]{Feller_intro_proba} that there exists a constant $C>0$ such that
\begin{align*}
\PP\left( S_k \geq 0 \, , \, \forall k \in \dbl 1,d_n-c_n \dbr \right) \geq \PP\left( S_k \geq 0 \, , \, \forall k \in \NN \right) \geq C.
\end{align*}
We deduce that
\begin{align*}
\limsup_{n \to \infty} \frac{1}{n} \log \PP\big(\vert\ell_n(f)-1 \vert< \varepsilon \big) & \geq \limsup_{n \to \infty} \frac{1}{d_n} \log \PP\big(\vert\ell_{d_n}(f)-1 \vert< \varepsilon \big) \\
& \geq \limsup_{n \to \infty} \frac{c_n}{d_n} \log \overline{p}.
\end{align*}
Since $d_n=(n+1)c_n$, it follows that $\lim_{n \to \infty} \frac{c_n}{d_n}=0$. Together with the trivial upper bound $\PP\big(\vert\ell_n(f)-1 \vert< \varepsilon \big) \leq 1$, we deduce that for all $\varepsilon>0$,
\begin{equation}\label{eq_limsup_eq_zero_in_counter_example}
\limsup_{n \to \infty} \frac{1}{n} \log \PP\big(\vert\ell_n(f)-1 \vert< \varepsilon \big) =0.
\end{equation}

Next, consider the event
\begin{align*}
B_n \eqdef \big\{S_{c_n}>d_{n-1}+\varepsilon c_n \big\}.
\end{align*}
If $B_n$ occurs, then because individual steps have size at most 1, for each $k \in \dbl \lceil (1-\varepsilon)c_n \rceil , c_n \dbr$ we have $S_k \in \dbl d_{n-1}+1,c_n-1 \dbr$. By definition of $f$, this implies that for all such $k$, we have $f(S_k)=0$. Thus, on the event $B_n$, we have
\begin{align*}
\ell_{c_n}(f) & =\frac{1}{c_n} \sum_{k=1}^{\lceil (1-\varepsilon)c_n \rceil-1}f(S_k) \leq \frac{\lceil (1-\varepsilon)c_n \rceil-1}{c_n} \leq 1-\varepsilon.
\end{align*}
Hence $\{\vert \ell_{c_n}(f)-1 \vert < \varepsilon \} \subseteq B_n^c$ so
\begin{align*}
\PP\big( \vert \ell_{c_n}(f)-1 \vert<\varepsilon \big) & \leq \PP\big( S_{c_n} \leq d_{n-1}+\varepsilon c_n \big).
\end{align*}
Since $\lim_{n \to \infty} d_n=+\infty$, for $n$ large enough we have $\frac{1}{d_{n-1}} \leq \varepsilon$. By definition $c_n=d_{n-1}^2$, so that $d_{n-1} \leq \varepsilon c_n$, and therefore $d_{n-1}+\varepsilon c_n \leq 2\varepsilon c_n$. Thus,
\begin{align*}
\PP\big( \vert \ell_{c_n}(f)-1 \vert<\varepsilon \big) & \leq \PP \left( \frac{S_{c_n}}{c_n} \leq 2 \varepsilon  \right).
\end{align*}
Taking $\liminf_{n \to \infty}$ and then using a strengthening of Cramér's theorem, see \cite[Corollary 2.2.19]{DZ_LDP}, we obtain
\begin{align*}
\liminf_{n \to \infty} \frac{1}{n} \log \PP\big( \vert \ell_{n}(f)-1 \vert<\varepsilon \big) & \leq \liminf_{n \to \infty} \frac{1}{c_n} \log \PP \left( \frac{S_{c_n}}{c_n} \leq 2 \varepsilon  \right) \\
& = \lim_{n \to \infty} \frac{1}{n} \log \PP \left( \frac{S_{n}}{n} \leq 2 \varepsilon  \right) \\
& \leq -\inf_{x \leq 2 \varepsilon} \Lambda^*(x),
\end{align*}
where $\Lambda^*$ is the Fenchel--Legendre transform of $\Lambda$, the cumulant generating function of the step distribution $(1-\overline{p}) \delta_{-1}+\overline{p} \delta_1$. Thus, 
\begin{align*}
\inf_{\varepsilon>0} \liminf_{n \to \infty}\frac{1}{n} \log \PP\big( \vert \ell_{n}(f)-1 \vert<\varepsilon \big) \leq -\inf_{x \leq 0} \Lambda^*(x).
\end{align*}
We know that the rate function $\Lambda^*$ is strictly convex and since $\overline{p}>\frac{1}{2}$, it is equal to zero only at $2\overline{p}-1>0$. Therefore, $-\inf_{x \leq 0} \Lambda^*(x)=-\Lambda^*(0)<0$. Using Equation~(\ref{eq_limsup_eq_zero_in_counter_example}), we deduce that
\begin{align*}
\inf_{\varepsilon>0} \liminf_{n \to \infty}\frac{1}{n} \log \PP\big( \vert \ell_{n}(f)-1 \vert<\varepsilon \big)<0=\inf_{\varepsilon>0} \limsup_{n \to \infty}\frac{1}{n} \log \PP\big( \vert \ell_{n}(f)-1 \vert<\varepsilon \big).
\end{align*}
Hence, the sequence $(\ell_n(f))_{n \in \NN}$ does not satisfy a LDP.
\end{proof}

\subsection{Restricting the starting point of the random walk to the origin}\label{subsection_appendix_restriction_starting_point_origin}

In this section, we give the proof of Lemma~\ref{lemma_restriction_to_0}, which states that we can restrict our attention to the random walk with the origin as starting point and which we recall here.

\begin{lemma}\label{lemma_restriction_to_0_appendix}
Suppose Equations (\ref{eq_prop_lower_bound_RL}) and (\ref{eq_prop_upper_bound_RL}) are satisfied for $m=0$, then the result holds for all $m \in \ZZ$.
\end{lemma}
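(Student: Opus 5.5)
The plan is to compare the walk started at $m$ with the walk started at $0$ by splicing in a deterministic path between $0$ and $m$ of length $|m|$. The cost of this path is at most $p_*^{|m|}$, with $p_*>0$ as in \eqref{eq_def_p_star_R}. Its effect on the empirical measure is only $O(|m|/n)$ in KR norm, because $\|\ell_n-\ell'_{n}\|\le 2k/n$ whenever two words agree after removing $k$ letters; this follows from \eqref{eq_lemma_properties_KR_norm_of_proba}. Both effects vanish on the exponential scale once $\varepsilon\to0$. Fix $m\neq 0$, write $k=|m|$, and let $\gamma_{0\to m}$ be the monotone nearest-neighbour path $0,\pm1,\dots,m$ (excluding the endpoint $m$). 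Let $\gamma_{m\to 0}$ be the analogous path from $m$ to $0$.

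\emph{Lower bound for $\PP_m$.} Take a word $w\in\Omega_{n-k}^{(0)}(\mu,\varepsilon)$ and prepend the path from $m$ to $0$. This gives a word $w'$ of length $n$ starting at $m$ with $\mathfrak p(w')\ge p_*^{k}\mathfrak p(w)$. Its empirical measure satisfies $\|\ell(w')-\ell(w)\|\le 2k/n$, since $\ell(w')=\frac{n-k}{n}\ell(w)+\frac1n\sum\delta$ and both parts have KR norm at most $1$. The map $w\mapsto w'$ is injective, so
\[
\PP_m(\ell_n\in B(\mu,2\varepsilon))\ge p_*^{k}\,\PP(\ell_{n-k}\in B(\mu,\varepsilon))
\]
for $n\ge 2k/\varepsilon$. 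Since $n-k\sim n$, we have $\liminf\frac1n\log\PP(\ell_{n-k}\in\cdot)=\liminf\frac1{n}\log\PP(\ell_n\in\cdot)$. This is a shift of index, and I would state it directly as an elementary relabelling. Applying \eqref{eq_prop_lower_bound_RL} for $m=0$ then gives the lower bound for $m$.

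\emph{Upper bound for $\PP_m$.} Reverse the roles. For any word $v\in\Omega_n$ starting at $m$ with $\ell(v)\in B(\mu,\varepsilon)$, prepend the path $0\to m$ and drop the last $k$ letters of $v$. This produces a word $w$ of length $n$ starting at $0$, with $\ell(w)\in B(\mu,\varepsilon+2k/n)$. Its probability satisfies $\mathfrak p(w)\ge p_*^{k}\,\mathfrak p(v_{[1:n-k]})$. The map $v\mapsto w$ is at most $2^{k}$-to-one, because $v$ is determined by $w$ together with its last $k$ steps. Moreover $\sum_{\text{tails}}\mathfrak p(v)=\mathfrak p(v_{[1:n-k]})$ when summing over the possible last $k$ steps. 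Grouping accordingly yields
\[
\PP_m(\ell_n\in B(\mu,\varepsilon))\le p_*^{-k}\,\PP(\ell_n\in B(\mu,2\varepsilon))
\]
for $n\ge 2k/\varepsilon$. Taking $\limsup\frac1n\log$, then $\varepsilon\to0$, and using \eqref{eq_prop_upper_bound_RL} for $m=0$ gives the upper bound.

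\emph{Main obstacle.} The key step is the bookkeeping in the upper bound. The map $v\mapsto w$ is not injective, so the sum must be organised by the first $n-k$ letters of $v$ rather than bounding term by term. The same grouping also absorbs the multiplicity, so no $2^k$ factor appears. Since $k$ is fixed, even a crude bound would be harmless here.
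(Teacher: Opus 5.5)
Your proof is correct and follows the paper's approach: you splice in a deterministic monotone path of length $|m|$, which costs a factor of at most $p_*^{|m|}$ and moves the empirical measure by $O(|m|/n)$ in KR norm, and your lower bound is exactly the paper's. For the upper bound, the paper avoids your truncation-and-grouping step by simply exchanging the roles of $0$ and $m$, i.e.\ prepending the path $0\to m$ to a walk of length $n-|m|$ started at $m$; this is an injective splice giving $\PP_m(\ell_{n-|m|}\in B(\mu,\varepsilon/2))\le p_*^{-|m|}\PP(\ell_n\in B(\mu,\varepsilon))$, after which the same relabelling $n-|m|\sim n$ applies, so the obstacle you flag never arises (though your version is also valid).
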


\begin{proof}
Fix $\mu \in \mathcal{P}(\overline{\ZZ})$, $m \in \ZZ \backslash\{0\}$ and $\varepsilon>0$ and let $n > \frac{4|m|}{\varepsilon}$. Denote the empirical measure restricted to the segment $\dbl |m|+1,n \dbr$ by $\ell_{[|m|+1:n]} \eqdef \frac{1}{n-|m|}\sum_{j=|m|+1}^n \delta_{S_j}$. Let $s=1$ if $m>0$ and $s=-1$ if $m<0$. Suppose that 
\begin{align*}
S_j=m-s(j-1) , \quad j \in \dbl 1,|m|+1 \dbr \qquad \text{and} \qquad \ell_{[|m|+1:n]} \in B\left(\mu,\frac{\varepsilon}{2}\right).
\end{align*}
Using Equation~(\ref{eq_lemma_properties_KR_norm_of_proba}), one can verify that $\ell_n \in B(\mu,\varepsilon)$. Therefore,
\begin{align*}
\PP_m \big( \ell_n \in B(\mu,\varepsilon) \big) \geq \PP_m\left(S_1=m,S_2=m-s,\dots,S_{|m|+1}=0,\ell_{[|m|+1:n]} \in B\left(\mu,\frac{\varepsilon}{2}\right) \right).
\end{align*}
Consider $p_{*} \eqdef \inf \{p(k),1-p(k)\, : \, k \in \ZZ\}>0$, where the strict positivity follows from Hypothesis~\ref{hypothesis_on_transition_function_p}. Using Markov's property and bounding the first transitions in the inequality above from below by $p_*$, we obtain
\begin{equation}\label{eq_comparaison_m_and_0}
\PP_m \big( \ell_n \in B(\mu,\varepsilon) \big) \geq p_*^{|m|} \PP_0 \left(  \ell_{n-|m|} \in B\left(\mu,\frac{\varepsilon}{2} \right) \right).
\end{equation}
Taking the logarithm, dividing by $n$ and then taking $\liminf_{n \to \infty}$ on both sides, we obtain
\begin{align*}
\liminf_{n \to \infty} \frac{1}{n} \log \PP_m \big( \ell_n \in B(\mu,\varepsilon) \big) \geq \liminf_{n \to \infty} \frac{1}{n} \log \PP_0 \left(  \ell_{n} \in B\left(\mu,\frac{\varepsilon}{2} \right) \right).
\end{align*}
If the lower bound in Proposition~\ref{proposition_RL_functions} holds for $m=0$, then taking $\varepsilon$ to 0 in the above inequality yields
\begin{align*}
\lim_{\varepsilon \to 0^+} \liminf_{n \to \infty} \frac{1}{n} \log \PP_m \big( \ell_n \in B(\mu,\varepsilon) \big) \geq -I(\mu).
\end{align*}
Regarding the upper bound, we obtain by exchanging the roles of $m$ and $0$ in (\ref{eq_comparaison_m_and_0}) that
\begin{align*}
\left( \frac{1}{p_*} \right)^{|m|} \PP_0 \big( \ell_n \in B(\mu,\varepsilon) \big) \geq \PP_m \left(  \ell_{n-|m|} \in B\left(\mu,\frac{\varepsilon}{2} \right) \right).
\end{align*} 
Taking the logarithm, dividing by $n$ and taking $\limsup_{n \to \infty}$ on both sides, we obtain
\begin{align*}
\limsup_{n \to \infty} \frac{1}{n} \log \PP_0 \big( \ell_n \in B(\mu,\varepsilon) \big) \geq \limsup_{n \to \infty} \frac{1}{n} \log \PP_m \left(  \ell_{n} \in B\left(\mu,\frac{\varepsilon}{2} \right) \right).
\end{align*}
Finally, taking $\varepsilon$ to 0 in the above inequality yields
\begin{align*}
-I(\mu) \geq \lim_{\varepsilon \to 0^+} \limsup_{n \to \infty} \frac{1}{n} \log \PP_m \big( \ell_n \in B(\mu,\varepsilon) \big).
\end{align*}
Thus, if Proposition~\ref{proposition_RL_functions} holds for $m=0$, then it also holds for an arbitrary $m \in \ZZ$.
\end{proof}

\newpage

\section{List of Notations}\label{section_appendix_list_of_notations}

\renewcommand{\arraystretch}{1.3}

\begin{tabular}{llll}
\toprule
\textbf{Symbol} & \textbf{Description} & \textbf{} & \textbf{First use} \\
\midrule

\notation{$p_+,p_-,p_{\sigma}$}{Limiting probabilities}{$p_{\pm}=\lim_{k \to \pm \infty}p(k)$}{p.\pageref{eq_def_p_pm}}

\notation{$\ell_n$}{Empirical measure}{$\ell_n=\frac{1}{n}\sum \limits_{j=1}^n \delta_{S_j}$}{p.\pageref{eq_def_empirical_measure}}

\notation{$I$}{LDP rate function}{}{(\ref{eq_def_rate_fuction}),(\ref{eq_rate_function_other_form})}

\notation{$I_{\textup{DV}}$}{DV rate function}{$I_{\textup{DV}}(\mu)=\sup \limits_{u_k \geq 1} \displaystyle \sum \limits_{k \in \ZZ} \mu(k) \log \left( \frac{u_k}{(\Pi u)_k} \right)$}{(\ref{Donsker_Varadhan_our_case})}

\notation{$I_{\textup{Cr}}^{p_{\sigma}}$}{Cramér rate function}{$\frac{1-x}{2} \log \left( \frac{1-x}{2(1-p_{\sigma})} \right) + \frac{1+x}{2} \log \left( \frac{1+x}{2p_{\sigma}} \right)$}{\eqref{eq_I_Cr_Fenchel_Legendre},(\ref{eq_def_I_Cr_p})}

\notation{$d_{\overline{\ZZ}}$}{Metric on $\overline{\ZZ}$}{}{p.\pageref{eq_def_distance_Z_bar}}

\notation{$\| \mu \|$}{Kantorovich-Rubinstein norm}{}{(\ref{eq_def_BL_norm})}

\notation{$\Omega_n, \Omega_n^{(0)}$}{Trajectories of length $n$}{}{\eqref{def_Omega_n}}

\notation{$\Omega_{\textup{fin}}$}{Traj. of finite length}{}{p.\pageref{def_Omega_n}}

\notation{$\mathfrak{p}(w)$}{Probability of word $w$}{$\mathfrak{p}(w)=\PP_{w_1} \left(S_j=w_j \, ,\, \forall j \in \dbl 1,n \dbr \right)$}{(\ref{eq_def_probability_of_word})}

\notation{$A^{\sigma}=A_{R}^{\sigma}$}{Regions of $\overline{\ZZ}$}{}{p.\pageref{eq_def_A_sigma_R}}

\notation{$R_{\overline{\ZZ}}(\varepsilon)$}{}{$R_{\overline{\ZZ}}(\varepsilon)=\left \lceil \log_2 \left( \frac{1}{\varepsilon} \right) \right \rceil+1$}{(\ref{eq_def_R_Z_varepsilon})}

\notation{$\Omega_n^{(0)}(\mu,\varepsilon)$}{Traj. close to $\mu$}{$\ell_n(w)\in B(\mu,\varepsilon)$}{(\ref{eq_def_trajectories_close_to_mu})}

\notation{$\Omega_{n,\textup{exc}}^{\sigma}$}{Excursions in $A^{\sigma}$ of length $n$}{$w_1=\sigma R=w_n$ and $\sigma w_j \geq R$}{(\ref{eq_def_trajectories_excursions_above_R})}

\notation{$\Omega_{n,\textup{mea}}^{\sigma}$}{Meanders in $A^{\sigma}$ of length $n$}{$w_1=\sigma R$ and $\sigma w_j \geq R$}{(\ref{eq_def_trajectories_end_above_R})}

\notation{$\mathscr{C}_n^{\sigma}$}{Traj. in the class $\sigma$}{$w_n \in A^{\sigma}$}{\eqref{eq_def_class}}

\notation{$\mathscr{C}_n^{\sigma}(\mu,\varepsilon)$}{Traj. in the class $\sigma$ close to $\mu$}{$w_n \in A^{\sigma}$ and $\ell_n(w) \in B(\mu,\varepsilon)$}{\eqref{eq_def_class}}

\notation{$N^{\sigma},N_k$}{Occupation times}{$N^{\sigma}=\#\{j \in \dbl1,n \dbr : w_j \in A^{\sigma}\}$}{(\ref{eq_def_C_sigma_and_C_k})}

\notation{$R_{\mu_0}(\varepsilon)$}{}{}{(\ref{eq_def_R_mu_0_varepsilon})}

\notation{$\ell^0(w)$}{Restricted empirical measure}{$\ell^0(w)=\frac{1}{N^{0}(w)} \sum_{j : w_j \in A^0} \delta_{w_j}$}{\eqref{eq_def_restricted_empirical_measure}}

\notation{$G(\varepsilon)$}{}{}{(\ref{eq_for_lemma_G_varepsilon_to_zero})}

\notation{$\varepsilon^*$}{}{$G(\varepsilon)<1 \; \text{for all } \varepsilon \in (0,\varepsilon^*)$}{p.\pageref{eq_for_lemma_G_varepsilon_to_zero}}

\notation{$t^{\sigma}$}{Times for constr. of typ. traj.}{$t^{\sigma} \simeq \alpha_{\sigma} n$}{(\ref{eq_def_times_for_typical_traj})}

\notation{$\xi^{\sigma}(k),\chi^{\sigma}$}{Connecting words}{}{(\ref{eq_def_xi}),(\ref{eq_def_chi})}

\notation{$b$}{Ending word}{}{p.\pageref{lemma_existence_b_lower_bound}}

\notation{$\psi^{\sigma}$}{Stitching map}{$\psi^{\sigma}(v^{\textup{c}},v^{\textup{exc}},v^{\textup{mea}})=v^{\textup{c}}\xi^{\sigma}v^{\textup{exc}}\chi^{\sigma}v^{\textup{mea}}b$}{p.\pageref{def_map_psi_sigma}}

\notation{$E_{\textup{typ}}^{\sigma}$}{Typical trajectories}{}{\eqref{eq_def_typical_traj}}

\notation{$p_*$}{Lower bound on transitions}{$p_*=\inf_{k \in \ZZ}\{p(k),1-p(k)\}$}{(\ref{eq_def_p_star_R})}

\notation{$t_i,\sigma_i$}{Stopping times and regions}{$t_{i+1}=\inf\{k \in \dbl t_i+1,n\dbr : w_k \notin A^{\sigma_i}\}$}{p.\pageref{eq_def_t_and_sigma}}

\notation{$L$}{Nb. of cuts in the trajectory}{$L=\#\{i \in \NN : t_i \leq n\}$}{(\ref{eq_def_L})}

\notation{$J^{\sigma}$}{Index of subwords in $A^{\sigma}$}{$J^{\sigma}=\{i \in \dbl 1, L \dbr : \sigma_i=\sigma\}$}{(\ref{eq_def_J_sigma})}

\notation{$u^{h,\sigma}$}{Subwords in $A^{\sigma}$}{$u^{h,\sigma}=w_{[t_{i_h^{\sigma}} \, : \, (t_{i_h^{\sigma}+1}-1) \wedge n]}$}{(\ref{eq_def_u_h_sigma})}

\notation{$\xi^{h,\sigma}$}{Connecting words}{}{p.\pageref{lemma_existence_connecting_one_letter_word}}

\notation{$\overline{\alpha}_{\sigma,n}$}{}{$\overline{\alpha}_{\sigma,n}=2 \left \lceil \frac{\alpha_{\sigma}+8\varepsilon}{2}n \right \rceil +1$}{p.\pageref{lemma_overline_alpha_sigma_long_enough}}

\notation{$b^{\sigma}$}{Ending word}{}{p.\pageref{def_b_sigma}}

\notation{$\phi^{\sigma}$}{Map that stitches subwords}{$\phi^{\sigma}(w)=u^{1,\sigma}\xi^{1,\sigma}u^{2,\sigma} \cdots u^{\#J^{\sigma},\sigma}b^{\sigma}$}{p.\pageref{def_phi}}

\bottomrule
\end{tabular}

\end{appendices}
\newpage


\begin{thebibliography}{FdLF02}

\bibitem[BD96]{Bryc_Dembo}
W.~Bryc and A.~Dembo.
\newblock Large deviations and strong mixing.
\newblock {\em Annales de l'I.H.P. Probabilités et statistiques}, 32:549--569,
  1996.

\bibitem[BJV91]{Baxter_Jain_Varadhan}
J.~R. Baxter, N.~C. Jain, and S.~R.~S Varadhan.
\newblock Some familiar examples for which the large deviation principle does
  not hold.
\newblock {\em Communications on Pure and Applied Mathematics}, 44:911--923,
  1991.

\bibitem[Bog18]{Bogachev}
V.~V. Bogachev.
\newblock {\em Weak Convergence of Measures}.
\newblock American Mathematical Society, 2018.

\bibitem[Dau25]{Daures}
L.~Daures.
\newblock Large deviations for possibly reducible {M}arkov chains on discrete
  state spaces, 2025.
\newblock arXiv:2507.11166.

\bibitem[DE97]{Dupuis_Ellis}
P.~Dupuis and R.~S. Ellis.
\newblock {\em A Weak Convergence Approach to the Theory of Large Deviations}.
\newblock Wiley, 1997.

\bibitem[Din93]{Dinwoodie}
I.~H. Dinwoodie.
\newblock Identifying a {Large Deviation Rate Function}.
\newblock {\em The Annals of Probability}, 21(1):216--231, 1993.

\bibitem[DV75]{Donsker_varadhan}
M.~D. Donsker and S.~R.~S. Varadhan.
\newblock Asymptotic evaluation of certain {Markov} process expectations for
  large time, {I}.
\newblock {\em Communications on Pure and Applied Mathematics}, 28(1):1--47,
  1975.

\bibitem[DV76]{Donsker_varadhan_3}
M.~D. Donsker and S.~R.~S. Varadhan.
\newblock Asymptotic evaluation of certain {Markov} process expectations for
  large time, {III}.
\newblock {\em Communications on Pure and Applied Mathematics}, 29(4):389--461,
  1976.

\bibitem[DZ10]{DZ_LDP}
A.~Dembo and O.~Zeitouni.
\newblock {\em Large Deviations Techniques and Applications, Second Edition}.
\newblock Springer, 2010.

\bibitem[FdLF02]{Fayolle}
G.~Fayolle and A.~de~La~Fortelle.
\newblock {Entropy and the principle of large deviations for discrete-time
  Markov chains}.
\newblock {\em Problemy Peredachi Informatsii}, 38(4):121--135, 2002.

\bibitem[Fel91]{Feller_intro_proba}
W.~Feller.
\newblock {\em An Introduction to Probability Theory and Its Applications, vol.
  1, Third Edition}.
\newblock Wiley, 1991.

\bibitem[JW05]{Jian_Wu}
Y.~W. Jian and L.~M. Wu.
\newblock Large deviations for empirical measures of not necessarily
  irreducible countable markov chains with arbitrary initial measures.
\newblock {\em Acta Mathematica Sinica}, 21:1377–1390, 2005.

\bibitem[Kem74]{Kemperman_RW}
J.~H.~B. Kemperman.
\newblock The oscillating random walk.
\newblock {\em Stochastic Processes and their Applications}, 2(1):1--29, 1974.

\bibitem[Mun00]{Munkres}
J.~R. Munkres.
\newblock {\em Topology, Second Edition}.
\newblock Prentice Hall, 2000.

\bibitem[RAS15]{Rassoul}
F.~Rassoul-Agha and T.~Seppäläinen.
\newblock {\em A Course on Large Deviations with an Introduction to Gibbs
  Measures}.
\newblock American Mathematical Society, 2015.

\bibitem[Var18]{Varadhan_role_topology}
S.~R.~S. Varadhan.
\newblock The role of topology in large deviations.
\newblock {\em Expositiones Mathematicae}, 36(3):362--368, 2018.

\bibitem[VW09]{Vatutin}
V.~A. Vatutin and V.~Wachtel.
\newblock {Local probabilities for random walks conditioned to stay positive}.
\newblock {\em Probability Theory and Related Fields}, 143:177--217, 2009.

\bibitem[Zei01]{Zeitouni_RWRE}
O.~Zeitouni.
\newblock {Random Walks in Random Environment}.
\newblock In J.~Picard, editor, {\em Lectures on Probability Theory and
  Statistics}, pages 190--312. Springer, 2001.

\end{thebibliography}

\end{document}